\definecolor{labelkey}{rgb}{0.0, 0.8, 0.3}
\numberwithin{equation}{section}
\declaretheorem[name=Definition, style=definition]{definition}
\declaretheorem[name=Lemma]{lemma}
\declaretheorem[name=Proposition]{proposition}
\declaretheorem[name=Remark, style=remark]{remark}
\declaretheorem[name=Theorem]{theorem}
\declaretheorem[name=Assumption, style=remark]{assumption}
\renewcommand{\abstractname}{Abstract}
\newenvironment{blockquote}{%
  \par%
  \medskip
  \leftskip=4em\rightskip=2em%
  \noindent\ignorespaces}{%
  \par\medskip}
\def\calN{\mathcal{N}}
\def\calP{\mathcal{P}}
\def\simiid{\stackrel{iid}{\sim}}
\def\TV{\sf{TV}}
\def\H{\sf{H}}
\def\one{\mathbbm{1}}
\def\nbyp#1{{\color{red}[YP: #1]}}
\def\E{\mathbb{E}}
\def\eqdef{\triangleq}
\def\simiid{\stackrel{iid}{\sim}}
\newcommand{\poi}{\operatorname{Poi}}
\newcommand{\ppoi}{\operatorname{poi}}
\renewcommand{\sf}{\mathsf}
\renewcommand{\cal}{\mathcal}
\newcommand{\bb}{\mathbb}
\newcommand{\cov}{\operatorname{Cov}}
\newcommand{\coef}{\operatorname{Coef}}
\newcommand{\ber}{\operatorname{Ber}}
\renewcommand{\rm}{\mathrm}
\renewcommand{\it}[1]{\textit{#1}}
\newcommand{\co}{\operatorname{co}}
\newcommand{\bin}{\operatorname{Bin}}
\renewcommand{\KL}{\sf{KL}}
\newcommand{\unif}{\operatorname{Unif}}
\newcommand{\mult}{\operatorname{Mult}}
\newcommand{\Id}{\sf{Id}}
\newcommand\subsetsim{\mathrel{%
  \ooalign{\raise0.2ex\hbox{$\subset$}\cr\hidewidth\raise-0.8ex\hbox{\scalebox{0.9}{$\sim$}}\hidewidth\cr}}}
\newtheorem{open_problem}{Open problem}
\newcommand\AoScite[2]{#2}
\begin{document}

\begin{frontmatter}

	\title{Likelihood-free hypothesis testing}
	\runtitle{Likelihood-free hypothesis testing}
	\author{Patrik Róbert Gerber \hfill prgerber@mit.edu \\ 
	Yury Polyanskiy \hfill yp@mit.edu \\}

	\address{{Department of Mathematics} \\
		{Massachusetts Institute of Technology}\\
		{77 Massachusetts Avenue,}\\
		{Cambridge, MA 02139, USA}}
	\address{
		 {Department of Electrical Engineering and Computer Science} \\
		{Massachusetts Institute of Technology}\\
		{32 Vassar St,}\\
		{Cambridge, MA 02142, USA}
	}
	
	\runauthor{Gerber and Polyanskiy}
	\begin{abstract}  
	\small{

Consider the problem of binary hypothesis testing. Given $Z$ coming from either $\bb P^{\otimes m}$ or $\bb Q^{\otimes m}$, to decide between the two with small probability of error it is sufficient, and in many cases necessary, to have $m\asymp1/\eps^2$, where $\eps$ measures the separation between $\bb P$ and $\bb Q$ in total variation ($\TV$). Achieving this, however, requires complete knowledge of the distributions and can be done, for example, using the Neyman-Pearson test. In this paper we consider a variation of the problem which we call likelihood-free hypothesis testing, where access to $\bb P$ and $\bb Q$ is given through $n$ i.i.d. observations from each. In the case when $\bb P$ and $\bb Q$ are assumed to belong to a non-parametric family, we demonstrate the existence of a fundamental trade-off between $n$ and $m$ given by $nm\asymp n_\sf{GoF}^2(\eps)$, where $n_\sf{GoF}(\eps)$ is the minimax sample complexity of testing between the hypotheses $H_0:\, \bb P=\bb Q$ vs $H_1:\, \TV(\bb P,\bb Q)\geq\eps$. We show this for three families of distributions, in addition to the family of all discrete distributions for which we obtain a more complicated trade-off exhibiting an additional phase-transition. Our results demonstrate the possibility of testing without fully estimating $\bb P$ and $\bb Q$, provided $m \gg 1/\eps^2$. 
}
\end{abstract}
\end{frontmatter}

\tableofcontents
\section{Introduction}\label{sec:introduction}

A setting that we call \textit{likelihood-free inference (LFI)}, also known as simulation based inference (SBI),   has independently emerged in many areas of
science over the past decades. Given an expensive to collect ``experimental'' dataset and the ability to simulate
from a high fidelity, often mechanistic, stochastic model, whose output distribution and likelihood is intractable
and inapproximable, how does one perform model selection, parameter estimation or construct
confidence sets? The list of disciplines where such highly complex black-box simulators are used
is long, and include particle physics, astrophysics, climate science, epidemiology, neuroscience and ecology to
just name a few. For some of the above fields, such as climate modeling, the bottleneck resource is in fact the simulated data as opposed to the experimental data. In either case, understanding the trade-off between the number of simulations and experiments necessary to do valid inference is crucial. Our aim in this paper is to introduce a theoretical framework under which LFI can
be studied using the tools of non-parametric statistics and information theory.

To illustrate we draw an example from high energy physics, where LFI methods are used and
developed extensively. The discovery of the Higgs boson in 2012 \cite{chatrchyan2012observation,
adam2015higgs} is regarded as the crowning achievement of the Large Hadron collider (LHC) - the
most expensive instrument ever built. Using a composition of complex simulators
\cite{agostinelli2003geant4, frixione2007matching,corcella2001herwig, sjostrand2006pythia,
alwall2007madgraph} modeling the standard model and the detection
process, physicists are able to simulate the results of LHC experiments. Given actual data $Z_1,\ldots, Z_m$ from
the collider, to verify existence of the Higgs boson one tests whether the null hypothesis 
(physics without the Higgs boson, or $Z_i \simiid \bb P_0$) or the alternative hypothesis (physics with the Higgs boson, or $Z_i \simiid \bb P_1$)
 describes the experimental data more accurately. The standard Neyman-Pearson likelihood ratio test is not implementable since $\bb P_0$ and $\bb P_1$ are only available via simulators.
 
 How was this statistical test actually performed? 
First, a probabilistic
classifier $C$ was trained on simulated data to distinguish the two hypotheses (a boosted decision
tree to be more specific). Then, the proportion of real data points falling
in the set $S = \{x \in \R^d:C(x)\leq t\}$ was computed, where $t$ is chosen to maximize an
asymptotic approximation of the power. Finally, $p$-values are reported based on the asymptotic
distribution under a Poisson sampling model \cite{cowan2011asymptotic, lista2017statistical}. Summarizing, the ``Higgs boson'' test was performing the simple comparison
\begin{equation}\label{eqn:scheffe test def intro}\tag{$\textsf{Scheff\'e}$}
    \frac1m\sum\limits_{i=1}^m \one\{Z_i \in S\} \lessgtr \gamma, 
\end{equation}
where $Z_1,\dots,Z_m$ are the real data and $\gamma$ is some threshold. Such count-based tests, named after Scheff\'e in folklore \cite[Section 6]{devroye2001combinatorial}, are quite intuitive. 

Notice that Scheff\'e's test converts each observation $Z_i$ into a binary $0/1$ value. This extreme quantization certainly helps robustness, but should raise the suspicion of potential loss of power. Indeed, when the distributions under both hypotheses are completely known, the optimal Neyman-Pearson test thresholds the sum of \textit{real-valued} logarithms of the likelihood-ratio. Thus, it is natural to expect that a good test should aggregate non-binary values. This is what motivated this work originally, although follow-up work \cite{gerber2023minimax} has shown that Scheff\'e's test with a properly trained classifier can also be optimal.

Let us describe the test that we study for most of this paper. Given estimates $\widehat p_0, \widehat p_1$ of the density of the null and alternative distributions based on simulated samples, our test proceeds via the comparison
\begin{equation}\label{eqn:l2 test intro}
    \frac2m\sum\limits_{i=1}^m (\widehat p_0(Z_i)-\widehat p_1(Z_i)) \lessgtr \gamma
\end{equation}
where $Z_1, \dots, Z_m$ are the real data. Tests of this kind originate from the famous
goodness-of-fit work of Ingster~\cite{ingster1987minimax}, which corresponds to taking $\widehat p_0=p_0$, as the null-density is known exactly.\footnote{In the case of discrete
distributions on a finite (but large) alphabet, the idea was rediscovered by the computer science
community startin with \cite{goldreich2000testing}. Moreover, the difference of $L^2$-norms statistic was first studied in~\cite{kelly2010universal}. See Section~\ref{sec:prior work} for more on the
latter.}   The surprising observation of
Ingster was that such a test is able to reject the null hypothesis that $Z_i\simiid p_0$ even when
the true distribution of $Z$ is much closer to $p_0$ than described by the optimal
density-estimation rate; in other words \textit{goodness-of-fit testing is significantly easier than estimation}. In fact we will use $\gamma
= \|\widehat p_0\|_2^2 - \|\widehat p_1\|_2^2$ in which case~\eqref{eqn:l2 test intro} boils down to the
comparison of two squared $L^2$-distances.

Our overall goal is to understand the trade-off between the 
number  $n$ of simulated observations and the size of the actual data set $m$. The characterization of this tradeoff is reminiscent of the rate-regions in multi-user information theory, but there is an important difference that we wanted to emphasize for the reader. In information theory, the problem is most often stated in the form ``given a distribution $P_{X,Y,Z}$, or a channel $P_{Y,Z|X}$, find the rate region'', with the distribution being completely specified ahead of time. In minimax statistics, however, distributions are a priori only known to belong to a certain class. In \textit{estimation problems} the fundamental limits are thus defined by minimizing the estimation error over this class, and the theoretical goal is to characterize the worst-case rate at which this error converges to zero as the sample size grows to infinity. The definition of the fundamental limit in \textit{testing problems}, however, is more subtle.  If the total variation separation $\eps$ between the null and alternative distribution is fixed, and the number of samples is taken to infinity, then the rate of convergence trivializes and becomes exponentially decreasing in $n$. By now a standard definition of fundamental limit, as suggested by Ingster following ideas of Pittman efficiency, is to vary $\eps$ with $n$ and to find the fastest possible decrease of $\eps$ so as to still have an acceptable probability of error. This is the approach taken in the literature on goodness-of-fit and two-sample testing, and also the one we adopt here. This perspective is also widely used in TCS where the optimal value of $n$, as a function of $\eps$, is referred to as the ``sample complexity" of the problem. 

Specifically, we assume that it is known a priori that the two distributions $\bb P_0,\bb P_1$  belong to a known class $\cal P$ and are
$\eps$-separated under total variation. Given a large number $n$ of samples simulated from $\bb P_0$ and $\bb P_1$ and $m$ samples $Z_1,\ldots,Z_m$ from the experiment, our goal is to test which of the $\bb P_i$ generated the data. If $n$ is sufficiently large to estimate $\bb P_i$ in total variation to precision $\eps/10$, then one can perform the hypothesis test with $m\asymp 1 / \eps^2$ experimental samples, which is information-theoretically optimal even under oracle knowledge of $\bb P_i$'s. However, looking at the test~\eqref{eqn:scheffe test def intro} one may wonder if the full estimation of the distributions $\bb P_i$ is needed, or whether perhaps a suitable decision boundary could be found with a lot fewer simulated samples $n$.
Unfortunately,
our \textit{first main result} disproves this intuition: \textit{any
test using the minimal $m\asymp1/\eps^2$ dataset size will require $n$ so large as to be enough to estimate the
distributions of $\bb P_0$ and $\bb P_1$ to within accuracy $\asymp \eps$}, which is the distance
separating the two hypotheses. In particular, any
method minimizing $m$ performs no different in the worst case, than pairing off-the-shelf
density estimators $\widehat{p}_0,\widehat p_1$ and applying \eqref{eqn:scheffe test def intro} with $S=\{\widehat p_1 \geq \widehat p_0\}$. 

This result appears rather pessimistic and seems to invalidate the whole attraction of LFI, which after all hopes to 
circumvent the exorbitant number of simulation samples required for fully learning high-dimensional
distributions.
Fortunately, \textit{our second result} offers a resolution: if more data samples $m
\gg 1/\eps^2$ are collected, then testing is possible with
$n$ much smaller than required for density estimation. More precisely, when neither $p_0$ nor $p_1$ are known except through $n$ i.i.d. samples from each, the
test~\eqref{eqn:l2 test intro} is able to detect which of the two distributions generated the
$Z$-sample, \textit{even when the number of samples $n$} is insufficient for any estimate $\widehat p_i$ to be within a distance $\asymp \eps = \TV(p_0,p_1)$ from the true values. In other words, the test is
able to reliably detect the true hypotheses even though the estimates $\widehat p_i$ themselves have
accuracy that is orders of magnitude larger than the separation $\eps$ between the hypotheses. 

In summary, this paper shows that likelihood-free hypothesis testing (LFHT) is possible without learning the densities when $m\gg1/\eps^2$, but not
otherwise. It turns out that (appropriate analogues of) the simple
test \eqref{eqn:l2 test intro} has minimax optimal sample complexity up to constants in both $n$ and $m$ in all ``regular'' settings, see also the discussion at the end of \Cref{sec:nonparametric classes}.

\subsection{Informal statement of the main result}

Let us formulate the problem using the notation used throughout the rest of the paper. Suppose that we observe true data $Z \sim \bb P_\sf{Z}^{\otimes m}$ and that we have two candidate parameter settings for our simulator, from which we generate two artificial datasets $X\sim\bb P_\sf{X}^{\otimes n}$ and $Y\sim\bb P_\sf{Y}^{\otimes n}$. If we are convinced that one of the settings accurately reflects reality, we are faced with the problem of testing the hypothesis
\begin{align}\label{eqn:LFHT definition intro}
    H_0:\bb P_\sf{X}=\bb P_\sf{Z} \qquad\text{versus}\qquad H_1:\bb P_\sf{Y}=\bb P_\sf{Z}.
\end{align} 

\begin{remark}
We emphasize that $\bb P_\sf{X}$ and $\bb P_\sf{Y}$ are known only through the $n$ simulated
samples. Thus, \eqref{eqn:LFHT definition intro} can be interpreted as binary hypothesis testing
with approximately specified hypotheses. Alternatively, using the language of machine learning, we may think
of this problem as having $n$ labeled samples from both classes, and $m$ unlabeled samples. The twist is that the unlabeled samples are guaranteed to have the
same common label, that is, they all come from a single class. One can think of many
examples of this setting occurring in genetic, medical and other studies. 
\end{remark}

To put \eqref{eqn:LFHT definition intro} in a minimax framework, suppose that $\bb P_\sf{X},\bb P_\sf{Y}\in\cal P$ for a known class $\cal P$, and that $\TV(\bb P_\sf{X}, \bb P_\sf{Y}) \geq \eps$. Clearly \eqref{eqn:LFHT definition intro} becomes ``easier'' if we have a lot of data (large sample sizes $n$ and $m$) or if the hypotheses are well-separated (large $\eps$). We are interested in characterizing the
pairs of values $(n,m)$ as functions of $\eps$ and $\cal P$, for which the hypothesis test \eqref{eqn:LFHT definition intro}
can be performed with constant type-I and type-II error. Letting $n_\sf{GoF}(\eps, \cal P)$ denote the minimax sample complexity of goodness-of-fit testing (\Cref{def:gof}), we show for \textit{several different classes} of $\cal P$, that \eqref{eqn:LFHT definition intro} is possible with total error, say, $5\%$ if and only if
\begin{equation*}
m \gtrsim 1/\eps^2 \qquad\text{and}\qquad n\gtrsim n_\sf{GoF} \qquad\text{and}\qquad mn \gtrsim n_\sf{GoF}^2. 
\end{equation*}
We also make the observation that $n^2_\sf{GoF}\,\eps^2\asymp n_\sf{Est}$ for these classes, where $n_\sf{Est}(\eps,\cal P)$ denotes the minimax complexity of density estimation to $\eps$-accuracy (\Cref{def:est}) with respect to total variation. This provides additional meaning to the mysterious formula of Ingster \cite{ingster1987minimax} for the sample complexity of goodness-of-fit testing over the class of $\beta$-smooth densities over $[0,1]^d$, see Table~\ref{table:prior results TV} below.\footnote{A possible reason for this observation having been missed previously  is that fundamental limits in statistics are usually presented in the form of \textit{rates} of loss decrease with $n$, for example $r_\sf{Est}(n)\eqdef n^{-1}_\sf{Est}(n) = 1/n^{\beta/(2\beta+d)}$ and $r_\sf{GoF}(n) \eqdef n^{-1}_\sf{GoF}(n) = 1/n^{\beta/(2\beta + d/2)}$ for $\beta$-smooth densities. Unlike $n_\sf{Est}\asymp n_\sf{GoF}^2 \eps^2$ there seems to be no simple relation between $r_\sf{Est}$ and $r_\sf{GoF}$.} More importantly, it allows us to interpret \eqref{eqn:LFHT definition intro} as an  ``interpolation" between different fundamental statistical procedures, namely 
\begin{enumerate}
    \item[$\sf{A}$] $\leftrightarrow$ Binary hypothesis testing,
    \item[$\sf{B}$] $\leftrightarrow$ Estimation followed by robust binary hypothesis testing,
    \item[$\sf{C}$] $\leftrightarrow$ Two-sample testing,
    \item[$\sf{D}$] $\leftrightarrow$ Goodness-of-fit testing, 
\end{enumerate}
corresponding to the extreme points $\sf{A},\sf{B},\sf{C},\sf{D}$ on \Cref{fig:phase diagram}.

\subsection{Related work}\label{sec:prior work}
LHFT as defined in \eqref{eqn:LFHT definition intro} initially appeared in Gutman's paper \cite{gutman1989asymptotically}, building on Ziv's work \cite{Ziv1988OnCW}, where the problem is studied for distributions on a fixed, finite alphabet. Ziv called the problem \textit{classification with empirically observed statistics}, to emphasize the fact that hypotheses are specified only in terms of samples and the underlying true distributions are unknown. In \cite{zhou2020second} it is shown that the error exponent of Gutman's test is second order optimal. Recent work \cite{hsu2020binary, he2020distributed,
haghifam2021sequential, boroumand2022universal} extends this problem to distributed and sequential testing. However, the setting of these papers is fundamentally different from ours, a point which we expand on below. 

Given two arbitrary, unknown $\bb P_\sf{X}, \bb P_\sf{Y}$ over a finite
alphabet of fixed size, Gutman's test (see \cite[Equation (4)]{zhou2020second}) rejects the null hypothesis $H_0:\bb P_\sf{Z}=\bb P_\sf{X}$ in favor of the alternative $H_1:\bb P_\sf{Z}=\bb P_\sf{Y}$ if the statistic $\operatorname{GJS}(\widehat{\bb P}_\sf{X}, \widehat{\bb P}_\sf{Z}, \alpha)$ is large, where $\widehat{\bb P}$ denotes empirical measures, $\operatorname{GJS}$ denotes the generalized Jensen-Shannon divergence defined in \cite[Equation (3)]{zhou2020second} and $\alpha=n/m$. In other words, it simply performs a two-sample test using the samples from $\bb P_\sf{X}$ and $\bb P_\sf{Z}$ of size $n$ and $m$ respectively, and completely discards the sample from $\bb P_\sf{Y}$. In light of our sample complexity results this is strictly sub-optimal due to minimax lower bounds on two-sample testing, see the difference of light gray and striped regions in Figure \ref{fig:phase diagram}. 

More generally, the method of types, which is a crucial tool for the works cited above, cannot be used to derive our results, because in the regime where the alphabet size $k$ scales with the sample size $n$, the usual ${n \choose k} = e^{o(n)}$ approximation no longer holds, i.e. these factors affect estimation rates and do not lead to tight minimax results. As a consequence, one cannot deduce results about the minimax sample complexity of LFHT from works on the classical regime because the latter do not quantify the speed of convergence of the error terms as a function of the alphabet size. Specifically, let us examine \cite[Theorem 1]{zhou2020second}, which is a strengthening of the results of \cite{gutman1989asymptotically}. Paraphrasing, it states that for any fixed ratio $\alpha=n/m$ and pair of distributions $(\bb P_\sf{X},\bb P_\sf{Y})$, Gutman's test has type-II error bounded by $1/3$ when given samples from $\bb P_\sf{X}$ and $\bb P_\sf{Y}$ as input, and type-I error bounded by $\exp(-\lambda n)$ given arbitrary input, where 
\begin{equation}\label{eqn:zhou thm1}\tag{1}
    \lambda = \operatorname{GJS}(\bb P_\sf{X},\bb P_\sf{Y}, \alpha) + \sqrt{\frac{V(\bb P_\sf{X},\bb P_\sf{Y}, \alpha)}{n}}\Phi^{-1}(1/3) + \cal O\left(\frac{\log(n)}{n}\right)
\end{equation}
as $n\to\infty$. Here $V$ denotes the dispersion function defined in \cite[Equation (9)]{zhou2020second} and $\Phi$ is the standard normal cdf. The crucial point we make here is that in \eqref{eqn:zhou thm1} the dependence of the $\cal O(\log(n)/n)$ term on $\bb P_\sf{X},\bb P_\sf{Y}$, and in particular their support size $k$ and the ratio $\alpha=n/m$ is unspecified. Because of this, \eqref{eqn:zhou thm1} and similar results cannot be used to derive minimax sample complexities as $\min\{n,m,k\} \to \infty$ jointly at possibly different rates. 

This distinction between the fixed alphabet size setting studied in \cite{gutman1989asymptotically,Ziv1988OnCW,zhou2020second} and similar works, and our large alphabet setting was recognized by \cite{huang2012classification,huang2013generalized,kelly2010universal,kelly2012classification} whose results are much closer to those of this paper. In \cite{huang2013generalized} Huang and Meyn introduce the concept of ``generalized error exponent'' to deal with support sizes that grow superlinearly with sample size (referred to as the ``sparse sample regime'' by them) in the setting of uniformity testing.\footnote{Uniformity testing is the problem of goodness-of-fit testing where the null is given by a uniform distribution.} In \cite{huang2012classification} they extend this idea to LFHT and say, quote, 
\begin{blockquote}
``In the classification problem, the classical error exponent analysis has been applied to the case of fixed alphabet in \cite{Ziv1988OnCW} and \cite{gutman1989asymptotically}.... However, in the sparse sample problem, the classical error exponent concept is again not applicable, and thus a different scaling is needed."
\end{blockquote}

Moving on to \cite{kelly2010universal, kelly2012classification}, their authors study \eqref{eqn:LFHT definition intro} with $n=m$ over the class of discrete
distributions $p$ with $\min_i p_i \asymp \max_i p_i \asymp 1/n^\alpha$, which they call
\textit{$\alpha$-large sources}. Disregarding the dependence on the $\TV$-separation $\eps$, effectively setting $\eps$ to a constant, they find that achieving non-trivial minimax error is
possible if and only if $\alpha \leq 2$, using in fact the same \textit{difference of squared
$L^2$-distances} test \eqref{eqn:l2 test intro} that we study in this paper. Follow-up work
\cite{huang2012classification} extends to the case $m\neq n$ and the class of distributions on
alphabet $[k]$ with $\max_i p_i \lesssim 1/k$, we also cover this class under the name $\cal{P}_{\sf{Db}}$. In the regime of constant separation $\eps=\Theta(1)$ and $n,m\to \infty$ they show that LFHT with vanishing error is possible if and only if $k=o(\min(n^2, mn))$, thus discovering for the first time the \textit{trade-off} between $m$ and $n$.\footnote{The paper~\cite{huang2012classification} contains implicitly other interesting results. For example, it appears that the constructive (upper bound) part of their proof if done carefully can also handle the case of variable $\eps\to0$ in the regime $m,n \lesssim k$. Specifically, we believe they also show that for the minimax error $\delta \in (0,1)$ LFHT is possible
if $k \log(1/\delta) / \eps^4 \lesssim \min(n^2, nm)$. The lower bound appears to show LFHT is possible only if $k \log(1/\delta) \lesssim \min(n^2,nm)$. In addition they also apply the flattening technique, later re-discovered in~\cite{diakonikolas2016new}.} Contrasting with our work, we are the first to characterize the full $m,n,\eps$ trade-off in the regime of constant probability of error, and we also consider three other classes of distributions, in addition to $\cal{P}_{\sf{Db}}$.

Another related problem is that of two-sample testing with unequal sample sizes, studied in \cite{bhattacharya2015testing, diakonikolas2016new} for the class of discrete distributions $\cal P_\sf{D}$. In \Cref{sec:reductions} we present reductions that show that our problem's sample complexity equals, up to constant factors, to that of two-sample testing in the case $m\geq n$. We emphasize that the distinction between $m\geq n$ and $m \leq n$ is necessary for this equivalence: in the latter case the sample complexities of the two problems are not the same. Moreover, our reduction doesn't help us solve classes other than $\cal P_\sf{D}$, as two-sample testing with unequal sample size exhibits a trade-off between $n$ and $m$ only in classes
for which $n_\sf{TS}\neq n_\sf{GoF}$, see also the discussion at the end of \Cref{sec:nonparametric classes}.

The test \eqref{eqn:scheffe test def intro} has been considered previously \cite{devroye2002note,friedman2004multivariate,lopez2016revisiting,gutmann2018likelihood,liu2020learning,kim2021classification,hediger2022use} and is also known as a ``classification accuracy'' test (CAT). Follow-up work \cite{gerber2023minimax} to the present paper shows that CATs are able to attain a (near-)minimax optimality in all settings studied here, and also achieve optimal dependence on the probability of error (in this paper we only consider a fixed error probability).

\subsection{Contributions} Though the likelihood-free hypothesis testing
problem~\eqref{eqn:LFHT definition intro} has previously appeared under various disguises and was
studied in different regimes for the class of bounded discrete distributions, it omitted the key question of understanding the dependence of the sample complexity on the separation $\eps$. Our work fully characterizes the dependence on the separation $\eps$ (\Cref{THM:P_SGDB UPPER,THM:P_D UPPER}). We discover the existence
of a rather non-trivial trade-off between the $m$ and $n$ showing that in the likelihood-free setting statistical performance ($m$) can be traded for computational resources ($n$). Our results are shown for not just one but multiple distribution classes. In addition, we also demonstrate that LFHT naturally interpolates between its special cases corresponding to goodness-of-fit testing, two-sample testing and density-estimation. As a by-product we observe the relation $n_\sf{GoF}^2\,\eps^2\asymp n_\sf{Est}$ that holds over several classes of distributions and measures of separation, hinting at some universality property. On the technical side we provide a unified 
upper bound analysis for all regular classes we consider, and prove matching lower bounds using techniques of Tsybakov, Ingster and
Valiant. Our upper bound analysis is inspired by Ingster \cite{ingster1982minimax,ingster1987minimax} whose
$L^2$-distance testing approach, originally designed for goodness-of-fit in smooth-density
classes, has been rediscovered in the discrete-alphabet world~\cite{kelly2010universal, kelly2012classification,
goldreich2000testing}. Compared to Ingster's work, the new ingredient needed in the discrete
case is a ``flattening" reduction \cite{huang2012classification,diakonikolas2016new,goldreich2017introduction},
which we also utilize. Several minor extensions are also shown along the way, namely, robustness with respect to $L^2$-misspecification (\Cref{THM:ROBUST}) and characterization of $n_\sf{GoF}$ for the class of $\beta$-smooth densities with $\beta\leq1$ under Hellinger separation (\Cref{THM:HELLINGER GOF}).

\subsection{Structure}
\Cref{sec:preliminaries} defines the statistical problems and the classes of distributions that are studied throughout the paper, and discusses multiple tests for likelihood-free hypothesis testing. \Cref{sec:results} contains our main results and the discussion linking to goodness-of-fit and two-sample testing, estimation and robustness. In \Cref{sec:sketch} we provide sketch proofs for our results. Finally, in \Cref{sec:open problems} we discuss possible future directions of research. The detailed proofs of \Cref{THM:P_SGDB UPPER,THM:P_D UPPER,THM:ROBUST,THM:HELLINGER GOF} and all auxiliary results are included in the \AoScite{supplement \cite{supplement}}{Appendix}. 

\subsection{Notation}
For $k\in\bb N$ we write $[k]\eqdef\{1,2,\dots,k\}$. For $x,y\in\bb R$ we write $x\land y\eqdef\min\{x,y\}$, $x \lor y \eqdef \max\{x,y\}$. We use the Bachmann–Landau notation $\Omega, \Theta, \cal O, o$ as usual and write $f \lesssim g$ for $f=\cal O(g)$ and $f\asymp g$ for $f=\Theta(g)$. For $c\in\R$ and $A\subseteq\R^2$ we write $cA \eqdef \{(ca_1,ca_2)\in\R^2:(a_1,a_2)\in A\}$. For two sets $A,B \subseteq \bb R^2$ we write $A \asymp B$ if there exists $c\in[1, \infty)$ with $\frac1cA\subseteq B\subseteq cA$. For two probability measures $\mu,\nu$ dominated by $\eta$ with densities $p,q$ we define the following divergences: $\TV(\mu,\nu) \eqdef \frac12 \int|p-q|\rm{d}\eta$, $\H(\mu,\nu) \eqdef ( \int(\sqrt{p}-\sqrt{q})^2\rm{d}\eta)^{1/2}$, $\KL(\mu\|\nu) \eqdef \int p \log(p/q) \rm{d}\eta$, $\chi^2(\mu\|\nu) \eqdef \int ((p-q)^2/q)\rm{d}\eta$. Abusing notation, we sometimes write $(p,q)$ as arguments instead of $(\mu, \nu)$. We write $\|\cdot\|_p$ for the $L^p$ and $\ell^p$ norms, where the base measure shall be clear from the context.

\section{Sample complexity, non-parametric classes and tests}\label{sec:preliminaries}

In the first two parts of this section we go over the technical background and definitions that are required to understand the rest of the paper, after which we give an exposition of multiple alternative approaches for our problem in \Cref{sec:tests}. 

\subsection{Five fundamental problems in Statistics}\label{sec:fundamental problems}
Formally, we define a hypothesis as a set of probability measures. Given two hypotheses $H_0$ and $H_1$ on some space $\cal X$, we say that a function $\psi:\cal X \to \{0,1\}$ successfully tests
the two hypotheses against each other if
\begin{equation}\label{eqn:successful test}
\max\limits_{i=0,1} \sup\limits_{P \in H_i} \bb P_{S \sim P}(\psi(S) \neq i) \leq 1/3.
\end{equation}
\begin{remark}\label{rem:test bootstrap}
For our purposes, the constant $1/3$ above is unimportant and could be replaced by any number less than $1/2$. Throughout the paper we
are interested in the asymptotic order of the sample complexity, and $\Omega(\log(1/\delta))$-way sample splitting followed by a majority vote decreases the overall error probability to $\cal O(\delta)$ of any successful tester, at the cost of inflating the sample complexity by a multiplicative $\cal O(\log(1/\delta))$ factor. Unfortunately, the resulting dependence on $\delta$ is sub-optimal except for binary hypothesis testing, see for example \cite[Theorem 4.7]{bar2002complexity}. Recent results for uniformity \cite{diakonikolas2018sample} and two-sample testing \cite{diakonikolas2021optimal}, and our follow-up work on LFHT \cite{gerber2023minimax} resolves the optimal dependence to be $\sqrt{\log(1/\delta)}$ or even $\sqrt[3]{\log(1/\delta)}$ in some regimes. 
\end{remark}
Throughout this section let $\cal P$ be a class of probability distributions on
$\cal X$. Suppose we observe independent samples
$X \sim \bb P_{\sf{X}}^{\otimes n}$, $Y \sim \bb P_{\sf{Y}}^{\otimes n}$
and $Z\sim\bb P_{\sf{Z}}^{\otimes m}$ whose distributions $\bb P_{\sf{X}},\bb P_{\sf{Y}}, \bb P_{\sf{Z}} \in \cal P$ are
 \it{unknown} to us. Finally, $\bb P_0,\bb P_1 \in \cal P$ refer to distributions
that are \it{known} to us. We now define five fundamental problems in statistics that we refer to throughout this paper.

\begin{definition}\label{def:ht}
\textbf{Binary hypothesis testing} is the problem of testing
\begin{equation}\label{eqn:HT definition}\tag{$\sf{HT}$}
H_0:\bb P_\sf{X}=\bb P_0 \qquad\text{against}\qquad H_1:\bb P_\sf{X}=\bb P_1
\end{equation}
based on the sample $X$. We use $n_\sf{HT}(\eps, \cal P)$ to denote the \emph{minimax sample complexity of binary hypothesis testing}, which is the smallest number such that for all $n \geq n_\sf{HT}(\eps, \cal P)$
and all $\bb P_0,\bb P_1 \in \cal P$ with $\TV(\bb P_0,\bb P_1) \geq \eps$ there
exists a function $\psi:\cal X^n \to \{0,1\}$, which given $X$ as input successfully tests
$H_0$ against $H_1$ in the sense of \eqref{eqn:successful test}.
\end{definition}

It is well known that the complexity of binary hypothesis testing is controlled by the Hellinger divergence. 
\begin{lemma}\label{lem:n_HT}
For all $\eps$ and $\cal P$ with $|\cal P|\geq2$, the relation $$n_\sf{HT}(\eps, \cal P) = \Theta\Big(\sup_{\bb P_0,\bb P_1 \in \cal P: \TV(\bb P_0, \bb P_1) \geq \eps} \H^{-2}(\bb P_0, \bb P_1)\Big)$$holds, where the implied constant is universal.
\end{lemma}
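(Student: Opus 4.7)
The statement is a classical characterization, and my plan is to prove the two inequalities separately by reducing to a pointwise (two-hypothesis) analysis for each fixed pair $(\bb P_0,\bb P_1)$ and then taking the appropriate supremum over admissible pairs in $\cal P$. The ingredients I will use are all standard: the Neyman--Pearson/Chernoff upper bound in terms of the Bhattacharyya coefficient, Le Cam's two-point lower bound, and the tensorization identity for the Hellinger divergence.

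For the upper bound I would fix an arbitrary admissible pair $\bb P_0,\bb P_1 \in \cal P$ with $\TV(\bb P_0,\bb P_1)\geq\eps$ and run the Neyman--Pearson test $\phi$ thresholding the product likelihood ratio at one. The symmetric Chernoff bound via $\int\sqrt{p_0 p_1}\,\rm{d}\eta=1-\H^2(\bb P_0,\bb P_1)/2$ gives
\[
\max\{\bb P_0^{\otimes n}(\phi=1),\bb P_1^{\otimes n}(\phi=0)\}\leq\Big(1-\tfrac12\H^2(\bb P_0,\bb P_1)\Big)^n\leq e^{-n\H^2(\bb P_0,\bb P_1)/2},
\]
so choosing $n$ to be a universal constant multiple of $\H^{-2}(\bb P_0,\bb P_1)$ drives both errors below $1/3$. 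Supremizing over admissible pairs then yields $n_\sf{HT}(\eps,\cal P)\lesssim\sup_{\TV(\bb P_0,\bb P_1)\geq\eps}\H^{-2}(\bb P_0,\bb P_1)$.

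For the lower bound I would invoke Le Cam's inequality
\[
\bb P_0^{\otimes n}(\psi=1)+\bb P_1^{\otimes n}(\psi=0)\geq 1-\TV(\bb P_0^{\otimes n},\bb P_1^{\otimes n}),
\]
valid for every test $\psi$, so that the test must fail as soon as the right-hand side exceeds $2/3$. Combining the tensorization identity $\H^2(\bb P_0^{\otimes n},\bb P_1^{\otimes n})=2-2(1-\H^2(\bb P_0,\bb P_1)/2)^n$ with Bernoulli's inequality $(1-x)^n\geq 1-nx$ and the standard comparison $\TV\leq\H$ gives $\TV(\bb P_0^{\otimes n},\bb P_1^{\otimes n})\leq\sqrt{n}\,\H(\bb P_0,\bb P_1)$. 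Thus $n\leq c\,\H^{-2}(\bb P_0,\bb P_1)$ for a small absolute constant $c$ forces failure on that pair, and taking a sequence of pairs whose $\H^{-2}$ approaches the supremum yields the matching $n_\sf{HT}(\eps,\cal P)\gtrsim\sup_{\TV(\bb P_0,\bb P_1)\geq\eps}\H^{-2}(\bb P_0,\bb P_1)$.

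I do not expect any step to present a serious obstacle: each ingredient is textbook, and the only thing to monitor is that the absolute constants hidden in $\lesssim$ and $\gtrsim$ are independent of $\cal P$ and $\eps$, which is manifest from the explicit bounds above. The edge case where no pair in $\cal P$ is $\eps$-separated is trivial since both sides then vanish, and the hypothesis $|\cal P|\geq 2$ plays no further role.
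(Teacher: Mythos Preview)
Your proposal is correct and follows essentially the same route as the paper: both directions rely on Le Cam's two-point bound together with the Hellinger tensorization identity and the comparison $\TV\leq\H$. The only cosmetic differences are that the paper reaches the upper bound via $1-\TV\leq 1-\tfrac12\H^2$ rather than the Bhattacharyya/Chernoff bound, and bounds $(1-\H^2/2)^n$ via $e^{-2x}\leq 1-x$ rather than Bernoulli's inequality; both yield the same exponential rates with universal constants.
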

\begin{proof}
We include the proof in \AoScite{the supplement \cite{supplement}}{\Cref{sec:proof of lem1}} for completeness. 
\end{proof}
For all $\cal P$ considered in this paper $n_\sf{HT}=\Theta(1/\eps^2)$ holds. Therefore, going forward we usually refrain from the general notation $n_\sf{HT}$ and simply write $1/\eps^2$.

\begin{definition}\label{def:gof}\textbf{Goodness-of-fit testing} is the problem of testing
\begin{equation}\label{eqn:GoF definition}\tag{$\sf{GoF}$}
H_0: \bb P_\sf{X}=\bb P_0 \qquad\text{against}\qquad H_1: \TV(\bb P_\sf{X}, \bb P_0) \geq \eps\text{ and }\bb P_\sf{X}\in\cal P
\end{equation}
based on the sample $X$. We write $n_\sf{GoF}(\eps, \cal P)$ for the \emph{minimax sample complexity of goodness-of-fit testing}, which is the smallest value such that for all $n\geq n_\sf{GoF}(\eps,\cal P)$
and $\bb P_0 \in \cal P$ there exists a function $\psi : \cal X^n \to \{0,1\}$, which given $X$ as input successfully tests  $H_0$
against $H_1$ in the sense of \eqref{eqn:successful test}.
\end{definition}

\begin{definition}
\textbf{Two-sample testing} is the problem of testing
\begin{equation}\label{eqn:TS definition}\tag{$\sf{TS}$}
H_0: \bb P_\sf{X}=\bb P_\sf{Z}\text{ and }\bb P_\sf{X}\in\cal P \qquad\text{against}\qquad H_1: \TV(\bb P_\sf{X}, \bb P_\sf{Z}) \geq \eps\text{ and }\bb P_\sf{X},\bb P_\sf{Z}\in\cal P
\end{equation}
based on the samples $X$ and $Z$. We write $\cal R_\sf{TS}(\eps, \cal P)$ for the maximal subset of $\R^2$ such that for any $(n,m) \in \N^2$ for which there exists $(x,y)\in\cal R_\sf{TS}(\eps, \cal P)$ with $(n,m)\geq (x,y)$ coordinate-wise, there also exists a function $\psi : \cal X^n \times \cal X^m \to \{0,1\}$, which given $X$ and $Z$ as input successfully tests between
$H_0$ and $H_1$ in the sense of \eqref{eqn:successful test}. We will use the abbreviation $n_\sf{TS}(\eps, \cal P) = \min \{\ell \in \N: (\ell,\ell) \in \cal R_\sf{TS}(\eps, \cal P)\}$ and refer to it as the \emph{minimax sample complexity of two-sample testing}. 
\end{definition}

\begin{definition}\label{def:est}
The \emph{minimax sample complexity of \textbf{estimation}} is the smallest value $n_\sf{Est}(\eps, \cal P)$ 
such that for all $n \geq n_\sf{Est}(\eps,\cal P)$ there exists an estimator $\widehat{\bb P}_\sf{X}$, which given $X$ as input satisfies
\begin{equation}\label{eqn:EST definition}\tag{$\sf{Est}$}
\bb E \TV(\widehat{\bb P}_\sf{X}, \bb P_\sf{X}) \leq \eps.
\end{equation}
\end{definition}
In order to simplify the presentation of our final definition, let us temporarily write $\cal P_\eps = \{(\bb Q_0,\bb Q_1)\in\cal P^2 : \TV(\bb Q_0,\bb Q_1)\geq\eps\}$. That is, $\cal P_\eps$ is the set of pairs of distributions in the class $\cal P$ which are $\eps$ separated in total variation. 
\begin{definition}\label{def:lfht}
\textbf{Likelihood-free hypothesis testing} is the problem of testing
\begin{equation}\label{eqn:LFHT definition}\tag{$\sf{LF}$}
H_0: \bb P_\sf{Z}=\bb P_\sf{X}\text{ and }(\bb P_\sf{X}, \bb P_\sf{Y})\in\cal P_\eps \qquad\text{against}\qquad H_1: \bb P_\sf{Z}=\bb P_\sf{Y}\text{ and }(\bb P_\sf{X}, \bb P_\sf{Y})\in\cal P_\eps 
\end{equation}
based on the samples $X,Y$ and $Z$. Write $\cal R_\sf{LF}(\eps, \cal P)$ for the maximal subset of $\R^2$ such that for any $(n,m)\in\bb N^2$ for which there exists $(x,y)\in\cal R_\sf{LF}(\eps,\cal P)$ with $(n,m)\geq (x,y)$ coordinate-wise, there also exists a function $\psi : \cal X^n \times \cal X^n\times\cal X^m \to \{0,1\}$, which given $X,Y$ and $Z$ as input
successfully tests $H_0$ against $H_1$ in the sense of \eqref{eqn:successful test}.
\end{definition}

Requiring $\mathcal R_\sf{TS}(\eps,\cal P)$ to be maximal is well defined, because for any $(n_0,m_0) \in \cal R_\sf{TS}(\eps,\cal P)$ and $(n,m)\in\bb N^2$ with $(n_0,m_0) \leq (n,m)$ coordinate-wise, it must also hold that $(n,m) \in \cal R_\sf{LF}$, since $\psi$ can simply disregard the extra samples. Clearly the same applies also to $\cal R_\sf{LF}(\eps,\cal P)$.

\begin{remark}\label{rem:sfd}
All five definitions above can be modified to measure separation with respect to an arbitrary function $\sf{d}$ instead of $\TV$. We will write $n_\sf{GoF}(\eps, \sf{d}, \cal P)$ et cetera for the corresponding values. 
\end{remark}

\subsection{Four classes of distributions}\label{sec:nonparametric classes}
All of our definitions in the previous section assumed that we have some class of distributions $\cal P$ at hand. Below we introduce the classes that we study throughout the rest of the paper. 

\begin{enumerate}[(i)]
\item \textbf{Smooth density.} Let $\cal C(\beta, d, C)$ denote the set of functions $f:[0,1]^d \to \bb R$ that are $\underline\beta\eqdef\lceil \beta-1\rceil$-times differentiable and satisfy
\begin{align*}
    \|f\|_{\cal C_\beta} \eqdef \max\left\{\max\limits_{0\leq |\alpha| \leq \underline\beta} \|f^{(\alpha)}\|_\infty, \sup\limits_{x\neq y \in [0,1]^d, |\alpha|=\underline\beta} \frac{|f^{(\alpha)}(x)-f^{(\alpha)}(y)|}{\|x-y\|^{\beta-\underline\beta}_2}\right\} \leq C, 
\end{align*}
where we write $|\alpha|=\sum_{i=1}^d\alpha_i$ for the multiindex $\alpha \in \N^d$ as usual. We further define $\cal P_\sf{H}(\beta, d, C)$ to be the class of distributions with Lebesgue-densities in $\cal C(\beta, d, C)$. 

\item \textbf{Gaussian sequence model on the Sobolev ellipsoid.}
Given $C>0$ and a smoothness parameter $s>0$, we define the Sobolev ellipsoid
$$\cal E(s,C)\eqdef\Big\{\theta\in\bb R^{\bb N}: \sum_{j=1}^\infty j^{2s} \theta_j^2 \leq C\Big\}.$$
Our second distribution class is given by 
\begin{equation*}
    \cal P_\sf{G}(s, C) \eqdef \left\{\mu_\theta\,: \theta \in \cal E(s, C)\right\}, 
\end{equation*}
where $\mu_\theta = \otimes_{i=1}^\infty \cal N(\theta_i, 1)$. It is well known that this class models an $s$-smooth signal under Gaussian white noise, see for example \cite[Section 1.7.1]{tsybakov} for an exposition of this connection.

\item[(iii)-(iv)] \textbf{Distributions on a finite alphabet.} For $k \geq 2$, let
\begin{align*}
  \cal P_\sf{D}(k) &\eqdef \{\textrm{all distributions on }\{1,2,\dots,k\}\} \\
  \intertext{denote the class of all discrete distributions, and set}
    \cal P_\sf{Db}(k, C) &\eqdef \{p \in \cal P_\sf{D}(k): \|p\|_\infty \leq C/k\}
\end{align*}
for all $C > 1$. In other words, $\cal P_\sf{Db}$ are those distributions with support in $\{1,2,\dots,k\}$ that are bounded by a constant multiple of the uniform distribution. 
\end{enumerate}

Note that depending on the choice of $C$ some of the above distribution classes may be empty. To avoid such issues, throughout the rest of paper we implicitly operate under the following assumption. 
\begin{assumption}\label{assumption}
    We always assume that $C>1$ when referring to $\cal P_\sf{H}(\beta,d,C)$ and $\cal P_\sf{Db}(k,C)$. 
\end{assumption} 

As we shall see in \Cref{sec:results TV} when discussing our results, the behaviour of $\cal P_\sf{D}$ is qualitatively different from the other three classes introduced above. Consequently, we will sometimes refer to $\cal P_\sf{Db}$ as the ``regular discrete" class, and we will see that its minimax sample complexities are similar to $\cal P_\sf{H}$ and $\cal P_\sf{G}$ but different from $\cal P_\sf{D}$. More generally we will call the classes $\cal P_\sf{H},\cal P_\sf{G}, \cal P_\sf{Db}$ ``regular", characterized by the fact that $n_\sf{GoF} \asymp n_\sf{TS}$, or equivalently, by the fact that $\cal R_\sf{TS} \asymp \{(n,m) : \min\{n, m\} \geq n_\sf{TS}\}$.

\subsection{Tests for LFHT}\label{sec:tests}

We start this section by reintroducing the difference of $L^2$-distances statistic that our results are based on, and which we've already seen in \eqref{eqn:l2 test intro}. Then, in \Cref{sec:alternative tests} we mention some natural alternative approaches to the problem, which we however do not study further. Therefore, the reader that wishes to proceed to our results without delay may safely skip over \Cref{sec:alternative tests}. 

\subsubsection{Ingster's $L^2$-distance test}\label{sec:ingster test} For simplicity we focus on the case of discrete distributions. This case is more general than may first appear: for example in the case of smooth densities on $[0,1]^d$ one can simply take a regular grid (whose resolution is determined by the smoothness of the densities) and count the number of datapoints falling in each cell. Let $\widehat{p}_\sf{X}, \widehat{p}_\sf{Y}, \widehat{p}_\sf{Z}$ denote the empirical probability mass functions of the finitely supported distributions $\widehat{\bb P}_\sf{X}, \widehat{\bb P}_\sf{Y}, \widehat{\bb P}_\sf{Z}$. The test proceeds via the comparison
\begin{equation}\label{eqn:l2 diff test definition}
    \|\widehat{p}_\sf{X}-\widehat{p}_\sf{Z}\|_2 \lessgtr \|\widehat{p}_\sf{Y}-\widehat{p}_\sf{Z}\|_2. 
\end{equation}
Squaring both sides and rearranging, we arrive at the form
\begin{equation*}
    \frac 1m \sum\limits_{i=1}^m (\widehat{p}_\sf{Y}(Z_i)-\widehat{p}_\sf{X}(Z_i)) \lessgtr \gamma, 
\end{equation*}
where $\gamma = (\|\widehat{p}_\sf{Y}\|^2-\|\widehat{p}_\sf{X}\|^2)/2$. As mentioned in the introduction,
variants of this $L^2$-distance based test have been invented and re-invented multiple times
for goodness-of-fit \cite{ingster1987minimax, goldreich2000testing} and two-sample testing
\cite{batu2013testing, arias2018remember}. The exact statistic \eqref{eqn:l2 diff test definition}
with application to $\cal P_\sf{Db}$ has appeared in \cite{kelly2010universal,
kelly2012classification}, and Huang and Meyn \cite{huang2012classification} proposed an ingenious
improvement restricting attention exclusively to bins whose counts are one of $(2,0),(1,1),(0,2)$
for the samples $(X,Z)$ or $(Y,Z)$. We attribute \eqref{eqn:l2 diff test definition} to Ingster
because his work on goodness-of-fit testing for smooth densities is the first occurence of the
idea of comparing empirical $L^2$ norms, but we note that~\cite{kelly2010universal}
and~\cite{goldreich2000testing} arrive at this influential idea apparently independently. 

We emphasize the following subtlety. Let us rewrite~\eqref{eqn:l2 diff test definition} as
\begin{equation}\label{eq:l2dtd2} \|\widehat{p}_\sf{X}-\widehat{p}_\sf{Z}\|_2^2 -  \|\widehat{p}_\sf{Y}-\widehat{p}_\sf{Z}\|_2^2 \lessgtr 0\,.
\end{equation} 
As we shall see from our proofs, this difference results in an optimal test for the full range of possible values of $n$ and $m$ for $\cal P_\sf{Db}$. However, this does not mean that each term by itself is a meaningful estimate of the corresponding distance: rejecting the null by thresholding just $\|\widehat{p}_\sf{X}-\widehat{p}_\sf{Z}\|_2^2$ would not work. Indeed, the variance of  $\|p_\sf{X}-\widehat{p}_\sf{Z}\|_2^2$ is so large that one needs $m \gtrsim n_\sf{GoF} \gg 1/\eps^2$ observations to obtain a reliable estimate of $\|p_\sf{X}-p_\sf{Z}\|_2^2$. The ``magic" of the $L^2$-difference test is that the two terms in~\eqref{eq:l2dtd2} separately have high variance, and thus are not good estimators of their means, but their difference cancels the high-variance terms.   

\begin{remark}
While testing \eqref{eqn:LFHT definition}, practitioners are usually interested in obtaining a $p$-value, rather than purely a decision whether to reject the null hypothesis. For this we propose the following scheme. Let $\sigma_1,\dots,\sigma_P$ be i.i.d. uniformly random permutations on $n+m$ elements. Let $\widehat T = \|\widehat p_\sf{X}-\widehat p_\sf{Z}\|_2^2-\|\widehat p_\sf{Y}-\widehat p_\sf{Z}\|_2^2$ be our statistic, and write $\widehat T_i$ for the statistic $\widehat T$ evaluated on the permuted dataset where $\{X_1,\dots,X_n,Z_1,\dots,Z_m\}$ are shuffled according to $\sigma_i$. Under the null the random variables $\widehat T, \widehat T_1,\dots,\widehat T_P$ are exchangeable, thus reporting the empirical upper quantile of $\widehat T$ in this sample yields an estimate of the $p$-value. Studying the variance of this estimate or the power of the test that rejects when the estimated $p$-value is less than some threshold, is beyond the scope of this work. 
\end{remark}

\subsubsection{Alternative tests for LFHT}\label{sec:alternative tests}
In this section we discuss a variety of alternative tests that may be considered for \eqref{eqn:LFHT definition} instead of \eqref{eq:l2dtd2}. These are included only to provide additional context for our problem, and the reader may safely skip it and proceed to our results in \Cref{sec:results}. The approaches we consider are
\begin{enumerate}[(i)]
\item Scheff\'e's test, 
\item Likelihood-free Neyman-Pearson test and
\item Huber's and Birg\'e's robust tests.
\end{enumerate}

The tests $(i$-$ii)$ are based on the idea of using the simulated samples to learn a set or a function that separates $\bb P_\sf{X}$ from
$\bb P_\sf{Y}$. The test $(iii)$ and \eqref{eq:l2dtd2} use the simulated samples to obtain density estimates of $\bb P_\sf{X}, \bb P_\sf{Y}$ directly. All of them,
however, are of the form 
\begin{equation}\label{eqn:sum s(Z_i)}
\sum_{i=1}^m s(Z_i) \lessgtr 0
\end{equation}
with only the function $s$ varying.

Variants of \emph{Scheff\'e's test} using machine-learning enabled classifiers are the subject of current research in two-sample testing \cite{lopez2016revisiting,liu2020learning, gutmann2018likelihood, kim2021classification, hediger2022use} and are used in practice for LFI specifically in high energy physics, see also our discussion of the Higgs boson discovery in \Cref{sec:introduction}. Thus, understanding the performance of Scheff\'e's test in the context of \eqref{eqn:LFHT definition} is of great practical importance. Suppose that using the simulated samples we train a probabilistic classifier $C:\cal X \to [0,1]$ on the labeled data $\cup_{i=1}^n \{(X_i,0), (Y_i,1)\}$. The specific form of the classifier here is arbitrary and can be anything from logistic regression to a deep neural network. Given thresholds $t, \gamma \in [0,1]$ chosen to satisfy our risk appetite for type-I vs type-II errors, Scheff\'e's test proceeds via the comparison
\begin{equation}\label{eqn:scheffe test def}
    \frac 1m \sum\limits_{i=1}^m \one\{C(Z_i) \geq t\} \lessgtr \gamma. 
\end{equation}
We see that \eqref{eqn:scheffe test def} is of the form \eqref{eqn:sum s(Z_i)} with $s(z) = (\one\{C(z) \geq t\}-\gamma)/m$. The follow-up work \cite{gerber2023minimax} studies the performance of Scheff\'e's test in great detail, finding that it is (near-)minimax optimal in all cases considered in this paper. It is found that the optimal classifier $C$ must be trained \emph{not} purely to minimize misclassification error, but rather must also keep the variance of its output small.

If the distributions $\bb P_\sf{X}, \bb P_\sf{Y}$ are fully known, then
the likelihood-ratio test corresponds to 
\begin{equation}\label{eq:neyman_pearson}
	\sum_{i=1}^m s_\sf{NP}(Z_i) \lessgtr \gamma\qquad s_\sf{NP}(z) = \log\left(\frac{\D\bb P_\sf{X}}{\D\bb P_\sf{Y}}(z)\right)\,, 
\end{equation}
where $\gamma$ is again chosen to satisfy our type-I vs type-II error trade-off preferences. It is well known that the above procedure is optimal due to the Neyman-Pearson lemma. Recall that in our setting $\bb P_\sf{X}, \bb P_\sf{Y}$ are known only up to i.i.d. samples, and therefore it seems natural to try to estimate $s_\sf{NP}$ from samples. It is not hard to see that $s_\sf{NP}$ minimizes the population \textit{cross-entropy/logistic loss}, that is
$$ s_\sf{NP} = \argmin_s \E_{z\sim \bb P_\sf{X}}[\ell(s(z),1)]  + \E_{z\sim \bb P_\sf{Y}}[\ell(s(z),0)]\,, $$
where $\ell(s,y) = \log(1+e^s) -ys$. In practice, the majority of today's classifiers are obtained by
running some form of gradient descent on the problem
$$ \widehat s = \argmin_{s\in \cal G} \E_{z\sim \widehat{\bb P}_\sf{X}}[\ell(s(z),1)]  + \E_{z\sim \widehat{\bb P}_\sf{Y}}[\ell(s(z),0)]\,, $$
where $\cal G$ is, say, a parametric class of neural networks and $\widehat{\bb  P}_\sf{X}, \widehat{\bb P}_\sf{Y}$ are
empirical distributions. Given such an estimate $\widehat s$, we can replace the unknown $s_\sf{NP}$
in~\eqref{eq:neyman_pearson} by $\widehat s$ to obtain the \textit{likelihood-free Neyman-Pearson
test}. For recent work on this approach in LFI see for example \cite{dalmasso2020confidence}. 
Studying properties of this test is outside the scope of this paper.

The final approach is based on the idea of \emph{robust testing}, first proposed by Huber~\cite{huber1965robust, huber1973minimax}. Huber's seminal result implies that if one has approximately correct distributions $\widehat{\bb P}_\sf{X}, \widehat{\bb P}_\sf{Y}$ satisfying 
$$\max\Big\{\TV(\widehat{\bb P}_\sf{X}, \bb P_\sf{X}), \TV(\widehat{\bb P}_\sf{Y}, \bb P_\sf{Y})\Big\} \le \eps/3\,\qquad\text{and}\qquad \TV(\bb P_\sf{X}, \bb P_\sf{Y}) \geq \eps,$$
then for some $c_1 < c_2$ the test
$$ \sum_{i=1}^m s_\sf{H}(Z_i) \lessgtr 0\qquad\text{where}\qquad s_\sf{H}(z)=\min\left\{\max\left\{c_1,  \log \left(\frac{\D\widehat{\bb P}_\sf{X}}{\D\widehat{\bb P}_\sf{Y}}(z)\right) \right\}, c_2 \right\}$$
has type-I and type-II error bounded by $\exp(-\Omega(m\eps^2))$, and is in fact minimax optimal for all sample sizes analogously to the likelihood-ratio test in the case of binary hypothesis testing. From the above formula we can see that Scheff\'e's test can be interpreted as an approximation of the maximally robust Huber's test. Let $\widehat{\cal L}(z) = (\D\widehat{\bb P}_\sf{Y}/\D\widehat{\bb P}_\sf{X})(z)$ denote the likelihood-ratio of the estimates. The values of $c_1,c_2$ are given as the solution to 
\begin{equation*}
    \eps /3 = \E_{z \sim \widehat{\bb P}_\sf{X}}\left[ \one\left\{\widehat{\cal L}(z) \leq c_1\right\} \frac{c_1-\widehat{\cal L}(z)}{1+c_1}\right] = \E_{z \sim \widehat{\bb P}_\sf{Y}} \left[\one\left\{\widehat{\cal L}(z) \geq c_2\right\}\frac{\widehat{\cal L}(z)-c_2}{1+c_2}\right]\, , 
\end{equation*}
which can be easily approximated to high accuracy given samples from $\widehat{\bb
P}_\sf{X}, \widehat{\bb P}_\sf{Y}$. This suggests both a theoretical construction, since $\widehat{\bb P}_\sf{X}, \widehat{\bb P}_\sf{Y}$ can be obtained with high probability from simulation
samples via the general estimator of Yatracos~\cite{yatracos1985rates}, and a practical rule:
instead of the possibly brittle likelihood-free Neyman-Pearson test $(ii)$, one should try clamping the
estimated log-likelihood ratio from above and below. 

Similar results hold due to Birg\'e \cite{birge1979theoreme, birge2013robust} in the case when distance is measured by Hellinger divergence:
\begin{equation*}
    \max\Big\{\H(\widehat{\bb P}_\sf{X}, \bb P_\sf{X}), \H(\widehat{\bb P}_\sf{Y}, \bb P_\sf{Y})\Big\} \leq \eps/3\qquad\text{and}\qquad \H(\bb P_\sf{X}, \bb P_\sf{Y}) \geq \eps.
\end{equation*}
For ease of notation, let $\widehat p_\sf{X}, \widehat p_\sf{Y}$ denote the densities of $\widehat{\bb
P}_\sf{X},\widehat{\bb P}_\sf{Y}$ with respect to some base measure $\mu$. Regarding
$\sqrt{\widehat{p}_\sf{X}}$ and $\sqrt{\widehat{p}_\sf{Y}}$ as unit vectors of the Hilbert space $L^2(\mu)$, let
$\gamma:[0,1] \to L^2(\mu)$ be the constant speed geodesic on the unit sphere of $L^2(\mu)$ with
$\gamma(0)=\sqrt{\widehat{p}_\sf{X}}$ and $\gamma(1)=\sqrt{\widehat{p}_\sf{Y}}$. It is easily checked that
each $\gamma_t$ is positive, and Birgé showed that the test
\begin{equation*}
    \sum\limits_{i=1}^m \log\left(\frac{\gamma^2_{1/3}}{\gamma^2_{2/3}}(Z_i)\right) \lessgtr 0
\end{equation*}
has both type-I and type-II errors bounded by $\exp(-\Omega(m\eps^2))$. For an exposition of this result see also \cite[Theorem 7.1.2]{gine2021mathematical}

\section{Results}\label{sec:results}

In this section we describe our results on the sample complexity of likelihood-free hypothesis testing. 
\subsection{General reductions}\label{sec:reductions}
In this first section, we give reductions that hold in great generality and show the relationship of our problem with other classical testing and estimation problems that were introduced in \Cref{sec:fundamental problems}. The result below holds for a generic class $\cal P$ of distributions and a generic measure of separation $\sf{d}$, see also \Cref{rem:sfd}. 

\begin{proposition}\label{PROP:REDUCTIONS}
Let $\cal P$ be a generic family of distributions and $\sf{d}:\cal P^2 \to \R$ be any function used to measure separation. There exists a universal constant $c>0$ such that for $n,m \in \N$ the following implications hold. 
\begin{align}
(n,m) \in \cal R_\sf{LF} &\implies m\geq n_\sf{HT},\label{eqn:LF -> HT} \\
(n,m) \in \cal R_\sf{TS} &\implies n\land m \geq n_\sf{GoF}\label{eqn:TS -> GoF} \\
(n,m) \in \cal R_\sf{LF}&\implies  cn\geq n_\sf{GoF}, \label{eqn:LF -> GoF}\\
(n,m) \in \cal R_\sf{TS} &\implies (n,m)\in\cal R_\sf{LF}, \label{eqn:TS -> LF}\\ 
m\geq n \text{ and }(n,m) \in \cal R_\sf{LF} &\implies (cn,cm) \in \cal R_\sf{TS}\label{eqn:LF -> TS},  
\end{align}
where we omit the argument $(\eps, \sf{d}, \cal P)$ throughout for simplicity. In particular, 
\begin{align}\label{eqn:LF TS equiv}
    \N^2_{n\leq m} \cap \cal R_\sf{LF} \asymp \N^2_{n\leq m} \cap \cal R_\sf{TS}, 
\end{align}
where $\N^2_{n \leq m} = \{(n,m)\in\N^2:n\leq m\}$. 
\end{proposition}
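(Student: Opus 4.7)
The plan is to establish the five implications in roughly increasing order of difficulty, deriving \eqref{eqn:LF -> GoF} and the final set-equivalence as corollaries, and relying throughout on the upward-closure of $\cal R_\sf{TS}$ and $\cal R_\sf{LF}$.

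First I would dispatch the three black-box reductions \eqref{eqn:LF -> HT}, \eqref{eqn:TS -> GoF}, and \eqref{eqn:TS -> LF}, each exploiting that simulation from a known distribution is free for the tester. For \eqref{eqn:LF -> HT}, given $\bb P_0, \bb P_1 \in \cal P$ with $\TV(\bb P_0,\bb P_1) \geq \eps$ and an observation $Z \sim \bb P_i^{\otimes m}$, I draw fresh $X \sim \bb P_0^{\otimes n}$ and $Y \sim \bb P_1^{\otimes n}$ and invoke $\psi_\sf{LF}(X, Y, Z)$; worst-casing over $(\bb P_0, \bb P_1)$ and applying \Cref{lem:n_HT} gives $m \geq n_\sf{HT}$. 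For \eqref{eqn:TS -> GoF}, I substitute simulated samples from the known GoF null into either the $X$-slot or the $Z$-slot of $\psi_\sf{TS}$, obtaining a GoF tester that uses only $\min\{n, m\}$ real samples. For \eqref{eqn:TS -> LF}, the map $\psi_\sf{LF}(X, Y, Z) := \psi_\sf{TS}(X, Z)$ simply discards $Y$: under the LFHT null we have $\bb P_\sf{Z} = \bb P_\sf{X}$ (TS-null), and under the LFHT alternative we have $\TV(\bb P_\sf{X}, \bb P_\sf{Z}) = \TV(\bb P_\sf{X}, \bb P_\sf{Y}) \geq \eps$ (TS-alt). Once \eqref{eqn:LF -> TS} is proved, \eqref{eqn:LF -> GoF} follows by chaining---put $(n, n \lor m) \in \cal R_\sf{LF}$ via upward-closure, apply \eqref{eqn:LF -> TS} and then \eqref{eqn:TS -> GoF}---and the equivalence $\bb N^2_{n\leq m} \cap \cal R_\sf{LF} \asymp \bb N^2_{n\leq m} \cap \cal R_\sf{TS}$ is immediate from \eqref{eqn:TS -> LF} (inclusion $\cal R_\sf{TS} \subseteq \cal R_\sf{LF}$) together with \eqref{eqn:LF -> TS} (the reverse inclusion up to the constant $c$).

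The main obstacle is thus \eqref{eqn:LF -> TS}. Given $\psi_\sf{LF}$ at $(n, m)$ with $m \geq n$, I plan to construct a TS tester at sample sizes $(Kn, K(n+m))$ for a universal constant $K$; since $K(n+m) \leq 2Km$ when $m \geq n$, taking $c = 2K$ completes the inflation by upward-closure. The key device is to symmetrize $\psi_\sf{LF}$ by setting
\begin{equation*}
\tilde\psi(X, Y, Z) \ :=\ B \cdot \psi_\sf{LF}(X, Y, Z) + (1 - B)\bigl(1 - \psi_\sf{LF}(Y, X, Z)\bigr),
\end{equation*}
where $B \sim \ber(1/2)$ is drawn independently of the data. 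Swapping $X$ with $Y$ inside $\psi_\sf{LF}$ converts an LFHT-null instance into an LFHT-alt instance and vice versa, so flipping the output realigns the verdict; hence $\tilde\psi$ retains error $\leq 1/3$ on $\cal P_\eps$. A direct calculation also gives the distributional identity $\tilde\psi(X, Y, Z) \stackrel{d}{=} 1 - \tilde\psi(Y, X, Z)$ valid for any fixed triple $(X, Y, Z)$, using only that $B \stackrel{d}{=} 1 - B$.

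Finally, I would partition the TS samples into $K$ disjoint blocks, each block supplying inputs $(X^{(k)}, Y^{(k)}, Z^{(k)})$ of sizes $(n, n, m)$ with $X^{(k)}$ drawn from $X'$ and both $Y^{(k)}, Z^{(k)}$ drawn from the $\bb Q$-sample $Z'$; set $T_k := \tilde\psi(X^{(k)}, Y^{(k)}, Z^{(k)})$ and reject the TS null iff $K^{-1}\sum_k T_k > 7/12$. Under the TS alternative each block presents a valid $(\bb P, \bb Q, \bb Q)$ LF-alt instance, so $\E T_k \geq 2/3$. Under the TS null $\bb P = \bb Q$ the pair $(X^{(k)}, Y^{(k)})$ is exchangeable, which combined with the identity above collapses $\E T_k$ to exactly $1/2$. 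The resulting $1/6$-gap is then amplified by a standard Hoeffding bound across the $K$ independent blocks, so a universal constant $K$ suffices to push both error probabilities below $1/3$, completing the construction.
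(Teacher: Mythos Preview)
Your proposal is correct and follows essentially the same blueprint as the paper. Two minor differences are worth noting. For \eqref{eqn:LF -> GoF} the paper gives a direct sample-splitting construction in the case $m\leq n$ (forming i.i.d.\ bounded variables $A_i=\Psi_\sf{LF}(X^{(i)},Y^{(i)},Z^{(i)})-\Psi_\sf{LF}(X^{(i)},Y^{(i)},X^{(i+1)}_{1:m})$ with mean $0$ under the null and $\geq 1/3$ under the alternative), while you obtain the full statement more economically by first inflating $m$ to $n\lor m$ via upward-closure and then chaining through \eqref{eqn:LF -> TS} and \eqref{eqn:TS -> GoF}. For \eqref{eqn:LF -> TS} the paper also splits into blocks but uses the antisymmetrized statistic $A_i=\Psi_\sf{LF}(X^{(i)},Y^{(i)}_{1:n},Y^{(i+1)})-\Psi_\sf{LF}(Y^{(i)}_{1:n},X^{(i)},Y^{(i+1)})$, which has mean $0$ under the null (by exchangeability of $X^{(i)}$ and $Y^{(i)}_{1:n}$) and mean $\geq 1/3$ under the alternative; your randomized symmetrization $\tilde\psi$ achieves the same $0$-versus-$\geq 1/3$ separation (after centering at $1/2$) through a distributional identity rather than a difference. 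Both devices exploit the same underlying exchangeability, so the arguments are equivalent in substance.
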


\begin{proof}
    
In what follows, let $\Psi_\sf{LF}, \Psi_{\sf{TS}}$ be minimax optimal tests for \eqref{eqn:LFHT definition} and \eqref{eqn:TS definition} respectively. Throughout the proof we omit the arguments $(\eps, \sf d, \cal P)$ for notational simplicty. 

\textbf{Reducing hypothesis testing to \eqref{eqn:LFHT definition}}
Suppose $(n,m) \in \cal R_\sf{LF}$. Let $\bb P_0,\bb P_1 \in \cal P$ be given with $\sf d(\bb P_0, \bb P_1) \geq \eps$ and suppose $Z$ is an i.i.d. sample with $m$ observations. We wish to test the hypothesis $H_0:Z_i \sim \bb P_0$ against $H_1: Z_i \sim \bb P_1$. To this end generate $n$ i.i.d. observations $X,Y$ from $\bb P_0, \bb P_1$ respectively, and simply output $\Psi_\sf{LF}(X,Y,Z)$. This shows that if $(n,m) \in \cal R_\sf{LF}$ then $m \geq n_\sf{HT}$ and concludes the proof of \eqref{eqn:LF -> HT}.

\textbf{Reducing goodness-of-fit testing to two-sample testing}
Suppose $(n,m) \in \cal R_\sf{TS}$. Then obviously $(n\land m,\infty) \in \cal R_\sf{TS}$. However, two-sample testing with sample sizes $n\land m, \infty$ is equivalent to goodness-of-fit testing with a sample size of $n\land m$. Therefore, $n\land m \geq n_\sf{GoF}$ must hold, concluding the proof of \eqref{eqn:TS -> GoF}.

\textbf{Reducing goodness-of-fit testing to \eqref{eqn:LFHT definition}}
Suppose $(n,m) \in \cal R_\sf{LF}$ with $m\leq n$. Let a distribution $\bb P_0 \in \cal P$ be given as well as an i.i.d. sample $X$ of size $cn$ with unknown distribution $\bb P_\sf{X}$, where $c \in \N$ is a large integer. We want to test $H_0: \bb P_\sf{X} = \bb P_0$ against $H_1:\bb P_\sf{X} \in \cal P, \sf d(\bb P_\sf{X}, \bb P_0) \geq \eps$. Generate $c\times2$ i.i.d. samples $Y^{(i)},Z^{(i)}$ for $i=1,\dots,c$ of size $n,m$ respectively, all from $\bb P_0$. Split the sample $X$ into $c$ batches $X^{(i)},i=1,\dots,c$ of size $n$ each and form the variables
\begin{equation*}
    A_i = \Psi_\sf{LF}(X^{(i)}, Y^{(i)}, Z^{(i)}) - \Psi_\sf{LF}(X^{(i)}, Y^{(i)}, X^{(i+1)}_{1:m})
\end{equation*}
for $i=1,3,\dots,2\lfloor c/2\rfloor-1$, where $X^{(i)}_{1:m}$ denotes the first $m$ observations in the batch $X^{(i)}$. Note that the $A_i$ are i.i.d. and bounded random variables. Under the null hypothesis we have $\E A_i=0$, while under the alternative they have mean $\E A_i \geq 1/3$ (since $\Psi_\sf{LF}$ is a successful tester in the sense of \eqref{eqn:successful test}). Therefore, a constant number $c/2$ observations suffice to decide whether $\bb P_\sf{X} = \bb P_0$ or not. In particular, $c n \geq n_\sf{GoF}$ which concludes the proof of \eqref{eqn:LF -> GoF} for the case $m \leq n$. The case $n \leq m$ follows from \eqref{eqn:LF -> TS} and \eqref{eqn:TS -> GoF}.

\textbf{Reducing \eqref{eqn:LFHT definition} to two-sample testing}
Suppose $(n,m) \in \cal R_\sf{TS}$. Let three samples $X,Y,Z$ be given, of sizes $a,a,b$ from the unknown distributions $\bb P_\sf{X},\bb P_\sf{Y},\bb P_\sf{Z}$ respectively, where $\{a,b\}=\{n,m\}$. We want to test the hypothesis $H_0:\bb P_\sf{X}=\bb P_\sf{Z}$ against $H_1:\bb P_\sf{Y}=\bb P_\sf{Z}$, where $\sf{d}(\bb P_\sf{X},\bb P_\sf{Y}) \geq \eps$ under both. Then, the test
\begin{equation*}
    \widetilde{\Psi}_\sf{LF}(X,Y,Z) \eqdef \Psi_\sf{TS}(X,Z)
\end{equation*}
shows that $(n,m), (m,n) \in \cal R_\sf{LF}$ and concludes the proof of \eqref{eqn:TS -> LF}.

\textbf{Reducing two-sample testing to \eqref{eqn:LFHT definition}}
Suppose $(n,m) \in \cal R_\sf{LF}$ where $m\geq n$. Let two samples $X,Y$ be given, from the unknown distributions $\bb P_\sf{X}, \bb P_\sf{Y} \in \cal P$ and of sample size $cn, cm$ respectively, where $c \in \N$ is a large integer. We wish to test the hypothesis $H_0:\bb P_\sf{X}=\bb P_\sf{Y}$ against $H_1: \sf d(\bb P_\sf{X}, \bb P_\sf{Y}) \geq \eps$. Split the samples $X, Y$ into $2\times c$ batches $X^{(i)},Y^{(i)}, i=1,\dots,c$ of sizes $n,m$ respectively, and form the variables
\begin{equation*}
    A_i = \Psi_\sf{LF}(X^{(i)}, Y_{1:n}^{(i)}, Y^{(i+1)}) - \Psi_\sf{LF}(Y_{1:n}^{(i)}, X^{(i)}, Y^{(i+1)})
\end{equation*}
for $i=1,3,\dots,2\lfloor c/2\rfloor-1$, where $Y^{(i)}_{1:n}$ denotes the first $n$ observations in the batch $Y^{(i)}$. The variables $A_i$ are i.i.d. and bounded. Under the null hypothesis we have $\E A_i=0$ while under the alternative $\E A_i \geq 1/3$ holds. Therefore a constant number $c/2$ observations suffice to decide whether $\bb P_\sf{X} = \bb P_\sf{Y}$ or not. In particular, $(cn,cm) \in \cal R_\sf{TS}$ which concludes the proof of \eqref{eqn:LF -> TS}. 

\textbf{Equivalence between two-sample testing and \eqref{eqn:LFHT definition}} Equation \eqref{eqn:LF TS equiv} follows immediately from \eqref{eqn:LF -> TS} and \eqref{eqn:TS -> LF}. 
\end{proof}

\Cref{eqn:LF TS equiv} tells us that the problems of likelihood-free hypothesis testing and two-sample testing are equivalent, \emph{but only for} $m \geq n$, that is, when we have more real data than simulated data. We will see in the next section, and on \Cref{fig:phase diagram} visually, that this distinction is necessary.

\subsection{Sample
complexity of likelihood-free hypothesis testing}\label{sec:results TV}
In this section we present our results on the sample complexity of \eqref{eqn:LFHT definition} for the specific classes $\cal P$ that were introduced in \Cref{sec:fundamental problems}, with separation measured by $\TV$. In all results below the parameters $\beta, s, d, C$ are regarded as constants, we only care about the dependence on the separation $\eps$ and the alphabet size $k$ (in the case of $\cal P_\sf{D},\cal P_\sf{Db}$). Where convenient we omit the arguments of $n_\sf{GoF}, n_\sf{TS}, \cal R_\sf{TS}, n_\sf{Est}, \cal R_\sf{LF}$ to ease notation, whose value should be clear from the context. 

\begin{theorem}\label{THM:P_SGDB UPPER}
Under $\TV$-separation, for each choice $\cal P \in \{\cal P_\sf{H}, \cal P_\sf{G}, \cal P_\sf{Db}\}$, we have
\begin{equation*}
    \cal R_\sf{LF} \asymp \Big\{(n,m) \,:\, m \geq 1/\eps^2,\, n\geq n_\sf{GoF},\, mn \geq n^2_\sf{GoF}\Big\},
\end{equation*}
where the implied constants do not depend on $k$ (in the case of $\cal P_\sf{Db}$) or $\eps$. 
\end{theorem}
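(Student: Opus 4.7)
The plan is to split the proof into an upper bound based on an unbiased variant of Ingster's $L^2$-difference test \eqref{eq:l2dtd2}, and three lower-bound constraints. Two of the latter fall directly out of \Cref{PROP:REDUCTIONS}; the substantive new ingredient is the product bound $nm\gtrsim n_\sf{GoF}^2$, for which I would run Ingster's chi-square method with a sign-perturbation prior after first reducing all three classes to a common discrete setting.

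\textbf{Upper bound.} The first step is a uniform reduction placing all three classes into a ``nearly uniform on $k$ bins'' setting with $\sqrt{k}/\eps^2\asymp n_\sf{GoF}$: for $\cal P_\sf{H}(\beta,d,C)$ I would use a regular grid whose width is tuned against the smoothness so that within-bin bias is negligible; for $\cal P_\sf{G}(s,C)$ I would truncate to the first $k$ Fourier coefficients; for $\cal P_\sf{Db}(k,C)$ I would apply the flattening reduction of \cite{huang2012classification,diakonikolas2016new} to enforce $\|p\|_\infty\lesssim 1/k$. Running an unbiased $U$-statistic version of \eqref{eq:l2dtd2} on the bin counts, the goal is to prove
\[
|\E T|\;\gtrsim\;\|p_\sf{X}-p_\sf{Y}\|_2^2\;\gtrsim\;\eps^2/k, \qquad \operatorname{Var}(T)\;\lesssim\;\frac{1}{k}\Big(\frac{1}{nm}+\frac{1}{n^2}+\frac{1}{m^2}\Big) + \frac{\eps^2}{k}\Big(\frac{1}{n}+\frac{1}{m}\Big),
\]
after which Chebyshev succeeds precisely on the claimed region: the three pure-noise terms force $nm\gtrsim k/\eps^4=n_\sf{GoF}^2$, $n\gtrsim\sqrt{k}/\eps^2$ and $m\gtrsim\sqrt{k}/\eps^2$, while the signal-variance terms weaken the last to $m\gtrsim 1/\eps^2$.

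\textbf{Lower bounds.} The bound $m\gtrsim 1/\eps^2$ is immediate from \eqref{eqn:LF -> HT} combined with $n_\sf{HT}\asymp 1/\eps^2$, which follows from \Cref{lem:n_HT} and standard $\TV$--$\H$ comparisons available on each class; the bound $n\gtrsim n_\sf{GoF}$ is exactly \eqref{eqn:LF -> GoF}. For the product bound, after the same discretisation I would draw i.i.d.\ signs $\rho_i\in\{\pm 1\}$ and set $p_\sf{X}(i)\propto 1+\eta\rho_i$, $p_\sf{Y}(i)\propto 1-\eta\rho_i$ with $\eta\asymp\eps$, so that $\TV(p_\sf{X},p_\sf{Y})\asymp\eps$ with high probability. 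The mixture laws $\bb M_0,\bb M_1$ of $(X,Y,Z)$ under the two hypotheses differ only in the law of $Z$; Poissonising the sample sizes makes $\chi^2(\bb M_0\|\bb M_1)$ factorise over bins as an expectation over two independent copies of the sign vector. The $\eta^4 n^2/k$ and $\eta^4 m^2/k$ contributions cancel against the null-to-null normalisation (the $X,Y$ marginals are common to $\bb M_0$ and $\bb M_1$), leaving a leading term of order $\eta^4 nm/k = nm/n_\sf{GoF}^2$, which is $o(1)$ exactly when $nm=o(n_\sf{GoF}^2)$.

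\textbf{Main obstacle.} The principal difficulty in both halves is the same cancellation bookkeeping. On the upper-bound side, one must debias so that the self-variance terms $1/(nk)+1/(mk)$---which would wash out the whole trade-off---are killed and only the cross term $1/(nmk)$ survives; this is exactly the phenomenon flagged in \Cref{sec:ingster test}, where the two $L^2$ terms in $T$ individually have high variance but their difference does not. On the lower-bound side, one must ensure that in the chi-square expansion the $\eta^4(n^2+m^2)/k$ contributions are absorbed by the common $X,Y$ marginals so that only the genuinely mixed $nm$-term drives the rate. For $\cal P_\sf{H}$ and $\cal P_\sf{G}$ there is in addition a routine bias estimate arising from the discretisation, but once $k$ is correctly tuned this bias is $o(\eps^2/k)$ and is dominated by the signal.
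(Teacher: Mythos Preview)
Your outline is on the right track but the upper bound contains a genuine error that, as written, would not deliver the claimed region.

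\textbf{The variance of $T$ has no $1/m^2$ term.} Expand the difference: $\|\widehat p_\sf{X}-\widehat p_\sf{Z}\|_2^2-\|\widehat p_\sf{Y}-\widehat p_\sf{Z}\|_2^2=\|\widehat p_\sf{X}\|_2^2-\|\widehat p_\sf{Y}\|_2^2-2\langle \widehat p_\sf{X}-\widehat p_\sf{Y},\widehat p_\sf{Z}\rangle$. The $\|\widehat p_\sf{Z}\|_2^2$ terms cancel \emph{structurally}, so $\widehat p_\sf{Z}$ enters only linearly and no pure-$Z$ quadratic survives. This is why the paper's (informal) variance is $\tfrac{\eps^2}{k^2}(\tfrac1n+\tfrac1m)+\tfrac{1}{k}(\tfrac{1}{n^2}+\tfrac{1}{nm})$ --- note also that your signal-variance prefactor should be $\eps^2/k^2$, not $\eps^2/k$. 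With your stated variance, the term $1/(km^2)$ compared to the squared signal $\eps^4/k^2$ would force $m\gtrsim\sqrt{k}/\eps^2=n_\sf{GoF}$, and no ``weakening'' by signal-variance terms can undo this: extra variance terms only \emph{add} constraints in Chebyshev. The sentence ``the signal-variance terms weaken the last to $m\gtrsim1/\eps^2$'' is therefore not a valid argument. The cancellation you need is not obtained by debiasing a $U$-statistic (debiasing removes the $1/n$ diagonal from $\|\widehat p_\sf{X}\|_2^2$ and $\|\widehat p_\sf{Y}\|_2^2$); it is a consequence of the difference-of-$L^2$-distances form itself, and it is precisely this asymmetry between $n$ and $m$ in the variance that lets $m$ go all the way down to $1/\eps^2$.

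\textbf{Minor points on the reduction.} For $\cal P_\sf{Db}$ no flattening is needed: the distributions are already bounded by $C/k$ and you can apply the $L^2$ test directly (flattening is only used for $\cal P_\sf{D}$). For $\cal P_\sf{G}$ the paper does not discretise at all; it takes $\phi_i(x)=x_i$ and works with the empirical means $\widehat\theta$ directly, which is cleaner than pushing through a bin count.

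\textbf{Lower bound.} Your anti-symmetric construction ($p_\sf{X}\propto 1+\eta\rho$, $p_\sf{Y}\propto 1-\eta\rho$) is different from the paper's, which fixes $\bb P_\sf{Y}$ to be the uniform/null $f_0$ and only randomises $\bb P_\sf{X}=f_\eta$, comparing $P_0=f_\eta^{\otimes n}\otimes f_0^{\otimes n}\otimes f_\eta^{\otimes m}$ with $P_1=f_\eta^{\otimes n}\otimes f_0^{\otimes n}\otimes f_0^{\otimes m}$. Fixing one side lets you drop the $Y$-sample by data processing and bound a \emph{conditional} $\chi^2$ of $Z$ given $X$; the posterior of $\eta$ given $X$ factorises over bins, and a short computation gives the $\rho^4\kappa^d mn$ exponent. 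Your symmetric version may also go through, but the ``$n^2$ and $m^2$ contributions cancel because the $(X,Y)$-marginals agree'' step is not automatic for $\chi^2$ (which is not a metric and does not obey a clean chain rule), so you would need to argue this carefully; the paper's asymmetric choice avoids the issue entirely.
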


For each class $\cal P$ in \Cref{THM:P_SGDB UPPER}, the entire region $\cal R_\sf{LF}$ (within universal constant) is attained by a suitable modification of Ingster's $L^2$-distance test from \Cref{sec:ingster test}. The region $\cal R_\sf{LF}$ is visualized on \Cref{fig:phase diagram} on a $\log$-$\log$ scale, with each corner point $\{\sf{A}, \sf{B}, \sf{C},\sf{D}\}$ having a special interpretation, as per the reductions presented in \Cref{PROP:REDUCTIONS}. The point $\sf{A}$ corresponds to binary hypothesis testing and $\sf{D}$ can be reduced to goodness-of-fit testing. Similarly, $\sf{B}$ and $\sf{C}$ can be reduced to the well-known problems of
estimation followed by robust hypothesis testing and two-sample testing respectively. In other words, \eqref{eqn:LFHT definition} allows us to naturally interpolate between multiple statistical problems. Finally, we make an interesting observation: since the product of $n$ and $m$ remains constant on the line segment $[\sf{B}, \sf{C}]$ on the left plot of \Cref{fig:phase diagram}, it follows that 
\begin{equation}\label{eqn:n_Est = n_GoF^2}
    n_\sf{Est}(\eps, \cal P) \asymp n_\sf{GoF}^2 (\eps, \cal P)\,\eps^2 
\end{equation}
for each class $\cal P$ treated in \Cref{THM:P_SGDB UPPER}. This relation between the sample complexity of estimation and goodness-of-fit testing has not been observed before to our knowledge, and understanding the scope of validity of this relationship is an exciting future direction.\footnote{ \textit{Added in print:} for example in \cite{zeyu} it is demonstrated that for the Gaussian sequence model (see definition $(ii)$ in \Cref{sec:nonparametric classes}) with the Sobolev ellipsoid replaced by the set $\Theta = \{\theta \in \ell^2 : \sum_{i=1}^\infty i|\theta_i| \leq 1\}$, it holds that $n_\sf{Est} \ll n_\sf{GoF}^2/\eps^2$. } 

Turning to our results on $\cal P_\sf{D}$ the picture is less straightforward. As first identified in \cite{batu2000testing} and fully resolved in \cite{chan2014optimal}, the sample complexity of two-sample testing undergoes a phase transition when $k\gtrsim1/\eps^4$. This phase transition appears also in likelihood-free hypothesis testing. 
\begin{theorem}\label{THM:P_D UPPER}
Let $\alpha = \max\big\{1, \min\big\{\frac kn, \frac km\big\}\big\}$. Then 
\begin{equation*}
\cal R_\sf{LF}(\eps, \cal P_\sf{D}(k)) \asymp_{\log(k)} \Big\{(n,m)\,:\, m\geq 1/\eps^2,\, n\geq n_\sf{GoF}(\eps, \cal P_\sf{D}(k)) \cdot \sqrt{\alpha},\, mn \geq n_\sf{GoF}(\eps, \cal P_\sf{D}(k))^2\cdot\alpha\Big\}, 
\end{equation*}
where the equivalence is up to a logarithmic factor in the alphabet size $k$. 
\end{theorem}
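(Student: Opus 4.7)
The plan is to prove matching upper and lower bounds for the three constraints defining $\cal R_\sf{LF}$, up to factors polylogarithmic in $k$. Throughout I write $n_\sf{GoF} \eqdef n_\sf{GoF}(\eps, \cal P_\sf{D}(k)) \asymp \sqrt{k}/\eps^2$, which is the Paninski/Valiant--Valiant rate, so that the claimed region reads $m\gtrsim 1/\eps^2$, $n\gtrsim n_\sf{GoF}\sqrt{\alpha}$, $nm\gtrsim n_\sf{GoF}^2\alpha$.

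\textbf{Upper bound.} The natural approach is to extend the $L^2$-difference test \eqref{eq:l2dtd2} via the flattening reduction of \cite{huang2012classification, diakonikolas2016new, goldreich2017introduction}. I would first use a small preliminary batch from the simulated samples $X$ and $Y$ to design a common flattener that splits each atom $i\in[k]$ into several sub-atoms of equal mass, and apply it jointly to all three empirical distributions $\widehat{\bb P}_\sf{X}, \widehat{\bb P}_\sf{Y}, \widehat{\bb P}_\sf{Z}$ (using independent auxiliary randomness on $Z$). Flattening preserves total variation exactly, and the splitting rates would be tuned so that the flattened distributions effectively live in $\cal P_\sf{Db}(\tilde k)$ for an enlarged alphabet of size $\tilde k \asymp k\alpha$. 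Applying Theorem~\ref{THM:P_SGDB UPPER} for $\cal P_\sf{Db}(\tilde k)$ then gives the conditions $m\gtrsim 1/\eps^2$, $n\gtrsim \sqrt{\tilde k}/\eps^2 = n_\sf{GoF}\sqrt{\alpha}$ and $nm\gtrsim \tilde k/\eps^4 = n_\sf{GoF}^2\alpha$. The choice $\tilde k \asymp k\alpha$ collapses to $\tilde k\asymp k$ (no flattening) in the ``dense'' regime $\max\{n,m\}\geq k$, and to $\tilde k \asymp k^2/\max\{n,m\}$ in the ``sparse'' regime, where more aggressive splitting is required to reduce the $\ell_\infty$ mass of the heavy atoms.

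\textbf{Lower bound.} The constraints $m\gtrsim 1/\eps^2$ and $n\gtrsim n_\sf{GoF}$ are immediate from the general implications \eqref{eqn:LF -> HT} and \eqref{eqn:LF -> GoF} in Proposition~\ref{PROP:REDUCTIONS}. For the product constraint $nm\gtrsim n_\sf{GoF}^2\alpha$ I would carry out an Ingster--Valiant style construction. Sample a random support $S\subseteq [k]$ of size $|S|\asymp \max\{n,m\}\wedge k$ (so that $k/|S|\asymp\alpha$), and let $\bb P_\sf{X}, \bb P_\sf{Y}$ be independent random $\pm\eps/|S|$ perturbations of the uniform distribution on $S$, giving $\TV(\bb P_\sf{X},\bb P_\sf{Y})\asymp\eps$ with constant probability. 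Set $\bb P_\sf{Z}=\bb P_\sf{X}$ under $H_0$ and $\bb P_\sf{Z}=\bb P_\sf{Y}$ under $H_1$. I would then bound the $\chi^2$-divergence of the two mixtures over $(X,Y,Z)$ and show it remains $o(1)$ unless $nm\gtrsim n_\sf{GoF}^2\alpha$, with the dominant contribution coming from the $Y$-$Z$ (equivalently $X$-$Z$) cross-terms, controlled by $(nm\eps^4)/|S|$. A matching goodness-of-fit-style argument on the $Y$-$Y$ or $X$-$X$ self-terms recovers the $n\gtrsim n_\sf{GoF}\sqrt{\alpha}$ constraint independently.

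The main obstacle is the lower-bound $\chi^2$-computation. Unlike two-sample testing treated in \cite{chan2014optimal}, the extra sample $Y$ introduces new cross-correlations between $X$, $Y$ and $Z$ under the mixture, and one must show that none of these correlations gives the tester more power than the $X$-$Z$ cross-term identified in the two-sample case. The choice $|S|\asymp \max\{n,m\}\wedge k$ is precisely what produces the phase transition encoded in $\alpha$: when $\max\{n,m\}\geq k$ the random support is essentially all of $[k]$ and $\alpha=1$, whereas when $\max\{n,m\}<k$ no tester can locate $S$ from the samples and the problem effectively reduces to LFHT over $\cal P_\sf{Db}(|S|)$, thereby inheriting the lower bound from Theorem~\ref{THM:P_SGDB UPPER}.
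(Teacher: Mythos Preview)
Your upper bound has a gap. Flattening with samples from $X$ and $Y$ alone cannot place the flattened triples in $\cal P_\sf{Db}(\tilde k, O(1))$ with $\tilde k\asymp k\alpha$: the alphabet size after flattening is at most $k+2n\lesssim k$, while the $\ell^\infty$-norm of $\tilde f,\tilde g$ drops only to order $1/(n\wedge k)$, so the boundedness constant would be $k/(n\wedge k)$, not $O(1)$. In particular, Theorem~\ref{THM:P_SGDB UPPER} for $\cal P_\sf{Db}$ cannot be invoked as a black box. The paper does \emph{not} reduce to $\cal P_\sf{Db}$; instead it flattens using the first halves of $X$, $Y$ \emph{and} $Z$ (this is what makes the factor $\alpha=k/((n\vee m)\wedge k)$ appear, since the Poisson denominator $1+N^\sf{X}_i+N^\sf{Y}_i+N^\sf{Z}_i$ then contains a term of order $(m\wedge k)h_i$), and then applies Proposition~\ref{prop:meta lfht} directly with careful control of the quantities $A_{\tilde h\tilde f\tilde g}$, $B_{\tilde f\tilde h}$, etc. A separate norm-estimation step (Proposition~\ref{prop:Goldreich norm est}) is also needed to guarantee $\|\tilde f\|_2\vee\|\tilde g\|_2\vee\|\tilde h\|_2\lesssim\sqrt{\alpha/k}$ under both hypotheses. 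The $\log k$ loss arises precisely from a union bound in bounding $A_{\tilde h\tilde f\tilde g}$ bin-by-bin.

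Your lower bound sketch is in the right spirit but does not match the paper, and the $\chi^2$ route you describe is unlikely to succeed in the hard regime. The paper splits the argument: for $n\le m$ it uses the generic equivalence \eqref{eqn:LF TS equiv} to import the two-sample testing lower bound of \cite{bhattacharya2015testing}; for $m\le n\le k$ it does \emph{not} use a $\chi^2$-mixture calculation but rather Valiant's wishful-thinking theorem (moment matching) applied to a construction with $n/\eta$ heavy atoms of mass $(1-\eps)\eta/n$ and $k/4$ light atoms of mass $4\eps/k$, with $p$ and $q$ differing only in the location of the light atoms. The heavy atoms are precisely what defeats a direct $\chi^2$ bound---the Ingster trick blows up when the null has atoms of mass $\asymp 1/n$---and your random-support-plus-perturbation construction is essentially the $\cal P_\sf{Db}$ construction on a smaller alphabet, which recovers only the $\cal P_\sf{Db}$ lower bound $mn\gtrsim |S|/\eps^4$ rather than the stronger $n^2m\gtrsim k^2/\eps^4$ that the moment-matching construction yields in the regime $m\le n\le k$.
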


\begin{figure}[!ht]
\includegraphics[width=\textwidth]{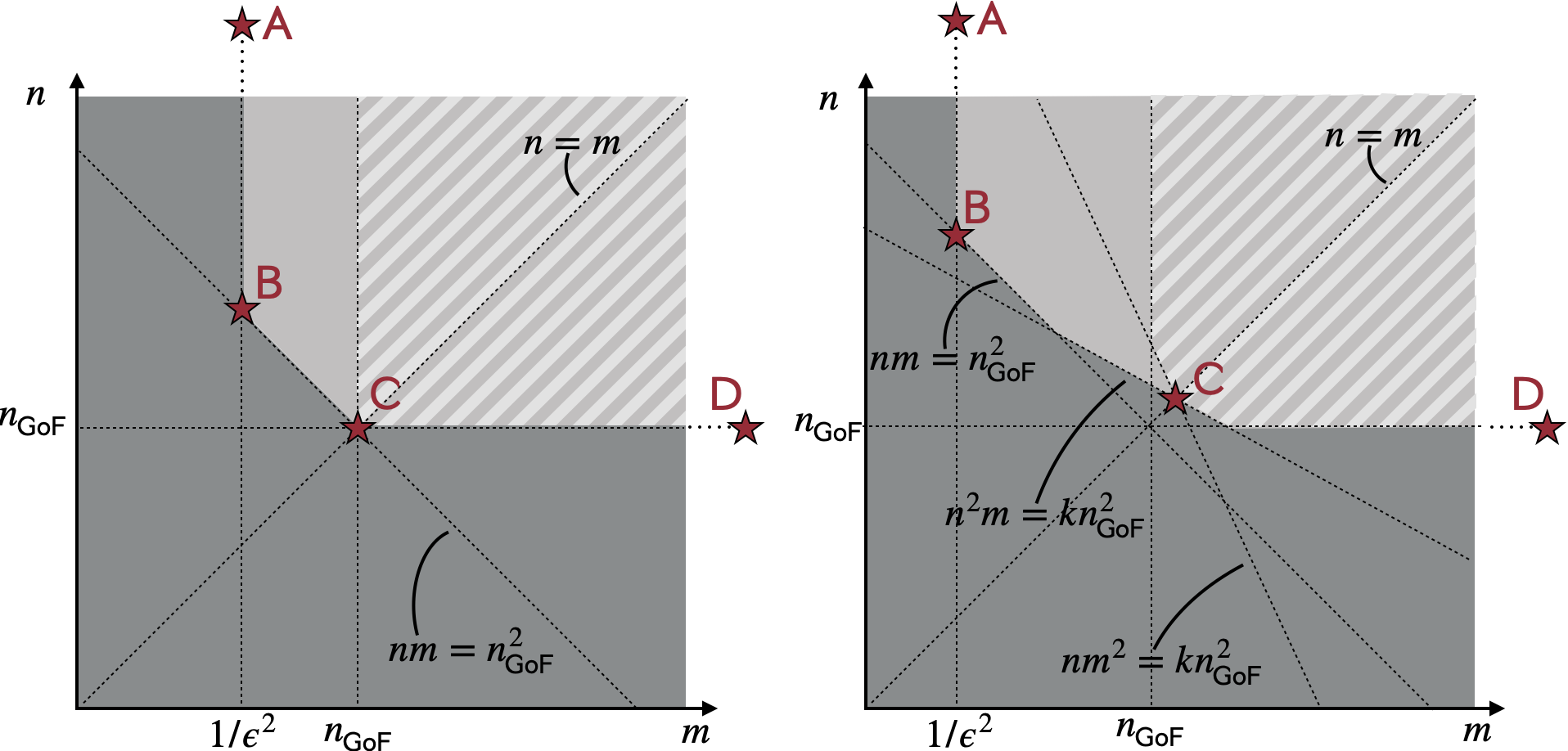}\caption{Light and dark gray show $\cal R_\sf{LF}$ and its complement resp. on $\log$ scale; the striped region depicts $\cal R_\sf{TS} \subsetneq \cal R_\sf{LF}$. Left plot is valid for $\cal P \in \{\cal P_\sf{H}, \cal P_\sf{G}, \cal P_\sf{Db}\}$ for all settings of $\eps, k$. For $\cal P_\sf{D}$ the left plot applies when $k \lesssim \eps^{-4}$ and the right plot otherwise. }\label{fig:phase diagram}
\end{figure}

The $\log k$ factor in our analysis originates from a union bound, and it is possible that it may be removed. It follows from follow up work \cite{gerber2023minimax} and past results on two-sample testing \cite{diakonikolas2021optimal} that the $\log(k)$ factor can be removed in all regimes, thus fully characterizing the sample complexity of \eqref{eqn:LFHT definition}, but using a different test from ours.

\Cref{table:prior results TV} summarizes previously known tight results for the values of $n_\sf{GoF}, n_\sf{TS}, \cal R_\sf{TS}$ and $n_\sf{Est}$. The fact that $n_\sf{HT}=\Theta(1/\eps^2)$ for reasonable classes is classical, see \Cref{lem:n_HT}. The study of goodness-of-fit testing within a minimax framework was pioneered by Ingster \cite{ingster1982minimax, ingster1987minimax} for $\cal P_\sf{H},\cal P_\sf{G}$, and independently studied by the computer science community \cite{goldreich2000testing, valiant2017automatic} for $\cal P_\sf{D}, \cal P_\sf{Db}$ under the name \textit{identity testing}. Two-sample testing (a.k.a. \textit{closeness testing}) was solved in \cite{chan2014optimal} for $\cal P_\sf{D}$ (with the optimal result for $\cal P_\sf{Db}$ implicit) and \cite{ingster1987minimax, arias2018remember, li2019optimality} consider $\cal P_\sf{H}$. The study of the rate of estimation $n_\sf{Est}$ is older, see \cite{ibragimov1977estimation, tsybakov, johnstone, gine2021mathematical} and references for $\cal P_\sf{H},\cal P_\sf{G}$ and \cite{canonne2020short} for $\cal P_\sf{D},\cal P_\sf{Db}$. 
\begin{table}[!ht]\caption{Prior results on testing and estimation}\label{table:prior results TV}
\begin{center}
\begin{tabular}{c|c | c | c | c}
 & $n_\sf{HT}$ & $n_\sf{GoF}$  & $\cal R_\sf{TS}$ & $n_\sf{Est}$ \\ [0.5ex]
 \hline\hline
 $\cal P_\sf{G}$ & $1/\eps^2$ &  $1/\eps^{(2s+1/2)/s}$  & $n \land m\geq n_\sf{GoF}$ & $\eps^2\, n_\sf{GoF}^2 $ \\
 \hline
 $\cal P_\sf{H}$ & $1/\eps^2$ &  $1/\eps^{(2\beta+d/2)/\beta}$  & $n \land m\geq n_\sf{GoF}$ & $\eps^2\,n_\sf{GoF}^2$ \\
  \hline
 $\cal P_\sf{Db}$ & $1/\eps^2$ &  $\sqrt{k}/\eps^2$ & $n \land m\geq n_\sf{GoF}$ & $\eps^2\, n_\sf{GoF}^2 $ \\
 \hline
 $\cal P_\sf{D}$ & $1/\eps^2$ & $\sqrt{k}/\eps^2$ & $n\lor m \geq \frac{\sqrt{k}}{\eps^2} \lor \frac{k^{2/3}}{\eps^{4/3}} \asymp n_\sf{TS},\,\, n\land m \geq n_\sf{GoF}\sqrt\alpha$ & $\eps^2\, n^2_\sf{GoF}$
\end{tabular}
\end{center}
\end{table}

\subsection{$L^2$-robust likelihood-free hypothesis testing}\label{sec:estimation limit}

Even before seeing \Cref{THM:P_SGDB UPPER,THM:P_D UPPER} one might guess that estimation in $\TV$ followed by a robust hypothesis test should work whenever $m\gtrsim1/\eps^2$ and $n\geq n_\sf{Est}(c\eps)$ for a small enough constant $c$. This strategy does indeed work, which can be deduced from the work of Huber and Birgé \cite{huber1965robust, birge2013robust} for total variation and Hellinger separation respectively, see also \Cref{sec:tests} for a brief discussion of these robust tests. In other words, we have the informal theorem 
\begin{equation*}
\text{if separation is measured by $\TV$ or $\H$, then } (n \geq n_\sf{Est} \text{ and } m \geq n_\sf{HT}) \implies (cn,cm) \in \cal R_\sf{LF}.
\end{equation*}
In the case of total variation separation, in fact an even simpler approach succeeds: if $\widehat p_\sf{X}$ and $\widehat p_\sf{Y}$ are minimax optimal density estimators with respect to $\TV$, then Scheff\'e's test using the classifier $C(x) = \one\{\widehat p_\sf{Y}(x) \geq \widehat p_\sf{X}(x)\}$ can be shown to achieve the optimal sample complexity by Chebyshev's inequality. 

The upshot of these observations is that they provide a solution to \eqref{eqn:LFHT definition} that is robust to model misspecification, specifically at the corner point $\sf{B}$ on \Cref{fig:phase diagram}. This naturally leads us to the question of robust likelihood-free hypothesis testing: can we construct robust tests for the full $m$ vs $n$ trade-off? 

As before, suppose we observe samples $X,Y,Z$ of size $n,n,m$ from distributions belonging to the class $\cal P$ with densities $f,g,h$ with respect to some base measure $\mu$. Given any $u\in\cal P$, let $\sf{B}_u(\eps,\cal P)\subseteq \cal P$ denote a region around $u$ against which we wish to be robust. Recall the notation $\cal P_\eps = \{(\bb Q_0,\bb Q_1)\in\cal P^2 : \TV(\bb Q_0,\bb Q_1)\geq\eps\}$ from \Cref{def:lfht}. We compare the hypotheses
\begin{equation}\label{eqn:rLFHT definition}\tag{$\sf{rLF}$}
        H_0: h\in\sf{B}_f(\eps,\cal P), (f,g) \in \cal P_\eps \qquad\text{versus}\qquad H_1: h\in\sf{B}_g(\eps,\cal P), (f,g) \in \cal P_\eps, 
\end{equation}
and write $\cal R_{\sf{rLF}}(\eps, \cal P, \sf{B}_\cdot)$ for the region of $(n,m)$-values for which \eqref{eqn:rLFHT definition} can be performed successfully, defined analogously to $\cal R_\sf{LF}(\eps,\cal P)$. Note that $\cal R_\sf{rLF} \subseteq \cal R_\sf{LF}$ provided $u\in\sf{B}_u$ for all $u\in\cal P$, that is, the range of sample sizes $n,m$ for which robustly testing \eqref{eqn:LFHT definition} is possible ought to be a subset of $\cal R_\sf{LF}$. 
\begin{theorem}\label{THM:ROBUST}
\Cref{THM:P_SGDB UPPER,THM:P_D UPPER} remain true if we replace $\cal R_\sf{LF}(\eps, \cal P)$ by $\cal R_\sf{rLF}(\eps, \cal P, \sf B_\cdot)$ for the following choices:

\begin{enumerate}[(i)]
    \item for $\cal P_\sf{H}(\beta,d,C)$ and $\sf{B}_u=\{v\in\calP_\sf{H}(\beta,d,C):\|u-v\|_2\leq c\eps\}$ for a constant $c>0$ independent of  $\eps$,  
    \item for $\cal P_\sf{G}(s,C)$ and $\sf{B}_{\mu_\theta} = \{\mu_{\theta'}:\theta'\in\cal E(s, C), \|\theta-\theta'\|_2\leq \eps/4\}$, 
    \item for $\cal P_\sf{Db}(k,C)$ and $\sf{B}_u=\{v:\|u-v\|_2\leq \eps/(2\sqrt{k})\}$, and 
    \item for $\cal P_\sf{D}(k)$ and $\sf B_u = \{v:\|u-v\|_2\leq c\eps/\sqrt k, \|v/u\|_\infty \leq c\}$ for a constant $c>0$ independent of $k$ and $\eps$. 
\end{enumerate}
\end{theorem}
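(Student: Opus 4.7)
The plan is to verify that the $L^2$-difference test of \Cref{sec:ingster test} used in \Cref{THM:P_SGDB UPPER,THM:P_D UPPER} is inherently robust to $L^2$-perturbations of the null and alternative of the prescribed magnitudes, so that the earlier proofs carry through with only constant-factor modifications. Let $T$ denote the suitably debiased statistic (a U-statistic in the discrete case and its histogram analogue in the smooth cases). If $X,Y,Z$ have densities $f,g,h$, then
\[
\E T \;=\; \|f-h\|_2^2 - \|g-h\|_2^2 \;=\; \langle f-g,\,f+g-2h\rangle,
\]
so the signal depends on $h$ only through an $L^2$-inner product with $f-g$. Under the robust null $h\in\sf{B}_f$ with $\|h-f\|_2\leq\rho$, Cauchy--Schwarz yields $\E T \leq -\|f-g\|_2^2 + 2\rho\,\|f-g\|_2$, and symmetrically under the alternative. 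Hence it suffices in each of cases (i)--(iv) to check that $\rho$ is strictly smaller than a fixed fraction of $\|f-g\|_2$.

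This is immediate for all four classes. In the smooth cases (i) and (ii), $\TV(f,g)\geq\eps$ combined with the class constraints gives $\|f-g\|_2\gtrsim\eps$ (for (ii) one works in parameter space where $\|\theta_f-\theta_g\|_2\gtrsim\eps$ is the analogous inequality), and $\rho=c\eps$ or $\eps/4$ is made small enough in comparison by choosing the relevant constant small. In the discrete cases (iii) and (iv), the support-size Cauchy--Schwarz bound $\|f-g\|_1\leq \sqrt k\,\|f-g\|_2$ gives $\|f-g\|_2\geq 2\eps/\sqrt k$, which dominates the prescribed $\rho\in\{\eps/(2\sqrt k),\,c\eps/\sqrt k\}$. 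Hence $|\E T|\geq \tfrac12\|f-g\|_2^2$ in all four settings, so the mean separation of the non-robust proof survives with only constant-factor loss.

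The variance bounds on $T$ established for \Cref{THM:P_SGDB UPPER,THM:P_D UPPER} are polynomial in the relevant $L^2$, $L^\infty$, and smoothness/$\ell^p$ norms of $f,g,h$. In cases (i)--(iii), the constraint $\sf{B}_u\subseteq\cal P$ guarantees that the relevant norms of $h$ enjoy the same bounds as those of $f$ (smoothness, Sobolev ellipsoid, or uniform $\ell^\infty$ bound $C/k$), so the original variance estimates carry over verbatim. Case (iv) is the delicate one: since $\cal P_\sf{D}$ has no boundedness constraint, an $\ell^2$-close perturbation $h$ of $f$ could still place non-negligible mass on coordinates where $f$ is tiny, inflating variance terms such as $\sum_i h_i^2$ or $\sum_i h_i^2/f_i$ beyond what the sample sizes afford. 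This is precisely why the extra condition $\|h/f\|_\infty\leq c$ is imposed: it ensures $h$ inherits the effective boundedness of $f$, so every variance contribution involving $h$ is bounded by its $f$-analogue up to a constant. The flattening reduction of \Cref{THM:P_D UPPER} is a randomized linear sub-binning based on the $X$-counts; applied with the same splitting to $h$ it preserves both $L^2$-proximity and the ratio bound $\|h/f\|_\infty$, so it carries over without change.

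Combining the mean separation with the variance bounds via Chebyshev's inequality reproduces the regions $\cal R_\sf{LF}$ of \Cref{THM:P_SGDB UPPER,THM:P_D UPPER} (up to absolute constants) for the robust problem. The principal technical obstacle is case (iv): tracking each occurrence of $f$ versus $h$ in the variance calculation and showing that the ratio constraint tames every pathological term. The remaining cases reduce to routine bookkeeping once the mean and variance inputs above are in place.
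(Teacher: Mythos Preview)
Your proposal is correct and follows essentially the same route as the paper: the achievability proofs of \Cref{THM:P_SGDB UPPER,THM:P_D UPPER} (Propositions~\ref{prop:P_Db upper}--\ref{prop:P_D upper}) are already stated and proved in the robust form, establishing the mean separation via the reverse triangle inequality (your Cauchy--Schwarz argument is an equivalent way to reach $|\E T|\gtrsim\|f-g\|_2^2$), while the variance bounds go through because $h$ inherits the relevant norm controls from the class---with case~(iv) requiring exactly the ratio condition $\|h/f\|_\infty\lesssim 1$ you identify, used in the paper to bound $A_{\tilde h\tilde f\tilde g}$ after flattening. One small omission: you address only achievability, but the theorem asserts the full $\asymp$ characterization; the matching lower bounds are inherited trivially from the non-robust case since $\cal R_\sf{rLF}\subseteq\cal R_\sf{LF}$ whenever $u\in\sf B_u$, which the paper notes explicitly.
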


\subsection{Beyond total variation}\label{sec:results H}
Recall from \Cref{rem:sfd} the notation $n_\sf{GoF}(\eps, \sf{d}, \cal P)$ etc. that is applicable when separation is measured with respect to a general measure of discrepancy $\sf{d}$ instead of $\TV$. In recent work \cite[Theorem 1]{nazer2017information} and~\cite[Lemma 3.6]{pensia2022communication} it is shown that any test that first quantizes the data by a map $\Phi:\cal X \to \{1,2,\dots,M\}$ for some $M\geq2$ must decrease the Hellinger distance between the two hypotheses by a $\log$ factor in the worst case. This implies that for every class $\cal P$ rich enough to contain such worst case examples, a quantizing test, such as Scheff\'e's, can hope to achieve $m \asymp \log(1/\eps)/\eps^2$ at best, as opposed to the optimal $m\asymp1/\eps^2$. Thus, if separation is assumed with respect to Hellinger distance, Scheff\'e's test should be avoided. This example shows that the choice of $\sf{d}$ can have surprising effects on the performance of specific tests that would be optimal under other circumstances. Understanding the sample complexity of \eqref{eqn:LFHT definition} for $\sf{d}$ other than $\TV$ might lead to new algorithms and insights. 

This motivates us to pose the question: does a trade-off analogous to that identified in \Cref{THM:P_SGDB UPPER} hold for other choices of $\sf{d}$, and $\H$ in particular? In the case of $\cal P_\sf{G}$ we obtain a simple, almost vacuous answer. From \Cref{lem:gaussian equivalence} it follows immediately that the results of \Cref{table:prior results TV} and \Cref{THM:P_SGDB UPPER} continue to hold for $\cal P_\sf{G}$ for any of $\sf{d} \in \{\H, \sqrt{\KL}, \sqrt{\chi^2}\}$, to name a few.

\begin{lemma}\label{lem:gaussian equivalence}
Let $C>0$ be a constant. For any $\theta \in \ell^2$ with $\|\theta\|_2\leq C$
\begin{equation*}
    \TV(\mu_\theta, \mu_0) \asymp \H(\mu_\theta, \mu_0) \asymp \sqrt{\KL(\mu_\theta \| \mu_0)} \asymp \sqrt{\chi^2(\mu_\theta\|\mu_0)} \asymp \|\theta\|_2,
\end{equation*}
where $\mu_\theta \eqdef \otimes_{i=1}^\infty \cal N(\theta_i, 1)$ and the implied constant depends on $C$. 
\end{lemma}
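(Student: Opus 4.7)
The plan is to exploit the product structure of $\mu_\theta$ and $\mu_0$ to obtain closed-form expressions for each of the four divergences in terms of $\|\theta\|_2^2$, and then verify that all of them are comparable to $\|\theta\|_2$ once we restrict to $\|\theta\|_2\leq C$, with multiplicative constants depending only on $C$.

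First I would treat the three smooth divergences by tensorization combined with the one-dimensional Gaussian formulas. Since $\KL$ is additive over product measures, $\KL(\mu_\theta\|\mu_0)=\sum_i \KL(\cal N(\theta_i,1)\|\cal N(0,1))=\tfrac12\|\theta\|_2^2$, which is finite by $\theta\in\ell^2$. The quantity $1+\chi^2$ is multiplicative under tensorization, and a direct Gaussian MGF computation gives $1+\chi^2(\cal N(a,1)\|\cal N(0,1))=e^{a^2}$, so $\chi^2(\mu_\theta\|\mu_0)=e^{\|\theta\|_2^2}-1$. Similarly the Bhattacharyya affinity $1-\tfrac12\H^2$ tensorizes multiplicatively and equals $e^{-a^2/8}$ in one dimension, yielding $\H^2(\mu_\theta,\mu_0)=2(1-e^{-\|\theta\|_2^2/8})$.

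For $\TV$ I would use the sufficient-statistic structure of Gaussian white noise. The log-likelihood ratio $\log(d\mu_\theta/d\mu_0)(x)=\sum_i(\theta_i x_i-\tfrac12\theta_i^2)$ converges almost surely under both measures (thanks to $\theta\in\ell^2$) and depends on $x$ only through the one-dimensional projection $T(x)=\sum_i\theta_i x_i/\|\theta\|_2$, which distributes as $\cal N(0,1)$ under $\mu_0$ and as $\cal N(\|\theta\|_2,1)$ under $\mu_\theta$. Since $T$ is sufficient for the pair $\{\mu_0,\mu_\theta\}$, the data-processing inequality is an equality and
\[
\TV(\mu_\theta,\mu_0)=\TV\bigl(\cal N(\|\theta\|_2,1),\cal N(0,1)\bigr)=2\Phi(\|\theta\|_2/2)-1.
\]
I note that the weaker chain $\tfrac12\H^2\leq\TV\leq\H$ would only give a lower bound $\TV\gtrsim\|\theta\|_2^2$, which is insufficient, so the projection argument is really needed here.

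To conclude I would combine the four explicit expressions with elementary analysis: on $[0,C^2]$ both $e^t-1$ and $1-e^{-t/8}$ are comparable to $t$ within $C$-dependent constants, while $2\Phi(s/2)-1\asymp s\wedge 1\asymp s$ on $[0,C]$. Taking square roots in the $\KL$ and $\chi^2$ cases yields the full chain of equivalences. The one point requiring care, and which I would flag as the only technical obstacle, is the justification of the infinite-product tensorization and the sufficiency claim; both reduce to their finite-dimensional counterparts by applying the identities to the marginals $\otimes_{i=1}^N\cal N(\theta_i,1)$ and passing to the limit $N\to\infty$ using monotone or dominated convergence, which is precisely where the hypothesis $\theta\in\ell^2$ is used.
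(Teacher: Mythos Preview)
Your argument is correct, but it differs from the paper's in how the two ends of the chain are handled. The paper never computes $\TV$ or $\H$ explicitly: it simply invokes the standard chain $\TV\leq\H\leq\sqrt{\KL}\leq\sqrt{\chi^2}$, evaluates only $\chi^2=e^{\|\theta\|_2^2}-1\lesssim\|\theta\|_2^2$ for the upper bound, and for the lower bound on $\TV$ cites the Devroye--Mehrabian--Reddad result $\TV(\mu_\theta,\mu_0)\geq\min\{1,\|\theta\|_2/200\}$. Your route instead computes all four divergences in closed form and, crucially, obtains the $\TV$ lower bound by the sufficiency/projection argument reducing to $\TV(\cal N(\|\theta\|_2,1),\cal N(0,1))=2\Phi(\|\theta\|_2/2)-1$; you correctly observe that the naive $\TV\geq\tfrac12\H^2$ would only yield $\|\theta\|_2^2$ and is not enough. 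Your approach is entirely self-contained and yields exact formulas, while the paper's is shorter but relies on an external citation for the one nontrivial direction.
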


The case of $\cal P_\sf{D}$ is more intricate. Substantial recent progress \cite{diakonikolas2016new, kamath2018modern, daskalakis2018distribution, canonne2020short} has been made, where among others, the complexities $n_\sf{GoF}, n_\sf{TS}, n_\sf{Est}$ for Hellinger separation are identified.
\begin{table}[h!]
\label{table:H prior results}
\begin{center}
\begin{tabular}{c |c | c | c | c}
 & $n_\sf{HT}$ & $n_\sf{GoF}$  & $n_\sf{TS}$ & $n_\sf{Est}$ \\ [0.5ex]
 \hline\hline
 $\cal P_\sf{D}$ & $1/\eps^2$ &  $\sqrt{k}/\eps^2$  & $k^{2/3}/\eps^{8/3} \land k^{3/4}/\eps^2$ & $n_\sf{Gof}^2\, \eps^2$ \\
 $\cal P_\sf{H}$ & $1/\eps^2$ & $?$ & $?$ & $1/\eps^{2(\beta+d)/\beta}$ \\
\end{tabular}
\caption{Prior results for $\sf{d}=\H$.}
\end{center}
\end{table}
Since our algorithm for \eqref{eqn:LFHT definition} is $\|\cdot\|_2$-based, we could immediately derive achievability bounds for $\cal R_\sf{LF}(\eps, \H, \cal P_\sf{D})$ via the inequality $\|\cdot\|_2 \geq \H^2/\sqrt{k}$, however such a naive technique yields suboptimal results, and thus we omit it. Studying \eqref{eqn:LFHT definition} under Hellinger separation for $\cal P_\sf{D}$ and $\cal P_\sf{Db}$ is beyond the scope of this work. 

Finally, we turn to  $\cal P_\sf{H}$. Due to the nature of our proofs, the results of \Cref{THM:P_SGDB UPPER} easily generalize to $\sf{d} = \|\cdot\|_p$ for any $p\in[1,2]$. The simple reason for this is that $(i)$ our algorithm is $\|\cdot\|_2$-based and $\|\cdot\|_2\geq\|\cdot\|_p$ by Jensen's inequality and $(ii)$ the lower bound construction involves perturbations near $1$, where all said norms are equivalent. In the important case $\sf{d}=\H$ the estimation rate $n_\sf{Est}(\eps, \H, \cal P_\sf{H})\asymp1/\eps^{2(\beta+d)/\beta}$ was obtained by Birgé \cite{birge1986estimating}, our contribution here is the study of $n_\sf{GoF}$.

\begin{theorem}\label{THM:HELLINGER GOF}
For any $\beta>0,C>1$ and $d\geq1$ there exists a constant $c>0$ such that 
 \begin{equation*}
     n_\sf{GoF}(\eps, \H, \cal P(\beta, d, C)) \geq c/\eps^{2(\beta+d/2)/\beta}.
 \end{equation*}
 If in addition we assume that $\beta \in (0,1]$, $c$ can be chosen such that 
 \begin{equation*}
     cn_\sf{GoF}(\eps, \H, \cal P) \leq  1/\eps^{2(\beta+d/2)/\beta}. 
 \end{equation*}
 In particular, $n_\sf{Est} \asymp n_\sf{GoF}^2\,\eps^2$.
 \end{theorem}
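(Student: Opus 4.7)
The plan is to prove the upper and lower bounds on $n_\sf{GoF}$ separately and then derive the identity $n_\sf{Est}\asymp n_\sf{GoF}^2\,\eps^2$ from Birgé's known Hellinger estimation rate. The conceptual driver for both bounds is the identity $\H^2(p,q)=\|\sqrt p-\sqrt q\|_2^2$: since a density $p\in C^\beta$ may vanish, the most we can guarantee is $\sqrt p\in C^{\beta/2}$, so Hellinger GoF on $\beta$-Hölder densities should have the same rate as $L^2$ GoF on a $\beta/2$-smooth class. Indeed, Ingster's $L^2$-GoF exponent $(2s+d/2)/s$ evaluated at $s=\beta/2$ is precisely $2(\beta+d/2)/\beta$, the rate claimed by the theorem, and the upper bound construction and the lower bound construction can each be viewed as implementing this correspondence.

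For the upper bound, which requires $\beta\le 1$, I would bin $[0,1]^d$ into cubes of side $\ell\asymp\eps^{2/\beta}$, giving $N\asymp\eps^{-2d/\beta}$ bins. The key approximation estimate is $\H(p,\bar p^{(\ell)})\lesssim\ell^{\beta/2}\asymp\eps$, obtained from $\H^2\le 2\TV\le\|p-\bar p^{(\ell)}\|_1$ and the per-cell Hölder bound $\|p-\bar p^{(\ell)}\|_\infty\lesssim\ell^\beta$; since both $p_0$ and the alternative $p$ are so approximated, an $\Omega(\eps)$-Hellinger separation is preserved after binning (with a small enough leading constant on $\ell$). I would then apply an Ingster-type statistic of the form $T=\sum_i(\sqrt{N_i/n}-\sqrt{p_{0,i}})^2$ on the binned empirical distribution, debiased by the standard $N/(4n)$ correction. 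A mean–variance calculation, using $\operatorname{Var}(\sqrt{N_i/n})\asymp 1/n$ and the usual $U$-statistic decomposition for $T$, gives the claimed sample complexity $n\lesssim\sqrt N/\eps^2=\eps^{-(2\beta+d)/\beta}$.

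For the lower bound I would use the Ingster–Tsybakov mixture/$\chi^2$-method. The null is taken to saturate the Hölder constraint at a vanishing set, e.g.\ $p_0(x)=(\beta+1)\,x_1^\beta$ on $[0,1]^d$, which lies in $\cal P_\sf H(\beta,d,C)$ thanks to the inequality $|x^\beta-y^\beta|\le|x-y|^\beta$ valid for $\beta\in(0,1]$ (with an analogous higher-order construction for $\beta>1$). Place $C^\beta$-regular bumps $\phi_i$ of width $\ell$ and amplitude $h\asymp\ell^\beta$ in dyadic slabs $\{x_1\asymp2^{-k}\}$ where $p_0$ is small, arranged at multiple scales so that the per-bump Hellinger contribution $\approx\int\phi_i^2/(4p_0)$ is amplified by $1/p_0$. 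The bump counts, widths, and amplitudes are chosen so that $\H(p_0,p_\sigma)\asymp\eps$ while the total number of effective degrees of freedom is $N\asymp\eps^{-2d/\beta}$. Applying the standard second-moment bound $\chi^2(\bar p^{\otimes n}\|p_0^{\otimes n})+1\le\prod_i\cosh(nA_i)\le\exp(n^2\sum_i A_i^2/2)$ with $A_i=\int\phi_i^2/p_0$ then gives indistinguishability for $n\ll\sqrt N/\eps^2=\eps^{-(2\beta+d)/\beta}$, matching the upper bound.

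The in-particular identity is then pure arithmetic: Birgé's Hellinger estimation rate for $\beta$-Hölder densities is $n_\sf{Est}\asymp\eps^{-2(\beta+d)/\beta}$, which follows from the Hellinger metric entropy $\log N(\eps,\cal P_\sf H,\H)\asymp\eps^{-2d/\beta}$ via the same $\sqrt{\cdot\,}/C^{\beta/2}$ correspondence, and combining with the two bounds above gives $n_\sf{GoF}^2\eps^2\asymp\eps^{-2(2\beta+d)/\beta+2}=\eps^{-2(\beta+d)/\beta}\asymp n_\sf{Est}$. The main obstacle I expect is the lower bound construction: beating the TV-separation rate $\eps^{-(2\beta+d/2)/\beta}$ by the extra factor $\sqrt N=\eps^{-d/(2\beta)}$ demands $C^\beta$ perturbations whose Hellinger mass is concentrated in very low-density regions of $p_0$, while keeping their pairwise $L^2(1/p_0)$-inner products small. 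The naïve approach of perturbing $\sqrt{p_\sigma}=\sqrt{p_0}+\sum\sigma_i\psi_i$ in a $C^{\beta/2}$-bump basis fails to give $p_\sigma\in C^\beta$ because $(\sum\sigma_i\psi_i)^2$ is typically only $C^{\beta/2}$, so a careful multi-scale arrangement of $C^\beta$-regular bumps in dyadic slabs, with amplitudes and widths tuned to saturate both the smoothness and separation constraints simultaneously, is the technical heart of the proof.
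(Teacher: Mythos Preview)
Your upper bound is essentially the paper's argument: bin at resolution $\kappa\asymp\eps^{-2/\beta}$, check that Hellinger separation survives binning, then reduce to discrete Hellinger goodness-of-fit with $k\asymp\eps^{-2d/\beta}$ bins. The paper, however, does not analyze a specific statistic; it simply invokes the known result $n_\sf{GoF}(\eps,\H,\cal P_\sf D)\asymp\sqrt k/\eps^2$ from \cite{daskalakis2018distribution} as a black box. Your proposed statistic $T=\sum_i(\sqrt{N_i/n}-\sqrt{p_{0,i}})^2$ with the $N/(4n)$ debiasing is not obviously optimal for discrete Hellinger GoF---the optimal tests in that setting are more delicate than a naive plug-in---so either you would need to supply a full mean--variance analysis showing it attains $\sqrt k/\eps^2$, or simply cite the discrete result as the paper does. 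Your separation-preservation argument via $\H^2\le 2\TV\le\|p-\bar p^{(\ell)}\|_1\lesssim\ell^\beta$ plus the triangle inequality is correct and in fact simpler than the paper's Proposition~\ref{prop:H separation}, which proves the two-sided bound $\H(P_\kappa f,P_\kappa g)\asymp\H(f,g)$ directly.

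Your lower bound is substantially overcomplicated. The paper avoids multi-scale constructions entirely by taking the null to be
\[
f_0(x)=\eps^2+\frac{\phi(x_1)}{\|\phi\|_1}(1-\eps^2),
\]
where $\phi$ is a fixed smooth step from $0$ to $1$ on $[1/3,2/3]$. Then $f_0\equiv\eps^2$ on the entire slab $[0,1/3]\times[0,1]^{d-1}$, and one places single-scale bumps $\rho\,\eta_j h_j$ there with $\kappa=\eps^{-2/\beta}$ and $\rho=\eps^{(2\beta+d)/\beta}$. The Hellinger separation picks up a clean factor $1/\eps^2$ from $1/f_0$, and Ingster's $\chi^2$ computation goes through verbatim with the inner product $\int f_\eta f_{\eta'}/f_0=1+(\rho^2/\eps^2)\langle\eta,\eta'\rangle$, giving $\chi^2\le\exp(n^2\rho^4\kappa^d/\eps^4)$ and hence $n\gtrsim\eps^2/(\rho^2\kappa^{d/2})=\eps^{-(2\beta+d)/\beta}$. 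Because $f_0$ is $C^\infty$, this works uniformly for all $\beta>0$ without any higher-order modification. Your dyadic-slab construction with null $p_0(x)=(\beta+1)x_1^\beta$ may be salvageable, but it forces you to balance amplitudes, widths, and positivity across infinitely many scales, and the null itself must be redesigned for $\beta>1$; none of this is needed once you realize you can simply set the floor of the null density to $\eps^2$.
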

 
\section{Sketch proof of main results}\label{sec:sketch}

In this section we briefly sketch the proofs of the main results of the paper. 

\subsection{Upper bounds for \Cref{THM:P_SGDB UPPER,THM:P_D UPPER,THM:ROBUST,THM:HELLINGER GOF}} 

\subsubsection{Bounded discrete distributions} Consider first the case when $\bb P_\sf{X}$ and $\bb P_\sf{Y}$ belong to the class $\cal P_\sf{Db}$, that is, they are supported on the discrete set $\{1,2,\dots, k\}$ and bounded by the uniform distribution. Let $\widehat{p}_\sf{X}, \widehat{p}_\sf{Y}, \widehat{p}_\sf{Z}$ denote empirical probability mass functions based on the samples $X,Y,Z$ of size $n,n,m$ from $\bb P_\sf{X}, \bb P_\sf{Y}, \bb P_\sf{Z}$ respectively. Define the test statistic
\begin{align*}
T_\sf{LF} &= \|\widehat{p}_\sf{X}-\widehat{p}_\sf{Z}\|_2^2-\|\widehat{p}_\sf{Y}-\widehat{p}_\sf{Z}\|_2^2
\end{align*}
and the corresponding test $\psi(X,Y,Z) = \one\{T_\sf{LF} \geq 0\}$. The proof of \Cref{THM:P_SGDB UPPER,THM:P_D UPPER} hinge on the precise calculation of the mean and variance of $T_\sf{LF}$. Due to symmetry it is enough to compute these under the null. The proof of the upper bound is then completed via Chebyshev's inequality: if $n,m$ are such that $(\bb E T_\sf{LF})^2 \gtrsim \var(T_\sf{LF})$ for large enough implied constant on the right then $\psi$ tests \eqref{eqn:LFHT definition} successfully in the sense of \eqref{eqn:successful test}.  
\begin{proposition}[informal]\label{prop:inform proof}
Suppose $\|p_\sf{X}+p_\sf{Y}+p_\sf{Z}\|_\infty \leq C_\infty / k$. Then $\psi$ successfully tests \eqref{eqn:LFHT definition} if 
\begin{equation}\label{eqn:informal proof}
    \underbracket[0.140ex]{\,\,\frac{\eps^4}{k^2}}_{(\E T_\sf{LF})^2} \gtrsim \underbracket[0.140ex]{\frac{C_\infty\eps^2}{k^2} \left(\frac1n+\frac1m\right) + \frac{C_\infty}{k}\left(\frac{1}{n^2} + \frac{1}{nm}\right)}_{\operatorname{var}(T_\sf{LF})}. 
\end{equation}
\end{proposition}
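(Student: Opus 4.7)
The plan is to prove both moment bounds in~\eqref{eqn:informal proof} and finish via Chebyshev's inequality applied to $\psi = \one\{T_\sf{LF} \geq 0\}$. By the $H_0\leftrightarrow H_1$ symmetry, both computations can be done under $H_0$, i.e.\ with $p_\sf{Z} = p_\sf{X}$.

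For the mean I would expand $T_\sf{LF} = \|\widehat p_\sf{X}\|_2^2 - \|\widehat p_\sf{Y}\|_2^2 - 2\langle\widehat p_\sf{X} - \widehat p_\sf{Y},\,\widehat p_\sf{Z}\rangle$ and use the multinomial identity $\E[\widehat p(i)^2] = p(i)^2 + p(i)(1-p(i))/n$ together with independence of the three samples, giving
\[
\E\,T_\sf{LF} = -\|p_\sf{X}-p_\sf{Y}\|_2^2 + \frac{\|p_\sf{Y}\|_2^2-\|p_\sf{X}\|_2^2}{n}.
\]
The bias is $O(C_\infty/(kn))$ and is dominated by the leading term in any regime where~\eqref{eqn:informal proof} is not already vacuous; alternatively it can be removed outright by passing to the standard U-statistic analogue of $T_\sf{LF}$ without affecting the variance asymptotics. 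A single Cauchy--Schwarz step combined with the $\TV$-separation then gives $\|p_\sf{X}-p_\sf{Y}\|_2^2 \geq k^{-1}\|p_\sf{X}-p_\sf{Y}\|_1^2 \geq 4\eps^2/k$, hence $(\E T_\sf{LF})^2\gtrsim\eps^4/k^2$, matching the left-hand side of~\eqref{eqn:informal proof}.

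The core of the argument is bounding $\operatorname{var}(T_\sf{LF})$. Setting $e_\sf{A} = \widehat p_\sf{A} - p_\sf{A}$ and using $p_\sf{Z}=p_\sf{X}$ under $H_0$, algebraic rearrangement gives
\[
T_\sf{LF} + \|p_\sf{X}-p_\sf{Y}\|_2^2 = (\|e_\sf{X}\|_2^2 - \|e_\sf{Y}\|_2^2) + 2\langle p_\sf{X}-p_\sf{Y},\,e_\sf{Y}-e_\sf{Z}\rangle - 2\langle e_\sf{X}-e_\sf{Y},\,e_\sf{Z}\rangle.
\]
Using mutual independence of $X, Y, Z$ together with $\E e_\sf{A}=0$, the variance of the right-hand side reduces to a small collection of summands, each evaluable via the multinomial covariance identity $\cov(e(i), e(j)) = N^{-1}(p(i)\one\{i=j\}-p(i)p(j))$ and bounded using $\|p_\cdot\|_\infty\leq C_\infty/k$. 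The signal$\times$noise terms contribute $4\operatorname{var}(\langle p_\sf{X}-p_\sf{Y},\,e_\sf{Y}\rangle)+4\operatorname{var}(\langle p_\sf{X}-p_\sf{Y},\,e_\sf{Z}\rangle) \lesssim \|p\|_\infty\,\|p_\sf{X}-p_\sf{Y}\|_2^2\,(n^{-1}+m^{-1})$; the pure noise terms contribute $\operatorname{var}(\|e_\sf{X}\|_2^2)+\operatorname{var}(\|e_\sf{Y}\|_2^2)\lesssim \|p\|_\infty/n^2$ and $\operatorname{var}(\langle e_\sf{X}-e_\sf{Y},\,e_\sf{Z}\rangle)\lesssim \|p\|_\infty/(nm)$. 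Substituting $\|p_\cdot\|_\infty\leq C_\infty/k$ and combining with the lower bound $\|p_\sf{X}-p_\sf{Y}\|_2^2\gtrsim\eps^2/k$ recovers the right-hand side of~\eqref{eqn:informal proof}.

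The single most delicate piece, which I would tackle first, is $\operatorname{var}(\langle e_\sf{X}-e_\sf{Y},\,e_\sf{Z}\rangle)$, since it is the only one that couples all three samples through the shared $Z$. Using $X,Y\perp Z$ and the mean-zero property it expands as $\sum_{i,j}(\cov(e_\sf{X}(i),e_\sf{X}(j))+\cov(e_\sf{Y}(i),e_\sf{Y}(j)))\cov(e_\sf{Z}(i),e_\sf{Z}(j))$; the diagonal part $(nm)^{-1}\sum_i(p_\sf{X}(i)+p_\sf{Y}(i))p_\sf{Z}(i)\leq 2\|p_\sf{Z}\|_\infty/(nm)$ dominates and lies comfortably within budget. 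With both moment bounds in hand, Chebyshev's inequality and the hypothesis of the proposition give $\bb P_{H_0}(T_\sf{LF}\geq 0),\,\bb P_{H_1}(T_\sf{LF}<0)\leq 1/3$, so $\psi$ is a successful test in the sense of~\eqref{eqn:successful test}.
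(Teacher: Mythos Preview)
Your proposal is correct and follows the same overall strategy as the paper: compute the mean and variance of $T_\sf{LF}$ under $H_0$, then apply Chebyshev. The variance computation is organized differently, however. The paper proves a general mean--variance formula (their Proposition~\ref{prop:meta lfht}) for $T_\sf{LF}^{-\sf d}$ in an abstract $L^2(\mu)$ framework, decomposing the statistic into four blocks $\sf I,\sf{II},\sf{III},\sf{IV}$ and tracking all pairwise covariances in terms of the functionals $A_{fgh},B_{fg}$; it then specializes to $\cal P_\sf{Db}$ by bounding those functionals via $\|p\|_\infty\le C/k$. You instead center first, writing $e_\sf{A}=\widehat p_\sf{A}-p_\sf{A}$, and obtain the algebraic decomposition into ``signal$\times$noise'' and ``pure noise'' pieces, bounding each directly from the multinomial moment identities. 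The advantage of the paper's route is that a single variance lemma serves all four classes $\cal P_\sf{H},\cal P_\sf{G},\cal P_\sf{Db},\cal P_\sf{D}$ uniformly; the advantage of yours is that, for the discrete bounded case specifically, the centering makes it transparent which terms produce the $1/n,1/m,1/n^2,1/(nm)$ contributions and why most cross-covariances vanish by independence and $\E e_\sf{A}=0$. Two small points: your phrase ``multinomial covariance identity'' slightly undersells what is needed, since $\var(\|e_\sf{X}\|_2^2)$ requires fourth moments (your degenerate U-statistic bound is the right way to get $\lesssim C_\infty/(kn^2)$); and ``combining with the lower bound $\|p_\sf{X}-p_\sf{Y}\|_2^2\gtrsim\eps^2/k$'' should be read as substituting the worst-case separation into the ratio $\var/(\E T_\sf{LF})^2$, not into the variance itself.
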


From \eqref{eqn:informal proof} one can immediately see where each constraint in the region $\cal R_\sf{LF}(\eps, \cal P_\sf{Db}(k,C))$ in \Cref{THM:P_SGDB UPPER} emerges. The first two terms in the variance require that both $m$ and $n$ be larger than $\Omega(1/\eps^2)$. The $1/n^2$ term in the variance requires that $n$ be at least $\Omega(\sqrt k/\eps^2) \asymp n_\sf{GoF}$, and the $1/(nm)$ term requires that the product $nm$ be at least $\Omega(n_\sf{GoF}^2)$.

\subsubsection{Smooth densities} 
Next we describe how \Cref{prop:inform proof} can be applied to the class $\cal P_\sf{H}$ of smooth densities. Divide $[0,1]^d$ into into $\kappa^d$ regular grid cells for some $\kappa \in \N$. Discretize the three samples $X,Y,Z$ over this grid and simply apply the optimal test for $\cal P_\sf{Db}$, observing the crucial fact that this discretization belongs to $\cal P_\sf{Db}$. The following lemma, originally due to Ingster \cite{ingster1987minimax} controls the approximation error of the discretization. 
\begin{lemma}[{{\cite[Lemma 7.2]{arias2018remember}}}]\label{lem:ingster_approx}
Let $P_\kappa$ denote the $L^2$ projection onto the space of functions constant on each grid cell. For any $\beta>0,C>1$ and $d\geq1$ there exist constants $c,c' > 0$ such that for any $f,g \in \cal P_\sf{H}(\beta, d, C)$ the following holds:
\begin{equation*}
    \|f-g\|_2 \geq \|P_\kappa(f-g)\|_2 \geq c \|f-g\|_2 - c'\kappa^{-\beta}. 
\end{equation*}
\end{lemma}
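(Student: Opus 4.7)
The first inequality $\|P_\kappa(f-g)\|_2 \leq \|f-g\|_2$ is immediate, as $P_\kappa$ is an orthogonal projection on $L^2([0,1]^d)$ and hence a contraction. For the second inequality, set $h \eqdef f-g$, which lies in $\cal C(\beta,d,2C)$ because the smoothness class is linear. By the triangle inequality,
$$ \|P_\kappa h\|_2 \;\geq\; \|h\|_2 - \|h - P_\kappa h\|_2, $$
so the whole problem reduces to establishing an approximation estimate $\|h-P_\kappa h\|_2 \leq c'\kappa^{-\beta}$ with $c'$ depending only on $(\beta,d,C)$. Given such an estimate, the lemma follows with $c=1$.

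To prove the approximation estimate, fix a cell $Q$ of side-length $1/\kappa$ in the regular $\kappa^d$-grid. On $Q$, the projection $P_\kappa h$ coincides with the cell-average $\bar h_Q \eqdef |Q|^{-1}\int_Q h(y)\,\mathrm{d}y$. In the easy regime $\beta\in(0,1]$, Hölder continuity of $h$ with constant $2C$ yields, for every $x\in Q$,
$$ |h(x) - \bar h_Q| \;\leq\; |Q|^{-1}\!\int_Q |h(x)-h(y)|\,\mathrm{d}y \;\leq\; 2C\,(\sqrt{d}/\kappa)^\beta. $$
Summing over cells and using $\operatorname{vol}([0,1]^d)=1$ gives $\|h - P_\kappa h\|_2 \leq \|h - P_\kappa h\|_\infty \leq 2Cd^{\beta/2}\,\kappa^{-\beta}$, which completes the argument in this regime.

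The main obstacle is the case $\beta>1$, where the naive Lipschitz estimate only delivers $\|h - P_\kappa h\|_2 = \cal O(\kappa^{-1})$, which is strictly weaker than the required $\kappa^{-\beta}$. Here the plan is to Taylor-expand $h$ about the centroid $x_Q$ of each cell to order $\underline\beta = \lceil\beta-1\rceil$: the Hölder condition on the top-order derivatives controls the remainder by $\cal O(\|x-x_Q\|^\beta)$, while a symmetry argument shows that odd monomials in $(y-x_Q)$ integrate to zero over the cube-symmetric cell $Q$. After subtracting the piecewise Taylor polynomial, a local Poincaré-type estimate on each $Q$ then gives the desired $\cal O(\kappa^{-\beta})$ bound for the best piecewise-constant approximant. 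A cleaner alternative, which is standard in this setting, is to reinterpret $P_\kappa$ as the $L^2$-projection onto \emph{piecewise polynomials of degree $\underline\beta$} on the same grid; for this projector the best-approximation property combined with cellwise Taylor bounds yields $\|h - P_\kappa h\|_2 \lesssim \kappa^{-\beta}$ directly, and the triangle-inequality reduction in the first paragraph then goes through verbatim.
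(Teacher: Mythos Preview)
Your argument is correct for $\beta\in(0,1]$, but for $\beta>1$ it contains a genuine gap: the approximation estimate $\|h-P_\kappa h\|_2\lesssim\kappa^{-\beta}$ that you reduce to is simply \emph{false}. Piecewise-constant approximation of a smooth function cannot beat $\cal O(\kappa^{-1})$ in $L^2$; for instance take $d=1$, $h(x)=x^2$, and compute directly that $\|h-P_\kappa h\|_2\asymp\kappa^{-1}$. Consequently one cannot hope to obtain the lemma with $c=1$ when $\beta>1$, and your triangle-inequality reduction collapses. Neither of your two proposed fixes rescues this. The symmetry/Poincar\'e route fails because, after Taylor expansion about the centroid $x_Q$, the linear term $\nabla h(x_Q)\cdot(x-x_Q)$ survives in $h(x)-\bar h_Q$ and is of size $\Theta(\kappa^{-1})$ near the boundary of $Q$; symmetry only kills this term in the \emph{average} $\bar h_Q$, not pointwise, and Poincar\'e on a cell of side $\kappa^{-1}$ gives precisely $\kappa^{-1}\|\nabla h\|_2$, no better. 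Your second option, reinterpreting $P_\kappa$ as projection onto piecewise polynomials, changes the statement being proved.

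The actual proof (this lemma is cited from \cite{arias2018remember} and not proved in the paper) uses a two-scale argument that crucially yields $c<1$. Fix a large constant $r$ depending only on $(\beta,d)$ and consider the coarse grid $\cal G_{\kappa/r}$. On each coarse cell approximate $h$ by its Taylor polynomial $p$ of degree $\underline\beta$; this gives $\|h-p\|_2\lesssim(\kappa/r)^{-\beta}\asymp\kappa^{-\beta}$. The key step is a finite-dimensional norm equivalence (this is \cite[Lemma~7.4]{arias2018remember}, also used in the paper's proof of Proposition~\ref{prop:H separation}): for any polynomial $q$ of degree at most $\underline\beta$ on $[0,1]^d$, one has $\|q\|_2\leq c_2\|P_r q\|_2$ once $r$ is chosen large enough in terms of $\underline\beta$. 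Applying this cellwise on the coarse grid and noting that $P_r$ on a coarse cell coincides with $P_\kappa$ restricted there, one obtains $\|p\|_2\leq c_2\|P_\kappa p\|_2$, hence
\[
\|h\|_2\;\leq\;\|h-p\|_2+\|p\|_2\;\leq\;c'\kappa^{-\beta}+c_2\|P_\kappa p\|_2\;\leq\;c'\kappa^{-\beta}+c_2\big(\|P_\kappa h\|_2+\|h-p\|_2\big),
\]
which rearranges to the desired inequality with $c=1/c_2$.
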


Based on \Cref{lem:ingster_approx} we set $\kappa \asymp \eps^{-1/\beta}$. This resolution is chosen to ensure that the discrete approximation to any $\beta$-smooth density is sufficiently accurate, that is, approximate $\eps$-separation is maintained even after discretization. We see now that our problem is reduced entirely to testing over $\cal P_\sf{Db}$, so we may apply \Cref{prop:inform proof} with $k=\kappa^d \asymp \eps^{-d/\beta}$, which yields the minimax optimal rates from \Cref{THM:P_SGDB UPPER,THM:ROBUST}.

Our proof of the achieavability direction in \Cref{THM:HELLINGER GOF} follows similarly by reduction to goodness-of-fit testing for discrete distributions \cite{daskalakis2018distribution} under Hellinger separation, where it is known that $n_\sf{GoF}(\eps, \H, \cal P_\sf{D}) \asymp \sqrt{k}/\eps^2$. The key step is to prove a result similar to \Cref{lem:ingster_approx} but for $\H$ instead of $\|\cdot\|_2$. 

\begin{proposition}\label{prop:H separation}
For any $\beta \in (0,1]$, $C>1$ and $d\geq1$ there exists a constant $c>0$ such that 
\begin{equation*}
    c\H(f, g) \leq \H(P_\kappa f, P_\kappa g) \leq \H(f,g)
\end{equation*}
holds for any $f,g \in \cal P_\sf{H}(\beta, d, C)$, provided we set $\kappa = (c\eps)^{-2/\beta}$.
\end{proposition}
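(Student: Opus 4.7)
The plan is to treat the two inequalities separately. The upper bound $\H(P_\kappa f, P_\kappa g)\leq\H(f,g)$ is a data-processing statement that I would verify by a direct Cauchy--Schwarz calculation cell-by-cell. The lower bound, the substantive direction, I would obtain from the triangle inequality in the square-root metric, the pointwise approximation $\|f-P_\kappa f\|_\infty\lesssim\kappa^{-\beta}$ (valid because $\beta\leq 1$ makes $f$ globally H\"older continuous of order $\beta$), and the separation hypothesis $\H(f,g)\geq\eps$ implicit in how the proposition is invoked inside the proof of \Cref{THM:HELLINGER GOF}.

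For the upper bound, on each grid cell $A_i$ Cauchy--Schwarz gives $\int_{A_i}\sqrt{fg}\,dx\leq\sqrt{\int_{A_i}f\,dx}\sqrt{\int_{A_i}g\,dx}$. Since $P_\kappa f$ equals the constant $\kappa^d\int_{A_i}f$ on $A_i$, summing over cells yields $\int\sqrt{fg}\leq\int\sqrt{P_\kappa f\cdot P_\kappa g}$, whence $\H^2(f,g)\geq\H^2(P_\kappa f,P_\kappa g)$ via the identity $\H^2(\cdot,\cdot)=2-2\int\sqrt{\cdot\,\cdot}$.

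For the lower bound, the key pointwise estimate is $|f(x)-P_\kappa f(x)|\leq C'\kappa^{-\beta}$, which follows from $\beta$-H\"older continuity since $P_\kappa f(x)$ is the mean of $f$ over a cell of diameter $O(\kappa^{-1})$, and likewise for $g$. Combining with the elementary bound $(\sqrt a-\sqrt b)^2\leq|a-b|$ for $a,b\geq0$ yields
$$\H^2(f,P_\kappa f)\leq\int_{[0,1]^d}|f-P_\kappa f|\,dx\leq C'\kappa^{-\beta},$$
and the analogous bound for $g$. With the choice $\kappa=(c\eps)^{-2/\beta}$ this reads $\H(f,P_\kappa f)\vee\H(g,P_\kappa g)\leq\sqrt{C'}\,c\eps$. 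The Hellinger triangle inequality then gives
$$\H(f,g)\leq\H(f,P_\kappa f)+\H(P_\kappa f,P_\kappa g)+\H(P_\kappa g,g)\leq 2\sqrt{C'}\,c\eps+\H(P_\kappa f,P_\kappa g),$$
and invoking $\H(f,g)\geq\eps$ to replace the $c\eps$ on the right by $c\H(f,g)$, a sufficiently small choice of $c$ absorbs the error term and produces $\H(P_\kappa f,P_\kappa g)\geq\tfrac12\H(f,g)$.

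The main obstacle is conceptual rather than computational: the lower bound cannot hold uniformly over all pairs $f,g\in\cal P_\sf{H}$, since any two densities with equal cell averages but $f\neq g$ satisfy $\H(P_\kappa f,P_\kappa g)=0<\H(f,g)$. The proposition therefore genuinely needs the hypothesis $\H(f,g)\geq\eps$ to make the $O(\kappa^{-\beta/2})=O(\eps)$ projection error comparable to an $\Omega(\eps)$ signal. This is exactly how the statement is used in the proof of \Cref{THM:HELLINGER GOF}: $P_\kappa$ reduces $\beta$-smooth density testing to discrete goodness-of-fit over an alphabet of size $\kappa^d\asymp\eps^{-2d/\beta}$ with Hellinger separation $\asymp\eps$, after which the known bound $n_\sf{GoF}(\eps,\H,\cal P_\sf{D}(k))\asymp\sqrt{k}/\eps^2$ delivers the claimed $n_\sf{GoF}\lesssim\eps^{-2(\beta+d/2)/\beta}$.
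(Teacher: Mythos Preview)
Your argument is correct and considerably more elementary than the paper's. The upper bound is exactly the data-processing inequality for Hellinger, and the lower bound via the triangle inequality for $\H$ together with $\H^2(f,P_\kappa f)\leq\|f-P_\kappa f\|_1\leq\|f-P_\kappa f\|_\infty\lesssim\kappa^{-\beta}$ is fully rigorous for $\beta\in(0,1]$. You are also right that the statement implicitly carries the hypothesis $\H(f,g)\geq\eps$; this is how it is invoked in the proof of \Cref{THM:HELLINGER GOF}.

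The paper takes a rather different route for the lower bound. Instead of bounding $\H(f,P_\kappa f)$ directly, it introduces piecewise-polynomial approximants $u_f,u_g$ of $\sqrt f,\sqrt g$ on a coarser grid $\cal G_{\kappa/r}$, controls $\|u_f-\sqrt f\|_\infty$ via Jackson's theorem (\Cref{lem:polynomial approximation}) and the modulus-of-continuity bound $\omega(\delta;\sqrt{f_+})\leq\omega(\delta;f)^{1/2}$, then invokes a polynomial norm-equivalence lemma (\cite[Lemma~7.4]{arias2018remember}) to compare $\|u_f-u_g\|_2$ with its projection onto $\cal G_\kappa$, and finally bounds the commutator $\|P_r\sqrt{\pi_{j,\kappa/r}f}-\sqrt{P_r\pi_{j,\kappa/r}f}\|_2$. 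This machinery is set up in parallel with Ingster's $L^2$ approximation lemma and would in principle be the right template for extending to $\beta>1$, where $\sqrt f$ loses H\"older regularity near zeros of $f$ and higher-order polynomial approximation becomes necessary. For the stated range $\beta\leq1$, however, your one-line estimate $(\sqrt a-\sqrt b)^2\leq|a-b|$ bypasses all of this, and the paper's approach is genuinely more work than needed.
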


\subsubsection{Gaussian sequence model}
Let us briefly discuss the Gaussian sequence class $\cal P_\sf{G}(s,C)$. 
Here our approach is not to discretize the distributions, but conceptually the test is very similar to the cases we've already covered. Let us write $\bb P_\sf{X} = \mu_{\theta_\sf{X}}$ and define ${\theta_\sf{Y}}, {\theta_\sf{Z}}$ analogously. For a given cutoff $r$, we simply 
\begin{equation}\label{eqn:gaussian test sketch}
    \text{reject the null if } T_{\sf{LF},\sf{G}} \eqdef \sum_{i=1}^r \Big\{\big(\widehat \theta_{\sf{X}, i} - \widehat \theta_{\sf{Z},i}\big)^2 - \big(\widehat\theta_{\sf{Y},i}-\widehat\theta_{\sf{Z},i}\big)^2\Big\} \geq 0, 
\end{equation}
where $\widehat\theta_{\sf{X},i} = \frac1n\sum_{j=1}^n X_{ji}$ and $\widehat\theta_{\sf{Y}}, \widehat\theta_\sf{Z}$ are defined analogously. Once again, a precise calculation of the mean and the variance of the sum above, yields the following result. 
\begin{proposition}[informal]
    Set $r\asymp\eps^{-1/s}$. The test \eqref{eqn:gaussian test sketch} succeeds if
    \begin{equation}\label{eqn:gauss mean-var informal}
        \underbracket[0.140px]{\,\,\eps^4}_{(\E T_{\sf{LF},\sf{G}})^2} \gtrsim \underbracket[0.140px]{\eps^2\left(\frac1n+\frac1m\right) + \eps^{-1/s}\left(\frac{1}{n^2} + \frac{1}{nm}\right)}_{\operatorname{var}(T_{\sf{LF},\sf{G}})}. 
    \end{equation}
\end{proposition}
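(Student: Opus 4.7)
The plan is to mirror the Chebyshev-plus-moments strategy used for $\cal P_\sf{Db}$ in Proposition~\ref{prop:inform proof}, exploiting the product structure of $\mu_\theta=\otimes_i\cal N(\theta_i,1)$ so that the truncated sum in~\eqref{eqn:gaussian test sketch} decomposes coordinate-wise. By the symmetry between the two hypotheses it suffices to analyse $T_{\sf{LF},\sf{G}}$ under the null $\theta_\sf{Z}=\theta_\sf{X}$: one shows $(\E T_{\sf{LF},\sf{G}})^2\gtrsim\eps^4$ and combines this with the target variance bound via Chebyshev's inequality applied to $\psi=\one\{T_{\sf{LF},\sf{G}}\geq 0\}$.

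First I would identify the per-coordinate Gaussian structure. For each $i\leq r$ the pair $(U_i,V_i):=(\widehat\theta_{\sf{X},i}-\widehat\theta_{\sf{Z},i},\,\widehat\theta_{\sf{Y},i}-\widehat\theta_{\sf{Z},i})$ is jointly Gaussian with means $(0,-a_i)$ where $a_i:=\theta_{\sf{X},i}-\theta_{\sf{Y},i}$, diagonal variances $1/n+1/m$, and crucially $\cov(U_i,V_i)=\operatorname{var}(\widehat\theta_{\sf{Z},i})=1/m$ from the shared $Z$-sample. This at once gives $\E T_{\sf{LF},\sf{G}}=-\sum_{i=1}^r a_i^2$. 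To lower-bound $|\E T_{\sf{LF},\sf{G}}|$ I would use the Sobolev tail estimate $\sum_{i>r}a_i^2\leq r^{-2s}\sum_{i>r}i^{2s}a_i^2\lesssim r^{-2s}$, coming from $\theta_\sf{X},\theta_\sf{Y}\in\cal E(s,C)$, together with Lemma~\ref{lem:gaussian equivalence} which turns $\TV$-separation into $\|\theta_\sf{X}-\theta_\sf{Y}\|_2\gtrsim\eps$. Choosing $r=C'\eps^{-1/s}$ with $C'$ large enough retains at least half of the squared separation, so $\sum_{i=1}^r a_i^2\gtrsim\eps^2$ and hence $(\E T_{\sf{LF},\sf{G}})^2\gtrsim\eps^4$.

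Next I would compute the variance. By independence across $i$, $\operatorname{var}(T_{\sf{LF},\sf{G}})=\sum_{i=1}^r \operatorname{var}(U_i^2-V_i^2)$, and Isserlis' identities for jointly Gaussian pairs --- $\operatorname{var}(U^2)=2\sigma_U^4+4\mu_U^2\sigma_U^2$ and $\cov(U^2,V^2)=2\cov(U,V)^2+4\mu_U\mu_V\cov(U,V)$ --- yield
\begin{equation*}
\operatorname{var}(U_i^2-V_i^2)=4\Bigl(\tfrac{1}{n^2}+\tfrac{2}{nm}\Bigr)+4a_i^2\Bigl(\tfrac{1}{n}+\tfrac{1}{m}\Bigr).
\end{equation*}
The key point is that the $4/m^2$ contributions inside $\operatorname{var}(U_i^2)+\operatorname{var}(V_i^2)$ are cancelled exactly by $-2\cov(U_i^2,V_i^2)=-4/m^2$, which is the Gaussian-sequence incarnation of the ``magic cancellation'' flagged after~\eqref{eq:l2dtd2}. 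Summing and bounding $\sum_{i=1}^r a_i^2\lesssim\eps^2$ in the worst-case regime $\|\theta_\sf{X}-\theta_\sf{Y}\|_2\asymp\eps$ that saturates the alternative reproduces the right-hand side of~\eqref{eqn:gauss mean-var informal}; Chebyshev then gives error at most $1/3$ under either hypothesis.

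The main obstacle is really just bookkeeping in the variance step: overlooking the off-diagonal $\cov(U_i,V_i)=1/m$ would leave a spurious $r/m^2$ contribution in $\operatorname{var}(T_{\sf{LF},\sf{G}})$ and predict the false requirement $m\gtrsim r^{1/2}/\eps^2\asymp n_\sf{GoF}$, destroying the $mn\gtrsim n_\sf{GoF}^2$ trade-off we are trying to exhibit. A small additional point to verify rigorously is the restriction to the worst case $\|\theta_\sf{X}-\theta_\sf{Y}\|_2\asymp\eps$: enlarging the separation scales $(\E T_{\sf{LF},\sf{G}})^2$ quadratically in $\sum a_i^2$ while $\operatorname{var}(T_{\sf{LF},\sf{G}})$ only grows linearly in $\sum a_i^2$, so the Chebyshev ratio can only improve and the restriction is harmless.
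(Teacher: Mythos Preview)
Your proof is correct and complete. The direct coordinate-wise computation using the joint Gaussian law of $(U_i,V_i)$ and Isserlis' identities is clean, and you correctly isolate the crucial cancellation of the $1/m^2$ terms coming from $\cov(U_i,V_i)=1/m$. Your treatment of the mean via the Sobolev tail bound, and your observation that larger separation can only help the Chebyshev ratio, are both sound.

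The paper takes a different route: rather than computing moments of $T_{\sf{LF},\sf{G}}$ directly, it instantiates a single meta-result (Proposition~\ref{prop:meta lfht}) that bounds the mean and variance of the $L^2$-difference statistic~\eqref{eqn:T_LF def} in terms of abstract quantities $A_{fgh}=\langle f[P_r(g-h)]^2\rangle$ and $B_{fg}=\sum_i\langle f\phi_i P_r(g\phi_i)\rangle$, valid for any orthonormal system $\{\phi_i\}$ in $L^2(\mu)$. For $\cal P_\sf{G}$ one takes $\mu=\otimes_i\cal N(0,1)$, $\phi_i(x)=x_i$, and then computes $A_{uvt}\lesssim\|P_r(v-t)\|_2^2$ and $B_{uv}\lesssim r$ by elementary Gaussian moment calculations; plugging into the general variance bound recovers~\eqref{eqn:P_G sufficient}. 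The paper also first removes the diagonal terms (working with $T_\sf{LF}^{-\sf d}$) and checks separately that the diagonal is harmless, whereas your computation handles the full statistic in one shot. The trade-off is clear: your argument is shorter and makes the $1/m^2$ cancellation completely explicit for this model, while the paper's unified framework lets $\cal P_\sf{H}$, $\cal P_\sf{Db}$, $\cal P_\sf{G}$, $\cal P_\sf{D}$ all be handled by the same variance inequality, at the cost of hiding the cancellation inside the combinatorics of Proposition~\ref{prop:meta lfht}.
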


Similarly to \eqref{eqn:informal proof}, we can again read of the constraints that define the region $\cal R_\sf{LF}(\eps, \cal P_\sf{G}(s,C))$ from \eqref{eqn:gauss mean-var informal}. The first and second terms in the variance ensure that $n,m =\Omega(1/\eps^2)$ and $n^2,mn =  \Omega(n_\sf{GoF}^2) = \Omega(\eps^{-(4s+1)/s})$ respectively. 

\subsubsection{General discrete distributions} 
Finally, we comment on $\cal P_\sf{D}$. Here we can no longer assume that $C_\infty = \cal O(1)$ in \Cref{prop:inform proof}, in fact $C_\infty = \Omega(k)$ is possible. We get around this by utilizing the reduction based approach of \cite{diakonikolas2016new, goldreich2017introduction}. We take the first half of the data and compute $$B_i=1+\#\left\{j \leq \frac{\min\{k, n\}}{2}:X_j=i\right\}+\#\left\{j \leq \frac{\min\{k, n\}}{2}:Y_j=i\right\}+\#\left\{j\leq\frac{\min\{k, m\}}{2}:Z_j=i\right\}$$ for each $i\in[k]$. Then, we divide the $i$'th support element into $B_i$ bins, uniformly. This transformation preserves pairwise total variation, but reduces the $\ell^\infty$-norms of $p_\sf{X},p_\sf{Y},p_\sf{Z}$ with high probability,  to order $1/(k\land (n \lor m))$, after an additional step that we omit here. We can then perform the usual test with these new ``flattened" distributions, using the untouched half of the data. 

It is insightful to interpret the ``flattening" procedure followed by $L^2$-distance comparison as a one-step procedure that simply compares a different divergence of the empirical measures.  Intuitively, in contrast to the regular classes, one needs to mitigate the effect of potentially massive differences in the empirical counts on bins $i\in[k]$ where both $p_\sf{X}(i)$ and $p_\sf{Y}(i)$ are large but their difference $|p_\sf{X}(i)-p_\sf{Y}(i)|$ is moderate. Let $\sf{LC}_\lambda$ be the ``weighted Le-Cam divergence" which we define as $\sf{LC}_\lambda(p\|q)=\sum_i (p_i-q_i)^2/(p_i + \lambda q_i)$ for two probability mass functions $p,q$. One may interpret the two step procedure (flattening followed by comparing $L^2$ distances) as approximately comparing empirical weighted Le-Cam divergences. Performing the test in two steps is a proof device, and we expect the test that directly compares, say, the Le-Cam divergence of the empirical probability mass functions to have the same minimax optimal sample complexity. Such a one-shot approach is used for example in the paper \cite{chan2014optimal} for two-sample testing. While Ingster \cite{ingster1987minimax} only considers goodness-of-fit testing to the uniform distribution, his notation also suggests the idea of normalizing by the bin mass under the null.

\subsection{Lower bounds for \Cref{THM:P_SGDB UPPER,THM:P_D UPPER,THM:ROBUST,THM:HELLINGER GOF}}\label{sec:lower sketch}
The reductions given in \Cref{PROP:REDUCTIONS} immediately yields a number of tight lower bounds on $n$ and $m$. Namely, \eqref{eqn:LF -> HT} gives $m\gtrsim1/\eps^2$ and \eqref{eqn:LF -> GoF} gives $n\gtrsim n_\sf{GoF}$. Obtaining the lower bound on the product term $m n$ proves more challenging. First we introduce the well known information theoretic tools we use to prove our minimax lower bounds. 

Suppose that we have two (potentially composite) hypotheses $H_0,H_1$ that we test against each other. Our strategy relies on the method of two fuzzy hypotheses \cite{tsybakov}, which is a generalization of Le-Cam's two point method. Write $\cal M(\cal X)$ for the set of probability measures on the set $\cal X$. 
\begin{lemma}\label{lem:lower bound main}
Take two hypotheses $H_i \subseteq \cal M(\cal X)$ and random $P_i \in \cal M(\cal X)$. Then
\begin{align*}
    2\,\inf\limits_{\psi}\max_{i=0,1}\sup\limits_{P \in H_i} P(\psi\neq i) \geq 1-\TV(\bb E P_0, \bb E P_1) -\sum_i\bb P(P_i\notin H_i),
\end{align*}
where the infimum is over all tests $\psi:\cal X \to \{0,1\}$.
\end{lemma}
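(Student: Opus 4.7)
The plan is to reduce the minimax lower bound to a Bayesian two-point argument applied to the averaged measures $\bar P_i \eqdef \bb E P_i$, while paying a penalty proportional to $\bb P(P_i \notin H_i)$ to account for the fact that the random priors $P_i$ may place mass outside $H_i$. This is the standard ``fuzzy hypotheses'' generalization of Le Cam's two-point method in the style of \cite{tsybakov}.

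First I would fix an arbitrary test $\psi:\cal X\to\{0,1\}$ and observe that, for each $i\in\{0,1\}$ and each realization of $P_i$ that happens to lie in $H_i$, we have $P_i(\psi\neq i)\leq\sup_{P\in H_i} P(\psi\neq i)$. Taking expectations and using that $P_i(\psi\neq i)\in[0,1]$ together with Fubini to identify $\bb E[P_i(\psi\neq i)]=\bar P_i(\psi\neq i)$, this yields
\begin{equation*}
    \sup_{P\in H_i} P(\psi\neq i) \;\geq\; \bb E\!\left[P_i(\psi\neq i)\,\one\{P_i\in H_i\}\right] \;\geq\; \bar P_i(\psi\neq i) - \bb P(P_i\notin H_i).
\end{equation*}

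Next I would sum this inequality over $i=0,1$ and use the elementary bound $2\max_i a_i\geq a_0+a_1$ to obtain
\begin{equation*}
    2\max_{i}\sup_{P\in H_i}P(\psi\neq i) \;\geq\; \bar P_0(\psi=1)+\bar P_1(\psi=0) \;-\;\sum_{i}\bb P(P_i\notin H_i).
\end{equation*}
The argument concludes by taking the infimum over $\psi$ on both sides and invoking the variational characterization of total variation,
\begin{equation*}
    \inf_{\psi}\bigl\{\bar P_0(\psi=1)+\bar P_1(\psi=0)\bigr\} \;=\; 1-\TV(\bar P_0,\bar P_1),
\end{equation*}
which is just the Neyman--Pearson identity.

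There is no real analytic obstacle: the proof is mechanical once one recognizes the averaging-plus-penalty decomposition. The only points requiring mild care are the measurability of the map $P\mapsto P(\psi\neq i)$ so that Fubini applies to the random measure $P_i$, and correctly tracking the factor of $2$ that arises from replacing $\max$ by the arithmetic mean. Everything else follows from definitions.
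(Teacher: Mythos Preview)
Your proposal is correct and follows essentially the same approach as the paper. The paper phrases the argument by conditioning on the event $\{P_i\in H_i\}$ to form $\tilde P_i$ and then bounding $\TV(\bb E\tilde P_0,\bb E\tilde P_1)\leq\TV(\bb E P_0,\bb E P_1)+\sum_i\bb P(P_i\notin H_i)$, whereas you subtract the penalty directly at the level of the error probabilities via the indicator decomposition; these are two equivalent ways of executing the same fuzzy-hypotheses idea.
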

\begin{proof}
We may assume without loss of generality that $\bb P(P_i\in H_i)>0$ for both $i=0$ and $i=1$, as otherwise the claim is vacuous. Let $\tilde P_i$ be distributed as $P_i | \{P_i \in H_i\}$. Then for any set $A \subset \cal X$ we have
\begin{equation*}
    \left|\bb E \tilde{P}_i(A) - \bb E P_i(A)\right| = \bb P( P_i \notin H_i)\Big|\bb E[P_i(A) | P_i \in H_i] - \bb E[P_i(A) | P_i \notin H_i]\Big| \leq \bb P(P_i \notin H_i).
\end{equation*}
In particular, $\TV(\bb E\tilde{P}_0, \bb E\tilde{P}_1) \leq \TV(\bb EP_0, \bb EP_1)+\sum_i\bb P(P_i\notin H_i)$. Therefore, for any $\psi$
\begin{align*}
    \max_{i=0,1}\sup\limits_{\bb P_i\in H_i} \bb P_i(\psi\neq i) &\geq \frac12(1-\TV(\bb E \tilde{P}_0, \bb E\tilde{P}_1)) \geq \frac12\left(1-\TV(\bb EP_0, \bb EP_1) - \sum_i\bb P(P_i\notin H_i)\right).
\end{align*}
\end{proof}

For clarity, we formally state \eqref{eqn:LFHT definition} as testing between the hypotheses
\begin{equation}\label{eqn:lfht lower main}
\begin{aligned}
    H_0&=\{\bb P_\sf{X}^{\otimes n} \otimes \bb P_\sf{Y}^{\otimes n} \otimes \bb P_\sf{X}^{\otimes m}:\,\bb P_\sf{X},\bb P_\sf{Y}\in\cal P,\,\TV(\bb P_\sf{X},\bb P_\sf{Y}) \geq \eps\} \\
    &\text{versus} \\
    H_1&=\{\bb P_\sf{X}^{\otimes n} \otimes \bb P_\sf{Y}^{\otimes n} \otimes P_\sf{Y}^{\otimes m}:\,\bb P_\sf{X},\bb P_\sf{Y}\in\cal P,\,\TV(\bb P_\sf{X},\bb P_\sf{Y}) \geq \eps\}.
\end{aligned}
\end{equation}

The lower bounds of \Cref{THM:ROBUST} follow from those for \Cref{THM:P_SGDB UPPER,THM:P_D UPPER} so we may focus on the latter.

\subsubsection{Smooth densities}
For concreteness let us focus on the case of $\cal P = \cal P_\sf{H}$. We take $\bb P_0$ to be uniform on $[0,1]^d$ and $\bb P_\eta$ to have density 
\begin{equation}\label{eqn:main P_S construction}
    p_\eta = 1 + \sum_{j\in[\kappa]^d} \eta_j h_j
\end{equation}
with respect to $\bb P_0$. Here $\kappa \in \N$, each $\eta \in \{\pm1\}^{\kappa^d}$ is uniform and $h_j$ is a bump function supported on the $j$'th cell of the regular grid of size $\kappa^d$ on $[0,1]^d$. The parameters $\kappa,h_j$ of the construction are set in a way to ensure $\bb P_\eta \in \cal P_\sf{H}$ and $\TV(\bb P_0, \bb P_\eta) \geq \eps$ with probability $1$ over $\eta$. We have
\begin{align}
    1+ \chi^2(\bb E_\eta \bb P_\eta^{\otimes m}\|\bb P_0^{\otimes m}) &=\int_{[0,1]^{dm}} \left(\bb E_\eta \prod\limits_{i=1}^n p_\eta(x_i)\right)^2 \D x_1 \dots \D x_m\nonumber \\
    &= \bb E_{\eta\eta'}  \langle p_\eta, p_{\eta'}\rangle_{L^2}^m \nonumber \\ 
    &= \bb E (1+\|h_\one\|_2^2\langle\eta,\eta'\rangle)^m \label{eqn:lower bound sketch} \\
    &\leq \exp(m^2 \|h_\one\|_2^4 \kappa^d), \nonumber
\end{align}
where $\eta, \eta'$ are i.i.d. uniform and we assume $\|h_\one\|_2 = \|h_j\|_2$ for all $j\in[\kappa]^d$. The above approach is what Ingster used in his seminal paper \cite{ingster1987minimax} on goodness-of-fit testing, which we adapt to likelihood-free hypothesis testing \eqref{eqn:lfht lower main}. Take $P_0 = \bb P_\eta^{\otimes n} \otimes \bb P_0^{\otimes n} \otimes \bb P_\eta^{\otimes m}$ and $P_1 = \bb P_\eta^{\otimes n} \otimes \bb P_0^{\otimes n} \otimes \bb P_0^{\otimes m}$ in \Cref{lem:lower bound main}. Bounding $\TV(\bb E P_0, \bb E P_1)$ proceeds in multiple steps: first, we drop the $Y$-sample using the data-processing inequality. Then, we use Pinsker's inequality and the chain rule to bound $\TV$ by the $\KL$ divergence of $Z$ conditioned on $X$. We bound $\KL$ by $\chi^2$, arriving at the same equation \eqref{eqn:lower bound sketch}. However, the mixing parameters $\eta,\eta'$ are no longer independent, instead, given $X$ they're independent from the posterior. In the remaining steps we use the fact that the posterior factorizes over the bins and the calculation is reduced to just a single bin where it can be done explicitly. 

Let us now turn to the lower bound in \Cref{THM:HELLINGER GOF}. The difference in the rate is a consequence of the fact that $\H$ and $\TV$ behave differently for densities near zero. Inspired by this, we slightly modify the construction \eqref{eqn:main P_S construction} by putting the perturbations at density level $\eps^2$ as opposed to $1$. Bounding $\TV$ then proceeds analogously to the steps outlined above. 

\subsubsection{Bounded discrete distributions} The construction is entirely analogous to the case of $\cal P_\sf{H}$ and we refer to the \AoScite{supplement \cite{supplement}}{appendix} for details. In the computer science community the construction of $p_\eta$ is attributed to Paninski \cite{paninski2008coincidence}. 

\subsubsection{Gaussian sequence model} The null distribution $\bb P_0$ is the no signal case $\otimes_{i=1}^\infty \cal N(0, 1)$ while the alternative is $\bb P_\theta = \otimes_{i=1}^\infty \cal N(\theta_i, 1)$ where $\theta$ has prior distribution $\otimes_{i=1}^\infty \cal N(0, \gamma_i)$ for an appropriate sequence $\gamma \in \R^\N$. We refer to the \AoScite{supplement \cite{supplement}}{appendix} for more details. 

\subsubsection{General discrete distributions} Once again, the irregular case $\cal P_\sf{D}$ requires special consideration. Clearly the lower bound for $\cal P_\sf{Db}$ carries over. However, in the regime $k \gtrsim 1/\eps^4$ said lower bound becomes suboptimal, and we need a new construction, for which we utilize the moment-matching based approach of Valiant \cite{valiant2011testing} as a black-box. The  construction is derived from that used for two-sample testing by Valiant, namely the pair $(\bb P_\sf{X}, \bb P_\sf{Y})$ is chosen uniformly at random from $\{(p\circ\pi, q\circ\pi)\}_{\pi\in S_k}$. Here we write $S_k$ for the symmetric group on $[k]$ and 
\begin{equation*}
    p(i) = \begin{cases} \frac{1-\eps}{n} &\text{for } i \in[n] \\ \frac{4\eps}{k} &\text{for } i\in[\frac k2, \frac{3k}{4}] \\ 0&\text{otherwise,} \end{cases} 
\end{equation*}
where we assume that $m\leq n \leq k/2$ and define $q(i)=p(i)$ for $i \in [k/2-1]$ and $q(i)=p(3k/2-i)$ for $i\in[k/2,k]$. This construction gives a lower bound matching our upper bound in the regime $m \lesssim n \lesssim k$. The final piece of the puzzle follows by the reduction from two-sample testing with unequal sample size \eqref{eqn:LF TS equiv}, as this shows that likelihood-free hypothesis testing is at least as hard as two-sample testing in the $n\leq m$ regime, and known lower bounds on the sample complexity of two-sample testing \cite{bhattacharya2015testing} (see also \Cref{table:prior results TV}) let us conclude.

\section{Open problems}\label{sec:open problems}

A natural follow-up direction to the present paper would be to study multiple hypothesis testing where $\bb P_\sf{X}$ and $\bb P_\sf{Y}$ are replaced by  $\bb P_{\sf{X}_1}, \dots, \bb P_{\sf{X}_M}$ with corresponding hypotheses $H_1,\dots,H_M$. The geometry of the family $\{\bb P_{\sf{X}_j}\}_{j \in [M]}$ might have interesting effects on the sample complexities. 

\begin{open_problem}
Study the dependence on $M>2$ of likelihood-free testing with $M$ hypotheses. 
\end{open_problem}

Another possible avenue of research is the study of local minimax/instance optimal rates, which is the focus of recent work \cite{valiant2017automatic, balakrishnan2019hypothesis, chhor2020sharp, chhor2021goodness, lam2022local} in the case of goodness-of-fit and two-sample testing.

\begin{open_problem}
Define and study the local minimax rates of likelihood-free hypothesis testing.
\end{open_problem}

Our discussion of the Hellinger case in \Cref{sec:results H} is quite limited, natural open problems in this direction include the following. 
\begin{open_problem}
 Let $\cal P \in \{\cal P_\sf{H}(\beta, d, C), \cal P_\sf{Db}(k,C_\sf{Db}), \cal P_\sf{D}(k)\}$. 
 \begin{enumerate}[(i)]
     \item Study $n_\sf{GoF}$ and $n_\sf{TS}$ for $\cal P$ under Hellinger separation.
     \item Determine the trade-off $\cal R_\sf{LF}$ for $\cal P$ under Hellinger separation.  
 \end{enumerate}
\end{open_problem}
More ambitiously, one might ask for a characterization of `regular` models $(\cal P,\sf{d})$ for which goodness-of-fit testing and two-sample testing are equally hard and the region $\cal R_\sf{LF}$ is given by the trade-off in \Cref{THM:P_SGDB UPPER}.
\begin{open_problem}
 Find a general family of `regular` models $(\cal P, \sf{d})$ for which
 \begin{align*}
     n_\sf{GoF}(\eps, \sf{d}, \cal P) &\asymp n_\sf{TS}(\eps, \sf{d}, \cal P) \text{ and }\\
     \cal R_\sf{LF}(\eps, \sf{d}, \cal P) &\asymp \{m\geq1/\eps^2, n\geq n_\sf{GoF}(\eps, \sf{d}, \cal P), mn \geq n_\sf{GoF}^2(\eps,\sf{d}, \cal P)\}. 
 \end{align*}

\end{open_problem}

Recent follow-up work \cite{gerber2023minimax} showed that Scheff\'e's test is also minimax optimal and achieves the entire trade-off in Figure \ref{fig:phase diagram}. It appears that the optimality of Scheff\'e's test is a consequence of the minimax point of view. Basically, in the worst-case the log-likelihood ratio between the hypotheses is close to being binary, hence quantizing it to $\{0,1\}$ does not lose optimality. Consequently, an important future direction is to better understand the competitive properties of various tests and studying some notion of regret, see \cite{acharya2012competitive} for prior related work. 

\begin{open_problem}
 Study the competitive optimality of likelihood-free hypothesis testing algorithms, and Scheff\'e's test in particular. 
\end{open_problem}

\section*{Acknowledgements}
We thank Julien Chhor for pointing out the connection between flattening and the Jensen-Shannon divergence. PG was supported in part by the NSF award IIS-1838071. YP was supported in part by the NSF under grant No CCF-2131115 and by the MIT-IBM Watson AI Lab.

\bibliographystyle{alpha}
\bibliography{main}

\addtocontents{toc}{\protect\setcounter{tocdepth}{1}}
\appendix

\section{Proof of achievability in Theorem \ref{THM:P_SGDB UPPER} and \ref{THM:P_D UPPER}} \label{sec:thm12 upper proof}
Let $\mu$ be a measure on the measurable space $(\cal X, \cal F)$. Let $\{\phi_i\}_{i \in [r]}$ be a sequence of orthonormal functions in $L^2(\mu)$, where we use the notation $[r]\eqdef \{1,2,\dots,r\}$.
For $f \in L^2(\mu)$, define its projection onto the span
of $\{\phi_1,\dots,\phi_r\}$ as
\begin{equation*}
P_r(f) \eqdef \sum\limits_{i\in[r]} \langle f\phi_i\rangle \phi_i,
\end{equation*}
where we write $\langle\cdot\rangle $ for integration with respect to $\mu$ and $\|\cdot\|_p$ for $\|\cdot\|_{L^p(\mu)}$. Given an i.i.d. sample $X=(X_1,\dots,X_n)$ from some density $f$, define its
empirical projection as
\begin{equation*}
\widehat{P}_r[X] \eqdef \sum\limits_{i \in [r]} \left(\frac1n\sum\limits_{j=1}^n \phi_i(X_j)\right)\phi_i.
\end{equation*}
We define the difference in $L^2$-distances statistics to be
\begin{equation}\label{eqn:T_LF def}
T_\sf{LF} = \|\widehat{P}_r[X]-\widehat{P}_r[Z]\|_2^2-\|\widehat{P}_r[Y]-\widehat{P}_r[Z]\|_2^2, 
\end{equation}
for an appropriate choice of $\mu$ and $\{\phi_j\}_{j\geq1}$ depending on the class $\cal P$. Before calculating the mean and variance, we separate out the diagonal terms in $T_\sf{LF}$ thereby decomposing the statistic into two terms:
\begin{equation}\label{eqn:diag def}
    T_\sf{LF} \eqdef T^{-\sf{d}}_\sf{LF} + \underbrace{\frac{1}{n^2} \sum_{i\in[r]} \sum_{j\in[n]}\big(\phi_i^2(X_j)-\phi_i^2(Y_j)\big)}_{\eqdef D}, 
\end{equation}
which will simplify our proofs somewhat.

To ease notation in the results below, we define the quantities
\begin{equation}\label{eqn:AB def}
\begin{aligned}
A_{fgh} &= \langle f\big[P_r(g-h)\big]^2\rangle \\
B_{fg} &= \sum\limits_{i=1}^r \langle f\phi_i P_r(g\phi_i)\rangle
\end{aligned}
\end{equation}
for $f,g,h \in L^2(\mu)$, assuming the quantities involved are well-defined. We are ready to state our meta-result from which we derive all our likelihood-free hypothesis testing upper bounds.
\begin{proposition}\label{prop:meta lfht}
Let $f,g,h$ denote probability densities on $\cal X$ with respect to $\mu$, and suppose
we observe independent samples $X, Y, Z$
of size $n,n,m$ from $f,g,h$ respectively. Then 
\begin{align*}
\bb E T^{-\sf{d}}_\sf{LF} &= \|P_r(f-h)\|_2^2-\|P_r(g-h)\|_2^2+\frac1n(\|P_r(g)\|_2^2-\|P_r(f)\|_2^2) \\
\var(T^{-\sf{d}}_\sf{LF}) &\lesssim \frac{A_{ffh}+A_{ggh}}{n} + \frac{A_{hfg}}{m} + \frac{\|f+g+h\|_2^4+|B_{fh}|+|B_{gh}|}{nm} \\&\qquad  + \frac{|B_{ff}|+|B_{gg}|+\|f+g+h\|_2^4 + \sqrt{A_{ff0} A_{ffh} + A_{gg0}A_{ggh}}}{n^2} \\
&\qquad + \frac{|B_{ff}|+|B_{gg}| + \|f+g+h\|_2^4 + A_{ff0} + A_{gg0}}{n^3}, 
\end{align*}
where the implied constant is universal.
\end{proposition}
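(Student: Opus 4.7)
The plan is to rewrite $T^{-\sf{d}}_\sf{LF}$ in a form that isolates the contribution of each sample, compute its mean directly by linearity, and attack the variance via the law of total variance applied conditional on $Z$, together with a Hoeffding-type decomposition of the resulting $U$-statistics in $X$ and $Y$. Expanding squares and separating out the diagonal piece $D$ yields
\[
T^{-\sf{d}}_\sf{LF} = U_X - U_Y - 2V_X + 2V_Y,
\]
where $U_X \eqdef n^{-2}\sum_{i\in[r]}\sum_{j\neq k}\phi_i(X_j)\phi_i(X_k)$ is, up to the factor $(n-1)/n$, a symmetric degree-two $U$-statistic with kernel $\tilde h(x,y)=\sum_i\phi_i(x)\phi_i(y)$, and $V_X \eqdef \langle \widehat P_r[X],\widehat P_r[Z]\rangle$, with $U_Y,V_Y$ defined analogously.

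For the mean, the independence of the three samples together with the identities $\bb E[\tilde h(X_1,X_2)]=\|P_rf\|_2^2$ and $\bb E[V_X]=\langle P_rf,P_rh\rangle$ give $\bb E[T^{-\sf{d}}_\sf{LF}] = \tfrac{n-1}{n}(\|P_rf\|_2^2-\|P_rg\|_2^2) - 2\langle P_r(f-g),P_rh\rangle$, which rearranges to the stated formula, with the $\tfrac 1n(\|P_rg\|_2^2-\|P_rf\|_2^2)$ term arising from the $(n-1)/n$ off-diagonal correction.

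For the variance I would condition on $Z$: $\var(T^{-\sf{d}}_\sf{LF}) = \bb E_Z[\var(T^{-\sf{d}}_\sf{LF}\mid Z)] + \var(\bb E[T^{-\sf{d}}_\sf{LF}\mid Z])$. The conditional mean equals a constant minus $2\langle P_r(f-g),\widehat P_r[Z]\rangle$, a linear statistic in $Z$ whose variance is $\tfrac{4}{m}\var_h(P_r(f-g)(Z_1)) \leq 4A_{hfg}/m$, producing the $A_{hfg}/m$ term. Given $Z$, independence of $X$ and $Y$ splits the inner conditional variance as $\var(T_{XZ}\mid Z) + \var(T_{YZ}\mid Z)$ with $T_{XZ}\eqdef U_X-2V_X$, and by symmetry it suffices to analyse $T_{XZ}$. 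Viewed as a function of $X$ at fixed $Z$, $T_{XZ}$ is a degree-two $U$-statistic with kernel $\tilde h$ minus the linear statistic $\tfrac 2n \sum_j \widehat P_r[Z](X_j)$, and the linear part of its Hoeffding decomposition is $\tfrac{2}{n}\sum_j[P_rf(X_j)-\widehat P_r[Z](X_j)]$ modulo an $X$-constant. The crucial observation is that upon averaging over $Z$ the variance of this linear piece collapses to $\tfrac{4}{n}\bb E_f[(P_r(f-h))^2] + \cal O(1/(nm))$-corrections from the $Z$-fluctuation of $\widehat P_r[Z]$, i.e.\ it contributes $4A_{ffh}/n$ plus $\cal O((|B_{fh}|+A_{fh0})/(nm))$, rather than the much larger $4A_{ff0}/n$ produced by the linear part of $U_X$ alone. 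The degenerate (quadratic) piece of $U_X$ contributes $\lesssim B_{ff}/n^2$ by the standard $U$-statistic variance bound; the cross-covariance between the degenerate piece and the $Z$-variation of $V_X$, bounded via Cauchy--Schwarz, gives the $\sqrt{A_{ff0}A_{ffh}}/n^2$ and $A_{ff0}/n^3$ contributions; and the coarse $\|f+g+h\|_2^4$ summands come from crude norm bounds on the remaining bilinear $Z$-variance terms. A symmetric analysis of $T_{YZ}$ furnishes the $A_{ggh}/n$, $B_{gg}/n^2$, $|B_{gh}|/(nm)$ and $A_{gg0}/n^3$ pieces.

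The main obstacle is the careful Hoeffding-type bookkeeping: naively, the linear part of $\var(U_X)$ alone is $\asymp A_{ff0}/n$, which would be catastrophic for the tight $mn\gtrsim n_\sf{GoF}^2$ trade-off of \Cref{THM:P_SGDB UPPER}, since under the null $h=f$ this quantity does \emph{not} vanish. The nontrivial algebraic point is that combining $U_X$ with $-2V_X$ \emph{before} taking variance cancels the offending linear-in-$X$ term, leaving only the signal-sensitive $A_{ffh}/n$ (which is zero under $H_0$). Once this cancellation is in place, the remaining calculations reduce to routine manipulations using orthonormality of $\{\phi_i\}$, Cauchy--Schwarz, and the identity $\sum_{i,i'}\langle u\phi_i\phi_{i'}\rangle\langle v\phi_i\phi_{i'}\rangle = B_{uv}$, which convert moment expansions into the $A$-, $B$-, and norm-quantities defined in the statement.
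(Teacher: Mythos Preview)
Your approach is correct and amounts to a genuinely different (and arguably cleaner) organization of the same calculation. The paper proceeds by brute force: it labels the four pieces $\sf I,\sf{II},\sf{III},\sf{IV}$ of $T^{-\sf d}_\sf{LF}$, expands every pairwise (co)variance into sums over $i,i'\in[r]$, and then collects the coefficients of $1/n,1/m,1/(nm),1/n^2,1/n^3$ one at a time, simplifying each via the identities of \Cref{lem:properties of P_r}. No conditioning or Hoeffding decomposition is used; the key cancellation you identify (that the $A_{ff0}/n$ contribution from $\var(\sf I)$ is killed by $\cov(\sf I,\sf{III})$) emerges only after all six summands are combined and regrouped. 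Your route via the law of total variance in $Z$ plus a Hoeffding decomposition in $X$ (resp.\ $Y$) makes this cancellation structural rather than algebraic: the linear projection of $U_X-2V_X$ onto $X$ is automatically $\tfrac{2}{n}\sum_j[\tfrac{n-1}{n}P_rf-\widehat P_r[Z]](X_j)$, whose averaged-in-$Z$ second moment is $A_{ffh}+\cal O(1/n)$ rather than $A_{ff0}$.

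One small correction: the $\sqrt{A_{ff0}A_{ffh}}/n^2$ term does not arise from any covariance with the degenerate piece $Q_X$ (which is $Z$-independent and Hoeffding-orthogonal to the linear part), but simply from expanding $\langle f(P_r(f-h)-\tfrac1nP_rf)^2\rangle$ and applying Cauchy--Schwarz to the cross term $-\tfrac2n\langle fP_r(f-h)P_rf\rangle$. Once you track that correctly, your bookkeeping yields exactly the terms in the statement; the $\|f+g+h\|_2^4$ summands in the paper's bound are artifacts of its cruder term-by-term estimates and are not needed in your tighter organization (they are harmless in all downstream applications).
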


Proposition \ref{prop:meta lfht} is used to test \eqref{eqn:LFHT definition} by rejecting the null whenever $T^{-\sf{d}}_\sf{LF} \geq 0$. To prove that this procedure performs well we show that  $T^{-\sf{d}}_\sf{LF}$ concentrates around its mean by Chebyshev's inequality. For this we find sufficient conditions on the sample sizes $n,m$ so that $(\bb E T^{-\sf{d}}_\sf{LF})^2 \gtrsim \var(T^{-\sf{d}}_\sf{LF})$ for a small enough implied constant on the left.

While \Cref{prop:meta lfht} is enough to conclude the proof of our main theorems, notice that it uses the statistic $T_\sf{LF}^{-\sf{d}}$ which has the diagonal terms removed. For completeness we show that rejecting when $T_\sf{LF}\geq0$ is also minimax optimal, that is, the diagonal term $D$ in \eqref{eqn:diag def} can be included without degrading performance.

\subsection{The class $\cal P_\sf{Db}$}
\begin{proposition}\label{prop:P_Db upper}
For any $C>1$ there exists a constant $c > 0$ such that
\begin{equation*}
    \cal R_\sf{rLF}(\eps, \cal P_\sf{Db}(k,C), \sf{B}_\cdot) \supset c\left\{m\geq 1/\eps^2, n\geq \sqrt{k}/\eps^2, mn \geq k/\eps^4\right\}, 
\end{equation*}
where $\sf{B}_u = \{u \in \cal P_\sf{Db}(k,C) : \|u-v\|_2\leq \eps/(2\sqrt{k})\}$. 
\end{proposition}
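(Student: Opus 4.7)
The plan is to apply \Cref{prop:meta lfht} with $\mu$ the counting measure on $[k]$, $r=k$, and $\phi_i=\one\{\cdot=i\}$, so that $P_r$ becomes the identity on $\ell^2([k])$ and the statistic $T_\sf{LF}$ reduces to $\|\widehat p_\sf{X}-\widehat p_\sf{Z}\|_2^2-\|\widehat p_\sf{Y}-\widehat p_\sf{Z}\|_2^2$. A direct check shows that the diagonal correction $D$ from \eqref{eqn:diag def} vanishes identically, since $\sum_i \phi_i^2(x)=\sum_i\one\{x=i\}=1$ for every $x\in[k]$, so $T_\sf{LF}=T_\sf{LF}^{-\sf d}$ and it suffices to control the rule $\psi=\one\{T_\sf{LF}\geq 0\}$ via Chebyshev's inequality applied to $T_\sf{LF}^{-\sf d}$.

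Let $f,g,h$ denote the densities of $\bb P_\sf{X},\bb P_\sf{Y},\bb P_\sf{Z}$. Under the robust null, $h\in\sf{B}_f(\eps,\cal P_\sf{Db})$ gives $\|f-h\|_2\leq\eps/(2\sqrt k)$, while $\TV(f,g)\geq\eps$ combined with Cauchy–Schwarz on $\R^k$ yields $\|f-g\|_2\geq 2\eps/\sqrt k$. The triangle inequality in $L^2$ then gives $\|g-h\|_2\geq (3/2)\eps/\sqrt k$, hence
\begin{equation*}
\|g-h\|_2^2-\|f-h\|_2^2\;\geq\;2\eps^2/k,
\end{equation*}
and the bias correction $n^{-1}(\|g\|_2^2-\|f\|_2^2)$ is $O(C/(nk))$, which is negligible against $\eps^2/k$ as soon as $n\gtrsim C/\eps^2$ (implied by $n\gtrsim\sqrt k/\eps^2$). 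Thus $|\E T_\sf{LF}|\gtrsim \eps^2/k$, with the opposite sign under $H_1$ by symmetry, and $(\E T_\sf{LF})^2\gtrsim \eps^4/k^2$.

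For the variance, $f,g,h\in\cal P_\sf{Db}(k,C)$ give the uniform bound $\|\cdot\|_\infty\leq C/k$. Plugging this into \eqref{eqn:AB def} with $P_r=\Id$ yields
\begin{equation*}
A_{abc}\leq \tfrac{C}{k}\|b-c\|_2^2,\qquad B_{ab}=\langle a,b\rangle\leq \tfrac{C}{k},\qquad \|f+g+h\|_2^2\leq \tfrac{9C}{k},\qquad A_{aa0}\leq \tfrac{C^2}{k^2}.
\end{equation*}
Substituting these bounds into the variance estimate of \Cref{prop:meta lfht} (and using $\|g-h\|_2^2\lesssim \|f-g\|_2^2+\eps^2/k$ to bound $A_{ggh}$), the variance collapses, up to constants depending on $C$ only, to
\begin{equation*}
\var(T_\sf{LF})\;\lesssim\; \frac{\|f-g\|_2^2}{k}\Bigl(\frac{1}{n}+\frac{1}{m}\Bigr)\;+\;\frac{1}{k}\Bigl(\frac{1}{n^2}+\frac{1}{nm}\Bigr),
\end{equation*}
with the $n^{-3}$ contribution subsumed into the $n^{-2}$ one. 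Imposing $(\E T_\sf{LF})^2\gtrsim\var(T_\sf{LF})$ and comparing the signal $\|f-g\|_2^4\gtrsim\eps^4/k^2$ against each of the four variance terms separately produces exactly the four sample-size constraints $n\gtrsim 1/\eps^2$, $m\gtrsim 1/\eps^2$, $n^2\gtrsim k/\eps^4$ and $nm\gtrsim k/\eps^4$; the first is subsumed by the third since $\sqrt k\geq 1$, giving the claimed region.

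The main obstacle is the bookkeeping: writing out \Cref{prop:meta lfht} carefully enough to confirm that each cross-term in $\var(T_\sf{LF}^{-\sf d})$ fits into the four-term bound above, and that the robustness radius $\eps/(2\sqrt k)$ is chosen tightly enough not to degrade the signal. The latter is the subtle point — the robust analysis works only because the $L^2$-ball radius is a constant factor smaller than the Cauchy–Schwarz lower bound $2\eps/\sqrt k$ on $\|f-g\|_2$, so $\|g-h\|_2$ remains within a constant factor of $\|f-g\|_2$ and the signal $\eps^4/k^2$ is preserved.
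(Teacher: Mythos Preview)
Your approach matches the paper's almost exactly, but there is a real gap in the mean-versus-variance comparison. You establish only $|\E T_\sf{LF}|\gtrsim\eps^2/k$, yet in the final paragraph you silently switch to treating $\|f-g\|_2^4$ as the signal when balancing against the variance terms $\|f-g\|_2^2/(kn)$ and $\|f-g\|_2^2/(km)$. That substitution is not justified by what you proved: the hypothesis is $\TV(f,g)\geq\eps$, so $\|f-g\|_2^2$ can be far larger than $\eps^2/k$ (as large as $\Theta(C/k)$), and with only $(\E T_\sf{LF})^2\gtrsim\eps^4/k^2$ the constraint you would actually derive from $\eps^4/k^2\gtrsim\|f-g\|_2^2/(km)$ is $m\gtrsim k\|f-g\|_2^2/\eps^4$, which in the worst case is $m\gtrsim 1/\eps^4$, strictly stronger than the claimed $m\gtrsim1/\eps^2$.

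The fix is already implicit in your own inequalities: you observed $\|f-h\|_2\leq\eps/(2\sqrt k)$ and $\|f-g\|_2\geq 2\eps/\sqrt k$, which combine to $\|f-h\|_2\leq\|f-g\|_2/4$. Running the triangle inequality with this relative bound (exactly as the paper does) gives
\[
\|g-h\|_2^2-\|f-h\|_2^2\;\geq\;\bigl(\|f-g\|_2-\|f-h\|_2\bigr)^2-\|f-h\|_2^2\;=\;\|f-g\|_2^2-2\|f-g\|_2\|f-h\|_2\;\geq\;\tfrac12\|f-g\|_2^2,
\]
so $|\E T_\sf{LF}|\gtrsim\|f-g\|_2^2$, and the Chebyshev comparison then legitimately cancels the $\|f-g\|_2^2$ in the variance to produce $n,m\gtrsim1/\eps^2$ as claimed. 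Once this is corrected, the rest of your argument is correct and coincides with the paper's.
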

\begin{proof}
\textbf{Choice of $\mu$ and $\phi$.}
Take $\cal X = [k]$ and let $\mu=\sum_{i=1}^k \delta_i$ be the counting measure. Let $\phi_i(j) = \one_{\{i=j\}}$ and choose $r=k$
so that $P_r=P_k$ is the identity. By the Cauchy-Schwarz inequality $\|u\|_1\leq\sqrt{k}\|u\|_2$ for all $u \in \bb R^k$.

\textbf{Applying Proposition \ref{prop:meta lfht}.}
Recall the notation of Proposition \ref{prop:meta lfht}, so that $f,g,h$ are the pmfs of $\bb P_\sf{X}, \bb P_\sf{Y}, \bb P_\sf{Z}$ respectively. We analyse the performance of the test $\one\{T^{-\sf{d}}_\sf{LF} \geq 0\}$ under the null hypothesis, the proof under the alternative is analogous. The inequality $$\|f-h\|_2 \leq \frac{\eps}{2\sqrt k} \leq \frac{\|f-g\|_1}{4\sqrt k} \leq \frac{\|f-g\|_2}{4}$$ along with the reverse triangle inequality yields
\begin{align*}
    \|g-h\|_2^2-\|f-h\|_2^2 &\geq (\|f-g\|_2-\|f-h\|_2)^2 - \|f-h\|_2^2 \\
    &= \|f-g\|_2^2 - 2\|f-g\|_2\|f-h\|_2 \\
    &\geq \|f-g\|_2^2/2. 
\end{align*}
Combining the above inequality with \Cref{prop:meta lfht}, we get that $-\E T^{-\sf{d}}_\sf{LF} \geq \|f-g\|_2^2/2 + R$, where the residual term $R$ can be bounded as
\begin{align*}
|R| &= \left| \frac{\|f\|_2^2-\|g\|_2^2}{n}\right| \\
&\leq 2 C \frac{\|f-g\|_2}{n \sqrt k}. 
\end{align*}
Therefore, $- \E T_\sf{LF}^{-\sf d} \geq \frac{\|f-g\|_2^2}{4}$ holds 
provided $2 C \|f-g\|_2/(n\sqrt k) \leq \|f-g\|_2^2/4$, which in turn is implied by $n \gtrsim 1/\eps$ and is thus always satisfied. 

Turning towards the variance, we apply \Cref{prop:meta lfht} to see that 
\begin{equation}\label{eqn:P_Db sufficient}
    \var(T^{-\sf{d}}_\sf{LF}) \lesssim \frac{\|f-g\|_2^2}{k}\left(\frac1n + \frac1m\right) + \frac{1}{k}\left(\frac{1}{n^2} + \frac{1}{nm}\right), 
\end{equation}
where we use the trivial bounds 
\begin{align*}
\|f+g+h\|_2 &\lesssim \sqrt{\frac Ck} \lesssim \sqrt{\frac1k}\\
|B_{ff}| + |B_{gg}| + |B_{fh}| + |B_{gh}| &\lesssim \frac Ck \lesssim\frac1k\\
A_{ffh} + A_{ggh} + A_{hfg} &\lesssim \frac Ck\|f-g\|_2^2 \lesssim \frac1k\|f-g\|_2^2\\
    A_{ff0} + A_{gg0} &\lesssim \left(\frac Ck\right)^2\lesssim\frac{1}{k^2}.   
\end{align*}

Applying Chebyshev's inequality and looking at each term separately in \eqref{eqn:P_Db sufficient} and using that $\|f-g\|_2 \geq \eps/(2\sqrt k)$ yields the desired bounds on $n,m$. 

\textbf{The diagonal.}
While the above test using $T_\sf{LF}^{-\sf{d}}$ already achieves the minimax optimal sample complexity, here we show for completeness that the diagonal $D$, defined in \eqref{eqn:diag def}, can be included without degrading the test's performance. Indeed, we always have
\begin{align*}
    D &= \frac{1}{n^2} \sum_{i\in[r]} \sum_{j \in [n]} \big(\one\{X_j=i\}^2-\one\{Y_j=i\}^2\big) \\
    &= 0.
\end{align*}
Therefore, trivially, the test $\one\{T_\sf{LF} \geq 0\}$ has the same performance as the one analyzed above. 
\end{proof}

\subsection{The class $\cal P_\sf{H}$}
\begin{proposition}\label{prop:P_S upper}
For every $C>1, \beta >0$ and $d\geq1$ there exist two constants $c,c_\sf{r}>0$ such that
\begin{equation}\label{eqn:P_S R definition}
    \cal R_\sf{rLF}(\eps, \cal P_\sf{H}(\beta, d, C), \sf{B}_\cdot) \supset c\left\{m\geq 1/\eps^2, n\geq 1/\eps^{(2\beta+d/2)/\beta}, mn \geq 1/\eps^{2(2\beta+d/2)/\beta}\right\}, 
\end{equation}
where $\sf B_u=\{v\in\cal P_\sf{H}(\beta, d, C):\|v-u\|_2\leq c_\sf{r}\eps\}$. 
\end{proposition}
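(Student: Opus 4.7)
\textbf{Strategy: discretize and reduce to $\cal P_\sf{Db}$.} My plan is to use Ingster's classical discretization to convert the problem over $\cal P_\sf H(\beta, d, C)$ into one over $\cal P_\sf{Db}(k, C)$, so that Proposition~\ref{prop:P_Db upper} can be invoked directly. Fix a sufficiently small constant $\lambda > 0$ and set $\kappa = \lceil \lambda^{-1}\eps^{-1/\beta}\rceil$, so that $k \eqdef \kappa^d \asymp \eps^{-d/\beta}$. Partition $[0,1]^d$ into axis-aligned sub-cubes $C_1, \dots, C_k$ of volume $1/k$, and for any density $u \in \cal P_\sf H(\beta, d, C)$ define the binned pmf $\bar u(i) = \int_{C_i} u \, \rm{d} x$. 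Since $\|u\|_\infty \leq C$, the inclusion $\bar u \in \cal P_\sf{Db}(k, C)$ holds, and the cell-indices $\bar X_j, \bar Y_j, \bar Z_j$ are i.i.d. from $\bar f, \bar g, \bar h$, respectively. My proposed test is that of Proposition~\ref{prop:P_Db upper} applied to these binned samples.

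\textbf{Transfer of separation and robustness.} Since Lebesgue measure on $[0,1]^d$ has unit mass, Cauchy--Schwarz gives $\|f-g\|_2 \geq \|f-g\|_1 = 2\TV(f,g) \geq 2\eps$. A direct computation with the orthonormal basis of normalized cell indicators $\{\sqrt{k}\,\one_{C_i}\}_{i\in[k]}$ yields the identity $\|P_\kappa u\|_2 = \sqrt{k}\,\|\bar u\|_{\ell^2}$, so combining with Lemma~\ref{lem:ingster_approx} and choosing $\lambda$ small enough that the discretization error $c'\kappa^{-\beta}$ is absorbed into $2c\eps$, I get $\|\bar f - \bar g\|_{\ell^2} \gtrsim \eps/\sqrt{k}$. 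For the robustness clause, $\|f - h\|_2 \leq c_\sf{r}\eps$ and contractivity of $P_\kappa$ give $\|\bar f - \bar h\|_{\ell^2} \leq c_\sf{r}\eps/\sqrt{k}$; taking $c_\sf{r}$ sufficiently small relative to the implied constant above places $\bar h$ well inside the robustness region $\sf{B}_{\bar f}$ required by Proposition~\ref{prop:P_Db upper}.

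\textbf{Conclusion and obstacle.} Inspection of the proof of Proposition~\ref{prop:P_Db upper} shows that only the discrete $\ell^2$ lower bound $\|\bar f - \bar g\|_{\ell^2}^2 \gtrsim \eps^2/k$ is actually used (the TV hypothesis enters solely via Cauchy--Schwarz). Redoing the Chebyshev calculation with our bound yields successful testing whenever $m \geq 1/\eps^2$, $n \geq \sqrt{k}/\eps^2$ and $nm \geq k/\eps^4$. Substituting $k \asymp \eps^{-d/\beta}$ gives $\sqrt{k}/\eps^2 \asymp \eps^{-(2\beta+d/2)/\beta}$ and $k/\eps^4 \asymp \eps^{-2(2\beta+d/2)/\beta}$, matching \eqref{eqn:P_S R definition}. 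The only delicacy is the calibration of constants: $\lambda$ must be chosen small with respect to the Ingster constants of Lemma~\ref{lem:ingster_approx}, and then $c_\sf{r}$ small with respect to the resulting separation constant; no new statistical ideas beyond the discretization reduction are needed.
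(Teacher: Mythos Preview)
Your proof is correct and follows essentially the same approach as the paper. The paper applies Proposition~\ref{prop:meta lfht} directly with $\mu$ taken to be Lebesgue measure and $\{\phi_i\}$ the normalized cell indicators, then bounds the $A$ and $B$ terms; you instead bin the data and invoke (the proof of) Proposition~\ref{prop:P_Db upper}. Since the statistic $T_\sf{LF}$ with the cell-indicator basis is, up to the scaling $\|P_\kappa u\|_2 = \sqrt{k}\,\|\bar u\|_{\ell^2}$ you wrote down, exactly the discrete $\ell^2$-statistic on the binned samples, the two arguments are the same computation phrased differently. Your observation that Proposition~\ref{prop:P_Db upper} uses only the $\ell^2$-separation $\|f-g\|_2 \gtrsim \eps/\sqrt{k}$ (rather than the $\TV$-separation, which need not survive binning) is exactly the point, and the paper handles it the same way via \eqref{eqn:ingster useful}.
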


\begin{proof}
\textbf{Choice of $\mu$ and $\phi$.}
Take $\cal X = [0,1]^d$, let $\mu$ the Lebesgue measure on $\cal X$. Let $\{\phi_i\}_{1 \leq i \leq \kappa^d}$ be the indicators of the cells of the regular grid with $\kappa^d$ bins, normalized to have $L^2(\mu)$-norm equal to $1$, that is, the indicator is multiplied by $\kappa^d$, which is one over the volume of one grid cell. By \cite[Lemma 7.2]{arias2018remember}  for any resolution $r=\kappa^d$ and $u \in \cal C(\beta, d, 2C)$ we have
\begin{equation}\label{eqn:ingster approx}
\|P_r(u)\|_2 \geq c_1\|u\|_2 - c_2 \kappa^{-\beta}
\end{equation}
for constants $c_1,c_2 > 0$ that don't depend on $r$. In particular, the inequalities
\begin{equation}\label{eqn:ingster useful}
    \|P_r(u)\|_2 \geq \frac{c_1}{2} \|u\|_2
\end{equation}
holds for any $\|u\|_2\geq\eps$ provided we choose $\kappa = \left(\frac{2c_2}{c_1\eps}\right)^{1/\beta}$. 

\textbf{Applying Proposition \ref{prop:meta lfht}.} Recall the notation of Proposition \ref{prop:meta lfht} so that $f,g,h$ are the $\mu$-densities of $\bb P_\sf{X}, \bb P_\sf{Y}, \bb P_\sf{Z}$. We analyse the performance of the test $\one\{T^{-\sf{d}}_\sf{LF}\geq0\}$ under the null hypothesis, the proof under the alternative is analogous. Let the radius of robustness be $c_\sf{r} = c_1/4$, and set $\kappa = \left(\frac{2c_2}{c_1\eps}\right)^{1/\beta}$. Then we have
\begin{equation*}
    \|P_r(f-h)\|_2 \leq c_\sf{r}\eps = \frac{c_\sf{r}}{2}\|f-g\|_2 \leq \frac{c_\sf{r}}{c_1}\|P_r(f-g)\|_2
\end{equation*}
by taking $u=f-g$ in \eqref{eqn:ingster useful}. Using the reverse triangle inequality we obtain
\begin{align*}
    \|P_r(g-h)\|_2^2 - \|P_r(f-h)\|_2^2&\geq \left(\|P_r(f-g)\|_2-\|P_r(f-h)\|_2\right)^2 - \|P_r(f-h)\|_2^2 \\
    &= \|P_r(f-g)\|_2^2 - 2\|P_r(f-g)\|_2\|P_r(f-h)\|_2 \\
    &\geq \|P_r(f-g)\|_2^2(1-2\frac{c_\sf{r}}{c_1}) \\
    &= \|P_r(f-g)\|_2^2/2
\end{align*}
Combining the above inequality with \Cref{prop:meta lfht}, we see that $-\E T^{-\sf{d}}_\sf{LF} \geq \|P_r(f-g)\|_2^2/2 + R$ where the residual term $R$ can be bounded as
\begin{align*}
|R| &= \left| \frac{\|f\|_2^2-\|g\|_2^2}{n}\right| \\
&\leq 2C \frac{\|f-g\|_2}{n}.
\end{align*}
Therefore, the inequality $-\E T_\sf{LF}^{-\sf d} \geq \|P_r(f-g)\|_2^2/4$ holds provided $2C\|f-g\|_2/n \leq \|P_r(f-g)\|_2^2/4$, which in turn is implied by $n \gtrsim 1/\eps$ and is thus always satisfied. 

Turning to the variance, using \Cref{prop:meta lfht} we obtain
\begin{equation}\label{eqn:P_S sufficient}
    \var(T^{-\sf{d}}_\sf{LF}) \lesssim 
\|P_r(f-g)\|_2^2\left(\frac1n+\frac1m\right) + \eps^{-d/\beta}\left(\frac{1}{n^2} + \frac{1}{nm}\right), 
\end{equation}
where we apply the trivial inequalities
\begin{align*}
    \|f+g+h\|_2 &\lesssim \sqrt C \lesssim 1\\
    |B_{ff}|+|B_{gg}|+|B_{fh}|+|B_{gh}| &\lesssim Cr = C \kappa^{d} \asymp \eps^{-d/\beta} \\
    A_{ffh} + A_{ggh} + A_{hfg} &\lesssim C \|P_r(f-g)\|_2^2 \lesssim \|P_r(f-g)\|_2^2\\
    A_{ff0} + A_{gg0} &\lesssim C^2 \lesssim1. 
\end{align*}
Applying Chebyshev's inequality and looking at each term separately in \eqref{eqn:P_S sufficient} and using that $\|P_r(f-g)\|_2 \gtrsim \|f-g\|_2 \geq \|f-g\|_1 \geq 2\eps$ yields the desired bounds on $n,m$. 

\textbf{The diagonal.} While the above test using $T_\sf{LF}^{-\sf{d}}$ already achieves the minimax optimal sample complexity, for completeness we also note that including the diagonal terms $D$ defined in \eqref{eqn:diag def} doesn't degrade performance. This follows from the simple fact that $D=0$, which is true for reasons analogous to the case of $\cal P_\sf{Db}$ that we already covered. 
\end{proof}

\subsection{The class $\cal P_\sf{G}$}
\begin{proposition}\label{prop:P_G upper}
For all $s,C > 0$ there exists a constant $c>0$ such that
\begin{equation*}
    \cal R_\sf{rLF}(\eps, \cal P_\sf{G}(s, C),\sf{B}_\cdot) \supset c\left\{m\geq 1/\eps^2, n\geq 1/\eps^{(2s+1/2)/s}, mn \geq 1/\eps^{2(2s+1/2)/s}\right\}, 
\end{equation*}
where $\sf{B}_{\mu_\theta}=\{\mu_{\theta'}:\theta'\in\cal E(s, C), \|\theta-\theta'\|_2\leq \eps/4\}$. 
\end{proposition}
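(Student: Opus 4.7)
The plan is to directly analyze the test \eqref{eqn:gaussian test sketch} via Chebyshev's inequality, following the mean/variance template that powered Propositions \ref{prop:P_Db upper} and \ref{prop:P_S upper}. The Gaussian sequence model is not a density model of the type treated by \Cref{prop:meta lfht}, so the computation would be carried out by hand. I would fix the truncation $r = \lceil (c_0C/\eps^2)^{1/(2s)}\rceil \asymp \eps^{-1/s}$ for a large universal $c_0$, write $\widehat\theta_{\sf{X},i} = \tfrac1n\sum_{j=1}^n X_{j,i}\sim\cal N(\theta_{\sf{X},i},1/n)$ (analogously for $\widehat\theta_{\sf{Y},i}, \widehat\theta_{\sf{Z},i}$), and factor the $i$th summand as
\begin{equation*}
    (\widehat\theta_{\sf{X},i}-\widehat\theta_{\sf{Z},i})^2 - (\widehat\theta_{\sf{Y},i}-\widehat\theta_{\sf{Z},i})^2 = A_i B_i,\qquad A_i = \widehat\theta_{\sf{X},i}-\widehat\theta_{\sf{Y},i},\quad B_i=\widehat\theta_{\sf{X},i}+\widehat\theta_{\sf{Y},i}-2\widehat\theta_{\sf{Z},i}.
\end{equation*}
The crucial observation is $\cov(A_i, B_i) = \operatorname{var}(\widehat\theta_{\sf{X},i}) - \operatorname{var}(\widehat\theta_{\sf{Y},i}) = 0$, and since $(A_i, B_i)$ is jointly Gaussian this upgrades to full independence, which will simplify the variance computation dramatically.

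Under the robust null $\|\theta_\sf{X}-\theta_\sf{Z}\|_2\leq\eps/4$ the $\cal O(1/n)$ variance contributions cancel, leaving $\E T_{\sf{LF},\sf{G}} = \sum_{i\leq r}\{(\theta_{\sf{X},i}-\theta_{\sf{Z},i})^2 - (\theta_{\sf{Y},i}-\theta_{\sf{Z},i})^2\}$. Writing $\Delta = \|\theta_\sf{X}-\theta_\sf{Y}\|_2^2 \geq \eps^2$, the reverse triangle inequality gives $\|\theta_\sf{Y}-\theta_\sf{Z}\|_2^2 \geq (\sqrt\Delta - \eps/4)^2 \geq \tfrac{9}{16}\Delta$, while the Sobolev tail
\begin{equation*}
    \sum_{i>r}(\theta_{\sf{Y},i}-\theta_{\sf{Z},i})^2 \leq r^{-2s}\sum_{i}i^{2s}(\theta_{\sf{Y},i}-\theta_{\sf{Z},i})^2 \leq 4Cr^{-2s}
\end{equation*}
is at most $\Delta/16$ for $c_0$ chosen large enough (using $\Delta\geq\eps^2$). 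Combined with $\sum_{i\leq r}(\theta_{\sf{X},i}-\theta_{\sf{Z},i})^2 \leq \eps^2/16 \leq \Delta/16$, this yields $-\E T_{\sf{LF},\sf{G}} \gtrsim \Delta$.

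For the variance, independence gives $\operatorname{var}(A_iB_i) = \E A_i^2\,\E B_i^2 - (\E A_i)^2(\E B_i)^2 = \tfrac{4}{n^2} + \tfrac{8}{nm} + \tfrac{2(\E B_i)^2}{n} + (\tfrac{2}{n}+\tfrac{4}{m})(\E A_i)^2$. Summing over $i\leq r$ and bounding $\sum_i(\E A_i)^2\leq \Delta$ together with $\sum_i(\E B_i)^2\lesssim \|\theta_\sf{X}-\theta_\sf{Z}\|_2^2 + \|\theta_\sf{Y}-\theta_\sf{Z}\|_2^2 \lesssim \Delta$, one gets $\operatorname{var}(T_{\sf{LF},\sf{G}}) \lesssim r/n^2 + r/(nm) + \Delta/n + \Delta/m$. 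Demanding $\operatorname{var} \leq \tfrac{1}{12}(\E T)^2 \asymp \Delta^2$ via Chebyshev then translates termwise into $n,m\gtrsim 1/\Delta$, $n\gtrsim\sqrt r/\Delta$ and $nm\gtrsim r/\Delta^2$, which in the worst case $\Delta=\eps^2$ with $r\asymp\eps^{-1/s}$ recovers the three advertised constraints $m\gtrsim 1/\eps^2$, $n\gtrsim \eps^{-(4s+1)/(2s)}$, $nm\gtrsim \eps^{-(4s+1)/s}$. The argument under the alternative is symmetric in $\sf{X}\leftrightarrow\sf{Y}$. The main technical subtlety is the exact cancellation $\cov(A_i,B_i)=0$: without it one would pick up an extra $\cal O(r\Delta/n^2)$-type term in the variance that would spoil the $nm\asymp n_\sf{GoF}^2$ trade-off at corner $\sf{C}$ of \Cref{fig:phase diagram}; everything else is routine bookkeeping once a single $c_0$ is chosen uniformly over the composite hypotheses.
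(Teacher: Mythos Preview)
Your proof is correct, but it takes a genuinely different route from the paper's. You claim that ``the Gaussian sequence model is not a density model of the type treated by \Cref{prop:meta lfht}'' and therefore compute everything by hand; in fact the paper does exactly the opposite. It sets $\mu=\otimes_{i\geq1}\cal N(0,1)$, $\phi_i(x)=x_i$, observes that $P_r(\D\mu_\theta/\D\mu)=\sum_{i\leq r}\theta_i x_i$, and then applies \Cref{prop:meta lfht} verbatim, computing the quantities $A_{uvt}\leq(1+C^2)\|P_r(v-t)\|_2^2$ and $B_{uv}\lesssim r$ for Gaussian densities. So the meta-proposition is general enough to cover this case.

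That said, your approach is more elementary and arguably cleaner for $\cal P_\sf{G}$ specifically. The factorization $(\widehat\theta_{\sf X,i}-\widehat\theta_{\sf Z,i})^2-(\widehat\theta_{\sf Y,i}-\widehat\theta_{\sf Z,i})^2=A_iB_i$ together with the observation that $\cov(A_i,B_i)=0$ and joint Gaussianity forces independence is a nice shortcut that collapses the variance computation to a two-line calculation; the paper instead has to evaluate several Gaussian integrals to bound $A_{uvt}$ and $B_{uv}$ and then feed them through the general variance formula. The paper's route has the advantage of reusing the same machinery across all four classes, while yours exploits structure special to the Gaussian case (exact cancellation of $\var(\widehat\theta_{\sf X,i})-\var(\widehat\theta_{\sf Y,i})$ and the fact that uncorrelated Gaussians are independent). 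One small omission: you invoke $\Delta\geq\eps^2$ without justification; this requires the inequality $\TV(\mu_\theta,\mu_{\theta'})\leq\|\theta-\theta'\|_2/2$, which the paper derives from the Hellinger formula and which you should cite.
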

\begin{proof}
\textbf{Choosing $\mu$ and $\phi$.}
Let $\cal X = \bb R^{\bb N}$ be the set of infinite sequences and take as the base measure $\mu=\otimes_{d=1}^\infty \cal N(0,1)$, 
the infinite dimensional standard Gaussian. For $\theta \in \ell^2$ write $\mu_\theta = \otimes_{d=1}^\infty \cal N(\theta_i,1)$ so that $\mu_0=\mu$. Take the orthonormal functions $\phi_i(x)=x_i$ in $L^2(\mu)$ for $i \geq 1$, so that 
\begin{equation*}
    P_r\left(\frac{\rm{d}\mu_\theta}{\rm{d}\mu}\right) = \sum\limits_{i=1}^r x_i \theta_i. 
\end{equation*}
Let $\theta,\theta' \in \cal E(s, C)$ with $\TV(\mu_\theta, \mu_{\theta'}) \geq \eps$. By direct computation we obtain
\begin{equation}\label{eqn:P_G approx}
\|P_r\left(\frac{\rm{d}\mu_\theta}{\rm{d} \mu}-\frac{\rm{d}\mu_{\theta'}}{\rm{d} \mu}\right)\|_2^2 = \sum\limits_{i=1}^r (\theta_i-\theta'_i)^2 \geq \|\theta-\theta'\|_2^2 - r^{-2s}\sum\limits_{i>r} (\theta_i-\theta'_i)^2 i^{2s} \geq \|\theta-\theta'\|_2^2 - 4C^2r^{-2s}. 
\end{equation}
In particular, the inequality
\begin{equation}\label{eqn:P_G approx useful}
    \|P_r\left(\frac{\D\mu_\theta}{\D\mu}-\frac{\D\mu_{\theta'}}{\D\mu}\right)\|_2^2 \geq \frac12\|\theta-\theta'\|_2^2
\end{equation}
holds for all $\theta,\theta' \in \cal E(s,C)$ with $\|\theta-\theta'\|_2 \geq \eps$, provided we take $r=(4C/\eps)^{1/s}$. 

\textbf{Applying Proposition \ref{prop:meta lfht}.}
Recall the notation of Proposition \ref{prop:meta lfht}, and let $f,g,h$ be the $\mu$-densities of $\bb P_\sf{X} = \mu_{\theta_\sf{X}}, \bb P_\sf{Y} = \mu_{\theta_\sf{Y}}, \bb P_\sf{Z}=\mu_{\theta_\sf{Z}}$ respectively. We analyse the test $\one\{T^{-\sf{d}}_\sf{LF} \geq 0\}$ only under the null hypothesis, as the analysis under the alternative is analogous. Note also that by \Cref{lem:pinsker} the inequality
\begin{equation*}
\TV(\mu_\theta, \mu_{\theta'}) \leq \H(\mu_\theta, \mu_{\theta'}) =\sqrt{2(1-\exp(-\|\theta-\theta'\|_2^2/8))} \leq \frac{\|\theta-\theta'\|_2}{2}
\end{equation*}
holds for any $\theta,\theta'\in\ell^2$. Therefore, we have
\begin{align*}
    \|P_r(f-h)\|_2 \leq \frac{\eps}{4} \leq \frac{\TV(\mu_\theta, \mu_{\theta'})}{4} \leq \frac{\|\theta-\theta'\|_2}{8} \leq \frac{\|P_r(f-g)\|_2}{4}
\end{align*}
by \eqref{eqn:P_G approx useful}. 

By the reverse triangle inequality we have
\begin{align*}
    \|P_r(g-h)\|_2^2-\|P_r(f-h)\|_2^2 &\geq \left(\|P_r(f-g)\|_2-\|P_r(f-h)\|_2\right)^2 - \|P_r(f-h)\|_2^2 \\
    &= \|P_r(f-g)\|_2^2 - 2\|P_r(f-g)\|_2\|P_r(f-h)\|_2 \\
    &\geq \|P_r(f-g)\|_2^2 /2
\end{align*}
Combining the inequality above with \Cref{prop:meta lfht}, we see that  $-\E T^{-\sf{d}}_\sf{LF} \geq \|P_r(f-g)\|_2^2/2 + R$, where the residual term $R$ can be bounded as
\begin{align*}
|R| &= \left| \frac{\|P_r(f)\|_2^2-\|P_r(g)\|_2^2}{n}\right| \\
&\leq 2C \frac{\|P_r(f-g)\|_2}{n}.
\end{align*}
Therefore, $-\E T_\sf{LF}^{-\sf d} \geq \|P_r(f-g)\|_2^2/4$ provided $2C\|P_r(f-g)\|_2/n \leq\|P_r(f-g)\|_2^2/4$, which in turn is implied by $n \gtrsim 1/\eps$ and is therefore always satisfied. 

Let us turn to the variance of the statistic. Let $u,v,t$ be the $\mu$-densities of the distributions $\mu_{\theta}, \mu_{\theta'}, \mu_{\theta''}$ for some vectors $\theta,\theta',\theta'' \in \cal E(s,C)$ in the Sobolev ellipsoid. Straightforward calculations involving Gaussian random variables produce 
\begin{align*}
    A_{uvt} &= \sum\limits_{ij}^r(\one(i=j)+\theta_i\theta_j)(\theta'_i-\theta''_i)(\theta'_j-\theta''_j) \leq (1+C^2)\|P_r(v-t)\|_2^2 \lesssim \|P_r(v-t)\|_2^2 \lesssim C^2 \lesssim 1\\
    \|u\|_2 &= \bb  \exp\left(\frac12\|\theta\|_2^2\right) \leq \exp(C^2/2) \lesssim 1 \\
    B_{uv} &= \sum\limits_{i=1}^r\Big(1+\theta_i^2+{\theta'}_i^2+\theta_i\theta'_i\sum_{j=1}^r \theta_j{\theta'}_j\Big) \\
    &\leq r + 2C^2 + C^4 \\
    &\lesssim r. 
\end{align*}
Applying \Cref{prop:meta lfht} tells us that 
\begin{equation}\label{eqn:P_G sufficient}
    \var(T^{-\sf{d}}_\sf{LF}) \lesssim \|P_r(f-g)\|_2^2\left(\frac1n+\frac1m\right) + \eps^{-1/s}\left(\frac{1}{n^2} + \frac{1}{nm}\right)
\end{equation}
Applying Chebyshev's inequality and looking at each term separately in \eqref{eqn:P_G sufficient} and using that $\TV(\mu_\theta,\mu_{\theta'}) \lesssim \|P_r(f-g)\|$ yields the desired bounds on $n,m$. 

\textbf{The diagonal.} While the above test using $T_\sf{LF}^{-\sf{d}}$ already achieves the minimax optimal sample complexity, for completeness we show that including the diagonal terms $D$ defined in  \eqref{eqn:diag def} doesn't degrade performance. To this end we compute
\begin{align*}
    \E D &= \E \frac{1}{n^2} \sum_{i \in [r]} \sum_{j \in [n]} \big( \phi_i^2(X_j)-\phi_i^2(Y_j)\big) \\
    &= \frac{1}{n} \sum_{i \in [r]}(\theta_{\sf{X},i}^2-\theta_{\sf{Y},i}^2) \\
    &\leq \frac1n \|\theta_\sf{X}+\theta_\sf{Y}\|_2 \sqrt{\sum_{i \in [r]}(\theta_{\sf X,i}-{\theta_{\sf Y,i}}^2)} \\
    &\leq 2C \frac{\|P_r(f-g)\|_2}{n}.
\end{align*}
We see that $|\E T^{-\sf{d}}_\sf{LF}| \gtrsim |\E D|$ as soon as $n\gtrsim 1/\eps$. Turning to the variance, we have
\begin{align*}
\var(D) &= \frac{1}{n^3} \sum_{i \in [r]} \big(\var(\phi_i^2(X_1)) + \var(\phi_i^2(Y_1))\big) \\
&\lesssim \frac{r C^2}{n^3}, 
\end{align*}
and so the diagonal terms do not inflate the variance by more than a constant factor. Therefore, the sample complexity of the test is unchanged. 
\end{proof}

\subsection{The class $\cal P_\sf{D}$}
\begin{proposition}\label{prop:P_D upper}
Let $\alpha = 1 \lor \left(\frac kn \land \frac km\right)$. There exist constants $c,c',c_\sf{r} > 0$ such that 
\begin{equation*}
    \cal R_\sf{rLF}(\eps, \cal P_\sf{D}(k), \sf{B}_\cdot) \supset \frac{c}{\log(k)} \left\{m\geq 1/\eps^2, n\geq \sqrt{k\alpha}/\eps^2, mn \geq k\alpha/\eps^4\right\}, 
\end{equation*}
where $\sf{B}_u=\{v:\|u-v\|_2\leq c_\sf{r}\eps/\sqrt{k}, \|v/u\|_\infty\leq c'\}$. 
\end{proposition}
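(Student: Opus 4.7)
The plan is to reduce \Cref{prop:P_D upper} to \Cref{prop:P_Db upper} via the \emph{flattening} technique of \cite{diakonikolas2016new, goldreich2017introduction}: use the first half of each sample to randomly refine the alphabet in a way that makes the three distributions nearly uniform on an alphabet of size $\asymp k$, and then apply the $L^2$-difference test of \Cref{prop:P_Db upper} on the second halves. Concretely, I split each of $X, Y, Z$ into two equal halves and truncate the first halves to sizes $\min(k,n)/2$, $\min(k,n)/2$, $\min(k,m)/2$ respectively. Setting $B_i = 1 + c_X(i) + c_Y(i) + c_Z(i)$, where $c_X(i), c_Y(i), c_Z(i)$ are the occurrence counts of $i \in [k]$ in the truncated first halves, I refine the alphabet by splitting atom $i$ into $B_i$ equally weighted sub-atoms. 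For a distribution $p$ on $[k]$ the flattened distribution $\tilde p$ assigns mass $p(i)/B_i$ to each of the $B_i$ sub-atoms of $i$. Since the refinement is a Markov kernel given $B$, pairwise total variation is preserved: $\TV(\tilde p_\sf X, \tilde p_\sf Y) = \TV(p_\sf X, p_\sf Y) \geq \eps$. Moreover $\tilde k = \sum_i B_i \leq k + \min(k,n) + \min(k,m)/2 \asymp k$, and Cauchy--Schwarz yields $\|\tilde p_\sf X - \tilde p_\sf Y\|_2 \geq 2\eps/\sqrt{\tilde k}$.

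The crux is a \emph{flattening lemma} stating that, with probability at least $1 - k^{-\Omega(1)}$ over the first halves, each of $\tilde p_\sf X, \tilde p_\sf Y, \tilde p_\sf Z$ lies in $\cal P_\sf{Db}(\tilde k, C')$ with $C' = O(\alpha \log k)$ and $\alpha = 1 \lor k/\max(n,m)$. The main ingredient is a Chernoff plus union bound: for every $i$ with $\min(k,n)\, p_\sf X(i) \gtrsim \log k$ (and analogously for $Y, Z$), the respective count is within a constant factor of its mean, so $B_i \gtrsim 1 + \min(k,n)\, p_\sf X(i) + \min(k,n)\, p_\sf Y(i) + \min(k,m)\, p_\sf Z(i)$. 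The delicate ``additional step'' alluded to in \Cref{sec:sketch} combines this with the ratio-robustness constraint $\|p_\sf Z/p_\sf X\|_\infty \leq c'$ (valid under $H_0$, and the symmetric statement under $H_1$) to tie the $Z$-sample contribution to the $X$-sample contribution, sharpening the lower bound to $B_i \gtrsim \min(k,\max(n,m))\, p_\sf X(i)$ (and likewise for $p_\sf Y, p_\sf Z$). Bins with $\min(k,\max(n,m))\, p(i) \lesssim \log k$ are handled by the trivial bound $\tilde p(i,j) \leq p(i)$. Taking maxima yields $\|\tilde p\|_\infty \lesssim \log(k)/\min(k,\max(n,m)) \asymp \alpha \log(k)/k$, with the $\log k$ price being precisely the source of the $\log k$ factor in the statement.

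Granting the flattening lemma, the second halves of the samples are independent of $B$ by sample splitting and i.i.d.\ from $\tilde p_\sf X, \tilde p_\sf Y, \tilde p_\sf Z$; the original robustness ball $\sf B_u$ maps into a constant-factor enlargement of the $\cal P_\sf{Db}(\tilde k, C')$-robustness ball since $\|\tilde p_\sf X - \tilde p_\sf Z\|_2^2 = \sum_i (p_\sf X(i)-p_\sf Z(i))^2/B_i \leq \|p_\sf X - p_\sf Z\|_2^2$. Invoking \Cref{prop:P_Db upper} --- and tracking the linear dependence of the variance (and hence of the sample-size constants) on the ambient constant $C'$ through \Cref{prop:meta lfht} --- yields the sufficient conditions $m \gtrsim C'/\eps^2$, $n \gtrsim \sqrt{C' \tilde k}/\eps^2$ and $mn \gtrsim C' \tilde k/\eps^4$, which with $C' = O(\alpha \log k)$ and $\tilde k \asymp k$ recovers the claimed region up to the $\log k$ factor. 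The main obstacle is the sharpened flattening bound with $\max(n,m)$ in the denominator rather than the easier $\min(n,m)$: the naive bound $\tilde p_\sf X(i,j) \leq 1/\min(k,n)$ from the $X$-sample alone is insufficient, and the improvement requires using the ratio-robustness hypothesis to combine the $X$-sample and $Z$-sample (respectively $Y$-sample and $Z$-sample) counts. This is the non-trivial step and is precisely why the definition of $\sf B_u$ in \Cref{THM:ROBUST}(iv) includes the condition $\|v/u\|_\infty \leq c'$ in addition to $\ell^2$ closeness.
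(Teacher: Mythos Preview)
Your overall architecture (split samples, flatten, then run the $L^2$-difference test) matches the paper, but the flattening lemma you state is false as written, and the gap is exactly the ``additional step'' you try to guess.  Under $H_0$ there is no relation whatsoever between $p_\sf{Y}$ and the pair $(p_\sf{X},p_\sf{Z})$, so the $Z$-counts cannot help flatten $\tilde p_\sf{Y}$.  Take $m\gg n$ and let $p_\sf{Y}$ place mass $1/2$ on a bin $i^\star$ with $p_\sf{X}(i^\star)=p_\sf{Z}(i^\star)=0$; then $B_{i^\star}\asymp 1+(n\wedge k)\,p_\sf{Y}(i^\star)$ and $\tilde p_\sf{Y}(i^\star,\cdot)\asymp 1/(n\wedge k)$, which is \emph{not} $\lesssim \alpha\log(k)/k\asymp \log(k)/(m\wedge k)$.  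Your use of the ratio constraint is also in the wrong direction: $\|p_\sf{Z}/p_\sf{X}\|_\infty\le c'$ says $p_\sf{Z}\le c'\,p_\sf{X}$, so the $X$-counts can be used to control the $\tilde h$-weighted term $A_{\tilde h\tilde f\tilde g}$ (this is exactly how the paper uses it), but it does \emph{not} give $B_i\gtrsim (m\wedge k)\,p_\sf{X}(i)$ from the $Z$-sample.  Consequently your black-box reduction to \Cref{prop:P_Db upper} with a uniform $C'=\cal O(\alpha\log k)$ cannot be justified.

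The paper does not try to put all three flattened pmfs into $\cal P_\sf{Db}(\tilde k,\cal O(\alpha\log k))$.  Instead it works at the level of $\ell^2$-norms and the individual variance terms of \Cref{prop:meta lfht}: by Markov, $\bb E\|\tilde h\|_2^2\lesssim 1/(m\wedge k)$ and $\bb E(\|\tilde f\|_2^2+\|\tilde g\|_2^2)\lesssim 1/(n\wedge k)$; under $H_0$ the $\ell^2$-robustness ties $\|\tilde f\|_2$ to $\|\tilde h\|_2$, giving $\|\tilde f\|_2\vee\|\tilde h\|_2\lesssim\sqrt{\alpha/k}$ w.h.p.  The ``additional step'' you are missing is a \emph{norm-estimation short-circuit}: using \Cref{prop:Goldreich norm est} on a quarter of the remaining $X$- and $Y$-samples, estimate $\|\tilde f\|_2^2$ and $\|\tilde g\|_2^2$; if either exceeds the threshold $\asymp \alpha/k$, output the corresponding hypothesis directly (correct since under $H_0$ we just argued $\|\tilde f\|_2$ must be small, and symmetrically under $H_1$).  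Only after this check may one proceed assuming $\|\tilde f\|_2\vee\|\tilde g\|_2\vee\|\tilde h\|_2\lesssim\sqrt{\alpha/k}$, and then bound the specific terms $A_{\tilde f\tilde f\tilde h},A_{\tilde g\tilde g\tilde h},A_{\tilde h\tilde f\tilde g},B_{\cdot\cdot}$ in \Cref{prop:meta lfht} (the $\log k$ arises from a union bound in the control of $A_{\tilde h\tilde f\tilde g}$, where the ratio hypothesis is invoked).  Without this short-circuit you cannot control the variance contribution of the ``third'' distribution, and the argument breaks down precisely in the regime $m\gg n$ that distinguishes $\cal P_\sf{D}$ from the regular classes.
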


\begin{proof}
\textbf{Choosing $\mu$ and $\phi$.}
As for $\cal P_\sf{Db}$, we take $\cal X = [k]$, $\mu=\sum_{i=1}^k \delta_i$, $\phi_i(j) = \one_{\{i=j\}}$ and $r=k$. By the Cauchy-Schwarz inequality $\|h\|_1\leq\sqrt{k}\|h\|_2$ for all $h \in \bb R^k$. 


\textbf{Reducing to the small-norm case.}
Before applying Proposition \ref{prop:meta lfht} we need to `pre-process` our distributions. For an in-depth explanation of this technique see \cite{diakonikolas2016new, goldreich2017introduction}. 
Recall that we write $f,g,h$ for the probability mass functions of $\bb P_\sf{X},\bb P_\sf{Y},\bb P_\sf{Z}$ respectively, from which we observe the samples $X,Y,Z$ of size $n,n,m$ respectively. Recall also that the null hypothesis is that $\|f-h\|_2\leq c_\sf{r}\eps/\sqrt{k}$ while the
alternative says that $\|g-h\|_2\leq c_\sf{r}\eps/\sqrt{k}$, with $\|f-g\|_2\geq2\eps/\sqrt{k}$ guaranteed under both. In the following section we use the standard inequality $\bb P(\lambda-x\geq\poi(\lambda))\leq\exp(-\frac{x^2}{2(\lambda+x)})$ valid for all $x\geq0$ repeatedly. We also utilize the identity
\begin{align}\label{eqn:inverse poisson exp}
    \bb E\left[\frac{1}{\poi(\lambda)+1}\right] &= \begin{cases} 1 &\text{if } \lambda = 0 \\ \frac{1-e^{-\lambda}}{\lambda} &\text{if }\lambda > 0, \end{cases}
\end{align}
which is easily verified by direct calculation. Finally, the following Lemma will come handy.
\begin{proposition}{{{\cite[Corollary 11.6]{goldreich2017introduction}}}}\label{prop:Goldreich norm est}
Given $t$ samples from an unknown discrete distribution $p$, there exists an algorithm that produces an estimate $\widehat{\|p\|_2^2}$ with the property
\begin{equation*}
    \bb P( \widehat{\|p\|_2^2} \notin (\frac12\|p\|_2^2, \frac32\|p\|_2^2)) \lesssim \frac{1}{\|p\|_2 t}, 
\end{equation*}
where the implied constant is universal. 
\end{proposition}

First we describe a random ``filter" $F:\cal P_\sf{D}(k) \to \cal P_\sf{D}(K)$ that maps distributions on $[k]$ to distributions on the inflated alphabet $[K]$. Let $(n_\sf{X},n_\sf{Y},n_\sf{Z}) = \frac12(n\land k, n\land k, m \land k)$ and let $N^\sf{X} \sim \poi(n_\sf{X}/2)$ independently of all other randomness, and define $N^\sf{Y}, N^\sf{Z}$ similarly. We take the first $N^\sf{X}, N^\sf{Y}, N^\sf{Z}$ samples from the data sets $X,Y,Z$ respectively. In the event $N^\sf{X}\lor N^\sf{Y} > n$ or $N^\sf{Z} > m$ let our output to the likelihood-free hypothesis test be arbitrary, this happens with exponentially small probability. Let $N^\sf{X}_i$ be the number of the samples $X_1, \dots, X_{N^\sf{X}}$ falling in bin $i$, so that $N^\sf{X}_i \sim \poi(n_\sf{X}f_i/2)$ independently for each $i\in[k]$, and define $N^\sf{Y}_i,N^\sf{Z}_i$ analogously. The filter $F$ is defined as follows: $$\text{divide each support element $i\in\{1,2,\dots,k\}$ uniformly into $1+N^\sf{X}_i+N^\sf{Y}_i+N^\sf{Z}_i$ bins.}$$The filter has the following properties trivially:
\begin{enumerate}
\item The construction succeeds with probability $\geq 1-3\exp(-n\land m \land k/16)$, focus on this event from here on. 
    \item The construction uses at most $n_\sf{X}, n_\sf{Y}, n_\sf{Z}$ samples from $X,Y,Z$ respectively and satisfies $K\leq 5k/2$. 
    \item For any $u,v \in \cal P_\sf{D}(k)$ we have $\TV(F(u),F(v))=\TV(u,v)$ and $\|F(u)-F(v)\|_2 \leq \|u-v\|_2$.
    \item Given a sample from an unknown $u \in \cal P_\sf{D}(k)$ we can generate a sample from $F(u)$ and vice-versa.
\end{enumerate}
Let $\tilde{f} \eqdef F(f)$ be the probability mass function after processing and define $\tilde{g},\tilde{h}$ analogously. By properties $1-2$ of the filter, we may assume with probability $99\%$ that the new alphabet's size is at most $5k/2$ and that we used at most half of our samples $X,Y,Z$. We immediately get $2\eps \leq \|f-g\|_1 = \|\tilde{f}-\tilde{g}\|_1 \leq \sqrt{5k/2}\|\tilde{f}-\tilde{g}\|_2$ and $\|\tilde{f}-\tilde{h}\|_2\leq\|f-h\|_2, \|\tilde{g}-\tilde{h}\|_2\leq\|g-h\|_2$. Notice that $$\sum_{i\in[K]} \tilde f_i\tilde g_i = \sum_{i\in[k]} \frac{f_ig_i}{1 + N_i^\sf{X} + N_i^\sf{Y} + N_i^\sf{Z}}$$holds, and similar statements can be derived for the inner product between $\tilde f, \tilde h$ etc. Recall that we set $$\alpha = \max\left\{1, \min\left\{\frac kn, \frac km\right\}\right\}.$$Adopting the convention $0/0=1$ and using \eqref{eqn:inverse poisson exp} we can bound inner products between the mass functions as
\begin{align*}
    \bb E\left[B_{\tilde{f}\tilde{h}} + B_{\tilde{g}\tilde{h}}\right] = \bb E\left[\langle \tilde{f}\tilde{h}\rangle + \langle\tilde{g}\tilde{h}\rangle\right] &\leq 4\sum\limits_{i \in [k]} \frac{f_ih_i+g_ih_i}{(n\land k)(f_i+g_i)+(m\land k)h_i} \leq \frac{8}{(n \lor m)\land k} = \frac{8\alpha}{k} \\
    \bb E\left[B_{\tilde{f}\tilde{f}} + B_{\tilde{g}\tilde{g}}\right] = \bb E \left[\|\tilde{f}\|_2^2 + \|\tilde{g}\|_2^2\right] &\leq 4\sum_{i \in [k]} \frac{f_i^2 + g_i^2}{(n\land k)(f_i+g_i) + (m\land k)h_i} \leq \frac{8}{n\land k} \\
    \bb E\|\tilde{h}\|_2^2 &\leq 4\sum_{i\in[k]} \frac{h_i^2}{(n\land k)(f_i+g_i) + (m\land k)h_i} \leq \frac{4}{m \land k}.
\end{align*}
By Markov's inequality we may assume that the inequalities in the preceding display hold not only in expectation but with $99\%$ probability overall with universal constants. Notice that under the null hypothesis $\|\tilde{f}-\tilde{h}\|_2\leq c_\sf{r}\eps/\sqrt{k}$ and thus $\|\tilde{f}\|_2 \leq \|\tilde{h}\|_2 + c_\sf{r}\eps/\sqrt{k} \leq \|\tilde{f}\|_2 + 2c_\sf{r}\eps/\sqrt{k}$, and similarly with $\tilde{f}$ replaced by $\tilde{g}$  under the alternative. We restrict our attention to $c_\sf{r} \in (0,1)$ so that $c_\sf{r}$ is treated as a constant where appropriate. Notice that $\eps/\sqrt{k} \lesssim 1/\sqrt{(n\lor m)\land k}$ holds trivially. Thus, we obtain $\|\tilde{f}\|_2 \lor \|\tilde{h}\|_2 \leq c/\sqrt{(m \lor n)\land k}$ under the null and $\|\tilde{g}\|_2 \lor \|\tilde{h}\|_2 \leq c/\sqrt{(n\lor m)\land k}$ under the alternative for a universal constant $c$. We would like to ensure that 
\begin{equation}\label{eqn:reduction norm goal}
    \|\tilde{f}\|_2 \lor \|\tilde{g}\|_2 \lor \|\tilde{h}\|_2 \lesssim \frac{1}{\sqrt{(m \lor n)\land k}} = \sqrt{\frac{\alpha}{k}}.
\end{equation}
To this end we apply Proposition \ref{prop:Goldreich norm est} using $(n/4,n/4)$ of the remaining, transformed but otherwise untouched $X,Y$ samples. Let $\widehat{\|\tilde{f}\|_2^2}, \widehat{\|\tilde{g}\|_2^2}$ denote the estimates, which lie in $(\frac12\|\tilde{f}\|_2^2, \frac32\|\tilde{f}\|_2^2)$ and $(\frac12\|\tilde{g}\|_2^2, \frac32\|\tilde{g}\|_2^2)$ respectively, with probability at least $1-\cal O((|\tilde{f}\|_2^{-1}+\|\tilde{g}\|_2^{-1})/n) \geq 1 - \cal O(\sqrt{k}/n)$, since $\|\tilde{f}\|_2\land\|\tilde{g}\|_2\geq\sqrt{2/(5k)}$ by the Cauchy-Schwarz inequality. Assuming that $n \gtrsim \sqrt{k}$ this probability can be taken to be arbitrarily high, say $99\%$. Now we perform the following procedure: if $\widehat{\|\tilde{f}\|_2^2} > \frac32c^2/((n\lor m)\land k)$ reject the null hypothesis, otherwise if $\widehat{\|\tilde{g}\|_2^2} > \frac32c^2/((n\lor m)\land k)$ accept the null hypothesis, otherwise proceed with the assumption that \eqref{eqn:reduction norm goal} holds. By design this process, on our $97\%\leq$ probability event of interest, correctly identifies the hypothesis or correctly concludes that \eqref{eqn:reduction norm goal} holds. The last step of the reduction is ensuring that the quantities $A_{\tilde{f}\tilde{f}\tilde{h}}, A_{\tilde{g}\tilde{g}\tilde{h}}, A_{\tilde{h}\tilde{f}\tilde{g}},A_{\tilde f\tilde f0}, A_{\tilde g\tilde g0}$ are small. The first two and last two may be bounded easily as
\begin{equation}\label{eqn:A_ffh A_ggh bound}
\begin{aligned}
    A_{\tilde{f}\tilde{f}\tilde{h}} + A_{\tilde{g}\tilde{g}\tilde{h}} &= \langle\tilde{f}(\tilde{f}-\tilde{h})^2\rangle + \langle\tilde{g}(\tilde{g}-\tilde{h})^2\rangle \\
    &\leq \|\tilde{f}\|_2\|\tilde{f}-\tilde{h}\|_4^2 + \|\tilde{g}\|_2\|\tilde{g}-\tilde{h}\|_4^2 \\
    &\lesssim \frac{\|\tilde{f}-\tilde{h}\|_2^2 + \|\tilde{g}-\tilde{h}\|_2^2}{\sqrt{(n\lor m)\land k}} \\
    &\lesssim \frac{\|\tilde{f}-\tilde{g}\|_2^2 + c_\sf{r}^2\eps^2/k}{\sqrt{(n\lor m)\land k}} \lesssim \frac{\|\tilde{f}-\tilde{g}\|_2^2}{\sqrt{(n \lor m)\land k}} = \sqrt{\frac{\alpha}{k}}\|\tilde f-\tilde g\|_2^2\\
    A_{\tilde f\tilde f0} + A_{\tilde g\tilde g0} &= \|\tilde f\|_3^3 + \|\tilde g\|_3^3 \leq \|\tilde f\|_2^3 + \|\tilde g\|_2^3 \lesssim \frac{1}{((n\lor m)\land k)^{3/2}} = \left(\frac{\alpha}{k}\right)^{3/2}. 
\end{aligned}
\end{equation}
To bound $A_{\tilde{h}\tilde{f}\tilde{g}}$ we need a more sophisticated method. Recall that by definition 
\begin{equation*}
    A_{\tilde{h}\tilde{f}\tilde{g}} = \sum_{i \in [k]} \frac{h_i(f_i-g_i)^2}{(1+N^\sf{X}_i+N^\sf{Y}_i+N^\sf{Z}_i)^2}. 
\end{equation*}
Fix an $i\in[k]$ and let $P \eqdef N^\sf{X}_i+N^\sf{Y}_i+N^\sf{Z}_i \sim \poi((n\land k)(f_i+g_i)/4+(m\land k)h_i/4)$ and take a constant $c > 0$ to be specified. We have
\begin{align*}
    \bb P\left(\frac{1}{1+P} > c\log(k)\frac{1}{\bb E P}\right) &= \begin{cases} 0&\text{if } \bb E P \leq c\log(k) \\ \bb P\left(\bb E P - \left(\bb E P\left(1-\frac{1}{c\log(k)}\right)+1\right) > P\right) &\text{if } \bb E P > c\log(k). \end{cases}
\end{align*}
Assuming that $i$ is such that $\bb E P \geq c \log(k)$ and taking $k$ large enough so that $c \log(k) \geq 2$, we can proceed as
\begin{align*}
    \bb P\left(\bb E P - \left(\bb E P\left(1-\frac{1}{c\log(k)}\right)+1\right) > P\right)  &\leq \exp(-\frac12\frac{(\bb E P(1-\frac{1}{c\log(k)})+1)^2}{\bb E P(2-\frac{1}{c \log(k)})+1}) \\
    &\leq \exp(-\frac{1}{16}\bb E P) \\
    &\leq \frac{1}{k^{c/16}}. 
\end{align*}
Choosing $c = 32$ and taking a union bound, the inequality
\begin{equation*}
    A_{\tilde{h}\tilde{f}\tilde{g}} \lesssim \frac{\log(k)}{m\land k}\sum_{i \in [k]} \frac{(f_i-g_i)^2}{1+N^\sf{X}_i+N^\sf{Y}_i+N^\sf{Z}_i} \asymp \frac{\log(k)}{m\land k} \|\tilde{f}-\tilde{g}\|_2^2
\end{equation*}
holds with probability at least $1-1/k$. Using that $\|h/f\|_\infty \land \|h/g\|_\infty \lesssim 1$ by assumption, we obtain $A_{\tilde{h}\tilde{f}\tilde{g}} \lesssim \frac{\log(k)}{n\land k} \|\tilde{f}-\tilde{g}\|_2^2$ similarly. Combining the two bounds yields
\begin{equation}\label{eqn:A_hfg bound}
    A_{\tilde{h}\tilde{f}\tilde{g}} \lesssim \frac{\log(k)}{(m\lor n)\land k} \|\tilde{f}-\tilde{g}\|_2^2 = \frac{\log(k)\alpha}{k}\|\tilde f-\tilde g\|_2^2. 
\end{equation}
To summarize, under the assumptions that $n\gtrsim\sqrt{k}$, and at the cost of inflating the alphabet size to at most $\frac52k$ and a probability of error at most $3\%+\frac1k$, we may assume that the inequalities \eqref{eqn:reduction norm goal}, \eqref{eqn:A_ffh A_ggh bound} and \eqref{eqn:A_hfg bound} hold with universal constants.

\textbf{Applying Proposition \ref{prop:meta lfht}.}
We only analyse the type-I error, as the type-II error follows analogously. As explained earlier, we apply the test $\one\{T^{-\sf{d}}_\sf{LF}\geq0\}$ to the transformed samples with probability mass functions $\tilde f, \tilde g, \tilde h$. Note that taking $c_\sf{r}$ small eonugh shows that 
\begin{equation*}
    \|\tilde{g}-\tilde{h}\|_2^2-\|\tilde{f}-\tilde{h}\|_2^2 \gtrsim \|\tilde{f}-\tilde{g}\|_2^2
\end{equation*}
for a universal implied constant. Therefore, by \Cref{prop:meta lfht} we see that $-\E T^{-\sf{d}}_\sf{LF} \geq c \|\tilde f - \tilde g\|_2^2 + R$ for some universal constant $c > 0$, where the residual term $R$ can be bounded as 
\begin{align*}
    |R| &= \left|\frac{\|\tilde f\|_2^2-\|\tilde g\|_2^2}{n}\right| \\
    &\lesssim \frac{\|\tilde f-\tilde g\|_2}{n\sqrt{k\land (m\lor n)}}, 
\end{align*}
where we used \eqref{eqn:reduction norm goal}. We have $-\E T^{-\sf{d}}_\sf{LF} \gtrsim \|\tilde f-\tilde g\|_2^2$ provided $n\gtrsim 1/(\|\tilde f-\tilde g\|_2\sqrt{k\land(m\lor n)}) \asymp \sqrt{\alpha}/\eps$, which we assume from here on. Plugging in the bounds derived above, the test $\one\{T_\sf{LF} \geq 0\}$ on the transformed observations has type-I probability of error bounded by $1/3$ provided
\begin{equation*}
    \|\tilde{f}-\tilde{g}\|_2^4 \gtrsim  \frac1n \sqrt{\frac{\alpha}{k}} \|\tilde{f}-\tilde{g}\|_2^2 + \frac1m\frac{\log(k)\alpha}{k}\|\tilde{f}-\tilde{g}\|_2^2 + \frac{\alpha}{k}\left(\frac{1}{nm}+\frac{1}{n^2}\right)
\end{equation*}
for a small enough implied constant on the left. Looking at each term separately yields the sufficient conditions
\begin{equation}
    \underbrace{m \gtrsim \frac{\log(k)\alpha}{\eps^2}}_{(I)} \qquad\text{and}\qquad
    n \gtrsim \frac{\sqrt{k\alpha}}{\eps^2} \qquad\text{and}\qquad
    mn \gtrsim \frac{k\alpha}{\eps^4}.\label{eqn:mn sufficient}
\end{equation}

The final step is to check that the sufficient conditions in \eqref{eqn:mn sufficient} are implied by what is indicated in the statement of \Cref{THM:P_D UPPER}. Recall from the statement of the Theorem, that it states that 
\begin{equation}\label{eqn:P_D thm sufficient}
    m \gtrsim \frac{\log(k)}{\eps^2} \qquad\text{and}\qquad n \gtrsim \frac{\sqrt{k\alpha}}{\eps^2} \qquad\text{and}\qquad mn \gtrsim \frac{k\log(k)\alpha}{\eps^4}
\end{equation}
is sufficient to successfully perform the test, where we have replaced the generic $\gtrsim_{\log(k)}$ notation with the precise dependence on $\log(k)$ that we require. Note that the only difference between \eqref{eqn:mn sufficient} and \eqref{eqn:P_D thm sufficient} is the condition on $m$, that is, the first term in the equations \eqref{eqn:mn sufficient} and \eqref{eqn:P_D thm sufficient}. Suppose now that \eqref{eqn:P_D thm sufficient} holds, and let us split this discussion into cases. 
\begin{enumerate}
    \item Suppose $\max\{m,n\} \geq k$. In this case $\alpha = 1$, and $(I)$ is implied by $m \gtrsim \log(k)/\eps^2$. For this the first condition of \eqref{eqn:P_D thm sufficient} is clearly sufficient. 
    \item Suppose $n \leq m \leq k$. In this case $\alpha=k/m$, and $(I)$ is implied by $m\gtrsim\sqrt{k\log(k)}/\eps$. By the third condition of \eqref{eqn:P_D thm sufficient} we know that $m^2n\gtrsim k^2/\eps^4$. Using that $n \leq m$, this implies that $m \gtrsim k^{2/3}/\eps^{4/3}$, which is clearly sufficient. 
    \item Suppose $m \leq n \leq k$. In this case $\alpha=k/n$, and $(I)$ is implied by $mn \gtrsim k\log(k)/\eps^2$. By the third condition of \eqref{eqn:P_D thm sufficient} we know that $mn^2\gtrsim k^2\log(k)/\eps^4$. After noting that $n \leq k$ we get $mn \gtrsim k\log(k)/\eps^4$, which is sufficient. 
\end{enumerate}

\textbf{The diagonal.}
See the discussion at the end of the proof for $\cal P_\sf{Db}$. 
\end{proof}

\section{Lower bounds of Theorem \ref{THM:P_SGDB UPPER} and \ref{THM:P_D UPPER}}\label{sec:thm12 lower proof}
Let $\cal M(\cal X)$ be the set of all probability measures on some space $\cal X$, and $\cal P\subseteq \cal M(\cal X)$ be some family of distributions. In this section we prove lower bounds for likelihood-free hypothesis testing problems. For clarity, let us formally state the problem as testing between the hypotheses
\begin{equation}\label{eqn:lfht lower}
\begin{aligned}
    H_0&=\{\bb P_\sf{X}^{\otimes n} \otimes \bb P_\sf{Y}^{\otimes n} \otimes \bb P_\sf{X}^{\otimes m}:\,\bb P_\sf{X},\bb P_\sf{Y}\in\cal P,\,\TV(\bb P_\sf{X},\bb P_\sf{Y}) \geq \eps\} \\
    &\text{versus} \\
    H_1&=\{\bb P_\sf{X}^{\otimes n} \otimes \bb P_\sf{Y}^{\otimes n} \otimes P_\sf{Y}^{\otimes m}:\,\bb P_\sf{X},\bb P_\sf{Y}\in\cal P,\,\TV(\bb P_\sf{X},\bb P_\sf{Y}) \geq \eps\}.
\end{aligned}
\end{equation}
Our strategy for proving lower bounds relies on the following well known result proved in the main text.
\begin{lemma}\label{lem:TV lower}
Take hypotheses $H_0,H_1 \subseteq \cal M(\cal X)$ and $P_0,P_1 \in \cal M(\cal X)$ random. Then
\begin{align*}
    \inf\limits_{\psi}\max_{i=0,1}\sup\limits_{P \in H_i} P(\psi\neq i) \geq \frac12\left(1-\TV(\bb E P_0, \bb E P_1)\right)-\sum_i\bb P(P_i\notin H_i),
\end{align*}
where the infimum is over all tests $\psi:\cal X \to \{0,1\}$.
\end{lemma}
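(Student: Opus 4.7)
The plan is to mimic the standard two-point Le Cam argument, with the twist that the priors $P_0,P_1$ may place some mass outside of the hypothesis classes $H_0,H_1$, and one must pay for this in TV.

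First I would dispose of the trivial case: if $\bb P(P_i\in H_i)=0$ for some $i$, the right-hand side is non-positive (since $\sum_i\bb P(P_i\notin H_i)\geq1$), so the bound is vacuous. Assume therefore $\bb P(P_i\in H_i)>0$ for $i=0,1$, and define $\tilde P_i$ to be distributed as $P_i$ conditioned on the event $\{P_i\in H_i\}$; this keeps the randomness inside the correct hypothesis.

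Next I would compare averages. For any measurable $A\subseteq\cal X$, the law of total expectation gives
\begin{equation*}
\bb E P_i(A) = \bb P(P_i\in H_i)\,\bb E\tilde P_i(A) + \bb P(P_i\notin H_i)\,\bb E[P_i(A)\mid P_i\notin H_i],
\end{equation*}
so $|\bb E\tilde P_i(A)-\bb E P_i(A)|\leq \bb P(P_i\notin H_i)$ (since both conditional expectations lie in $[0,1]$). Taking suprema over $A$ and applying the triangle inequality for TV yields
\begin{equation*}
\TV(\bb E\tilde P_0,\bb E\tilde P_1)\leq \TV(\bb E P_0,\bb E P_1)+\sum_i\bb P(P_i\notin H_i).
\end{equation*}

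Finally I would invoke the standard Bayes-risk lower bound: for any test $\psi$,
\begin{equation*}
\max_{i=0,1}\sup_{P\in H_i} P(\psi\neq i) \geq \max_{i=0,1}\bb E\tilde P_i(\psi\neq i) \geq \tfrac{1}{2}\bigl(\bb E\tilde P_0(\psi\neq 0)+\bb E\tilde P_1(\psi\neq 1)\bigr) \geq \tfrac{1}{2}\bigl(1-\TV(\bb E\tilde P_0,\bb E\tilde P_1)\bigr),
\end{equation*}
where the first inequality uses that $\tilde P_i$ is supported on $H_i$ and hence its realisations are pointwise upper bounded by the supremum, and the last inequality is the classical identity relating the minimum Bayes error of a binary test to one minus the TV between the two averaged distributions. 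Substituting the TV bound above gives the claim. I do not anticipate a real obstacle here: the only subtle step is the TV comparison, which follows cleanly from the law of total probability applied setwise.
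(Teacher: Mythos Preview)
Your proposal is correct and follows essentially the same route as the paper: condition each prior on landing inside its hypothesis, bound the resulting shift in the mixture by $\bb P(P_i\notin H_i)$ via the law of total probability, pass to TV by the triangle inequality, and finish with the standard Le Cam two-point bound. The only cosmetic difference is that you spell out the trivial case and the chain of inequalities in the Bayes-risk step slightly more explicitly than the paper does.
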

The following will also be used multiple times throughout:
\begin{lemma}[{{{\cite[Lemmas 2.3 and 2.4]{tsybakov}}}}]\label{lem:pinsker}
For any probability measures $\bb P_0,\bb P_1$, 
\begin{equation*}
    \frac{1}{4}\H^4(\bb P_0, \bb P_1) \leq \TV^2(\bb P_0,\bb P_1) \leq \H^2(\bb P_0, \bb P_1) \leq \KL(\bb P_0 \| \bb P_1) \leq \chi^2(\bb P_0\|\bb P_1).
\end{equation*}
\end{lemma}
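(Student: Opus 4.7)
The plan is to establish the chain of four inequalities one at a time, working from outside in. The statement is attributed to Tsybakov so the proof is classical, but let me sketch the argument I would present.

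First, for $\tfrac14 \H^4 \leq \TV^2$, I would work with densities $p,q$ of $\bb P_0,\bb P_1$ with respect to a dominating measure. The key pointwise inequality is $|p-q| = |\sqrt p - \sqrt q|(\sqrt p + \sqrt q) \geq (\sqrt p - \sqrt q)^2$, since $\sqrt p + \sqrt q \geq |\sqrt p - \sqrt q|$ for nonnegative reals. Integrating and dividing by $2$ gives $\TV \geq \tfrac12\H^2$, and squaring yields the desired bound.

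Next, for $\TV^2 \leq \H^2$, I would apply Cauchy--Schwarz to the factorization $|p-q| = |\sqrt p - \sqrt q|\,(\sqrt p + \sqrt q)$, obtaining
\begin{equation*}
2\TV = \int |p-q|\,\rm d\mu \leq \H \cdot \Big(\int (\sqrt p + \sqrt q)^2\,\rm d\mu\Big)^{1/2} = \H\sqrt{4-\H^2},
\end{equation*}
using $\int(\sqrt p + \sqrt q)^2 = 2 + 2\int\sqrt{pq} = 4 - \H^2$. Squaring and discarding the negative correction gives $\TV^2 \leq \H^2$.

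Third, for $\H^2 \leq \KL(\bb P_0\|\bb P_1)$, I would use the elementary inequality $-\log x \geq 2(1-\sqrt{x})$ valid for $x>0$ (checkable by comparing $\log x$ and $2(\sqrt x -1)$ at $x=1$ and differentiating). Setting $x = q/p$ and integrating against $p$ yields
\begin{equation*}
\KL(\bb P_0\|\bb P_1) = -\int p \log(q/p)\,\rm d\mu \geq 2\int (p - \sqrt{pq})\,\rm d\mu = \H^2.
\end{equation*}
Finally, for $\KL \leq \chi^2$, I would use $\log(1+x) \leq x$ applied to $x = (p-q)/q$, giving $p\log(p/q) \leq p(p-q)/q = (p-q)^2/q + (p-q)$, and the integral of the last term vanishes, leaving $\KL \leq \chi^2$. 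No step is really the obstacle; the main thing is to collect the right pointwise convexity inequality for each link in the chain.
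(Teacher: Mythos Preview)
Your proof is correct and is exactly the standard argument; the paper itself does not supply a proof for this lemma but simply cites Tsybakov's book, so there is nothing to compare against beyond noting that your four-step chain (pointwise $|\sqrt p-\sqrt q|\le\sqrt p+\sqrt q$, Cauchy--Schwarz on the same factorization, the concavity inequality $-\log x\ge 2(1-\sqrt x)$, and $\log(1+x)\le x$) is precisely the classical route.
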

Note that some of the inequalities in \Cref{lem:pinsker} can be improved, but since such improvements have no effect on our results, we present their simplest available version.  The inequalities between $\TV$ and $\H$ are attributed to Le Cam, while the bound $\TV\leq\sqrt{\KL/2}$ is due to Pinsker. The use of the $\chi^2$-divergence for bounding the total variation distance between mixtures of products was pioneered by Ingster \cite{ingster2003nonparametric}, and is sometimes referred to as the \textit{Ingster-trick}. \\

In our bounds we will also rely on the following simple technical result. 

\begin{lemma}\label{lem:multi expectation}
Suppose that $a,b,c > 0$ and $N=(N_1,\dots,N_k) \sim \operatorname{Multinomial}(n, (\frac1k,\dots,\frac1k))$. Then
\begin{equation*}
    \bb E_N \prod\limits_{j \in [k]} (a+b(1+c)^{N_j}) \leq (a+be^{cn/k})^k.
\end{equation*}
\end{lemma}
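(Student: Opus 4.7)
The plan is to expand the product explicitly, use that marginal sums of a multinomial are binomial, and then apply the elementary inequality $1+x\leq e^x$ coordinate-wise.

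First I would expand the product over subsets:
\begin{equation*}
    \prod_{j\in[k]} \bigl(a+b(1+c)^{N_j}\bigr) = \sum_{S\subseteq [k]} a^{k-|S|} b^{|S|} (1+c)^{\sum_{j\in S} N_j}.
\end{equation*}
The key observation is that for any fixed $S\subseteq [k]$ with $|S|=s$, the aggregated count $\sum_{j\in S}N_j$ is distributed as $\operatorname{Bin}(n,s/k)$, since the multinomial distribution on $k$ cells with equal probabilities, marginalized to the union of $s$ cells versus its complement, is precisely binomial. Using the probability-generating function $\bb E z^{\operatorname{Bin}(n,p)} = (1-p+pz)^n$ at $z=1+c$ gives $\bb E (1+c)^{\operatorname{Bin}(n,s/k)} = (1+cs/k)^n$. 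Taking expectations term by term therefore yields
\begin{equation*}
    \bb E \prod_{j\in[k]} \bigl(a+b(1+c)^{N_j}\bigr) = \sum_{s=0}^k \binom{k}{s} a^{k-s} b^{s} (1+cs/k)^n.
\end{equation*}

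Next I would apply $(1+cs/k)^n \leq e^{csn/k} = (e^{cn/k})^s$ term by term, which is valid since all quantities are nonnegative. The right-hand side then collapses by the binomial theorem:
\begin{equation*}
    \sum_{s=0}^k \binom{k}{s} a^{k-s} b^s \bigl(e^{cn/k}\bigr)^{s} = \bigl(a+be^{cn/k}\bigr)^k,
\end{equation*}
which is exactly the claimed bound. There is no real obstacle in this argument; the only step that is not entirely mechanical is recognizing the marginal-binomial property of the multinomial, which is standard. Everything else is linearity of expectation and the binomial theorem.
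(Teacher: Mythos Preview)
Your proof is correct and follows essentially the same approach as the paper: expand the product by the binomial formula, use that for any subset $S$ of size $s$ the sum $\sum_{j\in S}N_j$ is $\operatorname{Bin}(n,s/k)$, evaluate the generating function to get $(1+cs/k)^n$, bound by $e^{csn/k}$, and collapse via the binomial theorem.
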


Recall that the necessity of $m\gtrsim n_\sf{HT}(\eps, \cal P)$ and $n\gtrsim n_\sf{GoF}(\eps, \cal P)$ were shown in \Cref{PROP:REDUCTIONS}. Thus, most of our work lies in obtaining the lower bound on the product $m n$. 

\subsection{The class $\cal P_\sf{H}$}\label{sec:P_S lower}
\begin{proposition}\label{prop:P_S lower}
For any $\beta>0, C>1$ and $d\geq1$ there exists a finite $c$ independent of $\eps$ such that 
\begin{equation*}
    c \{m\geq 1/\eps^2, n\geq \eps^{-(2\beta+d/2)/\beta}, mn \geq \eps^{-2(2\beta+d/2)/\beta}\} \supseteq \cal R_\sf{LF}(\eps, \cal P_\sf{H}(\beta, d, C))
\end{equation*}
for all $\eps \in (0,1)$. 
\end{proposition}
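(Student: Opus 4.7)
The plan is to invoke \Cref{lem:TV lower} with the classical Ingster mixing prior and reduce the task to controlling a conditional $\chi^2$-divergence. The coordinate-wise constraints $m\gtrsim 1/\eps^2$ and $n\gtrsim n_\sf{GoF}$ follow immediately from \eqref{eqn:LF -> HT} and \eqref{eqn:LF -> GoF} of \Cref{PROP:REDUCTIONS}, so the only new ingredient is the product bound $mn \gtrsim \eps^{-2(2\beta+d/2)/\beta}$. Set $\kappa = \lceil c\eps^{-1/\beta}\rceil$ for a small constant $c$, partition $[0,1]^d$ into the regular grid $\{\cal C_j\}_{j\in[\kappa]^d}$, and pick a mean-zero smooth bump $h_j:\cal C_j\to\R$ of $L^\infty$-norm $\asymp \kappa^{-\beta}$ and $\cal C_\beta$-norm at most $C-1$. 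Then for $\eta\sim \unif\{\pm 1\}^{\kappa^d}$, the density $p_\eta \eqdef 1 + \sum_j \eta_j h_j$ belongs to $\cal P_\sf{H}(\beta,d,C)$ and $\TV(\bb P_\eta,\bb P_0) = \tfrac12\sum_j\|h_j\|_1 \asymp \kappa^{-\beta}$ is deterministic in $\eta$, so constants may be chosen to enforce $\TV(\bb P_\eta,\bb P_0)\geq \eps$ for every $\eta$.

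Apply \Cref{lem:TV lower} with $P_0 = \bb P_\eta^{\otimes n}\otimes \bb P_0^{\otimes n}\otimes \bb P_\eta^{\otimes m}$ and $P_1 = \bb P_\eta^{\otimes n}\otimes \bb P_0^{\otimes n}\otimes \bb P_0^{\otimes m}$; the deterministic separation annihilates the residual terms $\bb P(P_i\notin H_i)$. Because both distributions share the same $Y$-marginal, the data-processing inequality lets me discard $Y$; conditioning the remaining $(X,Z)$-joint on $X$, applying Jensen and Pinsker, and using $\KL \leq \chi^2$ from \Cref{lem:pinsker} reduces the goal to showing
\begin{align*}
    \bb E_X\chi^2\bigl(\bb E[\bb P_\eta^{\otimes m}\mid X]\,\big\|\,\bb P_0^{\otimes m}\bigr) = \cal O(1).
\end{align*}
Since the bumps $h_j$ have disjoint supports and integrate to zero, one has $\int p_\eta p_{\eta'}\,\D x = 1 + \|h_\one\|_2^2\langle \eta,\eta'\rangle$, so expanding the $\chi^2$ as in \eqref{eqn:lower bound sketch} yields
\begin{align*}
    1 + \bb E_X\chi^2(\cdots) = \bb E_X\,\bb E_{\eta,\eta'\mid X}\bigl(1 + \|h_\one\|_2^2\langle\eta,\eta'\rangle\bigr)^m.
\end{align*}
Crucially, the posterior of $\eta$ given $X$ factorizes across bins as $\prod_j \pi(\cdot\mid N_j)$ with $N_j = \#\{i:X_i\in\cal C_j\}$, and $|\bb E[\eta_j\mid N_j]|^2 \leq 1\wedge (\|h_j\|_\infty N_j)^2$.

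Combining $(1+x)^m\leq e^{mx}$ with the identity $e^{t\eta_j\eta'_j} = \cosh t + \eta_j\eta'_j\sinh t$ and conditional independence of the $\eta_j$'s turns the display above into
\begin{align*}
    \cosh\!\bigl(m\|h_\one\|_2^2\bigr)^{\kappa^d}\cdot \bb E_{(N_j)}\prod_j\Bigl(1 + \bigl(1\wedge(\|h_j\|_\infty N_j)^2\bigr)\tanh(m\|h_\one\|_2^2)\Bigr),
\end{align*}
with $(N_j)\sim\mult(n,\kappa^{-d})$. Plugging in $\|h_j\|_\infty\asymp\kappa^{-\beta}$ and $\|h_\one\|_2^2\asymp\kappa^{-2\beta-d}$, the first factor is $\exp(\cal O(m^2\eps^{4+d/\beta}))$, which is $\cal O(1)$ whenever $m\lesssim n_\sf{GoF}$ --- automatic in the regime $mn\lesssim n_\sf{GoF}^2$ once combined with $n\gtrsim n_\sf{GoF}$. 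For the second factor, splitting bins according to whether $\|h_j\|_\infty N_j \leq 1$ and applying \Cref{lem:multi expectation} (or a Poissonization step) produces a bound of the form $\exp(\cal O(mn\eps^{4+d/\beta}))$, which is $\cal O(1)$ exactly when $mn\lesssim n_\sf{GoF}^2$. The main technical obstacle is this second factor: the posterior bias may saturate near $\pm 1$ in heavily populated bins, so the two regimes $\|h_j\|_\infty N_j\lessgtr 1$ must be treated separately via Poisson moment-generating-function estimates, but the bookkeeping ultimately reproduces the desired $mn$ lower bound and matches the upper bound in \Cref{THM:P_SGDB UPPER}.
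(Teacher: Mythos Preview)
Your overall architecture matches the paper exactly: the Ingster bump prior, the same choice of $P_0,P_1$, dropping $Y$ by data processing, the KL chain rule, and reduction to a conditional $\chi^2$ that factorizes over grid cells. The problem is the step where you control the per-bin factor. First a notational slip: $\bb E[\eta_j\mid N_j]$ is identically zero by symmetry (the count $N_j$ carries no information about the sign $\eta_j$), so you must mean $\bb E[\eta_j\mid X^{(j)}]$. More seriously, the pointwise bound $|\bb E[\eta_j\mid X^{(j)}]|\le \|h_j\|_\infty N_j$ is a worst-case sup bound over the positions $X^{(j)}$ and is too loose by a full factor of $N_j$: it gives a quadratic contribution $\kappa^{-2\beta}N_j^2$, whereas the correct quantity scales linearly. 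Concretely, under Poissonization $N_j\sim\poi(\lambda)$ with $\lambda=n\kappa^{-d}$, your product becomes $(1+\gamma(\lambda+\lambda^2))^{\kappa^d}$ with $\gamma\asymp m\kappa^{-4\beta-d}$, and the $\lambda^2$ term contributes $\exp(mn^2\kappa^{-4\beta-2d})$; at $m=n=n_\sf{GoF}=\kappa^{2\beta+d/2}$ this is $\exp(\kappa^{2\beta-d/2})$, which diverges whenever $\beta>d/4$. Replacing $\min(1,x^2)$ by $x$ does not help either: that yields $\exp(mn\kappa^{-3\beta-d})$, still off by a factor $\kappa^\beta$ from the target $\exp(mn\kappa^{-4\beta-d})$. \Cref{lem:multi expectation} cannot repair this because your factor is not of the form $a+b(1+c)^{N_j}$.

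The fix is to average over the positions $X^{(j)}$ \emph{before} bounding. Because $\int h_j=0$, the cross terms in $\bb E_{X^{(j)}\mid N_j}\bigl(\sum_\ell h_j(X_{i_\ell})\bigr)^2$ vanish and the expected squared bias is linear in $N_j$. The paper implements this by bounding $\bb P(\eta_j\eta_j'=1\mid N_j)$ via $\chi^2(p_+^{\otimes N_j}\|p_-^{\otimes N_j})$, which gives a factor of the exact shape $\tfrac14(e^{\rho^2m}-e^{-\rho^2m})(1+(1+8\rho^2\kappa^d)^{N_j})+e^{-\rho^2m}$; this fits \Cref{lem:multi expectation} directly and produces $\exp(c\rho^4\kappa^d mn)$ under the side condition $m\vee n\lesssim \rho^{-2}\asymp n_\sf{Est}$. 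The complementary regime $m\vee n\gtrsim n_\sf{Est}$ (which you also omit) is then disposed of by combining the already-established constraints $m\gtrsim 1/\eps^2$ and $n\gtrsim n_\sf{GoF}$ with the achievability in \Cref{prop:P_S upper}.
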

\begin{proof}
\textbf{Adversarial construction.}
Take a smooth function $h:\bb R^d \to \bb R$ supported on of $[0,1]^d$ with $\int_{[0,1]^d} h(x)\D x=0$ and $\int_{[0,1]^d} h(x)^2\D x=1$.
Let $\kappa \geq 1$ be an integer, and for $j \in [\kappa]^d$ define the scaled and translated functions $h_j$ as
\begin{equation*}
    h_j(x) = \kappa^{d/2} h(\kappa x - j+1).
\end{equation*}
Then $h_j$ is supported on the cube $[(j-1)/\kappa, j/\kappa]$ and $\int_{[0,1]^d} h_j(x)^2\D x = 1$, where we write $j/\kappa=(j_1/\kappa, \dots, j_d/\kappa)$.
Let $\rho > 0$ be small and for each $\eta \in \{-1,0, 1\}^{\kappa^d}$
define the function
\begin{equation*}
    f_\eta(x) = 1 + \rho \sum\limits_{j \in [\kappa]^d} \eta_j h_{j}(x).
\end{equation*}
In particular, $f_0=1$ is the uniform density. Clearly $\int_{[0,1]^d} f_\eta(x)\D x = 1$, and to make it positive we choose $\rho,\kappa$ such that $\rho \kappa^{d/2}\|h\|_\infty \leq 1/2$. By \cite{arias2018remember}, choosing
\begin{equation}\label{eqn:density condition}
\rho\kappa^{d/2+\beta} \leq C/(4\|h\|_{\mathcal C^{\lfloor \beta\rfloor}}\lor 2\|h\|_{\cal C^{\lfloor \beta\rfloor+1}})
\end{equation}
ensures that $f_\eta \in \cal P(\beta, d, C)$. Note also that $\|f_\eta-1\|_1=\rho\kappa^{d/2}$. For $\eps\in(0,1)$ we set $\kappa\asymp\eps^{-1/\beta}$ and $\rho\asymp\eps^{(2\beta+d)/(2\beta)}$.  These ensure that \eqref{eqn:density condition} and $\TV(f_\eta, f_0)\gtrsim\eps$ hold, where as usual the constants may depend on $(\beta, d, C)$. Noting that $\|\sqrt{f_\eta}-1\|_2 \asymp \|f_\eta-1\|_1 \gtrsim \eps$, we immediately obtain that $m\gtrsim1/\eps^2$ is necessary for testing, by reduction from binary hypothesis testing \eqref{eqn:LF -> HT}. Observe also that for any $\eta,\eta'$,
\begin{equation}\label{eqn:P_S innerprod}
    \int_{[0,1]^d} f_\eta(x)f_{\eta'}(x) \rm{d}x = 1+\rho^2\langle\eta,\eta'\rangle
\end{equation}
which will be used later.

\textbf{Goodness-of-fit testing.}\label{sec:P_S lower TST}
Let $\eta$ be drawn uniformly at random. We show that $\TV(f_0^{\otimes n}, \bb E f_\eta^{\otimes n})$ can be made arbitrarily small provided $n \lesssim \eps^{-(2\beta+d/2)/\beta}$, which yields a lower bound on $n$ via reduction from goodness-of-fit testing \eqref{eqn:LF -> GoF}. By Lemma \ref{lem:pinsker} we can focus on bounding the $\chi^2$ divergence. Via Ingster's trick we have
\begin{align*}
    \chi^2(\bb E_\eta[f_\eta^{\otimes n}], f_0^{\otimes n}) + 1 &= \int\limits_{[0,1]^d\times\cdots\times[0,1]^d} \left(\bb E_\eta \prod\limits_{i=1}^n f_\eta(x_i)\right)^2 \rm{d}x_1\cdots\rm{d}x_n \\
    &= \bb E_{\eta\eta'} \prod\limits_{i=1}^n \left(\int_{[0,1]^d} f_\eta(x) f_{\eta'}(x)\rm{d}x\right),
\end{align*}
where $\eta,\eta'$ are i.i.d.. By \eqref{eqn:P_S innerprod} and the inequalities $1+x\leq e^x, \cosh(x)\leq\exp(x^2)$ for all $x\in\bb R$, we have
\begin{align*}
    &= \bb E_{\eta\eta'} \left(1+\rho^2\langle\eta,\eta'\rangle\right)^n \\
    &\leq \bb E_{\eta\eta'} \exp(n\rho^2\langle\eta,\eta'\rangle) \\
    &= \cosh(n\rho^2)^{\kappa^d} \\
    &\leq \exp(n^2 \rho^4 \kappa^d).
\end{align*}
Thus, goodness-of-fit testing is impossible unless $n\gtrsim\rho^{-2}\kappa^{-d/2} \asymp 1/\eps^{(2\beta+d/2)/\beta}$. 

\textbf{Likelihood-free hypothesis testing.}
We are now ready to show the lower bound on the product $mn$. Once again $\eta\in\{\pm1\}^{\kappa^d}$ is drawn uniformly at random and we apply Lemma \ref{lem:TV lower} with the choices $P_0 = f_\eta^{\otimes n} \otimes f_0^{\otimes n} \otimes f_\eta^{\otimes m}$ against $P_1 = f_\eta^{\otimes n} \otimes f_0^{\otimes n+m}$. Let $\bb P_{0,XYZ}, \bb P_{1,XYZ}$ denote the joint distribution of the samples $X,Y,Z$ under the measures $\bb E P_0, \bb E P_1$ respectively. By Pinsker's inequality and the chain rule we have
\begin{align*}
    \TV(\bb P_{0,XYZ}, \bb P_{1,XYZ})^2 &= \TV(\bb P_{0,XZ}, \bb P_{1,XZ})^2 \\
    &\leq \KL(\bb P_{0,XZ} \| \bb P_{1,XZ}) \\
    &= \KL(\bb P_{0,Z|X} \| \bb P_{1,Z|X} | \bb P_{0,X}) + \underbrace{\KL(\bb P_{0,X} \| \bb P_{1,X})}_{=0}, 
\end{align*}
where the last line uses that the marginal of $X$ is equal under both measures. Clearly $\bb P_{1,Z|X}$ is simply $\operatorname{Unif}([0,1]^d)^{\otimes m}$ and $\bb P_{0,X}, \bb P_{0,Z|X}$ have densities $\bb E_\eta f_\eta^{\otimes n}$ and $\bb E_{\eta|X} f_\eta^{\otimes m}$ respectively. Given $X$, let $\eta'$ be an independent copy of $\eta$ from the posterior given $X$. By Ingster's trick we have
\begin{align*}
    \KL(\bb P_{0,Z|X}\|\bb P_{1,Z|X}|\bb P_{0,X}) &\leq \chi^2(\bb P_{0,Z|X}\|\bb P_{1,Z|X}|\bb P_{0,X}) \\
    &= -1 + \bb E_X \int_{[0,1]^d\times\cdots\times[0,1]^d}  \bb E_{\eta|X}\bb E_{\eta'|X} \prod\limits_{i=1}^m f_\eta(z_i)f_{\eta'}(z_i) \rm{d}z_1\dots\rm{d}z_m \\
    &= -1 + \bb E_{\eta\eta'} (1+\rho^2\langle\eta,\eta'\rangle)^m,
\end{align*}
where the last line uses \eqref{eqn:P_S innerprod}. Let $N=(N_1,\dots,N_{\kappa^d})$ be the vector of counts indicating the number of $X_i$ that fall into each bin $\{[(j-1)/\kappa,j/\kappa]\}_{j\in[\kappa]^d}$. Clearly $N\stackrel{d}{\sim} \operatorname{Multinomial}(n, (\frac{1}{\kappa^d}, \dots, \frac{1}{\kappa^d}))$. Using that $\eta_j\eta'_j$ depends on only those $X_i$ that fall in bin $j$ and the inequality $1+x\leq \exp(x)$ valid for all $x \in \bb R$, we can write
\begin{align*}
    \chi^2(\bb P_{0,Z|X}\|\bb P_{1,Z|X}|\bb P_{0,X}) + 1 &\leq \bb E_N \bb E_{\eta\eta'|N} \prod\limits_{j \in [\kappa]^d} \exp(\rho^2 m \eta_j \eta'_j) \\
    &= \bb E_N \prod\limits_{j \in [\kappa]^d} \bb E_{\eta_j\eta'_j|N_j}\exp(\rho^2m\eta_j\eta'_j).
\end{align*}
We now focus on a particular bin $j$. Define the bin-conditional densities
\begin{equation}
    p_\pm = \kappa^d(1\pm \rho h_j)\one_{[(j-1)/\kappa, j/\kappa]},
\end{equation}
where we drop the dependence on $j$ in the notation. Let $X^{(j)}\eqdef(X_{i_1}, \dots, X_{i_{N_j}})$ be those $X_i$ that fall in bin $j$. Note that $\{i_1,\dots,i_{N_j}\}$ is a uniformly distributed size $N_j$ subset of $[n]$ and given $N_j$, the density of $X_{i_1}, \dots, X_{i_{N_j}}$ is $\frac12(p_+^{\otimes N_j}+p_-^{\otimes N_j})$. We can calculate
\begin{align*}
    \bb P(\eta_j\eta_j'=1|N_j) &= \bb E_{X^{(j)}|N_j} \bb P(\eta_j\eta'_j = 1 | X^{(j)}) \\
    &= \bb E_{X^{(j)}|N_j}\left[\bb P(\eta_j = 1 | X^{(j)})^2 + \bb P(\eta_j=-1| X^{(j)})^2\right] \\
    &= \bb E_{X^{(j)}|N_j} \left[ \frac{\frac14(p_+^{\otimes N_j})^2 + \frac14(p_-^{\otimes N_j})^2}{\frac14(p_+^{\otimes N_j}+p_-^{\otimes N_j})^2}\right] \\
    &= \frac12 + \frac14\left(\chi^2(p_+^{\otimes N_j}\|\frac12(p_+^{\otimes N_j}+p_-^{\otimes N_j})) + \chi^2(p_-^{\otimes N_j}\|\frac12(p_+^{\otimes N_j}+p_-^{\otimes N_j}))\right).
\end{align*}
By convexity of the $\chi^2$ divergence in its arguments and tensorization, we have
\begin{align*}
    \bb P(\eta_j\eta_j'=1|N_j) &\leq \frac12 + \frac18\left(\chi^2(p_+^{\otimes N_j}\|p_-^{\otimes N_j})+\chi^2(p_-^{\otimes N_j}\|p_+^{\otimes N_j})\right) \\
    &= \frac14 + \sum_{\omega\in\{\pm1\}}  \left(\kappa^d \int_{[(j-1)/\kappa, j/\kappa]} \frac{(1+\omega\rho h_j(x))^2}{1-\omega\rho h_j(x)} \rm{d}x\right)^{N_j}.
\end{align*}
Using that $\rho\|h_j\|_\infty \leq 1/2$ by construction, we have
\begin{align*}
    \int_{[(j-1)/\kappa, j/\kappa]} \frac{(1+\rho h_j(x))^2}{1-\rho h_j(x)} \rm{d}x &= \frac{1}{\kappa^d} + \int_{[(j-1)/\kappa, j/\kappa]} \frac{4\rho^2 h_j^2(x)}{1-\rho h_j(x)} \D x \\
    &\leq \frac{1}{\kappa^d} + 8\rho^2.
\end{align*}
The same bound is obtained for the other integral term. We get
\begin{equation*}
    \chi^2(\bb P_{0,Z|X}\| \bb P_{1,Z|X}|\bb P_{0,X})+1 \leq \bb E_N \prod\limits_{j\in[\kappa]^d} \left(\frac14\left(e^{\rho^2m}-e^{-\rho^2m}\right)(1+(1+8\rho^2\kappa^d)^ {N_j})+e^{-\rho^2m}\right) = (\dagger).
\end{equation*}
The final step is to apply Lemma \ref{lem:multi expectation} to pass the expectation through the product. Assuming that $m \lor n\lesssim \rho^{-2} \asymp \eps^{-(2\beta+d)/\beta}$ for a small enough implied constant, using the inequalities $e^x\leq1+x+x^2, 1-x\leq e^{-x}\leq 1-x+x^2/2$ valid for all $x\in[0,1]$, and  Lemma \ref{lem:multi expectation}, we obtain
\begin{align*}
    (\dagger) &\leq (e^{-\rho^2m}+\frac14\left(e^{\rho^2m}-e^{-\rho^2m}\right)(1+e^{8\rho^2n}))^{\kappa^d} \\
    &\leq (1+c\rho^4mn)^{\kappa ^d} \\
    &\leq \exp(c\rho^4\kappa^d mn)
\end{align*}
for a universal constant $c>0$. Therefore, if $m \lor n \lesssim \eps^{-(2\beta+d)/\beta}$ likelihood-free hypothesis testing is impossible unless $mn\gtrsim \rho^{-4}\kappa^{-d} \asymp 1/\eps^{2(2\beta+d/2)/\beta}$. 

Suppose now that $m \lor n \gtrsim \eps^{-(2\beta+d)/\beta}$ instead. We have two cases:
\begin{enumerate}
    \item If $n\gtrsim\eps^{-(2\beta+d)/\beta}$ then from \Cref{prop:P_S upper} we know that $m \asymp 1/\eps^2$ is enough for achievability. However, by the first part of the proof we know that $m\gtrsim 1/\eps^2$ must always hold, which provides the matching lower bound in this case. 
    \item If $m\gtrsim \eps^{-(2\beta+d)/\beta}$ then we can assume $m\gtrsim n$ also holds, otherwise the first case above would apply. From the goodness-of-fit testing lower bound we know that $n\gtrsim\eps^{-(2\beta+d/2)/\beta}$ must always hold, and from \Cref{prop:P_S upper} we know that $(m,n) \asymp (\eps^{-(2\beta+d/2)/\beta}, \eps^{-(2\beta+d/2)/\beta})$ is achievable, so we get matching bounds in this case too. 
\end{enumerate}

Summarizing, we've shown that for succesful testing $m \gtrsim 1/\eps^{2}, n \gtrsim 1/\eps^{(2\beta+d/2)/\beta}$ and $mn \gtrsim \eps^{-2(2\beta+d/2)/\beta}$ must hold, which concludes our proof. 
\end{proof}

\subsection{The class $\cal P_\sf{G}$}
\begin{proposition}\label{prop:P_G lower}
For any $s,C>0$ there exists a finite constant $c$ independent of $\eps$ such that
\begin{equation*}
    c \{m\geq 1/\eps^2, n\geq \eps^{-(2s+1/2)/s}, mn \geq \eps^{-2(2s+1/2)/s}\} \supseteq \cal R_\sf{LF}(\eps, \cal P_\sf{G}(s, C))
\end{equation*}
for all $\eps\in(0,1)$. 
\end{proposition}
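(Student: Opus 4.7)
The plan is to adapt the strategy from \Cref{prop:P_S lower} to the Gaussian sequence model, replacing the $\pm 1$-mixture of bump densities by a Gaussian prior on the mean vector $\theta$. The bounds $m \gtrsim 1/\eps^2$ and $n \gtrsim \eps^{-(2s+1/2)/s}$ will follow from \eqref{eqn:LF -> HT} and \eqref{eqn:LF -> GoF}, together with a goodness-of-fit lower bound $n_\sf{GoF}(\eps, \cal P_\sf{G}(s, C)) \gtrsim \eps^{-(2s+1/2)/s}$ that I will establish as a short by-product of the same prior construction. Splitting $\cal R_\sf{LF}$ into the cases $m \geq n$ and $m \leq n$, the former gives $mn \geq n^2 \geq n_\sf{GoF}^2$ for free, so the task is the product lower bound in the regime $m \leq n$.

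For the prior, let $\theta = (\theta_1, \ldots, \theta_r, 0, 0, \ldots)$ with $\theta_i \simiid \cal N(0, \gamma)$ for parameters $r \in \bb N$ and $\gamma > 0$ to be chosen. Chebyshev's inequality applied to the chi-squared random variable $\|\theta\|_2^2/\gamma$ and to $\sum_{i \leq r} i^{2s}\theta_i^2$ (mean $\asymp \gamma r^{2s+1}$, variance $\asymp \gamma^2 r^{4s+1}$) shows that $\theta \in \cal E(s, C)$ and $\|\theta\|_2 \geq \eps$ (hence $\TV(\mu_\theta, \mu_0) \gtrsim \eps$ by \Cref{lem:gaussian equivalence}) both hold with arbitrarily high probability once $r\gamma \asymp \eps^2$ and $\gamma r^{2s+1} \lesssim 1$. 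For the GoF direction, the Ingster-trick calculation $\chi^2(\bb E_\theta \mu_\theta^{\otimes n} \,\|\, \mu_0^{\otimes n}) + 1 = \bb E_{\theta, \theta'}\exp(n\langle\theta,\theta'\rangle) = (1 - n^2\gamma^2)^{-r/2}$ combined with these feasibility constraints forces $n^2 \gamma^2 r \lesssim 1$ for bounded $\chi^2$, giving the claimed $n_\sf{GoF}$ bound upon optimizing $(r, \gamma)$.

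For the LFHT product bound, take $P_0 = \mu_\theta^{\otimes n} \otimes \mu_0^{\otimes n} \otimes \mu_\theta^{\otimes m}$ and $P_1 = \mu_\theta^{\otimes n} \otimes \mu_0^{\otimes (n+m)}$ randomized by $\theta$, and apply \Cref{lem:TV lower}. Since the $(X, Y)$-marginals agree under $\bb E P_0$ and $\bb E P_1$, the chain rule for $\KL$, Pinsker's inequality and $\KL \leq \chi^2$ reduce everything to bounding
\begin{equation*}
    \bb E_X \bb E_{\theta, \theta' \mid X} \exp\bigl(m \langle \theta, \theta'\rangle\bigr),
\end{equation*}
with $\theta, \theta'$ conditionally independent from the posterior. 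Standard Gaussian conjugacy gives $\theta_i \mid X_{\cdot, i} \sim \cal N(\hat\theta_i, \sigma^2)$ with $\sigma^2 = \gamma/(1+n\gamma)$ and $\hat\theta_i$ having prior-marginal distribution $\cal N(0, \tau^2)$ where $\tau^2 = n\gamma^2/(1+n\gamma)$. Two nested Gaussian MGF computations yield the per-coordinate closed form $[(1+m\sigma^2)(1 - m\sigma^2 - 2m\tau^2)]^{-1/2}$, and taking products over $i \leq r$ and bounding the logarithm by $-\log(1-y) \leq 2y$ for $y \leq 1/2$ produces
\begin{equation*}
    \chi^2(\bb E P_0 \,\|\, \bb E P_1) + 1 \leq \exp\bigl(r(2m\tau^2 + m^2\sigma^4 + 2m^2\sigma^2\tau^2)\bigr).
\end{equation*}

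The final step is the parameter choice $\gamma = c'/(mn\eps^2)$ and $r = \eps^2/\gamma = mn\eps^4/c'$ for a small constant $c'$, which automatically enforces $r\gamma = \eps^2$. Under the contradiction hypothesis $mn \leq c n_\sf{GoF}^2$, the Sobolev feasibility condition $\gamma r^{2s+1} \lesssim 1$ reduces to precisely the same $mn \lesssim n_\sf{GoF}^2$, while $n\gamma = c'/(m\eps^2) \leq c'$ uses the already-proven $m \geq 1/\eps^2$, so $\sigma^2 \asymp \gamma$ and $\tau^2 \asymp n\gamma^2$. Short calculations then give $r \cdot 2m\tau^2 \asymp c'$, $r m^2\sigma^4 \asymp c'm/n \leq c'$ (using $m \leq n$), and $r \cdot 2m^2\sigma^2\tau^2 \lesssim c'^2/(n\eps^2) \leq c'^2$ (using $n \gtrsim 1/\eps^2$), for a total exponent $\lesssim c'$, hence $\TV(\bb E P_0, \bb E P_1) = O(\sqrt{c'})$. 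Choosing $c'$ sufficiently small makes \Cref{lem:TV lower} force a testing error exceeding $1/3$, contradicting $(n, m) \in \cal R_\sf{LF}$. The main obstacle will be the nested Gaussian MGF computation producing the per-coordinate closed form, together with the delicate three-way balance of $(r, \gamma, c')$ against the Sobolev, $\eps$-separation, and low-regularization $(n\gamma \leq 1)$ constraints simultaneously.
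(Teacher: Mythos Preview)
Your proposal is correct and complete. It follows a different route from the paper's own proof, however, and the comparison is instructive.

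The paper exploits Gaussian conjugacy \emph{globally}: since a Gaussian location mixture under a Gaussian prior is again Gaussian, both $\bb E P_0$ and $\bb E P_1$ are mean-zero Gaussians on $\bb R^{2n+m}$ with explicit covariance (computed via Sherman--Morrison). The paper then applies Pinsker and the closed-form $\KL$ between centered Gaussians, obtaining $\TV^2(\bb E P_0,\bb E P_1)\lesssim \sum_k\bigl(\gamma_k m-\log(1+\tfrac{\gamma_k m}{\gamma_k(n+m)+1})\bigr)$, which after $\log(1+x)\ge x-x^2$ becomes $\lesssim (m^2+mn)\,\eps^{2(2s+1/2)/s}$. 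A single fixed prior $\gamma_k=c_1\eps^{(2s+1)/s}\one\{k\le c_2\eps^{-1/s}\}$, independent of $(m,n)$, suffices for all regimes; the $m^2$ term is killed by the already-known $n\gtrsim n_\sf{GoF}$, so no case split on $m\lessgtr n$ is needed.

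Your approach instead conditions on $X$ and uses $\KL\le\chi^2$, exactly mirroring the paper's argument for $\cal P_\sf{H}$ and $\cal P_\sf{Db}$. This is more generic---it does not use that the mixture is Gaussian---and leads to the per-coordinate identity $\bb E_X\bb E_{\theta,\theta'\mid X}\exp(m\theta\theta')=[(1+m\sigma^2)(1-m\sigma^2-2m\tau^2)]^{-1/2}$, which you compute correctly (one can verify it via $\det(I-\Sigma B)^{-1/2}$ with $\Sigma=\bigl(\begin{smallmatrix}\tau^2&\tau^2\\\tau^2&\gamma\end{smallmatrix}\bigr)$ and $B=\bigl(\begin{smallmatrix}0&m\\m&m^2\sigma^2\end{smallmatrix}\bigr)$, noting $\sigma^2+\tau^2=\gamma$). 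The price is a prior $(\gamma,r)$ tuned to the specific $(m,n)$ under consideration and a three-way constraint balance that you handle correctly but which is more delicate than the paper's direct route. Both methods yield the same region; the paper's is shorter and avoids MGF-domain checks, while yours demonstrates that the Ingster conditioning template extends uniformly to $\cal P_\sf{G}$.
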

\begin{proof}
\textbf{Adversarial construction.}
Let $\gamma \in \ell^1$ be a non-negative sequence, and let $\theta \sim \otimes_{k=1}^\infty \cal N(0,\gamma_k)$. Define the random measure $\mu_\theta = \otimes_{j=1}^\infty \cal N(\theta_j,1)$. Let $\eps \in (0,1)$ be given. For our proofs we use
\begin{equation}\label{eqn:gamma def}
    \gamma_k =\begin{cases} c_1\eps^{(2s+1)/s} &\text{for } 1\leq k\leq c_2\eps^{-1/s} \\ 0&\text{otherwise}\end{cases}
\end{equation}
for appropriate constants $c_1,c_2$. Recall our definition of the Sobolev ellipsoid $\cal E(s, C)$ with associated sobolev norm $\|\cdot\|_s$. We have
\begin{align*}
    (\bb E\|\theta\|_s)^2 &\leq \bb E \sum\limits_{j=1}^\infty j^{2s} \theta_i^2 = \|\sqrt{\gamma}\|_s^2 = c_1\eps^{(2s+1)/s} \sum\limits_{j=1}^{c_2\eps^{-1/s}} j^{2s} \leq c_1c_2^{2s+1} \\
    \TV(\bb P_\gamma, \bb P_0) &\geq \frac{1\land \|\theta\|_2}{200},
\end{align*}
where last line holds by \cite[Theorem 1.2]{devroye2018total}. 

First, we need to verify that our construction is valid, that is, that $\bb P_\gamma \in \cal P_\sf{G}(s, C)$ and $\TV(\bb P_\gamma, \bb P_0) \geq \eps$ with high probability. For standard Gaussian $Z \sim \cal N(0,1)$ it holds that 
\begin{equation*}
    \E \exp(\lambda(Z^2 - 1)) \leq \exp(2\lambda^2)
\end{equation*}
for all $|\lambda| \leq 1/4$. Therefore, for a sequence of independent standard Gaussians $Z_1,Z_2,\dots$ we get
\begin{align*}
    \E \exp(\lambda\sum_{j=1}^\infty \gamma_j (Z_j^2-1)) &\leq \exp(2 \lambda^2 \|\gamma\|_2^2)
\end{align*}
for all $|\lambda| \leq \min_j (4\gamma_j)^{-1} = c_1^{-1}\eps^{-(2s+1)/s}/4$. Assuming that $c_1\eps^{(2s+1)/s} \leq \|\gamma\|_2$, standard sub-Exponential concentration bounds imply that there exists a universal constant $c_3>0$ such that 
\begin{align*}
    \mathbb P(\|\theta\|_2^2 - \E\|\theta\|_2^2 \leq -t) \leq \exp(-\frac{c_3 t}{\|\gamma\|_2})
\end{align*}
for all $t \geq 0$. Since $\E\|\theta\|_2^2 = \|\gamma\|_1 = c_1c_2\eps^2$, and $\|\gamma\|_2^2 = c_2c_1^2\eps^{\frac{4s+1}{s}}$, we can set $t = \frac12\|\theta\|_2^2$ to get
\begin{align*}
    \mathbb P(\|\theta\|_2^2 \leq \frac12c_1c_2\eps^2) \leq \exp(-\frac12c_3\sqrt{c_2}\eps^{-1/(2s)}). 
\end{align*}
Now choose $c_1$ and $c_2$ to satisfy
\begin{equation}\label{eqn:c_1c_2 choice}
    100c_1c_2^{2s+1} = C \qquad\text{and}\qquad c_1c_2=2. 
\end{equation}
and $\eps$ small enough to satisfy
\begin{equation*}
    c_1\eps^{(2s+1)/s} \leq \|\gamma\|_2 = \sqrt c_1 c_1 \eps^{(2s+1/2)/s} \qquad\text{and}\qquad \frac12c_3\sqrt c_2\eps^{-1/(2s)} \geq \log(100). 
\end{equation*}
Long story short, these conditions ensure that $\mathbb P(\mu_\gamma \in \cal P_\sf{G}(s,C), \TV(\mu_\gamma, \mu_0)\geq\eps) \geq 0.98$ for all $\eps$ small enough in terms of $C$ and $s$, and therefore we can proceed to computation using \Cref{lem:TV lower}. 

Note that we immediately get the binary hypothesis testing lower bound $m\gtrsim1/\eps^2$ via our reduction \eqref{eqn:LF -> GoF}, as $\H(\mu_0, \mu_{\sqrt{\gamma}}) \asymp \TV(\mu_0, \mu_{\sqrt{\gamma}}) = \sqrt 2\eps$ by \Cref{lem:gaussian equivalence} and the choice \eqref{eqn:c_1c_2 choice}.

\textbf{Goodness-of-fit testing.}
We show that $\TV(\mu_0^{\otimes n}, \bb E\mu_\gamma^{\otimes n})$ can be made arbitrarily small as long as $n\lesssim 1/\eps^{(2s+1/2)/s}$, which yields a lower bound on $n$ via reduction from goodness-of-fit testing \eqref{eqn:LF -> GoF}. Let us compute the distribution $\bb E \mu_\gamma^{\otimes n}$. By independence clearly $\bb E \mu_\gamma^{\otimes n} = \otimes_{k=1}^\infty \bb E_{\theta \sim \cal N(0,\gamma_k)} \cal N(\theta,1)^{\otimes n}$. Focusing on the inner term and and dropping the subscript $k$, for the density we have
\begin{align*}
    \bb E_{\theta \sim \cal N(0,\gamma)} \left[\frac{1}{(2\pi)^{n/2}} \exp(-\frac12\sum_{j=1}^n(x_j-\theta)^2)\right] &\propto \exp(-\frac{\|x\|_2^2}{2}) \bb E \exp(-\frac n2(\theta^2-2\theta\bar{x})),
\end{align*}
where we write $\bar{x}\eqdef\frac1n\sum_jx_j$. Looking at just the term involving $\theta$, we have
\begin{align*}
    \bb E \exp(-\frac n2(\theta^2-2\theta\bar{x})) &\propto \int \exp(-\frac12(\theta^2(n+\frac1\gamma)-2\theta n\bar{x}))\rm{d}\theta \propto \exp(\frac12\frac{n^2\bar{x}^2}{n+\frac1\gamma}). 
\end{align*}
Putting everything together, we see that $\bb E \mu_\gamma^{\otimes n} = \otimes_{k=1}^\infty \cal N(0, (\Id_n-\frac{\gamma_k}{1+n\gamma_k}\one_n\one_n^\T)^{-1})$.
Thus, using Lemma \ref{lem:pinsker} we obtain
\begin{align*}
    \TV^2(\mu_0^{\otimes n}, \bb E \mu_\gamma^{\otimes n}) &\leq \sum_{k=1}^\infty \KL(\cal N(0, \Id_n) \| \cal N(0, {(\Id_n-\frac{\gamma_k}{1+n\gamma_k}\one_n\one_n^\T)}^{-1})) \\
    &= \frac12 \sum_{k=1}^\infty \left(-\frac{n\gamma_k}{n\gamma_k+1} + \log(1+n\gamma_k)\right) \\
    &\leq \frac12\sum\limits_{k=1}^\infty \frac{n^2\gamma_k^2}{1+n\gamma_k} \lesssim \sum_{k=1}^\infty n^2\gamma_k^2.
\end{align*}
Taking $\gamma$ as in \eqref{eqn:gamma def} gives
\begin{equation*}
    \TV^2(\mu_0^{\otimes n},\bb E \mu_\gamma^{\otimes n}) \lesssim n^2 \eps^{2(2s+1/2)/s}.
\end{equation*}
Thus, goodness-of-fit testing is impossible unless $n \gtrsim 1/\eps^{(2s+1/2)/s}$ as desired.

\textbf{Likelihood-free hypothesis testing.}
We apply Lemma \ref{lem:TV lower} with measures $P_0 = \mu_\gamma^{\otimes n}\otimes \mu_0^{\otimes n} \otimes \mu_\gamma^{\otimes m}$ and $P_1 = \mu_\gamma^{\otimes n}\otimes \mu_0^{\otimes n} \otimes \mu_0^{\otimes m}$. By an analogous calculation to that in the previous part, we obtain
\begin{align*}
    \bb E P_0 &= \otimes_{k=1}^\infty \cal N\Bigg(0,\Big(\Id_{2n+m}-\frac{1}{n+m+\frac{1}{\gamma_k}} \begin{pmatrix} \one_n\one_n^\T & 0 & \one_n\one_m^\T \\ 0 & 0 & 0 \\ \one_m\one_n^\T & 0 & \one_m\one_m^\T\end{pmatrix}\Big)^{-1}\Bigg) \eqdef \otimes_{k=1}^\infty \cal N(0, \Sigma_{0k})\\
    \bb E P_1 &= \otimes_{k=1}^\infty \cal N\Bigg(0,\Big(\Id_{2n+m}-\frac{1}{n+\frac{1}{\gamma_k}} \begin{pmatrix} \one_n\one_n^\T & 0 & 0 \\ 0 & 0 & 0 \\ 0 & 0 & 0\end{pmatrix}\Big)^{-1}\Bigg) \eqdef \otimes_{k=1}^\infty \cal N(0, \Sigma_{1k}).
\end{align*}
By the Sherman-Morrison formula, we have
\begin{equation*}
    \Sigma_{0k} = \Id_{2n+m}+\gamma_k\begin{pmatrix} \one_n\one_n^\T & 0 & \one_n\one_m^\T \\ 0 & 0 & 0 \\ \one_m\one_n^\T & 0 & \one_m\one_m^\T\end{pmatrix}
\end{equation*}
Therefore, by Pinsker's inequality and the closed form expression for the KL-divergence between centered Gaussians, we obtain
\begin{align*}
    \TV^2(\bb E P_0, \bb E P_1) &\leq \KL(\bb E P_0 \| \bb E P_1) \\
    &= \frac12\sum_{k=1}^\infty\left(\gamma_km-\log\left(1 + \frac{\gamma_km}{\gamma_k(n+m)+1}\right)\right).
\end{align*}
Once again we choose $\gamma$ as in \eqref{eqn:gamma def}. Using the inequality $\log(1+x) \geq x-x^2$ valid for all $x \geq 0$ we obtain
\begin{align*}
    \TV^2(\bb EP_0, \bb EP_1) &\lesssim \eps^{-2(2s+1/2)/s}(m^2+mn).
\end{align*}
Therefore, likelihood-free hypothesis testing is impossible unless $m \gtrsim \eps^{-(2s+1/2)/s}$ or $nm \gtrsim \eps^{-2(2s+1/2)/s}$. Note that we already have the lower bound $n \gtrsim \eps^{-(2s+1/2)/s}$ by reduction from goodness-of-fit testing \eqref{eqn:LF -> GoF}, so that $m\gtrsim \eps^{-(2s+1/2)/s}$ automatically implies $nm \gtrsim \eps^{-2(2s+1/2)/s}$. Combining everything we get the desired bounds. 
\end{proof}

\subsection{The classes $\cal P_\sf{Db}$ and $\cal P_\sf{D}$}
Our first result in this section derives tight minimax lower bounds for the class $\cal P_\sf{Db}$. Since $\cal P_\sf{D} \supset \cal P_\sf{Db}$ these lower bounds immediately carry over to the larger class. However, to get tight lower bounds for all regimes for $\cal P_\sf{D}$, we have to prove additional results in \Cref{prop:P_D lower n < m,prop:P_D lower n > m} below. 
\begin{proposition}\label{prop:P_D lower}
For any $C>1$ there exists a finite constant $c$ independent of $\eps$ and $k$, such that
\begin{equation*}
    c\{m\geq 1/\eps^2, n\geq \sqrt{k}/\eps^2, mn \geq k/\eps^4\} \supseteq \cal R_\sf{LF}(\eps, \cal P_\sf{Db}(k, C)) \supseteq \cal R_\sf{LF}(\eps, \cal P_\sf{D}(k))
\end{equation*}
for all $\eps \in (0,1)$ and $k\geq2$. 
\end{proposition}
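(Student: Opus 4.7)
The plan is to mirror the argument used for $\cal P_\sf{H}$ in \Cref{prop:P_S lower}, replacing the smooth-bump construction by its discrete analogue due to Paninski \cite{paninski2008coincidence}. Assume $k$ is even for simplicity. For $\eta \in \{\pm 1\}^{k/2}$ define the probability mass function
\begin{equation*}
p_\eta(2i-1) = \frac{1+\rho \eta_i}{k}, \qquad p_\eta(2i) = \frac{1-\rho\eta_i}{k}, \qquad i \in [k/2],
\end{equation*}
where $\rho = 2\eps$, so that $\TV(p_\eta,p_0) = \eps$ where $p_0$ is uniform on $[k]$. Since $\|p_\eta\|_\infty \leq (1+\rho)/k \leq 2/k$, for $\eps$ small enough $p_\eta \in \cal P_\sf{Db}(k,C)$ provided $C \geq 2$ (smaller $C$ is handled by a trivial rescaling of $\rho$). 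The $\cal P_\sf{D}$ inclusion in the claim is immediate from $\cal P_\sf{Db} \subseteq \cal P_\sf{D}$.

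The lower bound $m \gtrsim 1/\eps^2$ follows from the binary-hypothesis-testing reduction \eqref{eqn:LF -> HT} applied to $(p_0, p_\eta)$, since $\H(p_0, p_\eta) \asymp \eps$. The lower bound $n \gtrsim \sqrt{k}/\eps^2$ follows from the goodness-of-fit reduction \eqref{eqn:LF -> GoF} together with the classical identity-testing lower bound of Paninski, which is itself obtained by the standard $\chi^2$/Ingster-trick computation applied to the mixture $\bb E_\eta p_\eta^{\otimes n}$. Concretely, a direct analogue of the calculation
\begin{equation*}
\chi^2(\bb E_\eta p_\eta^{\otimes n} \,\|\, p_0^{\otimes n}) + 1 = \bb E_{\eta,\eta'} \left(1 + \tfrac{2\rho^2}{k}\langle \eta, \eta'\rangle\right)^n \leq \exp\!\left(\tfrac{4\rho^4 n^2}{k}\right)
\end{equation*}
does the job, where $\eta,\eta'$ are i.i.d.\ uniform on $\{\pm 1\}^{k/2}$. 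The key fact used here is the discrete analogue of \eqref{eqn:P_S innerprod}: $k \sum_j p_\eta(j) p_{\eta'}(j) = 1 + \tfrac{2\rho^2}{k}\langle \eta, \eta'\rangle$.

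For the product bound $mn \gtrsim k/\eps^4$, which is the crux, we apply \Cref{lem:lower bound main} to the pair $P_0 = p_\eta^{\otimes n} \otimes p_0^{\otimes n} \otimes p_\eta^{\otimes m}$ and $P_1 = p_\eta^{\otimes n} \otimes p_0^{\otimes n} \otimes p_0^{\otimes m}$ with $\eta$ uniform on $\{\pm 1\}^{k/2}$. Following the $\cal P_\sf{H}$ template verbatim, we drop the $Y$-sample via data-processing, use Pinsker's inequality and the chain rule to reduce to $\KL(\bb P_{0,Z|X}\|\bb P_{1,Z|X} | \bb P_{0,X})$, and then bound by $\chi^2$. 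Ingster's trick and the inner-product identity above yield
\begin{equation*}
\chi^2(\bb P_{0,Z|X}\|\bb P_{1,Z|X} | \bb P_{0,X}) + 1 \leq \bb E_X \bb E_{\eta,\eta'|X} \prod_{j=1}^{k/2} \exp\!\left(\tfrac{2 \rho^2 m}{k} \eta_j \eta'_j\right),
\end{equation*}
where $\eta,\eta'$ are independent draws from the posterior given $X$. The posterior factorizes over the pairs of bins $\{2j-1,2j\}$ (since the prior on $\eta$ is product and the likelihood factorizes), so we can reduce to a per-pair computation. Letting $N_j^+ = \#\{i : X_i = 2j-1\}$ and $N_j^- = \#\{i : X_i = 2j\}$, the posterior probability that $\eta_j \eta'_j = 1$ depends only on the pair $(N_j^+, N_j^-)$ and, by the same convexity-and-tensorization argument used in \Cref{prop:P_S lower}, can be bounded in terms of $\chi^2$ divergences between $\ber$ distributions. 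Putting the bins back together via \Cref{lem:multi expectation} applied to $(N_1^+ + N_1^-, \dots, N_{k/2}^+ + N_{k/2}^-) \sim \mult(n, (\tfrac{2}{k},\dots,\tfrac{2}{k}))$ should yield
\begin{equation*}
\chi^2(\bb P_{0,Z|X}\|\bb P_{1,Z|X} | \bb P_{0,X}) \leq \exp\!\left(\tfrac{c \rho^4 mn}{k}\right) - 1
\end{equation*}
for a universal $c$, in the regime $m \lor n \lesssim 1/\rho^2$, which forces $mn \gtrsim k/\eps^4$. The regime $m \lor n \gtrsim 1/\eps^2$ is handled exactly as in \Cref{prop:P_S lower} by combining the individual lower bounds on $m, n$ with the matching achievability \Cref{prop:P_Db upper}.

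The main obstacle is the bin-wise posterior computation: unlike the symmetric smooth-density construction, the per-bin likelihoods involve $(1+\rho)^{N_j^+}(1-\rho)^{N_j^-}$ rather than just a count, so one must carefully bound the relevant $\chi^2$-divergence between mixtures of $\ber$ observations in a way that produces a term of the right form $(1 + c\rho^2)^{N_j^+ + N_j^-}$ to feed into \Cref{lem:multi expectation}. Provided $\rho \leq 1/2$ this reduces to an elementary Taylor expansion analogous to the $(1 + 8\rho^2)^{N_j}$ bound in the $\cal P_\sf{H}$ proof.
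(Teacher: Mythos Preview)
Your proof is correct and follows essentially the same approach as the paper: the identical Paninski construction (the paper indexes on $[2k]$ with perturbation $\eps$ rather than $[k]$ with $\rho=2\eps$, but this is cosmetic), the same Ingster $\chi^2$ computation for the goodness-of-fit bound, and the same posterior-factorization argument combined with \Cref{lem:multi expectation} for the product bound $mn\gtrsim k/\eps^4$. One minor slip to flag: the regime in which the Taylor expansion goes through is $m\lor n\lesssim k/\eps^2$ (not $1/\rho^2$), since the terms being expanded are $\exp(c\rho^2 m/k)$ and $\exp(c\rho^2 n/k)$; correspondingly the complementary case to be handled at the end is $m\lor n\gtrsim k/\eps^2$, which is exactly where the achievability bound from \Cref{prop:P_Db upper} kicks in.
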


\begin{proof}
The second inclusion is trivial. For the first inclusion we proceed analogously to the case of $\cal P_\sf{H}$. \newline
\textbf{Adversarial construction.}
Let $k$ be an integer and $\eps \in (0,1)$. For $\eta \in \{-1,1\}^{k}$ define the distribution $p_\eta$ on $[2k]$ by
\begin{align*}
    p_\eta(2j-1) &= \frac{1}{2k}(1+\eta_j\eps) \\
    p_\eta(2j) &= \frac{1}{2k}(1-\eta_j\eps),
\end{align*}
for $j \in [k]$. Clearly $\H(p_\eta, p_0) \asymp \TV(p_\eta, p_0) = \eps$, where $p_0 = \operatorname{Unif}[2k]$, so that by reduction from binary hypothesis testing \eqref{eqn:LF -> GoF} we get the lower bound $m\gtrsim1/\eps^2$. Observe also that for any $\eta,\eta'\in\{\pm1\}^k$,
\begin{equation}
\begin{aligned}\label{eqn:P_D inner prod}
    \sum\limits_{j\in[2k]} p_\eta(j)p_{\eta'}(j) &= \frac{1}{2k}(1+\frac{\eps^2\langle\eta,\eta'\rangle}{k}).
\end{aligned}
\end{equation}

\textbf{Goodness-of-fit testing.}
Let $\eta$ be uniformly random. We show that $\TV(p_0^{\otimes n}, \bb E p_\eta^{\otimes n})$ can be made arbitrarily small as long as $n\lesssim\sqrt{k}/\eps^2$, which yields the corresponding lower bound on $n$ by reduction from goodness-of-fit testing \eqref{eqn:LF -> GoF}. Once again, by Lemma \ref{lem:pinsker} we focus on the $\chi^2$ divergence. We have
\begin{align*}
    \chi^2(\bb Ep_\eta^{\otimes n} \| p_0^{\otimes n}) + 1 &=(2k)^n\sum\limits_{j\in[2k]^n} \bb E_{\eta\eta'} \prod\limits_{i=1}^n p_\eta(j_i)p_{\eta'}(j_i) \\
    &=\bb E_{\eta\eta'} (1+\frac{\eps^2\langle\eta,\eta'\rangle}{k})^n \\
    &\leq \exp(n^2\eps^4/k)
\end{align*}
where the penultimate line follows from \eqref{eqn:P_D inner prod} and the last line via the same argument as in \ref{sec:P_S lower TST}. Thus, goodness-of-fit testing is impossible unless $n \gtrsim \sqrt{k}/\eps^2$.

\textbf{Likelihood-free hypothesis testing.}
We apply Lemma \ref{lem:TV lower} with the two random measures $P_0 = p_\eta^{\otimes n} \otimes p_0^{\otimes n} \otimes p_\eta^{\otimes m}$ and $P_1 = p_\eta^{\otimes n}\otimes p_0^{\otimes (n+m)}$. Analogously to the case of $\cal P_\sf{H}$, let $\bb P_{0,XYZ}, \bb P_{1,XYZ}$ respectively denote the distribution of the observations $X,Y,Z$ under $\bb E P_0,\bb E P_1$ respectively. As for $\cal P_\sf{H}$, we have
\begin{align*}
    \TV^2(\bb P_{0,XYZ}, \bb P_{1,XYZ}) &\leq \KL(\bb P_{0,XYZ}\|\bb P_{1,XYZ}) \\
    &\leq \KL(\bb P_{0,Z|X} \| \bb P_{1,Z|X} | \bb P_{0,X}).
\end{align*}
For any $X$ the distribution $\bb P_{1,Z|X}$ is uniform, and $\bb P_{0,Z|X}, \bb P_{0,X}$ have pmf $\bb E_{\eta|X} p_\eta^{\otimes m}$ and $\E_\eta p_\eta^{\otimes n}$ respectively. Once again, by Lemma \ref{lem:pinsker} we may turn our attention to the $\chi^2$-divergence. Given $X$, let $\eta'$ have the same distribution as $\eta$ and be independent of it. Then
\begin{align*}
    \chi^2(\bb P_{0,Z|X} \| \bb P_{1,Z|X} | \bb P_{0,X}) + 1 &= (2k)^m \bb E_X \sum\limits_{j\in[2k]^m} \bb E_{\eta|X} \bb E_{\eta'|X} \prod\limits_{i=1}^n p_\eta(j_i)p_{\eta'}(j_i) \\
    &= \bb E_{\eta\eta'} (1+\frac{\eps^2\langle\eta,\eta'\rangle}{k})^m \\
    &\leq \bb E_{\eta\eta'} \prod\limits_{j \in [k]} \exp(\frac{\eps^2 m \eta_j\eta'_j}{k}),
\end{align*}
where we used Lemma \ref{eqn:P_D inner prod}. Let $N=(N_1,\dots,N_k)$ be the vector of counts indicating the number of the $X_1,\dots,X_n$ that fall into the bins $\{2j-1,2j\}$ for $j\in[k]$. Clearly $N\sim\mult(n, (\frac1k, \dots,\frac1k))$. Let us focus on a specific bin $\{2j-1,2j\}$ and define the bin-conditional pmf
\begin{equation*}
    p_\pm(x) = \begin{cases} \frac12(1\pm\eps) &\text{if }x=2j-1, \\
                             \frac12(1\mp\eps) &\text{if }x=2j \\
                             0 &\text{otherwise}, \end{cases}
\end{equation*}
where we drop the dependence on $j$ in the notation. Let $X_{i_1}, \dots, X_{i_{N_j}}$ be the $N_j$ observations falling in $\{2j-1,2j\}$. Given $N_j$, the pmf of $X_{i_1},\dots,X_{i_{N_j}}$ is $\frac12(p_+^{\otimes N_j} + p_-^{\otimes N_j})$. We have $\eta_j\eta'_j \in \{\pm1\}$ almost surely, and analogously to Section \ref{sec:P_S lower} we may compute
\begin{align*}
    \bb P(\eta_j\eta'_j=1|N_j) &= \bb E_{X|N_j} \bb P(\eta_j\eta'_j=1|X) \\
    &= \bb E_{X|N_j}\left[ \bb P(\eta_j=1|X)^2 + \bb P(\eta_j=-1|X)^2\right] \\
    &= \frac12 + \frac14\left(\chi^2(p_+^{\otimes N_j} \| \frac12(p_+^{\otimes N_j} + p_-^{\otimes N_j})) + \chi^2(p_-^{\otimes N_j} \| \frac12(p_+^{\otimes N_j} + p_-^{\otimes N_j})\right) \\
    &\leq \frac12 + \frac18\left(\chi^2(p_-^{\otimes N_j} \| p_+^{\otimes N_j})+\chi^2(p_+^{\otimes N_j} \| p_-^{\otimes N_j})\right).
\end{align*}
We can bound the two $\chi^2$-divergences by
\begin{align*}
    \chi^2(p_\pm^{\otimes N_j} \| p_\mp^{\otimes N_j}) + 1 &= \left(\frac{1 + \frac32\eps^2}{1-\eps^2}\right)^{N_j} \\
    &\leq (1 + 3\eps^2)^{N_j},
\end{align*}
provided $\eps \leq c$ for some universal constant $c>0$. Using Lemma \ref{lem:multi expectation}, we obtain the bound
\begin{align*}
    \bb E_N \prod_{j\in[k]} \bb E_{\eta\eta'|N_j} \exp(\frac{\eps^2 m \eta_j\eta'_j}{k}) &\leq \bb E_N \prod_{j \in [k]} \left(\frac12(\exp(\frac{\eps^2m}{k})-\exp(-\frac{\eps^2m}{k}))(1+(1+2\eps^2)^{N_j}) + \exp(-\frac{\eps^2m}{k})\right) \\
    &\leq \left(\frac12(\exp(\frac{\eps^2m}{k})-\exp(-\frac{\eps^2m}{k}))(1+\exp(\frac{2\eps^2n}{k}))+\exp(-\frac{\eps^2m}{k})\right)^k. \\
    \intertext{Now, under the assumption that $m \lor n \lesssim k/\eps^2$ for some small enough implied constant, the above can be further bounded by}
    &\leq (1+c\frac{\eps^4mn}{k^2})^k \\
    &\leq \exp(\frac{c\eps^4mn}{k}),
\end{align*}
for a universal constant $c>0$. In other words, for $n\lor m \lesssim k/\eps^2$ likelihood-free hypothesis testing is impossible unless $mn \gtrsim k/\eps^4$. 
The treatment of the case $m\lor n \gtrsim k/\eps^2$ is straightforward, and entirely analogous to our discussion at the end of the proof of \Cref{prop:P_S lower}, so we won't repeat it here. This completes the proof. 
\end{proof}

This takes care of the class $\cal P_\sf{Db}$. To prove tight bounds for $\cal P_\sf{D}$ in the large $k$ regime, we have to work harder. Our second lower bound, \Cref{prop:P_D lower n < m} below, proves tight bounds in the regime $n \leq m$ and follows by reduction to two-sample testing \Cref{PROP:REDUCTIONS}. 

\begin{proposition}\label{prop:P_D lower n < m}
    There exists a finite constant $c$ independent of $\eps$ and $k$, 
    \begin{equation*}
        c\{m\geq1/\eps^2,\, n^2m\geq k^2/\eps^4,\, n \leq m\} \supseteq \cal R_\sf{LF}(\eps, \TV, \cal P_\sf{D}) \cap \N^2_{n \leq m}
    \end{equation*}
    for all $k\geq2,\eps\in(0,2)$, where $\N^2_{n\leq m} = \{(n,m) \in \N^2:n \leq m\}$. 
\end{proposition}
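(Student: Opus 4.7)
My plan is to deduce the claimed lower bounds in the regime $n\leq m$ entirely by reduction to two-sample testing, using \eqref{eqn:LF -> TS} of \Cref{PROP:REDUCTIONS} together with known lower bounds for two-sample testing over $\cal P_\sf{D}(k)$ with unequal sample sizes. In this sense the proposition is essentially a consequence of prior work of \cite{bhattacharya2015testing}, packaged in the form we need via the reduction already available to us.

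First, the inequality $m\gtrsim 1/\eps^2$ is immediate from the reduction to binary hypothesis testing \eqref{eqn:LF -> HT}, noting that $n_\sf{HT}(\eps, \cal P_\sf{D}(k))\asymp 1/\eps^2$ by \Cref{lem:n_HT} (witnessed e.g.\ by a pair of two-point distributions embedded in $[k]$). For the main product bound, apply \eqref{eqn:LF -> TS}: any $(n,m)\in\cal R_\sf{LF}(\eps, \cal P_\sf{D}(k))$ with $m\geq n$ yields $(cn,cm)\in\cal R_\sf{TS}(\eps, \cal P_\sf{D}(k))$ for a universal $c$. I then invoke the two-sample lower bound recorded in \Cref{table:prior results TV}: any $(n',m')\in\cal R_\sf{TS}(\eps, \cal P_\sf{D}(k))$ with $n'\leq m'$ must satisfy
\[
n' \;\gtrsim\; n_\sf{GoF}(\eps, \cal P_\sf{D}(k))\cdot\sqrt{\max\{1,\,k/m'\}}
\;\asymp\; \frac{\sqrt{k}}{\eps^2}\cdot\sqrt{\max\{1,\,k/m'\}}.
\]
Substituting $(n',m')=(cn,cm)$, squaring, and multiplying by $m$ gives
\[
n^2 m \;\gtrsim\; \frac{k}{\eps^4}\cdot\max\{1,\,k/m\}\cdot m \;=\; \frac{k}{\eps^4}\cdot\max\{m,\,k\},
\]
which is at least $k^2/\eps^4$ in either the regime $m\leq k$ or $m\geq k$, as required.

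The only nontrivial input is thus the moment-matching lower bound of \cite{valiant2011testing, bhattacharya2015testing} for two-sample testing with unequal sample sizes, which is outlined in \Cref{sec:lower sketch} (the pair $(p\circ\pi,q\circ\pi)$ with support on a small set of `heavy' coordinates and a larger uniform cloud). I do not anticipate a genuine obstacle beyond citing this result in the precise form above; once the two-sample lower bound is in hand, the reduction \eqref{eqn:LF -> TS} and the short case analysis $m\lessgtr k$ complete the proof.
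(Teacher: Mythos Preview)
Your proof is correct and follows essentially the same approach as the paper: the paper's proof is the one-line ``Follows from \eqref{eqn:LF TS equiv} and the lower bound construction in \cite{bhattacharya2015testing},'' and you have simply made the reduction and the ensuing case analysis $m\lessgtr k$ explicit. One minor remark: the $(p\circ\pi,q\circ\pi)$ construction you point to in \Cref{sec:lower sketch} is actually the one the paper uses for \Cref{prop:P_D lower n > m} (the $m\leq n$ regime), whereas here you only need to cite the two-sample lower bound of \cite{bhattacharya2015testing} as a black box.
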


\begin{proof}
    Follows from \eqref{eqn:LF TS equiv} and the lower bound construction in \cite{bhattacharya2015testing}. 
\end{proof}

\subsubsection{Valiant's wishful thinking theorem.}
For our third and final lower bound, which is tight in the regime $m \leq n$, we apply a method developed by Valiant, which we describe below.

\begin{definition}
For distributions $p_1,\dots,p_\ell$ on $[k]$ and $(n_1,\dots,n_\ell) \in \mathbb N^\ell$, we define the $(n_1,\dots,n_\ell)$-based moments of $(p_1,\dots,p_\ell)$ as
\begin{equation*}
    m(a_1,\dots,a_\ell) = \sum\limits_{i=1}^k \prod\limits_{j=1}^\ell (n_j p_j(i))^{a_j}
\end{equation*}
for $(a_1,\dots,a_\ell) \in \mathbb N^\ell$.
\end{definition}
Let $p^+=(p^+_1,\dots,p_\ell^+)$ and $p^-=(p^-_1,\dots,p^-_\ell)$ be $\ell$-tuples of distributions on $[k]$ and suppose we observe samples $\{X^{(i)}\}_{i\in[\ell]}$, where the number of observations in $X^{(i)}$ is $\poi(n_i)$. Let $H^\pm$ denote the hypothesis that the samples came from $p^\pm$, up to an arbitrary relabeling of the alphabet $[k]$. It can be shown that to test $H^+$ against $H^-$, we may assume without loss of generality that our test is invariant under relabeling of the support, or in other words, is a function of the \textit{fingerprints}. The fingerprint $f$ of a sample $\{X^{(i)}\}_{i\in[\ell]}$ is the function $f:\bb N^\ell \to \bb N$ which given $(a_1,\dots,a_\ell)\in\bb N^\ell$ counts the number of bins in $[k]$ which have exactly $a_i$ occurences in the sample $X^{(i)}$.  

\begin{theorem}[{{{\cite[Wishful thinking theorem]{valiant2011testing}}}}]\label{thm:valiant}
 Suppose that $|p_i^\pm|_\infty \leq \eta/n_i$ for all $i\in[\ell]$ for some $\eta > 0$, and let $m^+$ and $m^-$ denote the $(n_1,\dots,n_\ell)$-based moments of $p^+,p^-$ respectively. Let $f^\pm$ denote the distribution of the fingerprint under $H^\pm$ respectively. Then
\begin{equation*}
    \TV(f^+,f^-) \leq 2(e^{\eta\ell}-1) + e^{\ell(\eta/2+\log3)}\sum\limits_{a\in\mathbb N^\ell} \frac{|m^+(a)-m^-(a)|}{ \sqrt{1+m^+(a)\lor m^-(a)}}.
\end{equation*}
\end{theorem}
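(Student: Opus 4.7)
The plan is to reduce the total variation between the two fingerprint distributions to a comparison of a product of independent Poissons, one per count-vector $a\in\bb N^\ell$, whose means are the raw moments $m(a)$ up to factorial and exponential corrections. First I would invoke Poissonization: pass to the model where $|X^{(j)}|\sim\poi(n_j)$, so that for each bin $i\in[k]$ the count vector $C^\pm(i)=(C_1^\pm(i),\dots,C_\ell^\pm(i))$ has independent coordinates $C_j^\pm(i)\sim\poi(\lambda_j^\pm(i))$ with $\lambda_j^\pm(i)\eqdef n_jp_j^\pm(i)\leq\eta$, and these vectors are independent across $i$. The fingerprint reads $f^\pm(a)=\sum_{i=1}^k\one\{C^\pm(i)=a\}$, and by invariance of $H^\pm$ under relabelings of $[k]$ it is a sufficient statistic, so it suffices to bound $\TV(f^+,f^-)$.

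Second, I would use the independence of the $C^\pm(i)$ across $i$ to realize $(f^\pm(a))_{a\neq 0}$ as a sum of $k$ independent indicator vectors, one per bin, and apply a Chen--Stein or direct coupling argument to approximate the joint law by a product of independent Poissons with means $\mu^\pm(a)=\sum_i\Pr(C^\pm(i)=a)$. The error of this approximation is dominated by the event that some bin produces a non-trivial repeated symbol, which has bin-wise probability at most $1-\prod_j e^{-\lambda_j^\pm(i)}\leq 1-e^{-\eta\ell}$; aggregating this effect across both hypotheses is what eventually yields the additive $2(e^{\eta\ell}-1)$ correction.

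Third, using the standard coordinatewise inequality $\TV(\poi(\mu_1),\poi(\mu_2))\leq |\mu_1-\mu_2|/\sqrt{1+\mu_1\vee\mu_2}$ together with tensorization, the TV between the two product-Poissons is bounded by $\sum_a|\mu^+(a)-\mu^-(a)|/\sqrt{1+\mu^+(a)\vee\mu^-(a)}$. The final step is to translate the Poisson means $\mu^\pm(a)=\sum_i\prod_j\lambda_j^\pm(i)^{a_j}e^{-\lambda_j^\pm(i)}/a_j!$ into the raw moments $m^\pm(a)=\sum_i\prod_j\lambda_j^\pm(i)^{a_j}$ from the theorem: the exponentials $e^{-\lambda_j(i)}$ give rise to an overall $e^{\eta\ell/2}$ factor after the square-root, while the factorials $1/\prod_ja_j!$ are absorbed via the crude bound $\sum_{a\in\bb N^\ell}\prod_j 1/a_j!=e^\ell$, with a bit of slack converting $e^\ell$ into $3^\ell=e^{\ell\log 3}$ to accommodate cross-terms. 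The combined effect is the multiplicative prefactor $e^{\ell(\eta/2+\log 3)}$ stated in the theorem.

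The main obstacle I anticipate is making the Poisson approximation of the joint fingerprint in the second step quantitative with the specific additive term $2(e^{\eta\ell}-1)$: the $f(a)$ are sums of Bernoullis that are only almost independent, because each bin contributes to exactly one coordinate $a$ and they therefore satisfy the constraint $\sum_a f(a)=k$. Handling this constraint uniformly in the mean vector is delicate, and the cleanest route is to couple each $f^\pm(a)$ to an independent Poisson by adding or subtracting a small amount of Poisson noise whose total mass is bounded by the probability of ``bad bins'' under that hypothesis, yielding exactly the $e^{\eta\ell}-1$ correction per hypothesis and hence the factor of $2$ across both.
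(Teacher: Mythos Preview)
The paper does not prove this theorem at all; its entire proof reads ``The proof is a straightforward adaptation of \cite{valiant2011testing} and thus we omit it.'' So there is nothing to compare against beyond Valiant's original argument, and your outline is precisely that argument lifted from one or two distributions to $\ell$.

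Your three-step structure is correct: (i) Poissonization is already baked into the setup (sample sizes are $\poi(n_j)$), giving independent Poisson count vectors per bin; (ii) approximate the joint fingerprint law by a product of independent Poissons $\otimes_a \poi(\mu^\pm(a))$, the error being controlled by the probability that bins contribute nontrivially, which is where the additive $2(e^{\eta\ell}-1)$ enters; (iii) bound $\TV$ between the two Poisson products coordinatewise and convert the Poisson means $\mu^\pm(a)$ to the raw moments $m^\pm(a)$, picking up the multiplicative $e^{\ell(\eta/2+\log 3)}$.

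One minor imprecision: your explanation of the $3^\ell$ factor via $\sum_a \prod_j 1/a_j! = e^\ell$ with ``slack'' is not quite how it arises in Valiant. The conversion from $\mu(a)$ to $m(a)$ goes by expanding the $e^{-\lambda_j(i)}$ factors in the Poisson pmf as power series, which expresses each $\mu(a)$ as an alternating signed sum of higher moments $m(a')$, $a'\ge a$; controlling this expansion and re-summing over $a$ is what produces the $3^\ell$. This is a bookkeeping detail that would come out in a full write-up and does not affect the validity of your plan.
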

\begin{proof}
The proof is a straightforward adaptation of \cite{valiant2011testing} and thus we omit it. 
\end{proof}

Although Theorem \ref{thm:valiant} assumes a random (Poisson distributed) number of samples, the results carry over to the deterministic case with no modification, due to the sub-exponential concentration of the Poisson distribution. We are ready to prove our likelihood-free hypothesis testing lower bound using Theorem \ref{thm:valiant}.

\begin{proposition}\label{prop:P_D lower n > m}
There exists a finite constant $c$ independent of $\eps$ and $k$, such that 
\begin{equation*}
    c\{m \geq 1/\eps^2, n^2m \geq k^2/\eps^4,\,m\leq n\} \supseteq \cal R_\sf{LF}(\eps, \TV, \cal P_\sf{D}) \cap \N^2_{m \leq n}
\end{equation*}
for all $\eps\in(0,1)$ and $k\geq2$, where $\N^2_{m\leq n} = \{(n,m) \in \N^2:m\leq n\}$. 
\end{proposition}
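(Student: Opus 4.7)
The first inequality $m \gtrsim 1/\eps^2$ is immediate from the binary hypothesis testing reduction \eqref{eqn:LF -> HT} applied to any $\bb P_\sf{X}, \bb P_\sf{Y} \in \cal P_\sf{D}(k)$ with $\H(\bb P_\sf{X}, \bb P_\sf{Y}) \asymp \eps$, for instance the Paninski-style pair used in \Cref{prop:P_D lower}. The content of the proposition is therefore the product bound $n^2 m \gtrsim k^2/\eps^4$, which I will obtain via Valiant's Wishful Thinking Theorem (\Cref{thm:valiant}) applied to the construction sketched in \Cref{sec:lower sketch}.

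Assume $n \leq k/(8\eps)$ and $n \leq k/2$; the complementary regime $n \gtrsim k/\eps$ is handled automatically by combining with $m \gtrsim 1/\eps^2$ to give $n^2 m \gtrsim k^2/\eps^4$ for free. With the pair $(p, q)$ from \Cref{sec:lower sketch} and $\pi$ uniform on $S_k$, define the random composite hypotheses $H^\pm$ as triples $(p\circ\pi, q\circ\pi, p\circ\pi)$ and $(p\circ\pi, q\circ\pi, q\circ\pi)$ with sample sizes $(n_1,n_2,n_3) = (n, n, m)$. By invariance of the LFHT problem under relabeling the alphabet, the minimax probability of error is governed by the total variation between the fingerprint distributions under $H^+$ and $H^-$, which by \Cref{lem:TV lower} and \Cref{thm:valiant} reduces to bounding the moment-matching sum
\begin{equation*}
    S = \sum_{a \in \N^3} \frac{|m^+(a) - m^-(a)|}{\sqrt{1 + m^+(a) \vee m^-(a)}}.
\end{equation*}

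The crucial structural input is the cancellation pattern of $m^+(a) - m^-(a)$: since $p \equiv q$ on $[n]$, and the $\eps$-bump supports $[k/2, 3k/4]$ of $p$ and $[3k/4, k]$ of $q$ are disjoint with equal mass, the difference vanishes unless $a_3 \geq 1$ and exactly one of $a_1, a_2$ is positive. For $a_2 = 0$, $a_1, a_3 \geq 1$ one computes
\begin{equation*}
    m^+(a) - m^-(a) = n^{a_1} m^{a_3} \cdot \tfrac{k}{4}\,(4\eps/k)^{a_1 + a_3},
\end{equation*}
while the denominator satisfies $\sqrt{1 + m^+(a)} \gtrsim \sqrt{m}$ for $a_3 \geq 1$ (using the $[n]$-bin contribution $n^{1-a_3}m^{a_3}(1-\eps)^{a_1+a_3} \gtrsim m$, which follows from $m \leq n$). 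The dominant term $(a_1,a_2,a_3) = (1,0,1)$ then yields $|m^+-m^-|/\sqrt{1+m^+} \asymp n\sqrt{m}\,\eps^2/k$, and the remaining terms form a convergent double geometric series in the small parameters $4n\eps/k$ and $4\sqrt{nm}\,\eps/k$ dominated by the leading term. Forcing $S = o(1)$ produces precisely $n^2 m \eps^4 \lesssim k^2$, as desired.

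The main obstacle will be verifying Valiant's uniform-boundedness hypothesis $\|p_i^\pm\|_\infty \leq \eta/n_i$ with $\eta$ chosen small enough that the term $2(e^{\eta\ell}-1)$ in \Cref{thm:valiant} is negligible. Since our construction has $n_i\|p_i^\pm\|_\infty \asymp 1$, one applies the theorem to a constant-fraction sub-sample of each of $X, Y, Z$ to drive $\eta$ below any desired absolute threshold; this only affects constants in the final lower bound. A secondary technicality is the disjointness requirement $n \leq k/2$: when $k/2 < n \leq k/(8\eps)$ one simply relocates the two $\eps$-bumps to any two disjoint blocks of $k/4$ indices outside $[n]$, giving the same bound up to constants.
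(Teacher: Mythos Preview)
Your approach is the same as the paper's: apply Valiant's wishful thinking theorem with the $(p,q,p)$ versus $(p,q,q)$ construction, exploit the moment cancellation (nonzero only when $a_3\geq1$ and exactly one of $a_1,a_2$ is positive), and sum the resulting geometric series to obtain the leading contribution $n\sqrt m\,\eps^2/k$. Two points of friction with the paper's execution:

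\emph{Handling $\eta$.} Your ``constant-fraction sub-sample'' idea works only after a silent reparametrization: applying Valiant with sample sizes $(c_0n,c_0n,c_0m)$ proves $(c_0n,c_0m)\notin\cal R_\sf{LF}$, not $(n,m)\notin\cal R_\sf{LF}$. After relabeling $(N,M)=(c_0n,c_0m)$ this is exactly the paper's move of placing the heavy mass on $[\gamma]$ with $\gamma=N/c_0=N/\eta$, so the argument is valid but more transparently stated that way. Relatedly, your parenthetical denominator bound $n^{1-a_3}m^{a_3}(1-\eps)^{a_1+a_3}\gtrsim m$ is false for $a_3\geq2$ (it equals $m(m/n)^{a_3-1}(1-\eps)^{a_1+a_3}\leq m$); fortunately your stated geometric ratios $4n\eps/k$ and $4\sqrt{nm}\,\eps/k$ are exactly what the correct $a$-dependent denominator gives, so the conclusion stands.

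\emph{The regime $k/2<n\leq k/(8\eps)$.} Relocating the $\eps$-bumps outside $[n]$ is impossible here: $|[k]\setminus[n]|<k/2$, so two disjoint blocks of size $k/4$ do not fit. The paper instead covers all $n$ larger than a small constant fraction of $k$ by invoking the product bound $mn\gtrsim k/\eps^4$ from \Cref{prop:P_D lower}: combined with $n\gtrsim k$ this yields $n^2m\geq n\cdot nm\gtrsim k\cdot k/\eps^4$ directly.
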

\begin{proof}
We focus on the regime $n \leq k$, as otherwise the result is subsumed by Proposition \ref{prop:P_D lower}. Suppose that $\eps \in (0,1/2)$, $\eta = 0.01$ (say) and $n/\eta \leq k/2$. Define $\gamma=n/\eta$ and let $p,q$ be pmfs on $[k]$ with weight $(1-\eps)/\gamma$ on $[\gamma]$ and $k/4$ light elements with weight $4\eps/k$ on $[k/2,3k/4]$ and $[3k/4, k]$ respectively. To apply Valiant's wishful thinking theorem, we take $p^+ = (p,q,p)$ and $p^- = (p,q,q)$ with corresponding hypotheses $H^\pm$. The $(n,n,m)$-based moments of $p^\pm$ are given by
\begin{align*}
    \frac{1}{n^{a+b}m^c} m^+(a,b,c) &= \begin{cases} k&\text{if } a+c=0,b=0 \\
    \left(\frac{1-\eps}{\alpha}\right)^{a+b+c}\alpha + \left(\frac{4\eps}{k}\right)^{a+b+c} \frac k4 &\text{if } a+c=0 \text{ xor } b = 0 \\
    \left(\frac{1-\eps}{\alpha}\right)^{a+b+c} \alpha &\text{if } a+c \geq1,b\geq1,
    \end{cases}\\
    \frac{1}{n^{a+b}m^c} m^-(a,b,c) &= \begin{cases} k&\text{if } a=0,b+c=0 \\
    \left(\frac{1-\eps}{\alpha}\right)^{a+b+c}\alpha + \left(\frac{4\eps}{k}\right)^{a+b+c} \frac k4 &\text{if } a=0 \text{ xor } b+c = 0 \\
    \left(\frac{1-\eps}{\alpha}\right)^{a+b+c} \alpha &\text{if } a \geq1,b+c\geq1.
    \end{cases}
\end{align*}
By the wishful thinking theorem we know that 
\begin{align*}
    \TV(f^+,f^-) &\leq 0.061 + 27.41\sum\limits_{a,b,c \in \mathbb N} \frac{|m^+(a,b,c)-m^-(a,b,c)|}{\sqrt{1+\max(m^+,m^-)}}.
\end{align*}
Let us consider the possible values of $|m^+(a,b,c)-m^-(a,b,c)|$. It is certainly zero if
$a\land b \geq 1$ or $a=b=c=0$. Suppose that $a=0$ so that necessarily $b+c\geq1$. Then
\begin{equation*}
    \frac{1}{n^bm^c}|m^+(0,b,c)-m^-(0,b,c)| = \left(\frac{4\eps}{k}\right)^{b+c} \frac k4 \one(b\land c \geq 1).
\end{equation*}
Using the symmetry between $a$ and $b$ and that $1+m^+\lor m^- \geq n^bm^c((1-\eps)/\gamma)^{b+c}\gamma$ (for $m^+\neq m^-$), we can bound the infinite sum above as
\begin{align*}
    &\lesssim \sum\limits_{b,c \geq 1} \frac{n^bm^c k^{1-(b+c)} \eps^{b+c}}{\sqrt{n^bm^c \gamma^{1-(b+c)}(1-\eps)^{b+c}}} \\
    &\lesssim \sum\limits_{b,c \geq 1} n^{b/2} m^{c/2} \left(\frac{\sqrt\gamma}{k}\right)^{b+c-1} \eps^{b+c}
\end{align*}
Plugging in $\gamma = n/\eta \asymp n$, and using $m \leq n \leq k$, we obtain
\begin{align*}
    \TV(f^+, f^-)-0.061 &\lesssim \sum\limits_{b,c \geq 1} n^{b+\frac c2 - \frac12} m^{c/2} \frac{1}{k^{b+c-1}} \eps^{b+c} \\
    &= \frac{n\sqrt m\eps^2}{k} \sum\limits_{b,c \geq 0} \left(\frac nk\right)^{b+\frac c2} \left(\frac mk\right)^{\frac c2} \eps^{b+c} \\
    &\leq \frac{n\sqrt m\eps^2}{k} \sum\limits_{b,c \geq 0} \eps^{b+c} \\
    &\lesssim \frac{n\sqrt m\eps^2}{k},
\end{align*}
where we use that $\eps < 1/2$. Thus, likelihood-free hypothesis testing is impossible for $m \leq n$ unless $n^2 m \gtrsim k^2/\eps^4$.
\end{proof}

\section{Proof of Theorem \ref{THM:HELLINGER GOF}}\label{sec:thm3 proof}
\subsection{Upper bound}
We deduce the upper bound by applying the corresponding result for $\cal P_\sf{D}$ as a black-box procedure. 
\begin{theorem}[{{{\cite{daskalakis2018distribution}}}}]\label{thm:GoF P_D H}
For a constant independent of $\eps$ and $k$,
\begin{equation*}
    n_\sf{GoF}(\eps, \H, \cal P_\sf{D}) \asymp \sqrt{k}/\eps^2.
\end{equation*}
\end{theorem}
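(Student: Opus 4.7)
The plan is to prove the two directions $n_\sf{GoF}(\eps, \H, \cal P_\sf{D}(k)) \asymp \sqrt{k}/\eps^2$ separately, with the upper bound being the substantive part.

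\emph{Lower bound.} Since $\TV(p,q) \leq \H(p,q)$, every TV-separated pair is also Hellinger-separated at the same threshold, so enlarging the alternative gives $n_\sf{GoF}(\eps, \H, \cal P_\sf{D}(k)) \geq n_\sf{GoF}(\eps, \TV, \cal P_\sf{D}(k)) \gtrsim \sqrt{k}/\eps^2$. The right-hand bound is Paninski's classical one: with null $q = \unif[k]$ and alternative $p_\eta(2j-1) = (1+\eta_j\eps)/k$, $p_\eta(2j) = (1-\eta_j\eps)/k$ over uniform $\eta \in \{\pm 1\}^{k/2}$, \Cref{lem:TV lower} together with the Ingster-trick estimate $\E_{\eta,\eta'}(1+\eps^2\langle\eta,\eta'\rangle/k)^n \leq \exp(n^2\eps^4/k)$ forces $n \gtrsim \sqrt{k}/\eps^2$.

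\emph{Upper bound.} I would use a chi-squared-type statistic whose normalization reflects the Hellinger rather than the $\ell^2$ geometry. Given bin counts $N_i$ from the unknown $p$, fix weights $w_i = \max\{q_i,\, \eps^2/k\}$ and take the null-unbiased statistic
$$T \;=\; \sum_{i\in [k]} \frac{(N_i - nq_i)^2 - N_i + n q_i^2}{w_i},$$
for which $\E T = n^2 \sum_i (p_i-q_i)^2 / w_i$ and a standard Poissonized moment computation yields a variance bound of the form $\mathrm{Var}(T) \lesssim n \sum_i q_i^2/w_i^2 + n^2 \sum_i q_i(p_i-q_i)^2/w_i^2$. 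The analytic core of the argument is the Hellinger-to-weighted-$\ell^2$ inequality
$$\H^2(p, q) \;\lesssim\; \sum_{i\in[k]} \frac{(p_i - q_i)^2}{\max\{q_i,\, \eps^2/k\}} + \eps^2,$$
proved by splitting $[k]$ into heavy bins $\{q_i \geq \eps^2/k\}$, where $(\sqrt{p_i}-\sqrt{q_i})^2 \leq (p_i - q_i)^2/q_i$ gives the bound immediately, and light bins, where a further case-split on $p_i \lessgtr 4\eps^2/k$ absorbs the Hellinger summands either into the weighted $\ell^2$ sum or into the total light $q$-mass $\sum_{\text{light}} q_i \leq \eps^2$. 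Under the alternative $\H(p,q) \geq \eps$ this yields $\sum_i (p_i-q_i)^2/w_i \gtrsim \eps^2$, and combined with $\sum_i q_i^2/w_i^2 \leq k/\eps^2$ Chebyshev's inequality turns $(\E T)^2 \gtrsim \mathrm{Var}(T)$ into the condition $n \gtrsim \sqrt{k}/\eps^2$.

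\emph{Main obstacle.} The delicate step is the variance analysis under the alternative: the cross-term $n^2 \sum_i q_i(p_i-q_i)^2/w_i^2$ must be shown to be dominated by $(\E T)\cdot(n\sqrt{k}/\eps^2)$ rather than by $(\E T)^2$ directly. The role of the truncation level $\eps^2/k$ in $w_i$ is precisely to prevent very light bins (on which $q_i \ll \eps^2/k$) from inflating this cross-term and destroying the rate; a naive chi-squared statistic with denominator $n q_i$ would fail on exactly those bins. This is the technical content of the test of~\cite{daskalakis2018distribution}, which the paper invokes as a black box for the stated theorem.
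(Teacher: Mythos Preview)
The paper does not prove this theorem at all: it is stated as a cited result from \cite{daskalakis2018distribution} and used as a black box in the proof of \Cref{THM:HELLINGER GOF}. You correctly recognize this in your final paragraph.

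Your sketch is nonetheless a faithful outline of the argument in the cited reference. The lower bound via the containment $\{\TV\ge\eps\}\subseteq\{\H\ge\eps\}$ and Paninski's construction is standard and correct. For the upper bound, the weighted chi-square statistic with truncation level $\eps^2/k$ in the denominator is exactly the right idea, and your Hellinger-to-weighted-$\ell^2$ inequality with the heavy/light split is the analytic heart of that paper. One small slip: in your variance bound under the alternative the numerator of the cross term should carry $p_i$ rather than $q_i$ (the counts are drawn from $p$), which is precisely what makes the term delicate; you flag this correctly as the main obstacle, and indeed controlling it is where the truncation $w_i=\max\{q_i,\eps^2/k\}$ earns its keep.
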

Write $\cal G_\ell$ for the regular grid of size $\ell^d$ on $[0,1]^d$ and let $P_\ell$ denote the $L^2$-projector onto the space of functions piecewise constant on the cells of $\cal G_\ell$. For convenience let us re-state \Cref{prop:H separation}. 

\begin{proposition}
For any $\beta \in (0,1]$, $C>1$ and $d\geq1$ there exists a constant $c>0$ such that 
\begin{equation*}
    c\H(f, g) \leq \H(P_\kappa f, P_\kappa g) \leq \H(f,g)
\end{equation*}
holds for any $f,g \in \cal P_\sf{H}(\beta, d, C)$, provided we set $\kappa = (c\eps)^{-2/\beta}$.
\end{proposition}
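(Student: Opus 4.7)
The plan is to derive both inequalities from a single triangle-inequality decomposition of $\H(f,g)$, combined with a direct bound on the quantization error $\H(f, P_\kappa f)$ obtained from the Hölder continuity assumption.

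First, I would observe that $P_\kappa f$ is the density constant on each cell of $\cal G_\kappa$, taking the value $\kappa^d p_i$ on cell $i$, where $p_i \eqdef \int_{\mathrm{cell}_i} f \, \rm{d}x$. A direct computation then gives
\[
\H^2(P_\kappa f, P_\kappa g) = \sum_i (\sqrt{p_i} - \sqrt{q_i})^2 = \H^2(\bar f, \bar g),
\]
where $\bar f, \bar g$ are the distributions obtained by pushing $f, g$ forward under the deterministic map $x \mapsto \mathrm{cell}(x)$. The upper bound $\H(P_\kappa f, P_\kappa g) \leq \H(f,g)$ then follows immediately from the data-processing inequality applied to this map.

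For the lower bound I would use that $\H$ is a metric, so
\[
\H(P_\kappa f, P_\kappa g) \geq \H(f,g) - \H(f, P_\kappa f) - \H(g, P_\kappa g).
\]
Since $\beta \in (0,1]$, any $f \in \cal C(\beta, d, C)$ is $\beta$-Hölder, so $|f(x) - \bar f_i| \leq C (\sqrt d/\kappa)^\beta$ for $x$ in cell $i$, where $\bar f_i$ denotes the constant value of $P_\kappa f$ on that cell. Using the elementary inequality $(\sqrt a - \sqrt b)^2 \leq |a - b|$ valid for $a, b \geq 0$,
\[
\H^2(f, P_\kappa f) = \sum_i \int_{\mathrm{cell}_i} (\sqrt{f(x)} - \sqrt{\bar f_i})^2 \, \rm{d}x \leq \sum_i \int_{\mathrm{cell}_i} |f(x) - \bar f_i| \, \rm{d}x \lesssim \kappa^{-\beta},
\]
and an identical bound holds for $g$. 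Combining the three displays yields $\H(P_\kappa f, P_\kappa g) \geq \H(f,g) - C_1 \kappa^{-\beta/2}$ for some constant $C_1 = C_1(\beta, d, C)$. In the regime of interest $\H(f,g) \geq \eps$, and the choice $\kappa = (c\eps)^{-2/\beta}$ makes the correction term equal to $C_1 c\, \eps$, so taking $c$ sufficiently small gives $\H(P_\kappa f, P_\kappa g) \geq \H(f,g)/2$, establishing the lower bound.

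The main obstacle is the restriction to $\beta \in (0,1]$, which is used both directly (to obtain Hölder continuity of $f$ itself) and implicitly through the crude inequality $(\sqrt a - \sqrt b)^2 \leq |a - b|$, which pairs well with a uniform pointwise bound on $|f - \bar f|$ but is too lossy otherwise. Extending the argument beyond $\beta = 1$ would require controlling the quantization error of $\sqrt f$ via higher-order Taylor expansions, which is problematic because $\sqrt f$ fails to be smooth near $f = 0$; this difficulty appears to genuinely motivate the $\beta \leq 1$ restriction in the theorem statement.
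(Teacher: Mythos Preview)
Your argument is correct and considerably more direct than the paper's. The upper bound via data processing and the lower bound via the metric triangle inequality $\H(P_\kappa f,P_\kappa g)\ge \H(f,g)-\H(f,P_\kappa f)-\H(g,P_\kappa g)$, combined with the pointwise Hölder estimate and $(\sqrt a-\sqrt b)^2\le|a-b|$, go through exactly as you wrote; both proofs implicitly use $\H(f,g)\ge\eps$ at the end to absorb the $\kappa^{-\beta/2}$ error, so your invocation of this is legitimate.

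The paper takes a rather different route: it works on two nested grids $\cal G_{\kappa/r}\subset\cal G_\kappa$, approximates $\sqrt f$ on each coarse cell by a polynomial of bounded degree (via a Jackson-type theorem of Newman), and then invokes a lemma from \cite{arias2018remember} comparing the $L^2$ norm of a low-degree polynomial to that of its piecewise-constant projection. This machinery mirrors the proof of the $L^2$ approximation lemma (\Cref{lem:ingster_approx}) and is presumably written with an eye toward extending to $\beta>1$, where your uniform pointwise bound on $|f-\bar f_i|$ no longer captures higher-order smoothness. However, since the paper only states and proves the result for $\beta\le 1$, your elementary argument achieves the same conclusion with far less overhead; the paper's extra structure does not buy anything here, and your remark about $\sqrt f$ failing to be smooth near zero correctly identifies the genuine obstruction to pushing either approach beyond $\beta=1$.
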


With the above approximation result, the proof of Theorem \ref{THM:HELLINGER GOF} is straightforward.
\begin{proof}[Proof of Theorem \ref{THM:HELLINGER GOF}]
Suppose we are testing goodness-of-fit to $f_0\in\cal P_\sf{H}$ based on an i.i.d. sample $X_1,\dots,X_n$ from $f\in\cal P_\sf{H}$. Take $\kappa\asymp\eps^{-2/\beta}$ and bin the observations on $\cal G_\kappa$, denoting the pmf of the resulting distribution as $p_f$. Then, under the alternative hypothesis that $\H(f,f_0) \geq \eps$, by Proposition \ref{prop:H separation}
\begin{equation*}
    \eps \lesssim \H(P_\kappa {f_0}, P_\kappa f) = \H(p_{f_0}, p_f).
\end{equation*}
In particular, applying the algorithm achieving the upper bound in Theorem \ref{thm:GoF P_D H} to the binned observations, we see that $n\gtrsim \sqrt{\kappa^d}/\eps^2 = \eps^{-(2\beta+d)/\beta}$ samples suffice.
\end{proof}

\subsection{Lower bound}
The proof is extremely similar to the $\TV$ case, except we put the perturbations at density level $\eps^2$ instead of $1$.
\begin{proof}
Let $\phi:[0,1]\to[0,1]$ be a smooth function such that $\phi(x) = 0$ for $x \leq 1/3$ and $\phi(x) = 1$ for $x \geq 2/3$. Let $h:\R^d \to \bb R$ be smooth, supported in $[0,1]^d$, and satisfy $\int_{[0,1]^d} h(x)\D x = 0$ and $\int_{[0,1]^d} h(x)^2 \D x=1$. Given $\eps \in (0,1)$ let
\begin{equation*}
    f_0(x) = \eps^2 + \frac{\phi(x_1)}{\|\phi\|_1}(1-\eps^2),
\end{equation*}
which is a density on $[0,1]^d$. For a large integer $\kappa$ and $j \in [\kappa/3]\times[\kappa]^{d-1}$ let
\begin{equation*}
    h_j(x) = \kappa^{d/2} h(\kappa x-j+1)
\end{equation*}
for $x \in [0,1]^d$. Then $h_j$ is supported on $[(j-1)/\kappa,j/\kappa]\subseteq [0,1/3]\times[0,1]^{d-1}$ and $\int h^2_j=1$. For $\eta \in \{\pm1\}^{[\kappa/3]\times[\kappa]^{d-1}}$ and $\rho > 0$ let
\begin{equation*}
    f_\eta(x) = f_0 + \rho \sum\limits_{j \in [\kappa/3]\times[\kappa]^{d-1}} \eta_j h_j(x).
\end{equation*}
Then $f_\eta$ is positive provided that $\eps^2 \geq \rho \kappa^{d/2}|h|_\infty \asymp \rho \kappa^{d/2}$. Further, $\|f_\eta\|_{\cal C^\beta}$ is of constant order provided $\rho\kappa^{d/2+\beta}\lesssim1$. Under these assumptions $f_\eta \in \cal P_\sf{H}$. Note that the Hellinger distance between $f_\eta$ and $f_0$ is
\begin{align*}
    H^2(f_0,f_\eta) &= \sum\limits_{j\in[\kappa/3]\times[\kappa]^{d-1}} \int_{[\frac{j-1}{\kappa},\frac j\kappa]} \left(\sqrt{f_0(x)}-\sqrt{f_\eta(x)}\right)^2 \rm{d} x \\
    &= \sum\limits_{j \in [\kappa/3]\times[\kappa]^{d-1}} \int_{[\frac{j-1}{\kappa},\frac j\kappa]} \frac{\rho^2 h_j^2(x)}{(\sqrt{f_0(x)}+\sqrt{f_\eta(x)})^2} \rm{d}x \\
    &\geq \sum\limits_{j \in [\kappa/3]\times[\kappa]^{d-1}} \int_{[\frac{j-1}{\kappa},\frac j\kappa]} \frac{\rho^2 h_j^2(x)}{4\eps^2} \rm{d}x \\
    &\gtrsim \frac{\rho^2 \kappa^d}{\eps^2}.
\end{align*}
Suppose we draw $\eta$ uniformly at random. Via Ingster's trick we compute
\begin{align*}
    \chi^2(\bb E_\eta f_\eta^{\otimes n} \| f_0^{\otimes n}) + 1 &= \int \bb E_{\eta\eta'} \prod\limits_{i=1}^n \frac{f_\eta(x_i)f_{\eta'}(x_i)}{f_0(x_i)} \rm{d} x_1 \dots \D x_n \\
    &= \bb E_{\eta\eta'} \left(\int \frac{f_\eta(x)f_{\eta'}(x)}{f_0(x)}\rm{d}x\right)^n.
\end{align*}
Looking at the integral term on the inside we get
\begin{align*}
    \int \frac{f_\eta(x)f_{\eta '}(x)}{f_0(x)} \rm{d}x &= \int \frac{\left(f_0(x) + \rho \sum\limits_{j \in [\kappa/3]\times[\kappa]^{d-1}} \eta_j h_j(x)\right)\left(f_0(x) + \rho \sum\limits_{j \in [\kappa/3]\times[\kappa]^{d-1}} \eta'_j h_j(x)\right)}{f_0(x)} \rm{d}x \\
    &= 1 + \rho \sum\limits_{j} (\eta_j + \eta'_j) \int h_j(x) \rm{d}x + \rho^2 \sum\limits_j \eta_j\eta'_j \int \frac{h_j(x)^2}{f_0(x)} \rm{d}x \\
    &= 1 + \frac{\rho^2}{\eps^2} \sum_j \eta_j\eta'_j \int h_j(x)^2 \rm{d}x \\
    &= 1 + \frac{\rho^2}{\eps^2} \langle \eta,\eta'\rangle,
\end{align*}
where we've used that $h_j$ and $h_{j'}$ have disjoint support unless $j=j'$, $\int h_j=0$, $\int h_j^2=1$, and that $f_0(x)=\eps^2$ for all $x$ with $x_1\leq1/3$. Plugging in, using the inequalities $1+x\leq\exp(x)$ and $\cosh(x) \leq \exp(x^2)$ we obtain
\begin{align*}
    \chi^2(\bb E_\eta f_\eta^{\otimes n} \| f_0^{\otimes n}) + 1 &\leq \bb E_{\eta\eta'} (1+\frac{\rho^2}{\eps^2}\langle\eta,\eta'\rangle)^n \\
    &\leq \bb E_{\eta\eta'} \exp(\frac{\rho^2 n}{\eps^2}\langle\eta,\eta'\rangle) \\
    &= \cosh(\frac{\rho^2 n}{\eps^2})^{\kappa^d/3} \\
    &\leq \exp(\frac{\rho^4n^2\kappa^d}{3\eps^4}).
\end{align*}
Choosing $\kappa=\eps^{-2/\beta}$ and $\rho = \eps^{(2\beta+d)/\beta}$ we see that goodness-of-fit testing of $f_0$ is impossible unless
\begin{equation*}
    n \gtrsim \frac{\eps^2}{\rho^2 \kappa^{d/2}} = \eps^{-\frac{2\beta+d}{\beta}}.
\end{equation*}
\end{proof}

\section{Auxiliary technical results}\label{sec:auxilary}

\subsection{Proof of Lemma \ref{lem:n_HT}}\label{sec:proof of lem1}
\begin{proof}
We prove the upper bound first. Let $\bb P_0,\bb P_1 \in \cal P$ be arbitrary. Then by Lemma \ref{lem:pinsker}, 
\begin{align*}
    \inf\limits_{\psi} \max\limits_{i=0,1} \bb P_i^{\otimes m}(\psi \neq i) &\leq \inf\limits_\psi \left(\bb P_0^{\otimes m}(\psi=1)+\bb P_1^{\otimes m}(\psi=0)\right) \\
    &= 1 - \TV(\bb P_0^{\otimes m}, \bb P_1^{\otimes m}) \\
    &\leq 1 - \frac12\H^2(\bb P_0^{\otimes m}, \bb P_1^{\otimes m}) \eqdef (\dagger).
\end{align*}
By tensorization of the Hellinger affinity, we have
\begin{align}\label{eqn:H tensor}
    \H^2(\bb P_0^{\otimes m}, \bb P_1^{\otimes m}) &= 2 - 2\left(1-\frac12 \H^2(\bb P_0, \bb P_1)\right)^m.
\end{align}
Plugging in, along with $1+x\leq e^x$ gives
\begin{align*}
    (\dagger) \leq \exp(-\frac m2 \H^2\left(\bb P_0^{\otimes m}, \bb P_1^{\otimes m})\right).
\end{align*}
Taking $m > 2\log(3)/\H^2(\bb P_0, \bb P_1)$ shows the existence of a successful test. Let us turn to the lower bound. Using Lemma \ref{lem:pinsker} we have
\begin{align*}
     \inf\limits_{\psi} \max\limits_{i=0,1} \bb P_i^{\otimes m}(\psi \neq i) &\geq \frac12\left(1-\TV(\bb P_0^{\otimes m}, \bb P_1^{\otimes m})\right) \\
     &\geq \frac12\left(1-\H(\bb P_0^{\otimes m}, \bb P_1^{\otimes m})\right). 
\end{align*}
Note that it is enough to restrict the maximization in Lemma \ref{lem:n_HT} to $\bb P_0,\bb P_1 \in \cal P$ with $\H^2(\bb P_0, \bb P_1) < 1$. Now, by \eqref{eqn:H tensor} and the inequalities $e^{-2x} \leq 1-x$ valid for all $x\in[0,1/2]$ and $1-x \leq e^{-x}$ valid for all $x \in \R$, we obtain
\begin{align*}
\H^2(\bb P_0^{\otimes m}, \bb P_1^{\otimes m}) &= 2-2\left(1-\frac12\H^2(\bb P_0,\bb P_1)\right)^m \\
&\leq 2 - 2\exp(-m\H^2(\bb P_0, \bb P_1)) \\
&\leq 2m\H^2(\bb P_0, \bb P_1). 
\end{align*}
Taking $m=1/(18\H^2(\bb P_0, \bb P_1))$ concludes the proof via Lemma \ref{lem:TV lower}.
\end{proof}

\subsection{Proof of Lemma \ref{lem:gaussian equivalence}}
\begin{proof}
By standard inequalities between divergences (see e.g. \AoScite{\cite[Chapter 7]{polyanskiywu}}{\Cref{lem:pinsker}}), omitting the argument $(\mu_\theta, \mu_0)$ for simplicity we have
\begin{equation*}
    \TV \leq \H \leq \sqrt{\KL} \leq \sqrt{\chi^2} = \sqrt{\exp(\|\theta\|_2^2)-1} \lesssim \|\theta\|_2.  
\end{equation*}
For the lower bound we obtain $\TV(\mu_\theta, \mu_0) \geq \min\{1, \|\theta\|_2 / 200\} \gtrsim \|\theta\|_2$ by \cite[Theorem 1.2]{devroye2018total}. 
\end{proof}

\subsection{Proof of Proposition \ref{prop:H separation}}
Let us write $a_+ \eqdef a \lor 0$ for both functions and real numbers. We start with some known results of approximation theory.
\begin{definition}
For $f:[0,1]^d \to \bb R$ define the modulus of continuity as
\begin{equation*}
    \omega(\delta; f) = \sup\limits_{\|x-y\|_2\leq\delta} |f(x)-f(y)|.
\end{equation*}
\end{definition}
\begin{lemma}\label{lem:root modulus}
For any real-valued function $f$ and $\delta \geq 0$,
\begin{equation*}
    \omega(\delta; \sqrt{f_+}) \leq \omega(\delta; f)^{1/2}.
\end{equation*}
\end{lemma}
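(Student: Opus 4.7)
The plan is to reduce the statement to the pointwise inequality
\[
   \bigl|\sqrt{a_+}-\sqrt{b_+}\bigr|^{2} \;\le\; |a-b| \qquad\text{for all } a,b\in\mathbb R,
\]
and then take a supremum over pairs $x,y$ with $\|x-y\|_2\le\delta$. Once the pointwise bound is in hand, for any such $x,y$ we obtain
\[
   \bigl|\sqrt{f_+(x)}-\sqrt{f_+(y)}\bigr|^{2} \;\le\; |f(x)-f(y)| \;\le\; \omega(\delta;f),
\]
and taking the supremum over $x,y$ yields $\omega(\delta;\sqrt{f_+})\le\omega(\delta;f)^{1/2}$.

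To prove the pointwise inequality, assume without loss of generality that $a\ge b$ and split into three cases. If $a\ge b\ge 0$, then $(\sqrt a-\sqrt b)^{2}=a+b-2\sqrt{ab}\le a-b$ reduces to $b\le\sqrt{ab}$, which holds because $b\le a$. If $a\ge 0>b$, then $|\sqrt{a_+}-\sqrt{b_+}|^{2}=a\le a-b$ since $-b>0$. Finally if $0>a\ge b$, both sides of the square root are zero, so the bound is trivial. These three cases cover every sign configuration.

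I do not expect any genuine obstacle: the lemma is really just a statement about the one-variable function $t\mapsto\sqrt{t_+}$ being $\frac{1}{2}$-Hölder with constant $1$, lifted to the modulus of continuity by monotonicity of the sup. No definitions from the paper beyond those already used in the surrounding approximation-theory paragraph are required, and no auxiliary results need to be invoked.
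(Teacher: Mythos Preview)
Your proof is correct and follows exactly the same approach as the paper, which simply states that the lemma ``follows from the inequality $|\sqrt{a_+}-\sqrt{b_+}|^2 \leq |a-b|$ valid for all $a,b \in \mathbb{R}$.'' You have merely supplied the case analysis verifying this pointwise inequality, which the paper leaves implicit.
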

\begin{proof}
Follows from the inequality $|\sqrt{a_+}-\sqrt{b_+}|^2 \leq |a-b|$ valid for all $a,b \in \bb R$.
\end{proof}

\begin{lemma}\label{lem:smooth modulus}
Let $f:[0,1]^d \to \bb R$ be $\beta$-smooth for $\beta \in (0,1]$. Then
\begin{align*}
    \omega(\delta; f) \leq c\,\delta^\beta
\end{align*}
for a constant $c$ depending only on $\|f\|_{\cal C^\beta}$.
\end{lemma}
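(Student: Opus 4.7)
The plan is to prove this directly from the definition of the $\mathcal{C}^\beta$ norm that was introduced earlier in the paper. Recall that for $\beta \in (0,1]$ we have $\underline\beta \eqdef \lceil \beta - 1 \rceil = 0$, so the only multi-index contributing to $\|f\|_{\cal C^\beta}$ is $\alpha = 0$. Consequently, the definition of $\|f\|_{\cal C^\beta}$ reduces to
\[
\|f\|_{\cal C^\beta} = \max\left\{\|f\|_\infty,\ \sup_{x \neq y \in [0,1]^d} \frac{|f(x)-f(y)|}{\|x-y\|_2^\beta}\right\}.
\]
In particular, the Hölder seminorm $[f]_{\beta} \eqdef \sup_{x \neq y} |f(x)-f(y)|/\|x-y\|_2^\beta$ is bounded by $\|f\|_{\cal C^\beta}$.

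From here the proof is immediate. For any $x,y \in [0,1]^d$ with $\|x-y\|_2 \leq \delta$ we have
\[
|f(x)-f(y)| \leq [f]_\beta\, \|x-y\|_2^\beta \leq \|f\|_{\cal C^\beta}\, \delta^\beta.
\]
Taking the supremum over all such pairs gives $\omega(\delta; f) \leq c\,\delta^\beta$ with $c = \|f\|_{\cal C^\beta}$, which depends only on $\|f\|_{\cal C^\beta}$ as required. There is no obstacle to speak of; the statement is really just an unpacking of the $\beta$-Hölder condition, and the lemma is included as a convenience to be combined with \Cref{lem:root modulus} in the subsequent proof of \Cref{prop:H separation}.
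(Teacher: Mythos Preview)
Your proof is correct and is precisely the unpacking of the one-line proof in the paper, which simply reads ``Follows by the definition of H\"older continuity.'' There is nothing to add; your explicit identification of $c = \|f\|_{\cal C^\beta}$ is exactly what the paper's terse statement intends.
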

\begin{proof}
Follows by the definition of H\"older continuity.
\end{proof}

\begin{lemma}[{{{\cite[Theorem 4]{newman1964jackson}}}}]\label{lem:polynomial approximation}
For any continuous function $f:[0,1]^d\to\bb R$ the best polynomial approximation $p_n$ of degree $n$ satisfies
\begin{equation*}
    \|p_n-f\|_\infty \leq c\,\omega\left(\frac{d^{3/2}}{n}; f\right)
\end{equation*}
for a universal constant $c>0$.
\end{lemma}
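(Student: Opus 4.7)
The plan is to follow the classical convolution-with-Jackson-kernel route, generalized to $d$ dimensions, and keep careful track of how the scale depends on $d$. Since the statement is attributed to Newman--Shapiro, I would essentially reproduce (a variant of) their argument.

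First, I would reduce the problem to trigonometric approximation on the torus. Substitute $x_i = \tfrac12(1+\cos\theta_i)$; then $f(x_1,\dots,x_d)$ becomes a continuous function $g(\theta_1,\dots,\theta_d)$ on $[0,\pi]^d$, which one extends to a continuous even function on $\mathbb{T}^d$. A degree-$n$ trigonometric polynomial approximation $t_n$ of $g$ pulls back, via Chebyshev's substitution, to an algebraic polynomial of total degree $n$ in $x$. Moreover, the modulus of continuity is only distorted by a bounded factor under this change of variables, since $|\cos\theta-\cos\theta'|\le|\theta-\theta'|$ and conversely the inverse is Lipschitz away from the boundary (handled by a separate boundary analysis standard in this setting).

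Next, I would construct an appropriate $d$-dimensional Jackson kernel $K_n$. The univariate Jackson kernel $J_n$ is a non-negative trigonometric polynomial of degree $n$ with $\int J_n=1$ and $\int |t|\,J_n(t)\,dt\lesssim 1/n$ (Favard's estimate). Taking the tensor product $K_n(\theta)=\prod_{i=1}^d J_{\lfloor n/d\rfloor}(\theta_i)$ yields a non-negative trigonometric polynomial of total degree at most $n$, with total mass one, and moment estimate
\begin{equation*}
\int_{\mathbb{T}^d} \|\theta\|_2\, K_n(\theta)\,d\theta \;\le\; \sqrt d\int_{\mathbb{T}^d}\|\theta\|_\infty K_n(\theta)\,d\theta\;\le\; \sqrt d\sum_{i=1}^d \int J_{\lfloor n/d\rfloor}(\theta_i)|\theta_i|\,d\theta_i \;\lesssim\; \frac{d^{3/2}}{n}.
\end{equation*}
Here the factor $\sqrt{d}$ is the $\ell^\infty \to \ell^2$ conversion and a factor $d$ is absorbed into the per-coordinate degree being $\sim n/d$. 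Then define $p_n = g * K_n$, a trigonometric polynomial of total degree $n$.

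The error bound follows from the standard identity $g(\theta)-(g*K_n)(\theta)=\int K_n(\tau)(g(\theta)-g(\theta-\tau))d\tau$ and the subadditivity estimate $\omega(\lambda t; g) \le (1+\lambda)\omega(t;g)$. Concretely, with $s = d^{3/2}/n$,
\begin{equation*}
\bigl|g(\theta)-(g*K_n)(\theta)\bigr| \le \int K_n(\tau)\,\omega(\|\tau\|_2; g)\,d\tau \le \omega(s; g)\int K_n(\tau)\bigl(1+\tfrac{\|\tau\|_2}{s}\bigr)\,d\tau \le 2\,\omega(s; g),
\end{equation*}
which after passing back through Chebyshev's substitution gives the stated $\|p_n-f\|_\infty\le c\,\omega(d^{3/2}/n; f)$.

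The main obstacle I expect is the bookkeeping of the dimension dependence: one must verify that the tensor-product kernel, after truncating the per-coordinate degrees to $\lfloor n/d\rfloor$ to keep total degree $\le n$, still has the correct moment scaling, and that the Chebyshev pull-back does not degrade the modulus of continuity beyond a universal constant. A secondary subtlety is that the Chebyshev map is only Lipschitz away from the boundary of $[-1,1]^d$; handling the boundary cleanly is a standard but fiddly step, typically done by working with even trigonometric polynomials throughout so that the resulting algebraic approximant automatically behaves well near the endpoints.
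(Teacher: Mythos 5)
The paper does not actually prove this lemma; it is cited verbatim from Newman and Shapiro (Theorem~4 of \cite{newman1964jackson}) and used as a black box, so there is no in-paper argument to compare against. Evaluating your proof on its own merits: the high-level plan (Chebyshev substitution, tensor-product Jackson kernel of per-coordinate degree $\lfloor n/d\rfloor$, convolution, subadditivity $\omega(\lambda t)\le(1+\lambda)\omega(t)$) is the right one and matches the spirit of the Newman--Shapiro argument.

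However, the moment estimate as displayed is off by a factor of $d$, which is a genuine gap since the dimension dependence is the entire point of the lemma. You bound
\begin{equation*}
\int_{\mathbb{T}^d} \|\theta\|_2\,K_n\,d\theta \;\le\; \sqrt d\int_{\mathbb{T}^d}\|\theta\|_\infty K_n\,d\theta\;\le\; \sqrt d\sum_{i=1}^d \int J_{\lfloor n/d\rfloor}(\theta_i)\,|\theta_i|\,d\theta_i,
\end{equation*}
and then claim this is $\lesssim d^{3/2}/n$. But Favard's estimate gives $\int J_m|t|\,dt\asymp 1/m$, so with $m=\lfloor n/d\rfloor$ each of the $d$ summands is $\asymp d/n$, the sum is $\asymp d^2/n$, and the whole expression is $\asymp d^{5/2}/n$. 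The $\ell^\infty\!\to\!\ell^1$ step ($\|\theta\|_\infty\le\sum_i|\theta_i|$) costs a factor of $d$ that your bookkeeping omits. The fix is to avoid the first-moment/Favard route altogether and use the second moment with Cauchy--Schwarz:
\begin{equation*}
\int_{\mathbb{T}^d}\|\theta\|_2 K_n\,d\theta \;\le\; \Big(\sum_{i=1}^d \int \theta_i^2\, J_{\lfloor n/d\rfloor}(\theta_i)\,d\theta_i\Big)^{1/2} \;\lesssim\; \Big(d\cdot\frac{d^2}{n^2}\Big)^{1/2} \;=\; \frac{d^{3/2}}{n},
\end{equation*}
using the standard second-moment bound $\int t^2 J_m(t)\,dt\lesssim 1/m^2$. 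With this replacement the rest of your argument goes through. (Your remark about boundary Lipschitz-ness of the inverse Chebyshev map is a red herring: the only direction you need is $\omega(\delta;g)\le\omega(\delta/2;f)$, which follows from the $1$-Lipschitz forward map and requires no boundary analysis.)
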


\begin{definition}
Given a function $f:[0,1]^d \to \bb R$, $\ell \geq 1$ and $j \in [\ell]^d$, let $\pi_{j,\ell} f :[0,1]^d \to \bb R$ denote the function
\begin{equation*}
\pi_{j,\ell} f(x) \eqdef f\left(\frac{x+j-1}{\ell}\right).
\end{equation*}
In other words, $\pi_{j,\ell} f$ is equal to $f$ zoomed in on the $j$'th bin of the regular grid $\cal G_\ell$.
\end{definition}
Recall that here $P_\ell$ denotes the $L^2$ projector onto the space of functions piecewise constant on the bins of $\cal G_\ell$. We are ready for the proof of Proposition \ref{prop:H separation}.

\begin{proof}
Let $\kappa \geq r \geq 1$ whose values we specify later. We treat the parameters $\beta,d,\|f\|_{\cal C^\beta},\|g\|_{\cal C^\beta}$ as constants in our analysis. Let $u_f:[0,1]^d \to \bb R$ denote the (piecewise polynomial) function that is equal to the best polynomial approximation of $\sqrt{f}$ on each bin of $\cal G_{\kappa/r}$ with maximum degree $\alpha$. By \cref{lem:root modulus,lem:smooth modulus} for any $\ell \geq 1$ and $j \in [\ell]^d$
\begin{equation}
    \omega(\delta; \pi_{j,\ell}\sqrt{f}) \leq \omega(\delta/\ell; \sqrt{f}) \lesssim (\delta/\ell)^{\beta/2},
\end{equation}
so that by Lemma  \ref{lem:polynomial approximation}
\begin{align*}
    |u_f-\sqrt{f}|_\infty &= \sup\limits_{j \in [\kappa/r]^d} |\pi_{j,\kappa/r}(u_f - \sqrt{f})|_\infty \\
    &\lesssim \sup\limits_{j \in [\kappa/r]^d} \omega(d^{3/2}/\alpha; \pi_{j,\kappa/r}\sqrt{f}) \\
    &\lesssim (\alpha\kappa/r)^{-\beta/2}.
\end{align*}
Regarding $r$ as a constant independent of $\kappa$, $\alpha$ can be chosen large enough independently of $\kappa$ such that $|u_f-\sqrt{f}|_\infty \leq c_1\kappa^{-\beta/2}$ for $c_1$ arbitrarily small. Define $u_g$ analogously to $u_f$. We have the inequalities
\begin{align*}
    \H(f,g) &= \|\sqrt{f}-\sqrt{g}\|_2 \\
    &\leq \|\sqrt{f}-u_f\|_2 + \|u_f-u_g\|_2 + \|u_g-\sqrt{g}\|_2 \\
    &\leq 2c_1\kappa^{-\beta/2} + \|u_f-u_g\|_2.
\end{align*}
We can write
\begin{align*}
    \|u_f-u_g\|_2^2 &= \frac{1}{(\kappa/r)^d} \sum\limits_{j \in [\kappa/r]^d} \|\pi_{j,\kappa/r}(u_f-u_g)\|_2^2
\end{align*}
Now, by \cite[Lemma 7.4]{arias2018remember} we can take $r$ large enough (depending only on $\beta,d,\|f\|_{\cal C^\beta}, \|g\|_{\cal{C}^\beta}$)  such that
\begin{equation*}
    \|\pi_{j,\kappa/r}(u_f-u_g)\|_2 \leq c_2\|P_r \pi_{j,\kappa/r}(u_f-u_g)\|_2
\end{equation*}
where the implied constant depends on the same parameters as $r$. Thus, we get
\begin{align*}
    \H^2(f,g) &\leq 8c_1^2\kappa^{-\beta} + \frac{2c_2^2}{(\kappa/r)^d} \sum\limits_{j \in [\kappa/r]^d} \|P_r \pi_{j,\kappa/r}(u_f-u_g)\|_2^2 \\
    &\leq 8c_1^2\kappa^{-\beta} + \frac{6c_2^2}{(\kappa/r)^d} \sum\limits_{j\in[\kappa/r]^d}\left( \|P_r\pi_{j,\kappa/r}u_f - \sqrt{P_r\pi_{j,\kappa/r} f}\|_2^2 + \|P_r\pi_{j,\kappa/r}u_g - \sqrt{P_r\pi_{j,\kappa/r} g}\|_2^2\right) \\
    &\qquad + 6c_2^2\H^2(P_\kappa f, P_\kappa f),
\end{align*}
where $c_1, c_2$ depend only on the unimportant parameters, and $c_1$ can be taken arbitrarily small compared to $c_2$. We also used the fact that $P_r \pi_{j,\kappa/r} = \pi_{j,\kappa/r} P_\kappa$. Looking at the terms separately, we have
\begin{align*}
    \|P_r \pi_{j,\kappa/r}u_f-\sqrt{P_r \pi_{j,\kappa/r}f}\|_2 &\leq \|P_r \pi_{j,\kappa/r}u_f-P_r\sqrt{\pi_{j,\kappa/r}f}\|_2 + \|P_r\sqrt{\pi_{j,\kappa/r}f} - \sqrt{P_r \pi_{j,\kappa/r}f}\|_2 \\
    &\leq c\kappa^{-\beta/2} + \|P_r\sqrt{\pi_{j,\kappa/r}f} - \sqrt{P_r \pi_{j,\kappa/r}f}\|_2,
\end{align*}
since $P_r$ is a contraction by Lemma \ref{lem:properties of P_r}. We can decompose the second term as
\begin{align*}
    &\|P_r\sqrt{\pi_{j,\kappa/r}f} - \sqrt{P_r \pi_{j,\kappa/r}f}\|_2^2 = \\
    &\qquad = \sum\limits_{\ell \in [r]^d} \int_{\left[\frac{\ell-1}{r}, \frac{\ell}{r}\right]} \left(r^d\int_{\left[\frac{\ell-1}{r},\frac\ell r\right]} \sqrt{\pi_{j,\kappa/r} f(x)}\rm{d} x - \sqrt{r^d\int_{\left[\frac{\ell-1}{r},\frac\ell r\right]} \pi_{j,\kappa/r} f(x) \rm{d} x}\right)^2 = (\dagger).
\end{align*}
For $x \in [(\ell-1)/r,\ell/r]$ we always have
\begin{equation*}
    |\pi_{j,\kappa/r} f(x) - \pi_{j,\kappa/r} f(\ell/r)| \leq \omega(\frac{\sqrt{d}}{r}; \pi_{j,\kappa/r} f) \lesssim \left(\frac{\sqrt{d}/r}{\kappa/r}\right)^\beta \lesssim \kappa^{-\beta}.
\end{equation*}
Using the inequality $\sqrt{a+b}-\sqrt{(a-b)_+} \leq 2\sqrt{b}$ valid for all $a,b \geq 0$, we can bound $(\dagger)$ by $\kappa^{-\beta}$ up to constant and the result follows.
\end{proof}

\subsection{Proof of Proposition \ref{prop:meta lfht}}

For $f \in L^2(\mu)$ write $f_i = \langle f\phi_i\rangle$ and $f_{ii'} = \langle f\phi_i\phi_{i'}\rangle$,
assuming that the quantities involved are well-defined. We record some useful identities related to $P_r$ that will be instrumental in our proof of \Cref{prop:meta lfht}.
\begin{lemma}\label{lem:properties of P_r}
$P_r$ is self-adjoint and has operator norm 
\begin{equation*}
    \|P_r\| \eqdef \sup_{f\in L^2(\mu):\|f\|_2\leq1} \|P_r(f)\|_2 \leq 1.
\end{equation*}
Suppose that $f,g,h,t \in L^2(\mu)$ and that each quantity below is finite. Then
\begin{align*}
    \sum_{ii'} f_ig_{i'}h_{ii'} &= \langle h P_r(f)P_r(g)\rangle,  \\
    \sum_{ii'} f_ig_ih_{i'}t_{i'} &= \langle f P_r(g)\rangle \langle h P_r(t)\rangle \\
    \sum_{ii'} f_{ii'}g_{ii'} &= \sum_i \langle f\phi_iP_r(g\phi_i)\rangle, 
\end{align*}
where the summation is over $i,i' \in [r]$. 
\end{lemma}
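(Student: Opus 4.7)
The proof is essentially a bookkeeping exercise built on the definition $P_r(f)=\sum_{i\in[r]}f_i\phi_i$ and the orthonormality of $\{\phi_i\}$. I would organize it as follows.

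First, I would establish the two abstract properties. Self-adjointness follows from $\langle P_r(f),g\rangle=\sum_i f_i\langle g,\phi_i\rangle=\sum_i f_ig_i=\langle f,P_r(g)\rangle$, which uses only the definition of $P_r$ and the linearity of the inner product. The operator-norm bound is Bessel's inequality applied to the orthonormal family $\{\phi_i\}$: $\|P_r(f)\|_2^2=\sum_{i\in[r]}f_i^2\leq\sum_{i\geq1}\langle f,\phi_i\rangle^2\leq\|f\|_2^2$ (or alternatively, since $P_r$ is an orthogonal projector onto a closed subspace, it is automatically a contraction).

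Next, I would verify the three identities by expanding $P_r$ and swapping integration with the (finite) sum over $[r]$, which is legitimate under the assumed finiteness of all quantities. For the first identity, write $P_r(f)P_r(g)=\sum_{ii'}f_ig_{i'}\phi_i\phi_{i'}$, multiply by $h$, and integrate against $\mu$ to produce $\sum_{ii'}f_ig_{i'}h_{ii'}$. For the second, note that by definition $\langle fP_r(g)\rangle=\sum_i g_i\langle f\phi_i\rangle=\sum_i f_ig_i$ and likewise $\langle hP_r(t)\rangle=\sum_{i'}h_{i'}t_{i'}$; the product is $\sum_{ii'}f_ig_ih_{i'}t_{i'}$. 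For the third, apply $P_r$ to the function $g\phi_i$ (for each fixed $i$) to obtain $P_r(g\phi_i)=\sum_{i'}g_{ii'}\phi_{i'}$, so that $\langle f\phi_i\,P_r(g\phi_i)\rangle=\sum_{i'}g_{ii'}f_{ii'}$; summing over $i\in[r]$ yields the claim.

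There is no real obstacle here: the only thing to check carefully is that the finiteness hypothesis allows us to exchange sums and integrals, which is justified because each sum is finite with $r$ terms and each integrand is in $L^1(\mu)$ by Cauchy--Schwarz applied to the relevant pairs of $L^2$ functions.
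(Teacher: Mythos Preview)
Your proposal is correct and follows essentially the same approach as the paper: both establish self-adjointness and the norm bound from basic orthogonal-projection facts (the paper uses the decomposition $f=P_r(f)+P_r^\perp(f)$ in place of your direct appeal to Bessel, which amounts to the same thing), and both verify the three identities by expanding $P_r$ in the orthonormal basis and swapping the finite sum with the integral.
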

\begin{proof}
Let $P^\perp_r$ denote the orthogonal projection onto the orthogonal complement of $\operatorname{span}(\{\phi_1, \dots, \phi_r\})$. Then for any $f,g \in L^2(\mu)$ we have
\begin{align*}
    \langle f P_r(g)\rangle &= \langle (P_r(f) + P_r^\perp (f)) P_r (g) \rangle = \langle P_r(f) P_r (g)\rangle = \langle P_r(f) g\rangle, 
\end{align*}
where the last equality is by symmetry. We also have
\begin{equation*}
    \|P_r(f)\|^2_2 \leq \|P_r(f)\|^2_2 + \|P_r^\perp (f)\|^2_2 = \|P_r(f)+P_r^\perp(f)\|^2=\|f\|^2_2. 
\end{equation*}
Let $f,g,h,t \in L^2(\mu)$. Then 
\begin{align*}
    \sum\limits_{ii'} f_ig_{i'}h_{ii'} &= \sum\limits_i f_i\sum\limits_{i'} g_{i'}h_{ii'} = \sum_if_i\sum_{i'} \langle g P_r(h\phi_i)\rangle = \sum_if_i \langle P_r (g) h \phi_i\rangle  = \langle P_r(f)h P_r(g)\rangle \\
    \sum_{ii'} f_ig_ih_{i'}t_{i'} &= (\sum_i f_ig_i)(\sum_{i'} h_{i'}t_{i'}) = \langle fP_r(g)\rangle\langle h P_r(t)\rangle \\
    \sum_{ii'} f_{ii'} g_{ii'} &= \sum_i \langle f\phi_i  \sum_{i'}\langle g\phi_i \phi_{i'}\rangle \phi_{i'}\rangle = \sum_i\langle f\phi_i P_r(g\phi_i)\rangle.
\end{align*}
\end{proof}

\begin{proof}[Proof of Proposition \ref{prop:meta lfht}]
Let us label the different terms of the statistic $T^{-\sf{d}}_\sf{LF}$:
\begin{align*}
    T_\sf{LF}^{-\sf{d}} &= \sum\limits_{i=1}^r \Bigg\{\frac{2}{n^2} \sum\limits_{j < j'}^n \phi_i(X_j)\phi_i(X_{j'})-\frac{2}{n^2}\sum\limits_{j < j'}^n \phi_i(Y_j)\phi_i(Y_{j'})\\
    &\qquad -\frac{2}{nm}\sum\limits_{j=1}^n\sum\limits_{u=1}^m\phi_i(X_j)\phi_i(Z_u) + \frac{2}{nm}\sum\limits_{j=1}^n\sum\limits_{u=1}^m\phi_i(Y_j)\phi_i(Z_u)\Bigg\} \\
    &= \frac{2}{n^2}\sf{I} - \frac{2}{n^2}\sf{II} -\frac{2}{nm}\sf{III} + \frac{2}{nm}\sf{IV}.
\end{align*}
Recall that $X,Y,Z \sim f^{\otimes n},g^{\otimes n}, h^{\otimes m}$ respectively. A straightforward computation yields
\begin{equation*}
    \mathbb E T_\sf{LF} = \|P_r(f-h)\|_2^2-\|P_r(g-h)\|_2^2 - \frac1n\big(\|P_r(f)\|_2^2-\|P_r(g)\|_2^2\big).
\end{equation*}
We decompose the variance as
    \begin{align*}
        \var(T_\sf{LF}) &= \frac{4}{n^4}\var(\sf{I}) + \frac{4}{n^4}\var(\sf{II}) + \frac{4}{n^2m^2}\var(\sf{III}) + \frac{4}{n^2m^2} \var(\sf{IV}) \\
        &\quad - \frac{8}{n^3m} \cov(\sf{I},\sf{III}) - \frac{8}{n^3m} \cov(\sf{II},\sf{IV}) - \frac{8}{n^2m^2} \cov(\sf{III},\sf{IV}),
\end{align*}
where we used independence of the pairs $(\sf{I},\sf{II}), (\sf{I},\sf{IV}), (\sf{II},\sf{III})$. Expanding the variances we obtain
\begin{align*}
    \var(\sf{I}) &= \sum\limits_{ii'} \left({n \choose 2}(f^2_{ii'}-f^2_if^2_{i'}) + ({n\choose2}^2-{n\choose2}-{4\choose2}{n\choose4})(f_if_{i'}f_{ii'}-f^2_if^2_{i'})\right) \\
    \var(\sf{II}) &= \sum\limits_{ii'} \left({n \choose 2}(g^2_{ii'}-g^2_ig^2_{i'}) + ({n\choose2}^2-{n\choose2}-{4\choose2}{n\choose4})(g_ig_{i'}g_{ii'}-g^2_ig^2_{i'})\right) \\
    \var(\sf{III}) &= \sum\limits_{ii'} \Big(nm(f_{ii'}h_{ii'}-f_if_{i'}h_ih_{i'})+nm(m-1)(f_{ii'}h_ih_{i'}-f_if_{i'}h_ih_{i'}) + \\
    &\qquad \qquad +mn(n-1)(f_if_{i'}h_{ii'}-f_if_{i'}h_ih_{i'})\Big) \\
    \var(\sf{IV}) &= \sum_{ii'}\Big( nm (h_{ii'}g_{ii'}-h_ih_{i'}g_ig_{i'}) + mn(n-1)(h_{ii'}g_ig_{i'}-h_ih_{i'}g_ig_{i'}) \\
    &\qquad\qquad  + nm(m-1)(g_{ii'}h_ih_{i'}-h_ih_{i'}g_ig_{i'})\Big). 
\end{align*}
For the covariance terms we obtain
\begin{align*}
    \cov(\sf{I},\sf{III}) &= \sum\limits_{ii'} 2m{n\choose2}(f_{ii'}f_ih_{i'}-f_i^2f_{i'}h_{i'}) \\
    \cov(\sf{II},\sf{IV}) &= \sum\limits_{ii'} 2m{n\choose2}(g_{ii'}g_ih_{i'}-g_i^2g_{i'}h_{i'}) \\
    \cov(\sf{III},\sf{IV}) &= \sum\limits_{ii'} mn^2(h_{ii'}f_ig_{i'}-f_ig_{i'}h_ih_{i'}).
\end{align*}
We can now start collecting the terms, applying the calculation rules from Lemma \ref{lem:properties of P_r} repeatedly. Note that ${n\choose2}^2-{n\choose2}-{4\choose2}{n\choose4}=n^3-3n^2+2n$, and
by inspection we can conclude that $1/n,1/m,1/nm, 1/n^2$ and $1/n^3$ are the only terms with nonzero coefficients.
We look at each of them one-by-one:
\begin{align*}
    \coef\left(\frac1n\right) &= \sum_{ii'}^r \Big( \underbrace{4(f_if_{i'}f_{ii'}-f_i^2f_{i'}^2)}_{\var(I)} + \underbrace{4(g_ig_{i'}g_{ii'}-g_i^2g_{i'}^2)}_{\var(II)} +
    \underbrace{4(h_ih_{i'}f_{ii'}-f_if_{i'}h_ih_{i'})}_{\var(III)} + \\
     &\qquad\underbrace{4(g_{ii'}h_ih_{i'}-h_ih_{i'}g_ig_{i'})}_{\var(IV)}
    -\underbrace{8(f_{ii'}f_ih_{i'}-f_i^2f_{i'}h_{i'})}_{\cov(I,III)} - \underbrace{8(g_{ii'}g_ih_{i'}-g_i^2g_{i'}h_{i'})}_{\cov(II,IV)}\Big) \\
    &= 4\langle fP_r(f)^2\rangle - 4\langle f P_r(f)\rangle^2 + 4\langle gP_r(g)^2\rangle - 4\langle gP_r(g)\rangle^2 + 4\langle fP_r(h)^2\rangle - 4\langle fP_r(h)\rangle^2 \\
    &\qquad + 4\langle gP_r(h)^2 - 4\langle hP_r(g)\rangle^2 - 8\langle fP_r(f)P_r(h)\rangle + 8\langle fP_r(f)\rangle\langle fP_r(h)\rangle \\
    &\qquad - 8\langle gP_r(g)P_r(h)\rangle + 8\langle gP_r(g)\rangle\langle gP_r(h)\rangle \\
    &= 4\langle f (P_r(f-h))^2\rangle + 4\langle g (P_r(g-h))^2\rangle - 4\langle P_r(f-h)\rangle^2 - 4\langle P_r(g-h)\rangle^2 \\
    &\leq 4A_{ffh} + 4A_{ggh}, 
\end{align*}
recalling the definition $A_{uvt}=\langle u\big[P_r(v-t)\big]^2\rangle$ for $u,v,t \in L^2(\mu)$. Similarly, we get
\begin{align*}
    \coef\left(\frac1m\right) &= \sum_{ii'}^r \Big(\underbrace{4(h_{ii'}f_if_{i'}-f_if_{i'}h_ih_{i'})}_{\var(III)} + \underbrace{4(h_{ii'}g_ig_{i'}-h_ih_{i'}g_ig_{i'})}_{\var(IV)} - \underbrace{8(h_{ii'}f_ig_{i'}-f_ih_ih_{i'}g_{i'})}_{\cov(III,IV)} \\
    &= 4\langle h(P_r(f-g))^2\rangle - 4\langle hP_r(f-g)\rangle^2 \\
    &\leq 4A_{hfg}.
\end{align*}
For the lower order terms we obtain
\begin{align*}
    \coef\left(\frac{1}{nm}\right) &= \sum_{ii'}^r \Big(\underbrace{4(f_{ii'}h_{ii'}-f_if_{i'}h_ih_{i'})-4(f_{ii'}h_ih_{i'}-f_if_{i'}h_ih_{i'})-4(f_if_{i'}h_{ii'}-f_if_{i'}h_ih_{i'})}_{\var(III)} \\
    &\qquad + \underbrace{4(h_{ii'}g_{ii'}-h_ih_{i'}g_ig_{i'})-4(h_{ii'}g_ig_{i'}-h_ih_{i'}g_ig_{i'})-4(g_{ii'}h_ih_{i'}-h_ih_{i'}g_ig_{i'})}_{\var(IV)}\Big) \\
    &= 4B_{fh}-4\langle fP_r(h)\rangle^2 - 4\langle fP_r(h)^2\rangle + 4\langle fP_r(h)\rangle^2 \\
    &\qquad - 4\langle hP_r(f)^2\rangle + 4\langle fP_r(h)\rangle^2 + 4B_{gh}-4\langle gP_r(h)\rangle^2 \\
    &\qquad - 4\langle hP_r(g)^2\rangle + 4\langle gP_r(h)\rangle^2 - 4\langle gP_r(h)^2\rangle + 4\langle gP_r(h)\rangle^2 \\
    &\leq 4\langle fP_r(h)\rangle^2 + 4\langle gP_r(h)\rangle^2 + 4B_{fh}+4B_{gh} \\
    &\lesssim |B_{fh}|+|B_{gh}|+\|f+g+h\|_2^4
\end{align*}
where we recall the definition $B_{uv} = \sum_i \langle u\phi_iP_r(v\phi_i)\rangle$ for $u,v\in L^2(\mu)$ and apply the Cauchy-Schwarz inequality. Next, we look at the coefficient of $1/n^2$ and find
\begin{align*}
\coef\left(\frac{1}{n^2}\right) &= \sum_{ii'} \Big(\underbrace{2(f_{ii'}^2-f_i^2f_{i'}^2)-12(f_{ii'}f_if_{i'}-f_i^2f_{i'}^2)}_{\var(\sf{I})} + \underbrace{2(g_{ii'}^2-g_i^2g_{i'}^2)-12(g_{ii'}g_ig_{i'}-g_i^2g_{i'}^2)}_{\var(\sf{II})} \\
&\qquad + \underbrace{8(f_{ii'}f_ih_{i'}-f_i^2f_{i'}h_{i'})}_{\cov(\sf{I},\sf{III})} + \underbrace{8(g_{ii'}g_ih_{i'}-g_i^2g_{i'}h_{i'})}_{\cov(\sf{II}, \sf{IV})}\Big) \\
&= 2B_{ff} - 2\langle fP_r(f)\rangle^2 - 12\langle fP_r(f)^2\rangle + 12 \langle fP_r(f)\rangle^2  \\
&\qquad + 2B_{gg}-2\langle gP_r(g)\rangle^2 - 12 \langle gP_r(g)^2\rangle + 12\langle gP_r(g)\rangle^2 \\
&\qquad + 8\langle f P_r(f)P_r(h)\rangle - 8\langle fP_r(f)\rangle\langle fP_r(h)\rangle + 8\langle gP_r(g)P_r(h)\rangle - 8\langle gP_r(g)\rangle\langle gP_r(h)\rangle \\
&\leq 2B_{ff} + 2B_{gg} + 8\langle fP_r(f)P_r(h-f)\rangle + 8\langle gP_r(g)P_r(h-g)\rangle + 40 \|f+g+h\|_2^4 \\
&\lesssim |B_{ff}|+|B_{gg}|+\|f+g+h\|_2^4 + \sqrt{A_{ff0} A_{ffh} + A_{gg0}A_{ggh}}. 
\end{align*}
Finally, we look at the coefficient of $1/n^3$:
\begin{align*}
    \coef\left(\frac{1}{n^3}\right) &= \sum_{ii'} \Big(\underbrace{-2(f_{ii'}^2-f_i^2f_{i'}^2)+8(f_{ii'}f_if_{i'}-f_i^2f_{i'}^2)}_{\cov(\sf{I},\sf{III})} \underbrace{-2(g_{ii'}^2-g_i^2g_{i'}^2)+8(g_{ii'}g_ig_{i'}-g_i^2g_{i'}^2)}_{\cov(\sf{I},\sf{III})}\Big) \\
    &= -2 B_{ff} + 2\langle fP_r(f)\rangle^2 + 8\langle fP_r(f)^2\rangle - 8\langle fP_r(f)\rangle^2 \\
    &\qquad - 2B_{gg} + 2\langle gP_r(g)\rangle^2 + 8\langle gP_r(g)^2\rangle - 8\langle gP_r(g)\rangle^2 \\
    &\lesssim |B_{ff}|+|B_{gg}| + \|f+g+h\|_2^4 + A_{ff0} + A_{gg0}.
\end{align*}
\end{proof}

\subsection{Proof of Lemma \ref{lem:multi expectation}}

\begin{proof}
Expanding via the binomial formula and using the fact that sums of $N_j$'s are binomial random variables, we get
\begin{align*}
    \bb E_N \prod\limits_{j \in k} (a+b(1+c)^{N_j}) &= \bb E \sum\limits_{\ell=0}^k {k \choose \ell} b^\ell(1+c)^{\bin(n, \ell/k)} a^{k-\ell} \\
    &= \sum\limits_{\ell=0}^k {k\choose \ell}b^\ell(1+\frac{c\ell}{k})^na^{k-\ell} \\
    &\leq (a+be^{cn/k})^k,
\end{align*}
where we used $1+x\leq e^x$ for all $x \in \bb R$.
\end{proof}

\end{document}